\numberwithin{equation}{section}
\theoremstyle{plain}
\newtheorem{theorem}{Theorem}[section]
\newtheorem{proposition}[theorem]{Proposition}
\newtheorem{lemma}[theorem]{Lemma}
\newtheorem{corollary}[theorem]{Corollary}
\theoremstyle{definition}
\newtheorem{definition}[theorem]{Definition}
\newtheorem{remark}[theorem]{Remark}
\newtheorem{example}[theorem]{Example}
\newtheorem{examples}[theorem]{Examples}
\renewcommand{\leq}{\leqslant}
\renewcommand{\geq}{\geqslant}
\newsavebox{\proofbox}
\savebox{\proofbox}{\begin{picture}(7,7)%
  \put(0,0){\framebox(7,7){}}\end{picture}}
\newcommand\E{\mathbb{E}}
\newcommand\Z{\mathbb{Z}}
\newcommand\R{\mathbb{R}}
\newcommand\T{\mathbb{T}}
\newcommand\C{\mathbb{C}}
\newcommand\Zcal{\mathcal{Z}}
\newcommand\X{\mathrm{X}}
\newcommand\Y{\mathrm{Y}}
\newcommand\D{\mathcal{D}}
\newcommand\B{\mathcal{B}}
\newcommand\G{\mathcal{G}}
\newcommand\I{\mathcal{I}}
\newcommand\F{\mathbb{F}}
\newcommand\Fw{\mathbb{F}^{\omega}}
\newcommand\Q{\mathbb{Q}}
\newcommand\x{{\bf x}}
\newcommand\w{{\bf w}}
\newcommand\1{{\bf 1}}
\newcommand\2{{\bf 2}}
\newcommand\z{{\bf z}}
\newcommand\s{{\bf s}}
\newcommand\y{{\bf y}}
\newcommand\g{{\bf g}}
\newcommand\m{{\bf m}}
\renewcommand\u{{\bf u}}
\newcommand\eps{\varepsilon}
\newcommand\ader{\Delta}
\newcommand\mder{{\Delta\!\!\!\!\!\hbox{\raisebox{0.2ex}{\tiny\ \textbullet}}\ \!}}
\newcommand\mdersmall{{\Delta\!\!\!\!\cdot\ \!}}
\newcommand\sgn{\operatorname{sgn}}
\newcommand\Abr{\operatorname{Abr}}
\newcommand\Phase{{\mathcal P}} 
\newcommand\charac{\operatorname{char}}
\newcommand\diag{\operatorname{diag}}
\newcommand\id{\operatorname{id}}
\newcommand\pt{\operatorname{pt}}
\renewcommand\th{{\operatorname{th}}}
\begin{document}

\title[An inverse theorem for the ergodic $\Fw$ uniformity seminorms ]{An inverse theorem for the uniformity seminorms associated with the action of $\F^\infty_p$}

\author{Vitaly Bergelson}
\address{Department of Mathematics, Ohio State University, Columbus OH 43210-1174}
\email{vitaly@math.ohio-state.edu}

\author{Terence Tao}
\address{UCLA Department of Mathematics, Los Angeles, CA 90095-1555.}
\email{tao@math.ucla.edu}

\author{Tamar Ziegler}
\address{Department of Mathematics, Technion, Haifa 32000, Israel.}
\email{tamarzr@tx.technion.ac.il}

\subjclass{}

\begin{abstract}  Let $\F$ a finite field.
We show that the universal characteristic factor for the Gowers-Host-Kra uniformity seminorm $U^k(\X)$ for an ergodic action $(T_g)_{g \in \Fw}$ of the infinite abelian group $\Fw$ on a probability space $X = (X,\B,\mu)$ is generated by phase polynomials $\phi: X \to S^1$ of degree less than $C(k)$ on $X$, where $C(k)$ depends only on $k$.
In the case where $k \leq \charac(\F)$ we obtain the sharp result $C(k)=k$.
This is a  finite field counterpart of an analogous result for $\Z$ by Host and Kra \cite{hk-cubes}.  In a companion paper \cite{tz-correspondence} to this paper, we shall combine this result with a correspondence principle to establish the inverse theorem for the Gowers norm in finite fields in the high characteristic case $k \leq \charac(\F)$, with a partial result in low characteristic.
\end{abstract}

\maketitle


\section{Introduction}

\subsection{Gowers-Host-Kra seminorms}

This paper is concerned with the structural theory of measure preserving actions of abelian 
groups.  We begin with some general definitions.

\begin{definition}[$G$-systems] Let $(G,+)$ be a locally compact abelian group.
A \emph{$G$-system} $\X = (X, \B_X, \mu_X, (T_g)_{g \in G})$ is a probability space $X = (X,\B_X,\mu_X)$ which is \emph{separable modulo null sets} (i.e. $\B_X$ is countably generated modulo null sets)\footnote{In order to carry out certain measure-theoretical constructions such as the disintegration of measures with respect to a factor, we will tacitly assume that the underlying measure spaces of the $G$-systems we will be dealing with are {\em regular}, meaning that in the triple $(X,\B,\mu)$, $X$ is a compact metric space, $\B$ is the (completion of the) $\sigma$-algebra of Borel sets, and $\mu$ is a Borel measure. Since every separable (modulo null sets) probability measure space is equivalent to a regular space (see for example \cite[Proposition 5.3]{Fur2}), this assumption can be made without any loss of generality.  Here, equivalence means equivalence of abstract $\sigma$-algebras modulo null sets; see \cite[Definition 5.2]{Fur2} for a precise definition.

Moreover, when dealing with factor maps between $G$-systems (see Definition \ref{factor-def} below) we will be assuming without specifically mentioning this that this regularity assumption applies simultaneously to a $G$-system and its factor. (Cf. \cite[Theorem 5.15]{Fur2})}, together with an action $g \mapsto T_g$ of $G$ on $X$ by measure-preserving transformations $T_g: X \to X$, and such that the map $(g,x) \mapsto T_g x$ is jointly measurable in $g$ and $x$.  We define the $L^p$ spaces $L^p(\X) = L^p(X,\B_X,\mu_X)$ for $1 \leq p \leq \infty$ in the usual manner (in particular, we identify any two functions in $L^p(\X)$ which agree $\mu$-almost everywhere).  If $X$ is a point, we write $\X = \pt$.
Given any measurable $\phi: X \to \C$ and $h \in G$, we define the shift
$T_h \phi := \phi \circ T_h$
and the multiplicative derivative
$\mder_h \phi := \overline{\phi} \cdot T_h \phi$.
Similarly, if $(U,+)$ is an abelian group and $\phi: X \to U$ is a measurable function, we define the shift $T_h \phi := \phi \circ T_h$ and the additive derivative
$\ader_h \phi := T_h \phi - \phi$.
We observe the commutativity relations $\mder_h \mder_k = \mder_k \mder_h$ and $\ader_h \ader_k = \ader_k \ader_h$ for all $h,k \in G$.  Observe also that for any $h \in G$, $\mder_h$ and $\ader_h$ are multiplicative and additive homomorphisms  from the group of measurable functions from $X  \to \C$ or  $X \to U$ endowed with pointwise multiplication or pointwise addition respectively, to itself.

We say that a $G$-system is \emph{ergodic} if the only functions in $L^2(\X)$ which are invariant under the $G$-action (by the shifts introduced above) are the constants.
\end{definition}

\begin{remark} Most of our analysis will take place in the setting of ergodic systems, but for various technical reasons we will sometimes have to work with non-ergodic systems.  In some (but not all) cases, results on ergodic systems can be extended satisfactorily to the non-ergodic case using the ergodic decomposition.  The hypothesis of separability is a technical one (used in particular in Appendix \ref{measure-sec} to obtain a certain measurability property), but can often be removed in applications by restricting the $\sigma$-algebra $\B_X$ to the sub-algebra generated by the functions one is interested in studying, together with all of their shifts.
\end{remark}

In most of our analysis, the group $G$ will be countable, discrete and abelian and hence has a 
\emph{F{\o}lner sequence}, i.e. a sequence $(\Phi_n)_{n=1}^\infty$ of finite subsets of $G$ satisfying\footnote{We use $|E|$ to denote the cardinality of a finite set $E$, and $\Delta$ to denote symmetric difference.} $|(\Phi_n+h) \Delta \Phi_n|/|\Phi_n| \to 0$ as $n \to \infty$ for all $h \in G$.  It is well known that one can always choose a F{\o}lner  sequence to be nested and to satisfy the condition $G = \bigcup_{n=1}^\infty \Phi_n$, and we will assume throughout this paper that the F{\o}lner sequences we deal with have this additional property.   Model examples include the integers $\Z$ (with $\Phi_n = [-n,n]$), and the (additive group of) countably infinite vector space $\Fw := \oplus \F \equiv \bigcup_n \F^n$ over a finite field $\F$ (with $\Phi_n = \F^n$).  For our initial discussion we will allow $G$ to be any countable abelian group, but we will eventually restrict\footnote{We will also need to consider the actions of various compact abelian groups, and in particular closed subgroups of the Pontryagin dual $\widehat{\Fw} \equiv \prod \F$ of $\Fw$.} our attention to the vector space $\Fw$.  However, it may be useful for future applications to note that several of the tools used here are in fact valid for arbitrary countable discrete\footnote{In fact, it seems likely that the hypothesis that $G$ be discrete could be dropped in much of the theory.  It may also be possible to generalize from abelian groups $G$ to nilpotent groups $G$.  We will not pursue these matters here.} abelian $G$.

This paper is concerned with the following seminorms for $G$-systems:

\begin{definition}[Gowers-Host-Kra uniformity seminorms, cf. \cite{hk-cubes}] \label{ghk-uni}   Let $G$ be a countable abelian  group, let $\X = (X,\B_X, \mu_X, (T_g)_{g \in G})$ be a $G$-system, let $\phi \in L^\infty(\X)$, and let $k \geq 1$ be an integer.  We define the \emph{Gowers-Host-Kra seminorm} $\|\phi\|_{U^k(\X)}$ of order $k$ of $\phi$ recursively by the formula
$$
 \| \phi\|_{U^1(\X)} := \lim_{n \to \infty} \| \E_{h \in \Phi^1_n} T_h \phi \|_{L^2(\X)}
$$
for $k=1$, and
$$ \| \phi\|_{U^k(\X)} := \lim_{n \to \infty} (\E_{h \in \Phi^k_n} \| \mder_h \phi \|_{U^{k-1}(\X)}^{2^{k-1}})^{1/2^k}$$
for $k \geq 1$, where for each $k$, $\Phi^k_1 \subset \Phi^k_2 \subset \ldots$ is a F{\o}lner sequence, and we use the expectation notation
$\E_{h \in H} f(h) := \frac{1}{|H|} \sum_{h \in H} f(h)$ for finite non-empty sets $H$ and functions $f: H \to \C$ throughout this paper.  
\end{definition}

\begin{remark} In the case of ergodic $G$-systems, we can use the mean ergodic theorem to simplify the $U^1$ norm as $\|\phi\|_{U^1(\X)} = |\int_X \phi\ d\mu_X|$.  In the non-ergodic case, the mean ergodic theorem gives the formula
$$ \|\phi\|_{U^1(\X)} = (\lim_{n \to \infty} \int_\X \E_{h \in \Phi^k_n} \mder_h \phi\ d\mu_X)^{1/2}$$
and more generally
$$ \|\phi\|_{U^k(\X)} = ( \lim_{n_k \to \infty} \ldots \lim_{n_1 \to \infty} \int_X \E_{h_k \in \Phi^k_{n_k}} \ldots \E_{h_1 \in \Phi^1_{n_1}} \mder_{h_k} \ldots \mder_{h_1} \phi\ d\mu_X )^{1/2^k}.
$$
The existence of the limits 
(independently of the choice of F{\o}lner sequences)
as well as similar integral formulae for the higher order Gowers-Host-Kra norms are given for $\Z$-actions in \cite{hk-cubes}, and for actions of a general countable abelian group in   Lemma \ref{welldefined}. One can also show that the Gowers-Host-Kra seminorms are indeed seminorms on $L^\infty(\X)$; see Lemma \ref{uk-basic}.
\end{remark}

\begin{example}\label{eigenf}  Let $\phi: X \to S^1$ be an \emph{eigenfunction} for an ergodic $G$-system, thus $\phi$ is measurable and $T_h \phi = \lambda(h) \phi$ for all $h \in G$ and some character $\lambda: G \to S^1$, where $S^1 := \{ z \in \C: |z|=1\}$ is the unit circle.  Then $\|\phi\|_{U^k(\X)} = 1$ for all $k \geq 2$.  If $\lambda$ is trivial, then $\|\phi\|_{U^1(\X)}=1$ as well, otherwise $\|\phi\|_{U^1(\X)} = 0$.
\end{example}

\begin{remark} In the case when $G$ is finite and $\X$ is just $G$ with normalized counting measure, the $\sigma$-algebra that consists of all subsets of $G$, and the translation action, the Gowers-Host-Kra seminorms simplify to the \emph{Gowers uniformity norms}
$$ \|f\|_{U^k(G)} = \left(\E_{x, h_1, \ldots, h_k \in G} \mder_{h_k} \ldots \mder_{h_1} f(x)\right)^{1/2^k}.$$
These norms (in the special case $G = \Z/N\Z$) were first introduced by Gowers in \cite{gowers},  where he derives quantitative bounds for  Szemer\'edi's theorem on arithmetic progressions in sets of positive upper density in the integers.  The above seminorms in the context of ergodic $\Z$-systems were introduced by Host and Kra in \cite{hk-cubes}, as a tool in the study of the ergodic averages related to Furstenberg's ergodic theoretic proof \cite{fu-szemeredi} of Szemer\'edi's theorem \cite{szemeredi}.   The Gowers uniformity norms for other finite abelian groups, such as finite-dimensional vector spaces $\F^n$ over a finite field $\F$, were studied in \cite{gt:inverse-u3}, \cite{sam}, \cite{green-tao-finfieldAP4s}, \cite{lms}, \cite{gt-ff-ratner}, \cite{gowwolf}.
\end{remark}

\subsection{Universal characteristic factors}

A fundamental concept in the study of the Gowers-Host-Kra uniformity seminorms is that of the \emph{universal characteristic factor} for such norms.  To describe this concept we need some notation.

\begin{definition}[Factors]\label{factor-def} A \emph{factor} $\Y = (Y, \B_Y, \mu_Y, (S_g)_{g \in G}, \pi^X_Y)$ of a $G$-system $\X = (X, \B_X, \mu_X, (T_g)_{g \in G})$ is another $G$-system $(Y, \B_Y, \mu_Y, (S_g)_{g \in G})$, together with a measurable \emph{factor map} $\pi^X_Y: X \to Y$, such that the push-forward $(\pi^X_Y)_* \mu_X$ of $\mu_X$ by $\pi^X_Y$ is equal to $\mu_Y$, and such that $\pi^X_Y \circ T_g = S_g \circ \pi^X_Y$ $\mu_X$-a.e. for all $g \in G$. 
We will often write  $(\Y, (S_g)_{g \in G}, \pi^X_Y)$, $(\Y, \pi^X_Y)$ or just $\Y$, for the factor 
$(Y, \B_Y, \mu_Y, (S_g)_{g \in G}, \pi^X_Y)$.
If $U$ is a measure space and $f: Y \to U$ is a measurable map, we write $(\pi^X_Y)^* f: X \to U$ for the pullback $(\pi^X_Y)^* f := f \circ \pi^X_Y$.  Conversely, if $f \in L^2(\X)$, we write $(\pi^X_Y)_* f \in L^2(\Y)$ for the pushforward of $f$, and $\E(f|\Y) := (\pi^X_Y)^* (\pi^X_Y)_* f \in L^2(\X)$ for the conditional expectation of $f$ to $\Y$.  We say that $f \in L^2(\X)$ is \emph{$\B_Y$-measurable} if $f = \E(f|\Y)$, of equivalently if $f = (\pi^X_Y) F$ for some $F \in L^2(\Y)$.  We refer to $(\X, \pi^X_Y)$ as an \emph{extension} of the $G$-system $(Y, \B_Y, \mu_Y, (S_g)_{g \in G})$.  If $Y$ is a point,  $(\pi^X_{\pt})_* f = \int_X f\ d\mu_X$.

One factor $\Y = (Y, \B_Y, \mu_Y,(S_g)_{g \in \G}, \pi^X_Y)$ of $\X$  is said to \emph{extend} another factor $\Y' = (\Y', \B_{\Y'}, \mu_{\Y'},(S'_g)_{g \in \G}, \pi^X_{Y'})$ of $\X$ (or equivalently, $\Y'$ is a \emph{sub-factor} of $\Y$) if every $\Y'$-measurable function is also $\Y$-measurable; in this case, we write $\Y \geq \Y'$.  Two factors are said to be \emph{isomorphic} if they extend each other.  Note that the notion of extension is a partial order modulo measure equivalence.  When we say that a factor is \emph{maximal} (resp. \emph{minimal}) with respect to some property, we mean that there is no extension (resp. sub-factor) of this factor, that obeys that property, which is not already equivalent to that factor.

We say that a factor $\Y = (Y,  \B_Y, \mu_Y,(S_g)_{g \in G}, \pi^X_Y)$ is \emph{generated} by a collection ${\mathcal F}$ of measurable functions $f: X \to \C$ if the $\sigma$-algebra $(\pi^X_Y)^{-1}(\B_Y)$ is generated (modulo $\mu_X$-null sets) by the pre-images of level sets $T_g f^{-1}(V)$ of functions $f \in {\mathcal F}$, where $g \in G$ and $V$ ranges over Borel subsets of $\C$.  Equivalently, $\Y$ is the minimal factor such that all functions in ${\mathcal F}$ are $\B_Y$-measurable.
\end{definition}

\begin{remark} Observe that any $\sigma$-algebra $\B \subset \B_X$ which is preserved by the $G$-action induces a factor $\Y = (X, \B, \mu_X, (T_g)_{g \in G},\operatorname{id})$ of 
$\X = (X, \B_X, \mu_X,(T_g)_{g \in G})$ (indeed, up to isomorphism, all factors arise in this manner, and we will often abuse notation by identifying factors with invariant $\sigma$-algebras).  In this case we see that $(\pi^X_Y)_* f = \E(f|\B)$ and $(\pi^X_Y)^* f = f$.  Note that if $X$ is separable modulo null sets, then $L^2(\X)$ is separable, hence on taking orthogonal projections $L^2(\Y)$ is separable, hence $Y$ is also separable modulo null sets.
\end{remark}

\begin{remark} Observe that any factor of an ergodic $G$-system is also ergodic.  The converse, of course, is not true.
\end{remark}

\begin{proposition}[Universal characteristic factor]\label{ucf-prop}  Let $G$ be a countable abelian group, let $\X$ be a $G$-system, and let $k \geq 1$.  Then there exists a factor 
$\Zcal_{<k} = \Zcal_{<k}(\X) = (Z_{<k}(\X), \B_{\Zcal_{<k}}, \mu_{\Zcal_{<k}}, (S_g)_{g \in G}, \pi^X_{Z_{<k}(\X)})$ of $\X$ with the property that for every $f \in L^\infty(\X)$, $\|f\|_{U^k(\X)} = 0$ if and only if $(\pi^X_{Z_{<k}(\X)})_* f = 0$ (or equivalently $\E(f|\Zcal_{<k}) = 0$).  This factor is unique up to  isomorphism.
\end{proposition}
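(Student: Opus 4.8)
The plan is to construct $\Zcal_{<k}$ from the $U^k$ \emph{dual functions} and then to prove uniqueness by a short Hilbert-space argument. Throughout I use the basic structure of the seminorm recorded in Lemmas \ref{welldefined} and \ref{uk-basic} (or provable there exactly as for $\Z$-actions in \cite{hk-cubes}): the Gowers--Host--Kra multilinear form $\langle(\phi_\omega)_{\omega\in\{0,1\}^k}\rangle_{U^k(\X)}$ of $2^k$ functions, with polarisation $\langle(\phi,\dots,\phi)\rangle_{U^k(\X)}=\|\phi\|_{U^k(\X)}^{2^k}$, the Gowers--Cauchy--Schwarz inequality $|\langle(\phi_\omega)_\omega\rangle_{U^k(\X)}|\le\prod_\omega\|\phi_\omega\|_{U^k(\X)}$, the monotonicity $\|\phi\|_{U^1(\X)}\le\|\phi\|_{U^k(\X)}$, and $\|1\|_{U^k(\X)}=1$; together with the associated dual function $\mathcal{D}_k\phi\in L^\infty(\X)$, defined by $\langle g,\mathcal{D}_k\phi\rangle_{L^2(\X)}=\langle(g,\phi,\dots,\phi)\rangle_{U^k(\X)}$ for all $g\in L^\infty(\X)$, which satisfies $T_h\mathcal{D}_k\phi=\mathcal{D}_k(T_h\phi)$, $\|\mathcal{D}_k\phi\|_{L^\infty(\X)}\le\|\phi\|_{L^\infty(\X)}^{2^k-1}$, and $\langle f,\mathcal{D}_k f\rangle_{L^2(\X)}=\|f\|_{U^k(\X)}^{2^k}$. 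No ergodicity is needed since these limits exist for arbitrary countable abelian $G$.

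\emph{Existence.} I would take $\Zcal_{<k}$ to be the factor of $\X$ generated, in the sense of Definition \ref{factor-def}, by the family $\{\mathcal{D}_k\phi:\phi\in L^\infty(\X)\}$; since $T_h\mathcal{D}_k\phi=\mathcal{D}_k(T_h\phi)$ lies in this family, $\B_{\Zcal_{<k}}$ is $G$-invariant. One direction is immediate: if $\E(f|\Zcal_{<k})=0$, then, as $\mathcal{D}_k f$ is $\B_{\Zcal_{<k}}$-measurable,
\[
\|f\|_{U^k(\X)}^{2^k}=\langle f,\mathcal{D}_k f\rangle_{L^2(\X)}=\langle\E(f|\Zcal_{<k}),\mathcal{D}_k f\rangle_{L^2(\X)}=0.
\]
For the converse, suppose $\|f\|_{U^k(\X)}=0$. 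Viewed inside $L^2(\X)$, the space $L^2(\Zcal_{<k})$ is the closed linear span of all finite products of the constants, the shifted dual functions $T_h\mathcal{D}_k\phi$ ($h\in G$, $\phi\in L^\infty(\X)$), and their complex conjugates, so it suffices to show that $f$ is orthogonal to each such product. Orthogonality to $1$ follows from $\|f\|_{U^1(\X)}\le\|f\|_{U^k(\X)}=0$, which forces $\int_X f\,d\mu_X=0$. Orthogonality to a single factor is Gowers--Cauchy--Schwarz: $|\langle f,T_h\mathcal{D}_k\phi\rangle_{L^2(\X)}|=|\langle(f,T_h\phi,\dots,T_h\phi)\rangle_{U^k(\X)}|\le\|f\|_{U^k(\X)}\,\|T_h\phi\|_{U^k(\X)}^{2^k-1}=0$. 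The case of a product of $m\ge2$ factors is the substantive point: following the argument of Host and Kra in \cite{hk-cubes} — a manipulation of the cube measure on $X^{2^k}$ which goes through verbatim for arbitrary countable abelian $G$ — one recognises $\langle f,\prod_{i=1}^m T_{h_i}\mathcal{D}_k\phi_i\rangle_{L^2(\X)}$, up to relabelling, as a Gowers--Host--Kra type multilinear form of $2^k$ functions with $f$ sitting in the slot $\omega=0$ and bounded functions elsewhere, computed against a fibre product of $m$ copies of the cube measure over the common base coordinate; since the seminorm attached to that base coordinate is still $\|f\|_{U^k(\X)}=0$, Gowers--Cauchy--Schwarz again makes the inner product vanish. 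Hence $f\perp L^2(\Zcal_{<k})$, i.e.\ $\E(f|\Zcal_{<k})=0$.

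\emph{Uniqueness.} Let $\Y_1,\Y_2$ be factors with the stated property, and let $R_i$ be the range of the conditional expectation $f\mapsto\E(f|\Y_i)$ on $L^2(\X)$; $R_i$ is a closed $G$-invariant subspace, and the invariant sub-$\sigma$-algebra it generates (namely $\{A:\mathbf 1_A\in R_i\}$) determines $\Y_i$ up to isomorphism, by the Remark following Definition \ref{factor-def}. For every $f\in L^\infty(\X)$ we have $\E(f|\Y_1)=0\iff\|f\|_{U^k(\X)}=0\iff\E(f|\Y_2)=0$, so the sets $N_i:=\{f\in L^\infty(\X):\E(f|\Y_i)=0\}$ satisfy $N_1=N_2$. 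Now for any $g\in L^\infty(\X)$ one has $g-\E(g|\Y_i)\in N_i$ (note $\E(g|\Y_i)\in L^\infty(\X)$), and since conditional expectation is $L^2$-continuous and $L^\infty(\X)$ is $L^2$-dense, the functions $g-\E(g|\Y_i)$ are $L^2$-dense in $R_i^\perp$; hence $\overline{N_i}=R_i^\perp$. From $N_1=N_2$ we get $R_1^\perp=R_2^\perp$, so $R_1=R_2$, so $\Y_1$ and $\Y_2$ are isomorphic.

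\emph{Main obstacle.} Everything above is soft once the dual-function machinery is in place, with the single exception of the $m\ge2$ orthogonality in the existence step — equivalently, the assertion that the closed linear span of the dual functions $\mathcal{D}_k\phi$ is already closed under multiplication, so that it genuinely is the $L^2$-space of a factor. This is precisely where the combinatorial structure of the cube measure on $X^{2^k}$ is used, and it is the step I would treat most carefully (or simply quote from \cite{hk-cubes}, after verifying that the argument there uses no special feature of $\Z$).
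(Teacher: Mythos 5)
Your proof is correct in outline but takes a genuinely different route from the paper, so let me compare.  The paper's own proof of this proposition is one sentence: it defines $\Zcal_{<k}$ concretely as the $\sigma$-algebra of sets whose lift to $X^{[k]}$ is independent of the $x_{-\1}$ coordinate (Definition~\ref{ucf-soft-def}), cites Host--Kra's Lemma~4.3 for the universal property (Lemma~\ref{softfactor} in the appendix), and dismisses uniqueness as clear.  You instead build $\Zcal_{<k}$ as the factor generated by the dual functions $\D_k\phi$ — a characterisation the paper mentions immediately after Lemma~\ref{softfactor} but pointedly declines to use ("although we will not use this fact here") — and then the substantive step becomes showing orthogonality of a $U^k$-null function to finite products of shifted dual functions.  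Both approaches defer the same combinatorial content to Host--Kra (that cubic-measure argument is identical whether packaged as the universal property of the cubic $\sigma$-algebra or as closure of the dual-function span under multiplication), so neither is "more elementary," but your version makes the location of the hard step much more visible, which you honestly flag.  Two small remarks.  First, your uniqueness argument, while correct, is doing genuine work the paper waves away: the key point that $N_i$ determines $R_i$ via $\overline{N_i}=R_i^\perp$ deserves to be written out exactly as you have done.  Second, your treatment of "orthogonality to $1$" via $\|f\|_{U^1}\le\|f\|_{U^k}$ is a red herring — in the non-ergodic case one needs orthogonality to all $\I_0$-measurable functions, not just to constants — but this is actually already subsumed by your $m=1$ case, because any bounded invariant $g$ satisfies $\D_k g=g$ (since $\mder_h g=1$) and so is itself a dual function; you could drop the separate $m=0$ paragraph entirely.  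Finally, be aware that the paper's recursive Definition~\ref{dual_functions} sets $\D_1 f:=1$, which only matches \eqref{dk} (and makes your $k=1$ case work) in the ergodic setting; for general $G$-systems the correct base case is $\D_1 f=\E(\bar f\mid\I_0)$, as the displayed formula for $\D_2$ and the proposition's non-ergodic scope both require.
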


\begin{proof} The uniqueness of $\Zcal_{<k}$ is clear; the existence follows immediately from Lemma \ref{softfactor}.
\end{proof}

\begin{remark}\label{wham} As $U^k$ is a seminorm on $L^{\infty}(\X)$, an equivalent characterisation of $\Zcal_{<k}$ is that it is the maximal factor for which $\| f\|_{U^k(X)} = \| \E(f|\Zcal_{<k}) \|_{U^k(X)}$ for all $f \in L^\infty(\X)$.  The factor $\Zcal_{<k}$ is also referred to as $\Zcal_{k-1}$ in the literature (and in particular in \cite{hk-cubes}).  From \eqref{mono-2} we have the monotonicity $\Zcal_{<j} \leq \Zcal_{<k}$ for $k \geq j$. 
\end{remark}

\begin{example}[Universal characteristic factors for small $k$]\label{smallk}  Let $\X$ be a $G$-system, then from the ergodic theorem we see that $\Zcal_{<1}(\X)$ is generated by the $G$-invariant functions on $\X$; in particular, for ergodic $G$-systems $Z_{<1}(X)$ is simply a point.  Some spectral theory also reveals (in the ergodic case) that $\Zcal_{<2}(\X)$ is the \emph{Kronecker factor} of $\X$, that is, the factor generated by the eigenfunctions of $\X$ (see Example \ref{eigenf}); this system is isomorphic 
 to $(H,\B,\mu,(T_g)_{g \in G})$ where $H$ is a closed subgroup of the Pontryagin dual $\hat G$ of $G$, 
 $\B$ the Borel $\sigma$-algebra, $\mu$ the Haar measure, and  the action of $G$  being given by a homomorphism from $G$ to $H$, acting on $H$ by translation.
\end{example}

In view of Proposition \ref{ucf-prop} and the decomposition $f = \E(f|\Zcal_{<k}) + (f - \E(f|\Zcal_{<k}))$, we see that any function $f \in L^\infty(\X)$ can be decomposed into a $\Zcal_{<k}$-measurable function, plus a function with vanishing $U^k$ norm.  When coupled with an explicit description of $\Zcal_{<k}$ (as was done for $k=1,2$ in Example \ref{smallk}), this decomposition leads to some highly non-trivial multiple recurrence and convergence theorems in ergodic theory: see for instance \cite{hk-cubes}, \cite{zieg-jams}, \cite{fhk}, \cite{fw}.  (See also \cite{gt-primes}, \cite{gt:inverse-u3}, \cite{green-tao-linearprimes}, \cite{gowwolf} for some finitary analogues of this decomposition, and some applications to additive combinatorics and analytic number theory.)
 
It is thus of interest to describe the universal characteristic factors $\Zcal_{<k}(\X)$ as explicitly as possible.  One particular class of functions related to such factors are the \emph{phase polynomials}:

\begin{definition}[Phase polynomials]\label{phase-def} Let $G$ be a countable discrete abelian group, $\X$ be a $G$-system, let $\phi \in L^\infty(\X)$, and let $k \geq 0$ be an integer. We say that $\phi$ is a \emph{phase polynomial of degree less than $k$} if we have $\mder_{h_1} \ldots \mder_{h_k} \phi = 1$ $\mu_X$-a.e.. for all $h_1,\ldots,h_k \in G$.  (In particular, setting $h_1=\ldots=h_k=0$, we see that phase polynomials must take values in the unit circle $S^1$ $\mu_X$-almost everywhere.)  We let $\Phase_{<k}(\X)$ denote the set of phase polynomials of degree less than $k$. 

We write $\Abr_{<k}(\X)$ for the factor of $\X$ generated by $\Phase_{<k}(\X)$, and say that $\X$ is an \emph{Abramov\footnote{It was Abramov who studied (under the name ``systems with quasi-discrete spectrum'' and for $\Z$-actions, see \cite{abramov}) systems of this type.} system of order $<k$} if $\X$ is ``generated'' by $\Phase_{<k}(\X)$, or equivalently if $\Phase_{<k}(\X)$ spans $L^2(\X)$.
\end{definition}

\begin{example} $\Phase_{<0}(\X)$ consists only of the constant function $1$, so the only Abramov system of order $<0$ is a trivial (one point) system.  $\Phase_{<1}(\X)$ consists of the $G$-invariant functions from $\X$ to $S^1$ (which, in the ergodic case, are just the constants), and only the Abramov systems of order $<1$ are those for which the action of $G$ is trivial.  In the ergodic case, $\Phase_{<2}(\X)$ consists of the eigenfunctions from Example \ref{eigenf}, and so the ergodic Abramov systems of order $<2$ are precisely the Kronecker systems (i.e. systems generated by translations on compact abelian groups).  There is an analogous relationship between higher degree phase polynomials and higher order eigenfunctions of $\X$.  Observe that every phase polynomial $\phi \in \Phase_{<k}(\X)$ takes the form $\phi = e(P)$, where $e: \R/\Z \to S^1$ is the standard character $e(x) := e^{2\pi i x}$, and $P: X \to \R/\Z$ is a \emph{polynomial of degree $<k$} in the sense that $\ader_{h_1} \ldots \ader_{h_{k}} P = 0$ for all $h_1,\ldots,h_{k} \in G$.    
\end{example}

The following observations are immediate:

\begin{lemma}[Trivial facts about phase polynomials]\label{trivpoly-lem} Let $G$ be a countable discrete abelian group, $\X$ be a $G$-system, and let $k \geq 0$.
\begin{itemize}
\item[(i)] (Monotonicity) We have $\Phase_{<k}(\X) \subseteq \Phase_{<k+1}(\X)$.  In particular, $\Abr_{<k}(\X) \leq \Abr_{<k+1}(\X)$, and an Abramov system of order $<k$ is also an Abramov system of order $<k+1$.
\item[(ii)] (Homomorphism) $\Phase_{<k}(\X)$ is an abelian group under pointwise multiplication, and for each $h \in G$, $\mder_h$ is a homomorphism from $\Phase_{<k+1}(\X)$ to $\Phase_{<k}(\X)$.
\item[(iii)] (Polynomiality criterion) Conversely, if $\phi: X \to \C$ is a measurable function such that $\mder_h \phi \in \Phase_{<k}(\X)$ for all $h \in G$, then $\phi \in \Phase_{<k+1}(\X)$.
\item[(iv)] (Functoriality) If $\Y$ is a factor of $\X$, then the pullback map $(\pi^Y_X)^*$ is a homomorphism from $\Phase_{<k}(\Y)$ to $\Phase_{<k}(\X)$.  Conversely, if $f: \Y \to \C$ is such that $(\pi^Y_X)^* f \in \Phase_{<k}(\X)$, then $f \in \Phase_{<k}(\Y)$.
\end{itemize}
\end{lemma}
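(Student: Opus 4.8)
The plan is to verify the four assertions one at a time, directly from Definition~\ref{phase-def}. The only ingredients needed are the two basic algebraic properties of the multiplicative derivative already recorded for $G$-systems: each $\mder_h$ is a multiplicative homomorphism on the group of measurable functions $X \to \C$, and the operators $\mder_h$ pairwise commute.

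For (i), given $\phi \in \Phase_{<k}(\X)$ and $h_1,\dots,h_{k+1} \in G$, I would use commutativity to write $\mder_{h_1}\cdots\mder_{h_{k+1}}\phi = \mder_{h_{k+1}}\bigl(\mder_{h_1}\cdots\mder_{h_k}\phi\bigr) = \mder_{h_{k+1}} 1 = 1$, so $\phi \in \Phase_{<k+1}(\X)$; the consequences for $\Abr_{<k}(\X)$ and for Abramov systems are then immediate from the definition of the factor generated by a family of functions. For (ii), the group property of $\Phase_{<k}(\X)$ follows because $\mder_{h_1}\cdots\mder_{h_k}$ is a multiplicative homomorphism, hence sends $\phi\psi$ and $\phi^{-1}$ to $1$ whenever it sends $\phi$ and $\psi$ to $1$, while it plainly fixes the constant $1$; and $\mder_h$ maps $\Phase_{<k+1}(\X)$ into $\Phase_{<k}(\X)$ since $\mder_{h_1}\cdots\mder_{h_k}(\mder_h\phi)$ is a $(k+1)$-fold multiplicative derivative of $\phi$, and $\mder_h$ is already known to be a homomorphism on all measurable functions, so it restricts to one on the subgroup $\Phase_{<k+1}(\X)$. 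For (iii), if $\mder_h\phi \in \Phase_{<k}(\X)$ for every $h \in G$, then $\mder_{h_1}\cdots\mder_{h_{k+1}}\phi = \mder_{h_1}\cdots\mder_{h_k}(\mder_{h_{k+1}}\phi) = 1$, and specialising all $h_i$ to $0$ forces $|\phi| = 1$ a.e., whence $\phi \in \Phase_{<k+1}(\X)$.

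For (iv), the essential input is the intertwining relation $\pi^X_Y \circ T_g = S_g \circ \pi^X_Y$ ($\mu_X$-a.e.), which at the level of functions reads $T_h \circ (\pi^X_Y)^* = (\pi^X_Y)^* \circ S_h$ and therefore $\mder_h \circ (\pi^X_Y)^* = (\pi^X_Y)^* \circ \mder_h$; iterating this identity shows that $(\pi^X_Y)^*$ carries $\Phase_{<k}(\Y)$ into $\Phase_{<k}(\X)$, and it is visibly multiplicative. For the converse, if $(\pi^X_Y)^* f \in \Phase_{<k}(\X)$ then $(\pi^X_Y)^*\bigl(\mder_{h_1}\cdots\mder_{h_k} f\bigr) = 1$ $\mu_X$-a.e.; since $(\pi^X_Y)_*\mu_X = \mu_Y$, a function on $Y$ whose pullback equals $1$ a.e.\ must itself equal $1$ $\mu_Y$-a.e., so $\mder_{h_1}\cdots\mder_{h_k} f = 1$ and $f \in \Phase_{<k}(\Y)$.

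I do not expect any genuine obstacle here: the whole lemma is bookkeeping with the commutativity and homomorphism properties of $\mder_h$, together with the defining intertwining property of a factor map. The only points meriting a little care are keeping track of the almost-everywhere qualifiers throughout, and, in part (iv), using that pullback along a (measure-preserving, surjective) factor map reflects almost-everywhere equalities; once these are noted, each part reduces to a one-line computation.
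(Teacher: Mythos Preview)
Your proposal is correct and matches the paper's treatment: the paper states these facts as ``immediate'' and gives no proof, and your verification unpacks exactly the bookkeeping one would do---using that each $\mder_h$ is a multiplicative homomorphism, that the $\mder_h$ commute, and that factor maps intertwine the shifts and push measures forward. There is nothing to add.
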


It is not hard to show that every phase polynomial of degree $<k$ is in fact $\Zcal_{<k}(\X)$-measurable (see Lemma \ref{easy-thm}); thus $\Abr_{<k}(\X) \leq \Zcal_{<k}(\X)$.  However, in the case of $\Z$-systems, the characteristic factor $\Zcal_{<k}(\X)$ also contains some functions which do not arise from phase polynomials, even when one assumes ergodicity; this fact was essentially first observed by Furstenberg and Weiss \cite{fw-char}.  Indeed, it is not too difficult to show (see \cite{hk-cubes}) that any factor $\Y = (N/\Gamma, (x \mapsto a^g x)_{g \in \Z}, \pi^X_Y)$ of $\X$ which is a \emph{$<k$-step nilsystem}, thus $N$ is a nilpotent Lie group of step $<k$, $\Gamma$ is a discrete cocompact subgroup, $N/\Gamma$ is given Haar measure, and $a \in N$ is a group element, is a sub-factor of $\Zcal_{<k}(\X)$.  The converse statement is much deeper, and is due to Host and Kra \cite{hk-cubes}:

\begin{theorem}[Description of $\Zcal_{<k}(\X)$ for ergodic $\Z$-systems]\label{hk-thm}\cite[Theorem 10.1]{hk-cubes} Let $\X$ be an ergodic $\Z$-system.  Then $\Zcal_{<k}(\X)$ is the minimal factor that extends all $<k$-step nilsystem factors of $\X$.  Indeed, $\Zcal_{<k}(\X)$ is itself the inverse limit of $<k$-step nilsystems.
\end{theorem}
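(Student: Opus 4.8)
The plan is to follow Host and Kra \cite{hk-cubes}. The reverse inclusion---that every $<k$-step nilsystem factor of $\X$ is a sub-factor of $\Zcal_{<k}(\X)$---is already recorded in the excerpt, and, granting the inverse-limit description, the minimality assertion is then formal (any factor extending all $<k$-step nilsystem factors extends every term of the inverse system, hence extends $\Zcal_{<k}(\X)$ itself). So the content is the structural statement: an ergodic $\Z$-system that equals its own factor $\Zcal_{<k}$ (a ``system of order $<k$'') is an inverse limit of $<k$-step nilsystems; applying this to $\Zcal_{<k}(\X)$ for an arbitrary ergodic $\X$ yields the theorem. I would argue by induction on $k$, the cases $k=1$ (trivial action) and $k=2$ (Kronecker factor, a rotation on a compact abelian group, which is an inverse limit of rotations on finite-dimensional tori, i.e.\ $1$-step nilsystems) being Example \ref{smallk}.

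The first step is to set up the cubic machinery: the $2^k$-fold system $\X^{[k]}$ with the Host--Kra measure $\mu^{[k]}$ underlying the $U^k$ seminorm, the identity expressing $\|f\|_{U^k(\X)}^{2^k}$ as an integral of a tensor power of $f$ against $\mu^{[k]}$, and the resulting description of $\Zcal_{<k}(\X)$ in terms of the ``side'' $\sigma$-algebras of the cube. This reduces matters to analysing the extension $\Zcal_{<k}(\X) \to \Zcal_{<k-1}(\X)$, and by the inductive hypothesis we may assume $\Zcal_{<k-1}(\X)$ is a $<(k-1)$-step nilsystem $Z' = N'/\Gamma'$.

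Next, using the relative Furstenberg--Zimmer dichotomy together with the ergodicity properties of the cube measures, one shows this extension is isometric, and---decomposing into finite-rank pieces and passing to an inverse limit, which is legitimate by separability---that $\Zcal_{<k}(\X) = Z' \times_\rho U$ for a compact abelian group $U$ and a cocycle $\rho$. The defining cube property then forces $\rho$ to satisfy a generalized Conze--Lesigne equation: for each $h$ the derivative $\ader_h \rho$ differs from a coboundary over $Z'$ by a phase polynomial of degree $<k-1$ on $Z'$---that is, $\rho$ is ``of type $<k$''.

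The main obstacle is the final, algebraic step: showing that a compact abelian extension of a $<(k-1)$-step nilsystem by a cocycle of type $<k$ is an inverse limit of $<k$-step nilsystems. Here one studies the group $G(\X)$ of measure-preserving transformations of $\X$ that cover translations of $N'/\Gamma'$ and respect the cube structure; the Conze--Lesigne equation is precisely what allows each translation of $N'/\Gamma'$ to be lifted into $G(\X)$ and what forces $G(\X)$ to be nilpotent of step $<k$ and to act transitively, so that $\X \cong G(\X)/\Stab$ is a nilsystem (an inverse limit of such, once one accounts for the preceding inverse-limit reductions). Constructing the nilpotent Lie group and cocompact lattice out of the cocycle data and verifying nilpotency is the deepest part of the argument---it is in effect the whole Host--Kra structure theorem, the case $k=3$ already being the substantial Conze--Lesigne theorem.
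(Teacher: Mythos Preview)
The paper does not prove this theorem; it is quoted as background from Host and Kra \cite[Theorem 10.1]{hk-cubes}, so there is no ``paper's own proof'' to compare against. Your outline is a faithful high-level summary of the Host--Kra argument: induction on $k$, the cubic measures $\mu^{[k]}$ and the characterization of $\Zcal_{<k}$ via them, the abelian-extension structure $\Zcal_{<k}(\X)=\Zcal_{<k-1}(\X)\times_\rho U$ (Proposition \ref{typek-prop} here), the type-$<k$ condition on $\rho$ (Proposition \ref{ext-type-k} here), and the final construction of the nilpotent group acting transitively (the group you call $G(\X)$ is the group $\mathcal G$ of \cite{hk-cubes}, mentioned in the paper's overview as ``the abstract nilpotent structure group $\mathcal G^{[k]}$'').

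One small correction worth flagging: when you write ``by the inductive hypothesis we may assume $\Zcal_{<k-1}(\X)$ is a $<(k-1)$-step nilsystem $Z'=N'/\Gamma'$,'' the inductive hypothesis only gives an \emph{inverse limit} of such systems, not a single one. In \cite{hk-cubes} this is handled by a further reduction (their toral systems of order $k$ and the limit argument in their Section 10), and you do gesture at ``inverse-limit reductions'' later, but the passage back and forth between a genuine nilmanifold base and an inverse limit is one of the places where real work hides, so it should not be elided in the statement of the inductive step.
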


\begin{remark} In the case of ergodic $\Z$-systems, every Abramov system of order $<k$ is the inverse limit of $<k$-step nilsystems (this is implicit from \cite{hk-cubes}).  However, the converse is not true\footnote{For instance, the Heisenberg system $H(\R)/H(\Z)$ discussed in Section \ref{heisen} is a $2$-step nilsystem which is not an Abramov system of any order, if the underlying shifts $\alpha, \beta$ and $1$ are independent over $\Q$.}:  see \cite{fw-char} for further discussion.
\end{remark}

\begin{remark} The finitary counterpart to Theorem \ref{hk-thm}, where $G$ and $X$ are $\Z/N\Z$,  is known as the \emph{inverse conjecture for the Gowers norm} for cyclic groups $\Z/N\Z$, and would have a number of applications to additive combinatorics and analytic number theory, see e.g. \cite{green-tao-linearprimes}.  It is currently only proven for $k \leq 3$ \cite{gt:inverse-u3}.
\end{remark}

\subsection{Main result}

In view of Theorem \ref{hk-thm}, it is natural to ask what the universal characteristic factors $\Zcal_{<k}(\X)$ are for ergodic\footnote{The non-ergodic case can then be recovered, at least in principle, from the ergodic case by the ergodic decomposition, though we will not attempt to do so here.} $G$-systems, when $G$ is an abelian group other than the integers $\Z$.  

The main results of this paper give a sharp answer to this question in the case when $G$ is the (additive group of the) countably infinite vector space $G = \Fw$ over a finite field $\F$ of characteristic $\ge k$ which we will refer to as the {\em high characteristic} case. We also partially answer this question for general $k$:
 
\begin{theorem}[Sharp description of $\Zcal_{<k}(\X)$ for ergodic $\Fw$-systems in high char]\label{main-thm-high}  Let $\F$ be a finite field, and let $\X$ be an ergodic $\Fw$-system.  Let $k \leq \charac(\F)$. Then for each $k \geq 1$, we have $\Abr_{<k}(\X) =\Zcal_{<k}(\X)$.
\end{theorem}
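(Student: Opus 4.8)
The plan is to prove the identity $\Abr_{<k}(\X)=\Zcal_{<k}(\X)$ by induction on $k$. The inclusion $\Abr_{<k}(\X)\le\Zcal_{<k}(\X)$ is already available (every phase polynomial of degree $<k$ is $\Zcal_{<k}$-measurable, cf. Lemma \ref{easy-thm} referenced above), so the entire content is the reverse inclusion $\Zcal_{<k}(\X)\le\Abr_{<k}(\X)$, i.e. showing that the characteristic factor is an Abramov system of order $<k$. The base cases $k=1,2$ are handled by Example \ref{smallk}: $\Zcal_{<1}$ is trivial in the ergodic case (so equals $\Abr_{<1}$), and $\Zcal_{<2}$ is the Kronecker factor, which for $\Fw$-actions is a rotation on a closed subgroup of $\widehat{\Fw}=\prod\F$; since such a group is a torsion group of exponent dividing $\charac(\F)$, its characters are genuine phase polynomials of degree $<2$, so $\Zcal_{<2}=\Abr_{<2}$.

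For the inductive step, I would first reduce to understanding the extension $\Zcal_{<k}(\X)$ over its predecessor $\Zcal_{<k-1}(\X)$, which by the inductive hypothesis already equals $\Abr_{<k-1}(\X)$. The standard structure theory (as in Host--Kra) says that $\Zcal_{<k}$ is an \emph{abelian} isometric/compact extension of $\Zcal_{<k-1}$ by a cocycle; more precisely, one can realize $\Zcal_{<k}$ as a group extension of $\Zcal_{<k-1}$ by a compact abelian group $U$ via a measurable cocycle $\rho:\X\times G\to U$, and one expects $U$ to again be a closed subgroup of $\prod\F$, hence of exponent dividing $p=\charac(\F)$. The key point is that the cocycle $\rho$ must be a \emph{polynomial cocycle} of the appropriate degree: using the defining recursion of the $U^k$-seminorm together with the fact that $\Zcal_{<k}$ is characteristic, one shows that for each $h$, the derivative $\mder_h$ of a function spanning the $U$-fibre is measurable with respect to $\Zcal_{<k-1}=\Abr_{<k-1}$, and then a cohomological/coboundary argument (the "straightening" of cocycles, where the high-characteristic hypothesis $k\le p$ is essential to avoid the binomial-coefficient denominators that obstruct integration of polynomial sequences in low characteristic) upgrades this to say $\rho$ is cohomologous to a polynomial phase cocycle of degree $<k-1$. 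Integrating such a cocycle then produces, on the extension, a genuine phase polynomial of degree $<k$ generating the new fibre, giving $\Zcal_{<k}\le\Abr_{<k}$.

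Concretely the steps are: (1) set up the group-extension description of $\Zcal_{<k}$ over $\Zcal_{<k-1}$ and identify the structure group as a closed subgroup of $\prod\F$; (2) use the $U^k$-recursion to show the defining cocycle has all its derivatives $\mder_h\rho$ $\Zcal_{<k-1}$-measurable, hence (by induction) expressible via phase polynomials of degree $<k-1$; (3) run the cocycle-straightening argument to replace $\rho$, up to a coboundary, by a polynomial phase cocycle, invoking $k\le\charac(\F)$ exactly at the point where one needs to "antidifferentiate" a polynomial sequence of degree $<k-1$ without characteristic obstructions; (4) conclude that the fibre coordinate on $\Zcal_{<k}$ is (cohomologous to) a phase polynomial of degree $<k$, so $\Zcal_{<k}$ is generated over $\Abr_{<k-1}$ by phase polynomials of degree $<k$, closing the induction.

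The main obstacle is step (3), the cocycle straightening: showing that a cocycle all of whose derivatives are phase polynomials of bounded degree is itself cohomologous to a polynomial phase cocycle. This is precisely where the $\Z$-theory uses the Mal'cev/nilsystem machinery, and where in the $\Fw$-setting one must instead exploit the vector-space structure and the absence of "carries"; the characteristic hypothesis $k\le\charac(\F)$ is what makes the relevant finite-difference calculus behave like ordinary calculus (in low characteristic one only gets the weaker bound $C(k)$ rather than $C(k)=k$, which is the subject of the general-$k$ result alluded to in the introduction). A secondary technical point is controlling the possibly infinite-dimensional structure group $U\subseteq\prod\F$ and justifying the measure-theoretic manipulations (disintegrations, ergodic decompositions of fibre products) in that generality, which is why the separability and regularity hypotheses on the system are imposed.
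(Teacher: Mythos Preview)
Your outline follows the same broad architecture as the paper --- induction on $k$, realizing $\Zcal_{<k}$ as an abelian extension $\Zcal_{<k-1}(\X)\times_\rho U$ (Proposition~\ref{typek-prop}), and then arguing that $\rho$ is cohomologous to a phase polynomial cocycle of the right degree --- and you correctly flag step~(3) as the heart of the matter. However, there are two concrete gaps between your proposal and what actually needs to be done.

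First, your step~(2) is not the right intermediate condition. The cocycle $\rho$ already lives on $\Zcal_{<k-1}(\X)$, so saying its $\Fw$-derivatives $\mder_h\rho$ are $\Zcal_{<k-1}$-measurable is automatic and contains no information. What one actually extracts from the hypothesis that $\Zcal_{<k-1}\times_\rho U$ has order $<k$ is that $\rho$ is a cocycle \emph{of type $<k$}, meaning $d^{[k]}\rho$ is a coboundary on the cubic system $\X^{[k]}$ (Proposition~\ref{ext-type-k}). This is a genuinely nonlocal condition and is the correct input to the straightening argument.

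Second, your step~(3) underestimates the difficulty and omits the key mechanism. The paper does not straighten $\rho$ directly; instead it \emph{vertically differentiates} $f=\chi\circ\rho$ by the $U$-action $V_t$ (not by $\Fw$), using that each such derivative lowers the type (Lemma~\ref{vert-lem}, and its sharp form Lemma~\ref{exactdiff}), and then integrates back one $V_t$-derivative at a time. Crucially, because $\Fw$ is not finitely generated, the intermediate objects are no longer $\Fw$-cocycles, and the paper has to work in the larger class of \emph{quasi-cocycles} and \emph{line cocycles} (Definition~\ref{quasicocycle}) with a double induction on $k$ and on the order $j$ of the ambient system (Theorem~\ref{main-high-char2}). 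The high-characteristic hypothesis enters not merely as ``no binomial denominators'' but specifically to force all phase polynomials to take values in $C_p$ (Lemma~\ref{L:values-in-F_p}), which is what makes the exact integration step (Proposition~\ref{exact_int}) and the finite-$U$ reduction go through without degree loss. Your sketch does not mention the vertical differentiation / type machinery, the quasi-cocycle workaround for non-finite-generation, or the two-parameter induction, and these are precisely the ideas required to carry out your step~(3).
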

 
\begin{theorem}[Partial description of $\Zcal_{<k}(\X)$ for ergodic $\Fw$-systems in low char]\label{main-thm}  Let $\F$ be a finite field, and let $\X$ be an ergodic $\Fw$-system.  Then for each $k \geq 1$, we have
$\Abr_{<k}(\X) \leq \Zcal_{<k}(\X) \leq \Abr_{<C(k)}(\X)$
for some $C(k)$ depending only on $k$.
\end{theorem}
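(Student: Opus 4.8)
The plan is to prove Theorem~\ref{main-thm} by induction on $k$, following the strategy of Host--Kra \cite{hk-cubes} but adapted to the group $\Fw$, where the key simplifying feature is that every element has order $p = \charac(\F)$, so that the ``nilsystems'' that appear are in fact nilpotent groups of bounded exponent and the whole structure theory can be made to collapse onto the Abramov systems. The base cases $k=1,2$ are handled by Example~\ref{smallk}: $\Zcal_{<1}(\X)$ is trivial and $\Zcal_{<2}(\X)$ is the Kronecker factor, which for an $\Fw$-system is a translation on a closed subgroup $H \le \widehat{\Fw} = \prod\F$; every element of $H$ has order dividing $p$, so every eigenfunction is a phase polynomial of degree $<2$ (in fact $\mder_h\phi$ is a constant root of unity of order $p$, and then $\mder_h^p \phi = 1$ forces $\phi \in \Phase_{<p+1}$, but a more careful argument gives $\Phase_{<2}$ directly), whence $\Zcal_{<2}(\X) = \Abr_{<2}(\X)$ and $C(2) = 2$ suffices.

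For the inductive step, I would assume $\Zcal_{<k}(\X) \le \Abr_{<C(k)}(\X)$ for all ergodic $\Fw$-systems and seek to control $\Zcal_{<k+1}(\X)$. The standard mechanism is the Host--Kra / Furstenberg structure theory: $\Zcal_{<k+1}(\X)$ is an abelian (indeed compact-abelian-group) extension of $\Zcal_{<k}(\X)$, and by the inductive hypothesis $\Zcal_{<k}(\X)$ is an Abramov system of order $<C(k)$, hence generated by phase polynomials. So one reduces to studying a cocycle $\rho: X \times G \to U$ (with $U$ a compact abelian group) defining the extension $\Zcal_{<k+1} \to \Zcal_{<k}$, and the key fact from the theory of characteristic factors is that $\rho$ can be taken to be of ``type $<k$'' in the sense that $\ader_{h_1}\cdots\ader_{h_k}\rho$ is a coboundary (this is where the $U^{k+1}$ hypothesis enters, via the Conze--Lesigne / Host--Kra machinery, presumably developed in the body of the paper). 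One then needs a \emph{cocycle straightening} result: a cocycle of type $<k$ over an Abramov base is cohomologous to a phase polynomial cocycle of bounded degree. This is the analogue of the Conze--Lesigne equation analysis, and over $\Fw$ it should be cleaner because the coboundary equations are linear over $\F_p$ and there is no ``winding number'' obstruction of the kind that produces genuine (non-Abramov) nilsystems in the $\Z$ case --- heuristically this is why $\Zcal_{<k}$ collapses to $\Abr_{<C(k)}$ whereas over $\Z$ it does not.

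Concretely the chain of lemmas I would aim to prove is: (1) reduce the extension $\Zcal_{<k+1}/\Zcal_{<k}$ to a cocycle $\rho$ of type $<k$; (2) using ergodicity and the Mackey/relative-cohomology formalism, reduce $U$ to a cyclic group (and then to $\Z/p$, since any character of $U$ composed with $\rho$ is again a cocycle of type $<k$, and the phase-polynomial structure is detected character-by-character); (3) show that a $\Z/p$-valued cocycle of type $<k$ over an Abramov system of order $<C(k)$ is cohomologous to one whose value is a polynomial map of degree $< C'$ for some $C'$ depending only on $k$ and $C(k)$ --- this is the real content, proved by successively integrating the relations $\ader_{h_1}\cdots\ader_{h_j}\rho = $ coboundary and at each stage solving a cohomological equation over the Abramov base, where one uses that on an Abramov system the relevant function spaces are spanned by phase polynomials so every solvable coboundary equation has a phase-polynomial solution of controlled degree; (4) conclude that $\rho$ is cohomologous to a phase polynomial cocycle, hence $\Zcal_{<k+1}(\X)$ is generated over $\Abr_{<C(k)}(\X)$ by a single phase polynomial of bounded degree, giving $\Zcal_{<k+1}(\X) \le \Abr_{<C(k+1)}(\X)$ with, say, $C(k+1) = C(k) + O_k(1)$. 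The main obstacle --- and the step that genuinely requires the finite-characteristic hypothesis and careful bookkeeping --- is step (3): controlling the degree growth when inductively solving the cohomological equations, and in particular ensuring that the ``integration'' of a phase-polynomial coboundary relation produces a phase polynomial (and not merely a measurable function) of degree bounded in terms of $k$ alone. In the high-characteristic regime $k \le \charac(\F)$ one expects each integration to cost exactly one degree (no $p$-torsion obstructions in the relevant binomial coefficients), which is what yields the sharp $C(k) = k$ of Theorem~\ref{main-thm-high}; in low characteristic the same argument still terminates but with a lossy, characteristic-independent bound $C(k)$, which is all that Theorem~\ref{main-thm} claims.
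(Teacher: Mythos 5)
Your high-level outline agrees with the paper's (induction on $k$, Proposition~\ref{typek-prop} to realize $\Zcal_{<k+1}$ as an abelian extension $\Zcal_{<k}\times_\rho U$, reduction to a type~$<k$ cocycle, character-by-character reduction to $S^1$-valued cocycles, and a "straightening" statement that such cocycles are cohomologous to bounded-degree phase polynomials), and your $k=1,2$ base cases and the heuristic about the absence of winding-number obstructions are correct. But what you label step~(3) is precisely where all the work is, and your sketch of it is both too vague to be executed and points in the wrong direction.

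Two concrete problems. First, you describe the type~$<k$ condition as "$\ader_{h_1}\cdots\ader_{h_k}\rho$ is a coboundary." That is not the condition. The type condition is that the cubic derivative $d^{[k]}\rho$ (a function on the Host--Kra cube system $\X^{[k]}$) is a $(G,\X^{[k]},U)$-coboundary; this is substantially stronger than, and different from, asserting that iterated \emph{horizontal} ($G$-direction) derivatives of $\rho$ are coboundaries on $\X$. Starting from the wrong form of the hypothesis, the proposed "successive integration of the relations $\ader_{h_1}\cdots\ader_{h_j}\rho=$ coboundary" has no obvious way to terminate with a phase polynomial. Second, and more fundamentally, you never introduce the mechanism that actually makes the argument go: the structure group $U$ acts freely and vertically on $\X=\Zcal_{<k-1}\times_\rho U$, commuting with the $G$-action, and the crucial observation (Lemma~\ref{vert-lem}) is that \emph{vertical} differentiation $f\mapsto\mder_t f$, $t\in U$, strictly reduces the type. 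One then iterates until the type reaches~$0$ (i.e.\ one has a genuine coboundary) and integrates back, one vertical direction at a time, with the error at each stage controlled via descent to the base (Lemma~\ref{descent}). Without this, the claim in your sketch that "on an Abramov system every solvable coboundary equation has a phase-polynomial solution of controlled degree" is not something you can simply invoke; it is, in effect, what has to be proved, and it is only established through the vertical-integration argument combined with the reduction from general compact $U$ to finite $U$ (which uses Lusin's theorem and the torsion of $U$, Proposition~\ref{fin-prop}) and the measurable-selection lemma (Lemma~\ref{measurable_choice}) to make the family of solutions depend measurably on the vertical parameter. These last two ingredients are missing from your proposal entirely, and they are not cosmetic: without measurable selection the "integration" step has nothing to integrate, and without the reduction to finite $U$ one cannot split $U$ into cyclic factors and do the explicit curl-free/gradient construction of Proposition~\ref{propfin}.

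So: correct skeleton, but the decisive step is not merely underdeveloped --- the intended integration is running in the wrong ($G$ rather than $U$) direction, the definition of type is misstated, and the two technical pillars (finite-$U$ reduction and measurable selection) that make the vertical integration possible are absent.
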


\begin{remark} We also have a slightly more precise structural description of $\Zcal_{<k}(\X)$, as a tower of abelian extensions by polynomial cocycles; see Theorems \ref{struc-thm}, \ref{struc-high}.  The quantity $C(k)$ can in principle be computed explicitly from the proof of Theorem \ref{main-thm}, but we have not sought to obtain the best possible value of $C(k)$.  We believe in fact that $\Zcal_{<k}(\X)$ should equal $\Abr_{<k}(\X)$ for all $k$ (not just in the high characteristic case $k \leq \charac(\F)$).
\end{remark}

From Theorems \ref{main-thm}, \ref{main-thm-high} and Proposition \ref{ucf-prop} we have the following immediate corollaries:

\begin{corollary}[Ergodic inverse Gowers conjecture - high characteristic]\label{main-cor-high}  Let $\F$ be a finite field, and let $\X$ be an ergodic $\Fw$-system.  Let $1 \leq k \leq \charac(\F)$ and $f \in L^\infty(\X)$ be such that $\|f\|_{U^k(\X)} > 0$.  Then there exists $\phi \in \Phase_{<k}(\X)$ such that $\int_X f \overline{\phi}\ d\mu_X \neq 0$.
\end{corollary}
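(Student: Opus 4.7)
The plan is to deduce the corollary as a direct combination of Proposition \ref{ucf-prop}, Theorem \ref{main-thm-high}, and the defining property of the Abramov factor. First, I would use the hypothesis $\|f\|_{U^k(\X)} > 0$ together with Proposition \ref{ucf-prop} to conclude that the conditional expectation $\E(f|\Zcal_{<k}(\X))$ is a nonzero element of $L^2(\X)$. Then, since we are in the high characteristic regime $k \leq \charac(\F)$, Theorem \ref{main-thm-high} identifies $\Zcal_{<k}(\X)$ with $\Abr_{<k}(\X)$, so that $\E(f|\Abr_{<k}(\X)) \neq 0$.

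The remaining task is to show that $\Phase_{<k}(\X)$ spans $L^2(\Abr_{<k}(\X))$, i.e. that $\Abr_{<k}(\X)$ is itself an Abramov system of order $<k$ in the sense of Definition \ref{phase-def}. By the definition of $\Abr_{<k}(\X)$, its $\sigma$-algebra is generated (modulo null sets) by $\Phase_{<k}(\X)$ together with the shifts of its members; but $\Phase_{<k}(\X)$ is already shift-invariant (the $G$-action commutes with all multiplicative derivatives), closed under pointwise multiplication and complex conjugation (Lemma \ref{trivpoly-lem}(ii), combined with the fact that $|\phi|=1$ for $\phi \in \Phase_{<k}(\X)$, so that $\overline{\phi} = \phi^{-1} \in \Phase_{<k}(\X)$), and contains the constants. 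A routine monotone class / Stone--Weierstrass argument then shows that the linear span of $\Phase_{<k}(\X)$ is dense in $L^2(\Abr_{<k}(\X))$.

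Finally, since $\E(f|\Abr_{<k}(\X))$ is a nonzero vector in $L^2(\Abr_{<k}(\X))$, it cannot be orthogonal to the entire spanning set $\Phase_{<k}(\X)$. Hence there exists some $\phi \in \Phase_{<k}(\X)$ with
\[
 \int_X \E(f|\Abr_{<k}(\X))\, \overline{\phi}\, d\mu_X \neq 0.
\]
Since $\phi$ is $\Abr_{<k}(\X)$-measurable, the self-adjointness of conditional expectation converts this into $\int_X f\, \overline{\phi}\, d\mu_X \neq 0$, which is the desired conclusion. I do not anticipate any substantive obstacle here: the entire depth of the statement has been packaged into Theorem \ref{main-thm-high}, and the remaining steps amount to unwinding the definitions of characteristic factor, conditional expectation, and Abramov system.
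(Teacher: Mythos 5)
Your proof is correct and is exactly the deduction the paper has in mind when it labels the corollary as "immediate" from Proposition \ref{ucf-prop} and Theorem \ref{main-thm-high}. The only detail you fill in that the paper elides is the standard algebra/monotone-class argument showing that the linear span of $\Phase_{<k}(\X)$ is dense in $L^2(\Abr_{<k}(\X))$, which you handle correctly via Lemma \ref{trivpoly-lem}(ii) and the group structure of phase polynomials.
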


\begin{corollary}[Partial ergodic inverse Gowers conjecture]\label{main-cor}  Let $\F$ be a finite field, and let $\X$ be an ergodic $\Fw$-system.  Let $k \geq 1$ and $f \in L^\infty(\X)$ be such that $\|f\|_{U^k(\X)} > 0$.  Then there exists $\phi \in \Phase_{<C(k)}(\X)$ such that $\int_X f \overline{\phi}\ d\mu_X \neq 0$.
\end{corollary}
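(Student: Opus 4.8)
The plan is to deduce Corollary \ref{main-cor} directly from Theorem \ref{main-thm} together with Proposition \ref{ucf-prop}; no further ergodic-theoretic input is needed beyond these, and the argument is purely functional-analytic. I would argue by contraposition: assume that $\int_X f\overline{\phi}\, d\mu_X = 0$ for every $\phi \in \Phase_{<C(k)}(\X)$, and show that this forces $\|f\|_{U^k(\X)} = 0$, contradicting the hypothesis.

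The first step is to record the closure properties of $\Phase_{<C(k)}(\X)$. By Lemma \ref{trivpoly-lem}(ii) it is an abelian group under pointwise multiplication; since each $\phi \in \Phase_{<C(k)}(\X)$ is $S^1$-valued, its group inverse is $\overline{\phi}$, so $\Phase_{<C(k)}(\X)$ is closed under complex conjugation and contains the constant function $1$. It is also shift-invariant: for $h \in G$ the operator $T_h$ commutes with each $\mder_{h'}$ (using that $G$ is abelian), so $\mder_{h_1}\cdots\mder_{h_{C(k)}}(T_h\phi) = T_h\big(\mder_{h_1}\cdots\mder_{h_{C(k)}}\phi\big) = 1$, whence $T_h\phi \in \Phase_{<C(k)}(\X)$. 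Consequently the complex-linear span $\mathcal A$ of $\Phase_{<C(k)}(\X)$ is a conjugation-closed, shift-invariant, unital subalgebra of $L^\infty(\X)$, and by Definition \ref{factor-def} the $\sigma$-algebra generated by $\mathcal A$ is, modulo null sets, exactly the invariant $\sigma$-algebra defining the factor $\Abr_{<C(k)}(\X)$.

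The second step invokes the standard measure-theoretic fact that the $L^2$-closure of such an algebra coincides with $L^2$ of the $\sigma$-algebra it generates: level sets of the $S^1$-valued generators can be approximated in $L^2$ by elements of $\mathcal A$ (via a monotone-class argument, or Stone--Weierstrass applied on the spectrum of the generators), so $\overline{\mathcal A}^{\,L^2} = L^2(\Abr_{<C(k)}(\X))$. I expect this to be the only point requiring a genuine argument, and it is routine. Granting it, the contraposition hypothesis says $f \perp \Phase_{<C(k)}(\X)$ in $L^2(\X)$, hence $f \perp \mathcal A$, hence $f \perp L^2(\Abr_{<C(k)}(\X))$; that is, $\E\big(f \mid \Abr_{<C(k)}(\X)\big) = 0$.

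Finally, Theorem \ref{main-thm} gives the inclusion $\Zcal_{<k}(\X) \leq \Abr_{<C(k)}(\X)$, so by the tower property for conditional expectations $\E\big(f \mid \Zcal_{<k}(\X)\big) = \E\big( \E(f \mid \Abr_{<C(k)}(\X)) \mid \Zcal_{<k}(\X)\big) = 0$, and then Proposition \ref{ucf-prop} yields $\|f\|_{U^k(\X)} = 0$, the desired contradiction. Hence some $\phi \in \Phase_{<C(k)}(\X)$ satisfies $\int_X f\overline{\phi}\, d\mu_X \neq 0$. (In the high-characteristic range $k \leq \charac(\F)$ one may replace Theorem \ref{main-thm} by Theorem \ref{main-thm-high}, which gives $\Zcal_{<k}(\X) = \Abr_{<k}(\X)$ and hence the sharper Corollary \ref{main-cor-high} with $C(k) = k$; the rest of the argument is unchanged.)
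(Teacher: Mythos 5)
Your argument is correct and is precisely the unpacking of what the paper declares as ``immediate'' from Theorem~\ref{main-thm} and Proposition~\ref{ucf-prop}: since $\Phase_{<C(k)}(\X)$ is a shift-invariant, conjugation-closed multiplicative group whose linear span is $L^2$-dense in $L^2(\Abr_{<C(k)}(\X))$ (a monotone-class fact the paper treats as built into Definition~\ref{phase-def}), orthogonality of $f$ to every such $\phi$ forces $\E(f\mid\Abr_{<C(k)}(\X))=0$, hence $\E(f\mid\Zcal_{<k}(\X))=0$ by the inclusion $\Zcal_{<k}(\X)\leq\Abr_{<C(k)}(\X)$ and the tower property, hence $\|f\|_{U^k(\X)}=0$. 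There is no divergence from the intended route; the one step you flag as needing an argument is exactly the one the paper silently builds into the equivalence ``generated by $\Phase_{<k}$'' $\iff$ ``$\Phase_{<k}$ spans $L^2$,'' and your monotone-class justification of it is correct.
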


In a companion paper \cite{tz-correspondence}, we will combine Corollary \ref{main-cor-high} with a version of the Furstenberg correspondence principle, as well as the equidistribution theory in \cite{gt-ff-ratner}, to obtain a finitary counterpart to this theorem:

\begin{theorem}[Inverse theorem for the Gowers norm over finite fields in high characteristic \cite{tz-correspondence} ]\label{inv-thm} Let $\F$ be a finite field of characteristic $p$, let $1 \leq k \leq \charac(\F)$  be an integer, and let $\delta > 0$.  Then there exists $c = c(p,k,\delta) > 0$ such that for finite-dimensional vector space $G$ over $\F$ and any function $f: G \to \C$ with $\|f\|_{L^\infty(G)} \leq 1$ and $\|f\|_{U^k(G)} \geq \delta$, one has $|\E_{x \in G} f(x) \overline{\phi(x)}| \geq c$ for some $\phi \in \Phase_{<k}(G)$.
\end{theorem}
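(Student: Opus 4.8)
The plan, carried out in the companion paper \cite{tz-correspondence}, is to deduce this finitary statement from the ergodic Corollary \ref{main-cor-high} by a version of the Furstenberg correspondence principle, which I would implement through an ultraproduct construction. Suppose the theorem fails. Then there are $\delta>0$, finite-dimensional vector spaces $G_n$ over $\F$ with $\dim G_n\to\infty$, and functions $f_n\colon G_n\to\C$ with $\|f_n\|_{L^\infty(G_n)}\le1$ and $\|f_n\|_{U^k(G_n)}\ge\delta$, yet $|\E_{x\in G_n}f_n(x)\overline{\phi(x)}|<1/n$ for every $\phi\in\Phase_{<k}(G_n)$. Fixing a non-principal ultrafilter $\alpha$ on $\N$, I would form the hyperfinite $\F$-vector space $X:=\prod_{n\to\alpha}G_n$ with its Loeb measure (built from the normalized counting measures on the $G_n$), on which $\Fw=\bigcup_m\F^m$ acts by translation. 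Restricting the $\sigma$-algebra to the countably generated, $\Fw$-invariant one generated by $f:=\lim_{n\to\alpha}f_n$ and all its shifts yields a separable $\Fw$-system, and the integral formula for the Gowers--Host--Kra seminorms (the remark following Definition \ref{ghk-uni}) together with {\L}o\'s's theorem gives $\|f\|_{U^k(X)}=\lim_{n\to\alpha}\|f_n\|_{U^k(G_n)}\ge\delta$ and $\|f\|_{L^\infty(X)}\le1$. Passing to a suitable ergodic component via the ergodic decomposition — which, since $\|f\|_{U^k}^{2^k}$ averages the $2^k$-th powers of the $U^k$ seminorms on the components, preserves a lower bound $\delta'>0$ of the same order — I obtain an ergodic $\Fw$-system $\X'$ and a function $f'$ with $\|f'\|_\infty\le1$ and $\|f'\|_{U^k(\X')}\ge\delta'$.

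Since $k\le\charac(\F)$, Corollary \ref{main-cor-high} applies to $\X'$ and produces a phase polynomial $\phi'\in\Phase_{<k}(\X')$ with $c:=|\int_{X'}f'\overline{\phi'}|>0$. It then remains to descend $\phi'$ to the finite groups. By Theorem \ref{main-thm-high}, $\Zcal_{<k}(\X')=\Abr_{<k}(\X')$, and the finer structural description of Theorem \ref{struc-high} exhibits this factor as a finite tower of abelian extensions by polynomial cocycles; feeding this into the finite-field equidistribution theory of \cite{gt-ff-ratner} lets one show that $\phi'$ agrees, up to an arbitrarily small $L^2$ error, with the ultralimit $\lim_{n\to\alpha}\phi_n$ of classical phase polynomials $\phi_n\in\Phase_{<k}(G_n)$ — this is the step where high characteristic is essential, ensuring that polynomials of degree $<k$ into $\R/\Z$ are rigid enough that no ``new'' measurable phase polynomials can appear in the limit which are not ultralimits of honest ones on the $G_n$. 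Using $\|f'\|_\infty\le1$ and $|\phi_n|\equiv1$ to absorb the approximation error, one then obtains
\[
0<\frac{c}{2}\le\lim_{n\to\alpha}\Bigl|\E_{x\in G_n}f_n(x)\overline{\phi_n(x)}\Bigr|\le\lim_{n\to\alpha}\frac1n=0,
\]
a contradiction; the constant $c(p,k,\delta)$ is extracted from this (qualitative) compactness argument.

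The crux is this final descent: a priori $\phi'$ is only an abstract measurable phase polynomial on an ergodic model, and showing that it is approximable by genuine classical phase polynomials of bounded degree on the $G_n$ — rather than some merely internally measurable function on the hyperfinite space — is precisely where the explicit structure theory (Theorems \ref{main-thm-high}, \ref{struc-high}) and the equidistribution results of \cite{gt-ff-ratner} are indispensable. The remaining points — arranging the correspondence principle so that the limiting system is both separable and ergodic without degrading the $U^k$ lower bound, and keeping track of the degree $<k$ through the Loeb construction and the ergodic decomposition — are more technical but do not present the same conceptual difficulty, and rest on the basic properties of the Gowers--Host--Kra seminorms developed earlier in the paper.
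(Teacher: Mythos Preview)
Your outline is correct and follows exactly the route the paper indicates: the paper itself does not prove Theorem \ref{inv-thm} but defers it to the companion paper \cite{tz-correspondence}, stating only that one combines Corollary \ref{main-cor-high} with a Furstenberg correspondence principle and the equidistribution theory of \cite{gt-ff-ratner}. Your ultraproduct sketch implements precisely this, and you have correctly isolated the crux --- the descent of an abstract phase polynomial on the limiting ergodic system to classical phase polynomials on the $G_n$ --- as the step requiring the external input from \cite{gt-ff-ratner}.
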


\begin{remark} We conjecture that this result should in fact hold without the restriction on the characteristic  depending on $k$.  Once this restriction is removed, it becomes important here that the values of the phase polynomial $\phi$ are allowed to range freely in the unit circle $S^1$.  If one constrains $\phi$ to take values in the $p^\th$ roots of unity $C_p$, then the claim can fail for small $p$, as first observed in \cite{lms}, \cite{gt-ff-ratner}.  However, such examples do not obstruct Theorem \ref{inv-thm} from holding when $\phi$ takes values in $S^1$ (see \cite{tz-correspondence} for further discussion).  By using Corollary \ref{main-cor} instead of Corollary \ref{main-cor-high}, one can obtain a partial analogue of Theorem \ref{inv-thm} in the low characteristic case $k > \charac(\F)$, in which $\Phase_{<k}(G)$ is replaced by $\Phase_{<C(k)}(G)$; see \cite{tz-correspondence}.
\end{remark}

\begin{remark} Theorem \ref{main-thm-high} should also allow one (assuming sufficiently high characteristic) to obtain a formula for the limit of multiple ergodic averages of quantities such as $c(g) := \mu( A \cap T^g A \cap \ldots \cap T^{(k-1) g} A )$ (as in \cite{zieg-jams}), and to be able to show that $c(g)$ can be approximated by a function of polynomials in $g$, in the spirit of the results in \cite{bhk}.  We hope to report on these and other applications in a subsequent paper.
\end{remark}

\subsection{The Heisenberg example}\label{heisen}

To illustrate the above results we now pause to describe the model case of a \emph{Heisenberg system}.  (The discussion in this section is not directly used in the remainder of the paper.)
To simplify the discussion we restrict attention to the $k=3$ case.

We first review the more familiar case of $\Z$-systems.  For any commutative ring $R$, let $H(R)$ be the Heisenberg group
\[ 
   H(R) :=
   \left( \begin{smallmatrix}
   1 & R & R \\
   0 &          1 & R \\
   0 & 0          &          1 \\
   \end{smallmatrix}\right).\]
This is clearly a $2$-step nilpotent group.  The quotient space $H(\R)/H(\Z)$ is then a $2$-step nilmanifold that has a natural Haar measure.  For fixed $\alpha, \beta \in \R$, the function $\psi: \Z \to H(\R)$ defined by
\begin{equation}\label{psin}
\psi(n) :=  \left( \begin{smallmatrix}
   1 & n\alpha & n \gamma+\binom{n}{2}\alpha\beta \\
   0 &          1 & n \beta \\
   0 & 0          &          1 \\
   \end{smallmatrix}\right)
\end{equation}
can be easily checked to be a homomorphism $\Z \to H(\R)$, and thus defines an action $(T_g)_{g \in \Z}$ on $H(\R)/H(\Z)$; if $1, \alpha, \beta$ are linearly independent over $\Q$, one can show that this action is ergodic (see e.g. \cite{parry}). The function $f: H(\R) \to \C$ defined by
\begin{equation}\label{heisenberg2}
f   \left( \begin{smallmatrix}
   1 & x & z \\
  0 & 1 & y \\
   0 &   0        &          1 \\
   \end{smallmatrix}\right) = e(z-\{x\}y),
\end{equation}
where $\{x\} := x - \lfloor x\rfloor$ is the fractional part of $x$, induces  a function $f: H(\R)/H(\Z) \to \C$, with
\begin{equation}\label{anh}
f(a^n x_0)=  e(\binom{n}{2}\alpha\beta+n\gamma-\{n\alpha\}n \beta)
\end{equation}
where $x_0 := H(\Z) \in H(\R)/H(\Z)$ (one can easily check that this function is well defined as a function on  $H(\R)/H(\Z)$).

The argument on the right-hand side of \eqref{anh} is an example of a  {\em generalized polynomial} (see \cite{berg-leib-poly}). It can be shown that the function $f$ is asymptotically orthogonal to all phase polynomials of degree $<3$ (in fact, it is asymptotically orthogonal to phase polynomials of all degrees), but that  $\|f\|_{U^3(\X)}>0$.  For the system $\X=(H(\R)/H(\Z),(T_g)_{g \in \Z})$, the algebra generated by the function $f$ and the eigenfunctions of the $\Z$ action (which take the form $e(ax+by+\theta)$ for integers $a,b$ and $\theta \in \R/\Z$) is dense in $L^2(\X)$. 

One can imitate this construction for an $\Fw$-action, for $\F$ with $\charac\F >2$ . The analogues of the ring of integers $\Z$ and the field of reals $\R$ will be the ring $\F[t] := \{ \sum_{i=0}^m a_i t^i: a_i \in \F \}$ of polynomials in one variable and the field $\F((t)) := \{ \sum_{i=-\infty}^m a_i t^i: a_i \in \F \}$ of Laurent polynomials of one variable, respectively. The analogue of the torus $\R/\Z$ is the abelian group $\F((t))/\F[t]$.  If $x = \sum_{i=-\infty}^m a_i t^i$ is an element of $\F((t))$, we write $\lfloor x \rfloor := \sum_{i=0}^m a_i t^i \in \F[t]$ for the ``integer part'', and $\{x\} := x - \lfloor x \rfloor$ for the ``fractional part''.

Observe that \emph{as an additive group}, $\F[t]$ is isomorphic to $\Fw$, and so an ergodic $\F[t]$-system is also an ergodic $\Fw$-system.  Consider the homogeneous space $X := H(\F((t)))/H(\F[t])$; this is a ``manifold'' over the base field $\F((t))$ and comes with a natural Haar measure. For fixed $\alpha, \beta \in \F((t))$, the function $\psi: \F[t] \to H(\F((t)))$ defined by \eqref{psin} is still a homomorphism, and thus defines an action $(T_g)_{g \in \F[t]}$ on $X$ as before.  If we define $f: H(\F((t))) \to \C$ by \eqref{heisenberg2}, then this function again induces a  well defined function on $X$ and we have the formula \eqref{anh} as before with $x_0 := \F[t] \in X$.  

Thus far, the $\Fw$-action case has proceeded in exact analogy with the $\Z$-action case.  However, the key difference between the two case is that the phase $P(n) := n^2\alpha\beta+n\gamma-\{n\alpha\}n \beta$ on the right-hand side of \eqref{anh} is not just a generalised polynomial - it is a genuine polynomial, indeed one easily checks the identity $\ader_{h_1} \ader_{h_2} \ader_{h_3} P = 0$ for all $h_1,h_2,h_3 \in \F[t]$.  (This ultimately stems from the fact that the maps $x \mapsto \lfloor x \rfloor$ and $x \mapsto \{x\}$ are genuine homomorphisms in the $\Fw$-action case (where there is no ``carry'' operation), whereas they are only ``approximate'' homomorphisms in the integer case.)  Thus the system $\X = (X, (T_g)_{g \in \F[t]})$ is isomorphic to the system $((\T(\F))^2 \times \T(\F),(T_g)_{g \in \Fw})$, where $\T(\F) := \F^{\Z} \equiv \F((t))/\F[t]$ is the Pontryagin dual $\hat{\Fw}$ of $\Fw$, and the $\Fw$-action is given by 
 $$T_g((x,y),z) = ((x,y)+(g\alpha,g\beta), z+g\gamma+\binom{g}{2}\alpha\beta+[g\alpha]y-g\beta\{x\}-\{g\alpha\} g\beta)$$
 for $x,y \in \T(\F) \equiv \F((t))/\F[t]$ and $g \in \Fw \equiv \F[t]$.  For any character $\chi$ of $\T(\F)$, $\chi \in \widehat{\T}(\F)$, the function $z \mapsto \chi(z)$ lies in $\Phase_2(\T(\F))$.   
By Fourier decomposition, any function in $L^2(\X)$ can be written as $\sum_{\chi \in \widehat{\T}(\F)} f_{\chi}(x,y)\chi(z)$, where $f_{\chi}(x,y) \in L^2( \T(\F)^2)$.
Since for any $\chi$, the function $f_{\chi}(x,y)$ is defined on a Kronecker system (arising from the eigenfunctions $(x,y) \mapsto e( ax+by )$ for $a,b \in \F[t]$), we have $f_{\chi} \in \Zcal_{<2}(\X)$.  It follows that $\X$ is an Abramov system of order $<3$ and is thus equal to $\Zcal_{<3}(\X)$ by Theorem \ref{main-thm}.  This latter fact can also be checked  by calculating the Gowers-Host-Kra seminorms directly.  The reader may also wish to verify Theorem \ref{struc-thm} explicitly for this example.

The main point of the example above is that it demonstrates the peculiarities of the polynomial aspect of nilpotency in the case of an $\Fw$-action.

\subsection{Overview of the structure of the paper}

\begin{figure}\label{logic}
$$
\begin{CD}
\hbox{Prop. \ref{fin-prop}} @>>>  \hbox{Thm. \ref{main-6}} @<<< \hbox{Prop. \ref{propfin}} @. \\
@AAA                                  @VVV                              @.                     @. \\
\hbox{Lems. \ref{descent}, \ref{vert-lem}} @>>> \hbox{Thm. \ref{main-4}} @>>> \hbox{Lem. \ref{torlemma}} @>>> \hbox{Thm. \ref{struc-thm}} \\
@.   @VVV @.  @. \\
\hbox{Prop. \ref{ext-type-k}} @>>> \hbox{Thm. \ref{main-3}} @>>> \hbox{Thm. \ref{main-2}} @<<< \hbox{Prop. \ref{typek-prop}} \\
@. @. @VVV @.\\
\hbox{\cite{tz-correspondence}, \cite{gt-ff-ratner}} @>>> \hbox{Thm. \ref{inv-thm}} @<<< \hbox{Thm. \ref{main-thm}} @.
\end{CD}
$$
\caption{Logical dependencies in the proofs of key theorems in the paper in the general characteristic case (ignoring the material in the appendices, which are used throughout the paper).  Theorem \ref{struc-thm} is also used inductively to establish Propositions \ref{fin-prop}, \ref{propfin}.  The high-characteristic case follows a similar logic.}  
\end{figure}

The bulk of the paper will be devoted to the proof of Theorems \ref{main-thm}, \ref{main-thm-high}.  
We first prove Theorem \ref{main-thm}, and then specialize to the high characteristic case.
The proof will be established via a series of reductions.  The main task is to establish the inequality
$\Zcal_{<k}(\X) \leq \Abr_{<C(k)}(\X)$
for some $C(k)$ depending on $k$, which will imply that $\Zcal_{<k}(\X)$ is generated by polynomials of high degree.  Thus we do not need to keep careful track of the degree of the polynomials that arise in our analysis, except to ensure that they are bounded.  The next few steps closely follow the approach of Host and Kra \cite{hk-cubes}.  Namely, by following the methods in \cite{hk-cubes}, one can reduce matters to understanding a certain type of cocycle $f: \Fw \times X \to S^1$ on a structured type of ergodic $\Fw$-system $\X$; specifically, one needs to show that any $(\Fw,\X,S^1)$-cocycle of type\footnote{All definitions for the terms used here are given later in the paper, when these results are formalized.} $<k$ on an ergodic $\Fw$-system of order $<k$ (see Definition \ref{sysk-def}) is cohomologous to a $(\Fw,\X,S^1)$-phase polynomial; see Theorem \ref{main-4}.   As mentioned earlier, the degree of this polynomial is not too important so long as it is bounded.

By an inductive hypothesis one can understand the underlying system $\X$ reasonably well; it turns out to be a finite  tower of abelian group extensions, each of which is given by a cocycle which is cohomologous to a polynomial.

To proceed further one needs to understand the condition that a cocycle $f: \Fw \times \X \to S^1$ has bounded type.  By definition, this means that a certain ``iterated derivative'' $d^{[k]} f$ of that cocycle is a coboundary.  It turns out that the underlying system $\X$ can be expressed as an abelian extension $\X = \Y \times_\rho U$ of a simpler system $\Y$, where $U = (U,\cdot)$ is a compact abelian group.  The group $U$ then acts freely on $\X$ by the action $V_u: (y,v) \mapsto (y,uv)$, with this action commuting with the action of $\Fw$.  One can  differentiate the cocycle $f$ repeatedly in the ``vertical'' direction by using the operation $\mder_u f(g,x) := f(g, V_u x) / f(g,x)$.  It turns out that each such vertical differentiation reduces the type of the cocycle; iterating this, one concludes that one can find an $m$ such that $\mder_{t_1} \ldots \mder_{t_m} f$ is a cocycle of type $<0$ (i.e. a coboundary) for every $t_1,\ldots,t_m \in U$.  In other words, one has an equation of the form
\begin{equation}\label{mder-iter}
 \mder_{t_1} \ldots \mder_{t_m} f(g,x) = \mder_g F_{t_1,\ldots,t_m}(x) := \frac{ F_{t_1,\ldots,t_m}( T_g x ) }{ F_{t_1,\ldots,t_m}( x ) }
\end{equation}
for some functions $F_{t_1,\ldots,t_m}(x)$.  A key technical point is that while the function $F_{t_1,\ldots,t_m}(x)$ is \emph{a priori} only measurable in $x$, it can be made to be measurable in the parameters $t_1,\ldots,t_m$ also (see Lemma \ref{measurable_choice}).  This will be rather important for us as we will be relying quite heavily on the measurability property\footnote{For instance, we will need a variant of the classical Steinhaus theorem that asserts that if $A$ is a measurable subset of a compact abelian group $U$ with positive measure, then the difference set $A-A$ contains a neighborhood of the origin (cf. Lemma \ref{tor-lem}).  Curiously, analogous results are exploited in the additive-combinatorial approach to the Gowers inverse problem (see e.g. \cite{gt:inverse-u3}), where they go by the name of ``Bogolyubov-type lemmas''.} in our arguments.

We would like to use the equation \eqref{mder-iter} to show that $f$ itself is a coboundary, up to a polynomial error.  The idea is to ``integrate'' the derivatives $\mder_{t_1},\ldots,\mder_{t_m}$ one at a time.  At any intermediate stage of this process, one will have obtained another function $\tilde f: \Fw \times \X \to S^1$ which differs from $f$ (multiplicatively) by a phase polynomial, and which obeys an equation of the form
\begin{equation}\label{mderj}
\mder_{t_1} \ldots \mder_{t_j} \tilde f(g,x) = c_{t_1,\ldots,t_j}(g,x) \mder_g F_{t_1,\ldots,t_j}(x)
\end{equation} 
for some $0 \leq j \leq m$, some functions $F_{t_1,\ldots,t_j}(x)$, some phase polynomials $c_{t_1,\ldots,t_j}$, any $t_1,\ldots,t_j \in U$, $g \in G$, and almost all $x \in X$.  A technical point here is that whereas the original function $f$ was a cocycle in $\Fw$, the new function $\tilde f$ need not be; however, it turns out that the $\Fw$-cocycle property\footnote{Indeed, there seem to be significant technical difficulties if one attempts to work purely within the world of cocycles, stemming ultimately from the fact that $\Fw$ is not finitely generated.  For instance, it appears quite difficult to show that the abstract nilpotent group ${\mathcal G}^{[k]}$ defined in \cite{hk-cubes} acts transitively in the non-finitely generated case.} is not actually needed in the rest of our analysis; indeed, as we shall shortly see, we can rely primarily on the cocycle behavior in $t_j$ instead. 

The left hand side of \eqref{mderj} exhibits some ``linearity'' in $t_j$, thanks to the cocycle equation $\mder_{t_j + t'_j} f = (\mder_{t_j} V_{t'_j} f) \mder_{t'_j} f$.  This induces some approximate linearity properties on $F_{t_1,\ldots,t_j}$.  One can conjugate $F_{t_1,\ldots,t_j}$ to upgrade this approximate linearity to genuine linearity (here we crucially exploit the measurability of all our objects with respect to $t_1,\ldots,t_j$), at least for $t_1,\ldots,t_j$ in an open subgroup $U'$ of $U$ (here we rely heavily on the finite characteristic of $\Fw$, which forces $U$ to be a torsion group, so that every open neighborhood of $U$ contains an open subgroup).  It still remains to handle the behavior in the quotient group $U/U'$, but this is a finite abelian group and can be worked out explicitly, splitting this group as the product of finite cyclic groups and ``straightening'' the various actors in \eqref{mderj} along each such group.  At the end of the day, one is able to ``integrate'' and remove one of the derivatives in \eqref{mderj}, at the cost of introducing an additional $U$-invariant factor on the right-hand side (cf. the classical ``$+C$'' ambiguity when solving an equation $\frac{d}{dx} f = F$ by integration).  However, this additional factor (which basically lives on $Y$ rather than on $X$) can be shown to be cohomologous to a polynomial by an induction hypothesis, and so can be absorbed into the other terms on the right-hand side.  One can then iterate this procedure until the number $j$ of derivatives in \eqref{mderj} reaches zero, at which point we obtain the desired characterization of $f$.  

The high characteristic case $k \leq \charac(\F)$ (i.e. Theorem \ref{main-thm-high}) follows a similar logic, but one has to be much more careful in keeping track of the degrees and types of the various objects that arise in the proof, thus requiring ``exact'' counterparts of many of the lemmas used in the above analysis.  The high characteristic hypothesis is used to ensure that all the polynomials which appear take values in a coset of the cyclic group $C_p$, and have constant ``integral'' along various lines (see Lemma \ref{L:values-in-F_p} for a more precise statement.)

\begin{remark} As mentioned above, our approach largely follows that of Host and Kra \cite{hk-cubes}, although the abstract nilpotent structure group ${\mathcal G}^{[k]}$, which plays a central role in \cite{hk-cubes}, is not used explicitly in this paper (although of course we will be exploiting several symmetries of the cubic measure $\mu^{[k]}$, particularly with respect to ``vertical rotations'', which can be viewed as special elements of this structure group). Furthermore, the heart of the Host-Kra argument
(the ``lifting'' proposition in \cite[Proposition 10.10]{hk-cubes}) does not have an equivalent in our context due to the fact that
the group $\Fw$ is not finitely generated.
As discussed above, the emphasis is instead on solving various equations of ``Conze-Lesigne'' type\cite{cl}.  In this respect, the arguments here share some features in common with \cite{zieg-jams} as well as \cite{hk-cubes}. 
\end{remark}

\begin{remark}
One curious distinction between the finite characteristic case and the $\Z$-action case is that in the former case (and assuming sufficiently high characteristic), the systems $Z_k(\X)$ are toral systems rather than inverse limits of such systems. 
\end{remark}

\subsection{Acknowledgements}

The authors would like to thank Tim Austin, Ben Green, Bernard Host, Bryna Kra, and Trevor Wooley for many enlightening conversations and suggestions.  The first author is supported by NSF grant DMS-0600042. The second author is supported by a grant from the MacArthur Foundation, and by NSF grant DMS-0649473. The first and third authors are supported by BSF grant  No. 2006094. The third author is supported
by a Landau fellowship of the Taub foundations, and by an Alon fellowship.   The authors also thank the anonymous referees for many useful suggestions and corrections.

\section{Abelian cohomology}\label{abcom0}

Throughout the paper we will be relying heavily on the language of abelian cohomology of dynamical systems.  We record the key definitions here; further discussion of these concepts can be found in Appendix \ref{abcom}.

\begin{definition}[Abelian cohomology]\label{cohom} Let $G$ be a countable discrete abelian group, let $\X=(X,\B_X,\mu_X, (T_g)_{g \in G})$ be a $G$-system, and let $(U,\cdot)$ be a compact abelian group. 
\begin{itemize}
\item We let $M(\X,U)$ denote the set of all measurable functions $\phi: X \to U$, with two functions $\phi, \phi': \X \to U$ identified if they agree $\mu_X$-almost everywhere; this is an abelian group under pointwise multiplication.  We refer to elements of $M(\X,U)$ as \emph{$(\X,U)$-functions}.
\item We let $M(G,\X,U)$ denote the set of all measurable functions $\rho: G \times X \to U$, with two functions $\rho, \rho': G \times \X \to U$ identified if $\rho(g,x) = \rho'(g,x)$ for all
$g \in G$ and almost every  $x \in X$; this is an abelian group under pointwise multiplication.  We refer to elements of $M(G,\X,U)$ as \emph{$(G,\X,U)$-functions}.
\item We let $Z^1(G,\X,U)$ denote the subgroup of $M(G,\X,U)$ consisting of those $(G,\X,U)$-functions that obey the cocycle equation
$\rho(g+g', x) = \rho(g,T_{g'} x) \rho(g', x)$
for all  $g,g' \in G$ and almost every $x \in X$.  We refer to elements of $Z^1(G,\X,U)$ as \emph{abelian $(G,\X,U)$-cocycles}, or \emph{cocycles} for short.
\item If $\rho$ is an abelian $(G,\X,U)$-cocycle, we define the \emph{abelian extension} $\X \times_\rho U$ of $\X$ by $\rho$ to be the product space $(X \times U, \B_X \times \B_U, \mu_X \times \mu_U)$ with shift maps $(x,u) \mapsto (T_g x, \rho(g,x) u)$ for $g \in G$, where $\mu_U$ is normalised Haar measure on $U$.  Note that this is indeed an extension of $\X$, with the obvious factor map $\pi^{\X \times_\rho U}{\X}: (x,u) \mapsto x$.  
\item If $F$ is a $(\X,U)$-function, we define the \emph{derivative} $\mder F$ of $F$ to be the $(G,\X,U)$ function $\mder F(g,x) := \mder_g F(x) = F(T_g x) / F(x)$.  We refer to $F$ as an \emph{antiderivative} of $\mder F$.  We write $B^1(G,\X,U) = \mder M(\X,U)$ denote the space of all derivatives; this is a subgroup of $Z^1(G,\X,U)$.  We refer to elements of $B^1(G,\X,U)$ as \emph{$(G,\X,U)$-coboundaries}, or \emph{coboundaries} for short.
\item More generally, if $B$ is a $G$-invariant subset of $\X$, we say that a $(G,\X,U)$-function $\rho$ is a \emph{$(G,B,U)$-coboundary} if there exists some measurable $F: B \to U$ such that $\rho(g,x) = \mder_g F(x) = \frac{F(T_g x)}{F(x)}$ 
for all $g \in G$ and $\mu_X$-almost every $x \in B$.  We refer to $F$ as an \emph{antiderivative} of the coboundary.
\item We say that two $(G,\X,U)$-functions $\rho, \rho'$ are \emph{$(G,\X,U)$-cohomologous} (or \emph{cohomologous} for short), if $\rho/\rho' \in B^1(G,\X,U)$.  Note that we do not require $\rho, \rho'$ to be cocycles here, though clearly any function cohomologous to a cocycle is again a cocycle.
\end{itemize}
\end{definition}

\begin{remark}\label{measure-equiv} Observe that if $\rho$ and $\tilde \rho$ are cohomologous, then $\X \times_\rho U$ and $\X \times_{\tilde \rho} U$ are measure-equivalent systems.  Thus, from the perspective of measure equivalence, $(X,G,U)$ cocycles $\rho$ are only determined up to their representative $[\rho]_{G,X,U}$ in the cohomology group $H^1(G,\X,U) := Z^1(G,\X,U)/B^1(G,\X,U)$.  As usual we have the short exact sequence
$$
\begin{CD}
0 @>>> B^1(G,\X,U) @>>> Z^1(G,\X,U) @>>>  H^1(G,\X,U) @>>> 0.
\end{CD}
$$
\end{remark}

\begin{remark} We will primarily be working with $(G,\X,U)$-cocycles, but for technical reasons related to the fact that $\Fw$ is not countably generated, we will also need to work in the more general setting of $(G,\X,U)$-functions.  In practice, however, these functions will be ``close'' to coboundaries in various senses (for instance, they may differ from a coboundary by a polynomial function).
\end{remark}

\begin{remark} We caution that abelian extensions of ergodic $G$-systems are not necessarily ergodic.
\end{remark}

We will need several technical results concerning abelian cohomology groups, which we have collected in Appendix \ref{abcom}, and which we will refer to as necessary in the main text of the paper.

\section{Reduction to abelian extensions of order $<k+1$}

Recall the definition of $\Zcal_{<k}(\X)$ from Proposition \ref{ucf-prop}. 
Before starting the proof of Theorem \ref{main-thm}, we make a basic definition:

\begin{definition}[System of order $<k$]\label{sysk-def} Let $k \geq 1$, and let $G$ be a countable discrete abelian group.  A $G$-system $\X$ is said to be \emph{of order $<k$} if $\Zcal_{<k}(\X) = \X$.
\end{definition}

\begin{example} Trivial systems are of order $<1$; Kronecker systems are of order $<2$.  From \eqref{mono-2} we see that for any system $\X$, $\Zcal_{<k}(\X)$ is of order $<k$, and that any system of order $<k$ is automatically of order $<k+1$.
\end{example}

By Theorem \ref{easy-thm}, every Abramov system of order $<k$ is also a $G$-system of order $<k$.  This and \eqref{mono-2}, will allow us to immediately derive Theorem \ref{main-thm}  from the following claim:

\begin{theorem}[First reduction]\label{main-2}
Let $\F$ be a finite field, let $k \geq 1$, and let $\X$ be an ergodic $\Fw$-system of order $<k$.  Then $\X$ is an Abramov system of order\footnote{Here and in the sequel, we use $O_{k}(1)$ to denote any quantity bounded by $C(k)$ for some constant $C(k)$ depending only on $k$, and similarly for other choices of subscripts in the $O()$ notation.} $<O_{k}(1)$.  
\end{theorem}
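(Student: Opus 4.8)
The plan is to prove Theorem \ref{main-2} by induction on $k$, the base cases $k = 1, 2$ being classical (trivial and Kronecker systems, respectively, which are Abramov of orders $<1$ and $<2$). For the inductive step, suppose the theorem is known for all orders less than $k$, and let $\X$ be an ergodic $\Fw$-system of order $<k$. The first move is to apply the Host--Kra machinery (adapted to $\Fw$-actions) to express $\X$ as an abelian extension $\X = \Y \times_\rho U$, where $\Y = \Zcal_{<k-1}(\X)$ is a system of order $<k-1$, $U$ is a compact abelian group, and $\rho \in Z^1(\Fw, \Y, U)$ is a cocycle. Because $\Fw$ has finite characteristic, $U$ is necessarily a torsion group, a fact that will be used repeatedly. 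By the inductive hypothesis, $\Y$ is an Abramov system of order $<O_k(1)$, so everything on the base $\Y$ is already understood; the entire difficulty is concentrated in the cocycle $\rho$.

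The next step is to show that $\rho$ is of \emph{bounded type}: since $\X$ has order $<k$, the relative Host--Kra seminorm forces the ``$k$-th iterated derivative'' $d^{[k]}\rho$ on the cubic system $\X^{[k]}$ to be a coboundary, i.e. $\rho$ has type $<k$ in the sense to be defined later (cf. Theorem \ref{main-4}). One then differentiates $\rho$ in the ``vertical'' direction: for $t \in U$, the operation $\mder_t \rho(g,x) := \rho(g, V_t x)/\rho(g,x)$ (where $V_t$ is translation by $t$ in the $U$-fiber) strictly decreases the type. Iterating, one finds $m = O_k(1)$ with $\mder_{t_1}\cdots\mder_{t_m}\rho$ a coboundary for all $t_1,\dots,t_m \in U$; crucially, by Lemma \ref{measurable_choice} the antiderivatives $F_{t_1,\dots,t_m}$ can be chosen jointly measurably in the parameters. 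The heart of the argument is then an ``integration'' procedure: assuming inductively (on $j$, descending from $m$ to $0$) that $\tilde\rho$ — differing from $\rho$ by a phase polynomial of bounded degree — satisfies \eqref{mderj}, one exploits the cocycle equation in $t_j$ to deduce approximate linearity of $F_{t_1,\dots,t_j}$ in $t_j$; using the measurability and a Steinhaus/Bogolyubov-type lemma (Lemma \ref{tor-lem}) one conjugates to genuine linearity on an open subgroup $U' \leq U$ (available because $U$ is torsion), then handles the finite quotient $U/U'$ explicitly by decomposing into cyclic factors. This lets one remove one derivative at the cost of a $U$-invariant factor living on $\Y$, which by the inductive hypothesis on $\Y$ is itself cohomologous to a bounded-degree polynomial and so is absorbed. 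Running this down to $j = 0$ shows $\rho$ is cohomologous to an $(\Fw,\Y,U)$-phase polynomial of degree $O_k(1)$, whence $\X = \Y \times_\rho U$ is an Abramov system of order $<O_k(1)$.

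\textbf{Main obstacle.} I expect the principal difficulty to be the integration step, i.e. passing from \eqref{mderj} at level $j$ to level $j-1$. The linearity one extracts from the cocycle equation is only \emph{approximate} (it holds modulo phase polynomials and modulo coboundaries), and upgrading it to exact linearity requires both the joint measurability in the parameters $t_1,\dots,t_j$ and a careful cohomological argument on the compact group $U$. The torsion structure of $U$ is what makes this tractable — every neighborhood of the identity contains an open subgroup — but one must still control the finite piece $U/U'$ by hand, and ensure that all the phase polynomials introduced along the way (both the correction terms and the ``$+C$'' ambiguities) have degrees bounded uniformly in terms of $k$. Keeping the degree bounds from blowing up through the nested inductions (on $k$, on the tower of abelian extensions comprising $\Y$, and on $j$) is the bookkeeping challenge; in the general-characteristic case it suffices that they stay bounded, but this still requires care to state the intermediate reductions (Theorems \ref{main-3}, \ref{main-4}, \ref{main-6} and the supporting propositions) with explicit enough control to close the loop.
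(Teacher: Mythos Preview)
Your overall strategy matches the paper's: induction on $k$, the Host--Kra abelian extension $\X = \Y \times_\rho U$ with $\Y = \Zcal_{<k-1}(\X)$, a type bound on the cocycle, vertical differentiation to lower the type, and an integration/Conze--Lesigne step to finish. You have also correctly identified the integration step as the crux. However, there is a genuine confusion in how you set up the vertical differentiation, and as written the argument does not start.

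The cocycle $\rho$ is a $(\Fw,\Y,U)$-function: it is defined on $\Y$, not on $\X$. The vertical action $V_t$ of $t \in U$ is on $\X = \Y \times_\rho U$, by $(y,u) \mapsto (y,tu)$; it does not act on $\Y$ at all. Thus your expression $\mder_t \rho(g,x) := \rho(g,V_t x)/\rho(g,x)$ has no meaning, and if you pull $\rho$ back to $\X$ along the factor map then $\mder_t(\pi^*\rho) \equiv 1$ trivially, so nothing is gained. In short: you cannot differentiate the defining cocycle of an extension by the group of that very extension.

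The paper fixes this with two moves you have omitted. First (Theorem \ref{main-3}), one passes to $S^1$ via characters: for each $\chi \in \hat U$, study $f := \chi \circ \rho$, an $S^1$-valued function on $\Y$ which, by Proposition \ref{ext-type-k}, is of type $<k-1$. This is what one must show is cohomologous to a bounded-degree phase polynomial; combined with the inductive Abramov structure of $\Y$, it gives the Abramov property of $\X$ via the functions $\phi_\chi(y,u) := \overline{F_\chi}(y)\chi(u)$. Second (Theorem \ref{main-4}/\ref{main-6}), to analyze this $f$ on $\Y$, one descends one more level: write $\Y = \Zcal_{<k-2}(\X) \times_{\rho'} U'$ and differentiate $f$ in the $U'$-direction (Lemma \ref{vert-lem}). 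The vertical group used is one step deeper in the tower than the one defining the extension whose cocycle you are studying. Once you make this shift, your outline (measurable selection, open-subgroup reduction via torsion, handling the finite quotient, absorbing the $\Y$-invariant remainder via the induction on $k$) coincides with the paper's chain \ref{main-2} $\to$ \ref{main-3} $\to$ \ref{main-4} $\to$ \ref{main-6}.
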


To prove Theorem \ref{main-2}, we will in fact prove a more precise statement.  We will use the language of abelian cohomology, in particular the notions of $(G,\X,U)$-functions, (abelian) $(G,\X,U)$-cocycles, $(G,\X,U)$-coboundaries, and extensions $\X \times_\rho U$ of a $G$-system $\X$ by a cocycle $\rho: G \times X \to U$, where $U$ is a compact abelian group; see Definition \ref{cohom} for full details.

The following basic fact was established by Host and Kra\cite{hk-cubes}:

\begin{proposition}[Order $<k+1$ systems are abelian extensions of order $<k$ systems]\label{typek-prop}\cite[Proposition 6.3]{hk-cubes}  Let $G$ be a discrete countable abelian group, let $k \geq 1$, and let $\X$ be an ergodic $G$-system of order $<k+1$.  Then $\X$ is an abelian extension $\X \equiv \Zcal_{<k}(\X) \times_\rho U$ of the order $<k$ system $\Zcal_{<k}(\X)$ for some compact abelian group\footnote{All topological groups in this paper are assumed to be Hausdorff.} $U$ and some $(G,\Zcal_{<k}(\X),U)$-cocycle $\rho$.
\end{proposition}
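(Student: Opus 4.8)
The plan is to recover the compact abelian group $U$ from the fibers of the factor map $\pi := \pi^{\X}_{\Zcal_{<k}(\X)}$ and to identify the extension structure via the ``relative'' dynamical and measure-theoretic structure of $\X$ over $\Zcal_{<k}(\X)$. First I would invoke the abstract machinery of cubic measures $\mu^{[k+1]}$ from \cite{hk-cubes}: since $\X$ has order $<k+1$, the seminorm $U^{k+1}$ is a genuine norm on $L^\infty(\X)$, so $\X$ is its own characteristic factor at level $k+1$, and the side-$k+1$ faces of the cube space $(X^{[k+1]},\mu^{[k+1]})$ separate points; meanwhile projecting onto any single proper face recovers $\Zcal_{<k}(\X)$. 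The key structural input is that the group $\G^{[k+1]}$ of measure-preserving transformations of $\mu^{[k+1]}$ includes the ``vertical rotations'' that act trivially on all but one coordinate; restricting attention to how these act on $\X$ itself (via the diagonal embedding) produces a group of measure-preserving transformations of $\X$ commuting with the $G$-action and fixing $\Zcal_{<k}(\X)$ pointwise.

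The main steps, in order: (i) Show that the relative independent self-joining $\mu \times_{\Zcal_{<k}(\X)} \mu$ (two copies of $\X$ glued over the common factor $\Zcal_{<k}(\X)$), together with the group of symmetries coming from $\G^{[k+1]}$, forces the extension $\X \to \Zcal_{<k}(\X)$ to be what Host and Kra call an \emph{isometric extension}, in fact a \emph{group extension}: the conditional measures $\mu_y$ on the fibers $\pi^{-1}(y)$ are permuted simply transitively by a compact group $U$ acting on $X$ by measure-preserving bundle automorphisms commuting with $(T_g)_{g\in G}$. (ii) Choose a measurable section $\sigma: \Zcal_{<k}(\X) \to \X$ of $\pi$ (available since the spaces are regular, by \cite[Theorem 5.15]{Fur2} or a standard selection theorem), giving a measurable isomorphism $\X \cong \Zcal_{<k}(\X) \times U$ of measure spaces sending $x \mapsto (\pi(x), u)$ where $u$ is the unique element with $V_u \sigma(\pi(x)) = x$. (iii) Transport the $G$-action through this isomorphism: because $T_g$ commutes with the $U$-action and covers $S_g$ on the base, it must take the form $(y,u) \mapsto (S_g y, \rho(g,y) u)$ for some measurable $\rho: G \times \Zcal_{<k}(\X) \to U$; the cocycle identity $\rho(g+g',y) = \rho(g, S_{g'}y)\rho(g',y)$ then follows automatically from $T_{g+g'} = T_g T_{g'}$. (iv) Check that the pushed-forward measure is $\mu_{\Zcal_{<k}(\X)} \times \mu_U$ with $\mu_U$ Haar: ergodicity of $\X$ forces the fiber measures to be translation-invariant under the transitive $U$-action, hence Haar, so $\X \equiv \Zcal_{<k}(\X) \times_\rho U$ as claimed.

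The main obstacle is step (i): extracting a \emph{transitive} compact group action on the fibers from the abstract symmetry group of $\mu^{[k+1]}$. One must show that the extension $\X/\Zcal_{<k}(\X)$ cannot have any intermediate factor that is a nontrivial isometric-but-not-group extension, equivalently that the relevant ``structure group'' acts transitively on fibers — exactly the point where \cite{hk-cubes} must use that, after passing to $\Zcal_{<k}$, all the relevant cocycles have been straightened. I would either cite \cite[Proposition 6.3]{hk-cubes} directly for this (as the statement does), reproducing only the translation of their cube-group formalism into the cocycle language of Definition \ref{cohom}, or else argue via a Mackey-type analysis of the ergodic components of $\mu \times_{\Zcal_{<k}(\X)} \mu$ under the product $G$-action, using that any such component projects onto $\Zcal_{<k}(\X)$ and hence the ``distance'' between two generic fiber points is governed by a compact group. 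A secondary technical nuisance is measurability of the section $\sigma$ and of $\rho$ jointly in $(g,y)$; this is where the regularity (compact metric, completed Borel) hypotheses on all systems and the standing assumption that regularity passes to factors are used, and it can be dispatched by a measurable selection theorem exactly as in the footnote to Definition \ref{factor-def}.
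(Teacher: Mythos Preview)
The paper itself gives no proof of this proposition; it simply cites \cite[Proposition 6.3]{hk-cubes}.  Your outline is broadly compatible with the Host--Kra argument (the extension $\X \to \Zcal_{<k}(\X)$ is isometric, hence a compact-group extension via Mackey--Zimmer theory, with the relevant symmetries supplied by the cube measure $\mu^{[k+1]}$), and you correctly flag transitivity of the fiber action as the heart of step~(i).

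There is, however, a genuine gap: you never explain why the structure group $U$ is \emph{abelian}.  An isometric extension of an ergodic system has in general the form $\Y \times_\sigma K/L$ for a compact group $K$ and closed subgroup $L$; neither ergodicity of $\X$ nor transitivity of the fiber action forces $K$ (or $K/L$) to be abelian.  In Host--Kra this is a separate step: one shows that any two measure-preserving transformations of $\X$ commuting with the $G$-action and acting trivially on $\Zcal_{<k}(\X)$ must commute with \emph{each other}, by lifting them to face transformations $(V_u)^{[k+1]}_\alpha$, $(V_v)^{[k+1]}_\beta$ on disjoint $0$-faces of $\2^{k+1}$ (cf.\ Lemma~\ref{erglem}(iv)) and using that both preserve $\mu^{[k+1]}$ and act on disjoint coordinates.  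Your reference to ``vertical rotations'' coming from $\G^{[k+1]}$ is pointed in the right direction, but the commutativity of these rotations is not automatic from the isometric-extension machinery and needs to be argued explicitly from the cube structure before step~(iii) makes sense (otherwise $\rho(g,y)u$ is only a left-translate in a possibly nonabelian $U$, and the notation $\X \times_\rho U$ of Definition~\ref{cohom} does not apply).
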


\begin{definition}[Phase polynomials, II]\label{phase2}  Let $(G,+)$ be a discrete countable abelian group, let $\X=(X,\B_X,\mu_X, (T_g)_{g \in G})$ be a $G$-system, let $(U,\cdot)$ be a compact abelian group, and let $k \geq 1$.
\begin{itemize}
\item We let $\Phase_{<k}(\X,U)$ denote the set of all $(\X,U)$-functions $\phi$ satisfying the equation $\mder_{h_1} \ldots \mder_{h_k} \phi(x) = 1$ for $\mu_X$-a.e. $x$ and for all $h_1,\ldots,h_k \in G$, where $\mder_h \phi(x) := \phi(T_h x) / \phi(x)$, and we identify functions that agree $\mu_X$-almost everywhere.  We refer to elements of $\Phase_{<k}(\X,U)$ as \emph{$(\X,U)$-phase polynomials of degree $<k$.}  
\item We let $\Phase_{<k}(G,\X,U)$ denote the set of all $(G,\X,U)$-functions $\rho$ satisfying the equation $\mder_{h_1} \ldots \mder_{h_k} \rho(g,x) = 1$ for $\mu_X$-a.e. $x$ and for all $g,h_1,\ldots,h_k \in G$, where $\mder_h \rho(g,x) := \rho(g,T_h x) / \rho(g, x)$, and where we identify functions that agree $\mu_X$-almost everywhere for each $g \in G$.  We refer to elements of $\Phase_{<k}(G,\X,U)$ as \emph{$(G,\X,U)$-phase polynomials of degree $<k$}.  We refer to elements of the intersection $\Phase_{<k}(G,\X,U) \cap Z^1(G,\X,U)$ as \emph{$(G,\X,U)$-phase polynomial cocycles of degree $<k$}.  
\end{itemize}
\end{definition}

\begin{example} If $\X$ is ergodic, then all phase polynomials of degree $<1$ are constant in the $X$ variable, and so
$\Phase_{<1}(\X,U) \equiv U$
and
$\Phase_{<1}(G,\X,U) \equiv M(G,U)$
in this case (for the definition of $M(G,U)$ see Definition \ref{cohom}).  
\end{example}

\begin{remark} Observe that $\Phase_{<k}(\X,U)$ and $\Phase_{<k}(G,\X,U)$ are subgroups of $M(\X,U)$ and $M(G,\X,U)$ respectively. This definition is of course closely related to Definition \ref{phase-def}; indeed, we observe that
$\Phase_{<k}(\X) \equiv \Phase_{<k}(\X,S^1)$.
Also observe that a $(G,\X,U)$-function $\rho$ is a phase polynomial coboundary of degree $<k$ if and only if it is a derivative $\rho(g,x) = \mder_g F(x)$ of a $(\X,U)$-phase polynomial $F$ of degree $<k+1$. Indeed, we have the short exact sequence
\begin{equation}\label{integ}
\begin{CD}
0 @>>> \Phase_{<1}(\X,U) @>>> \Phase_{<k+1}(\X,U) @>\mdersmall>> \Phase_{<k}(G,\X,U) \cap B^1(G,\X,U) @>>> 0
\end{CD}
\end{equation}
and similarly
$$
\begin{CD}
0 @>>> \Phase_{<1}(\X,U) @>>> M(\X,U) @>\mdersmall>> B^1(G,\X,U) @>>> 0.
\end{CD}
$$
(for the definitions of $M(\X,U), B^1(G,\X,U) $ see Definition \ref{cohom}). 
\end{remark}

Using Proposition \ref{typek-prop} and Fourier analysis, we can reduce matters to studying projections of abelian cocycles to the unit circle.  More precisely, in future sections we will show the following result. 

\begin{theorem}[Second reduction]\label{main-3} Let $\F$ be a finite field, let $k \geq 1$, and let $\X$ be an ergodic $\Fw$-system of order $<k$, and let $\X \times_\rho U$ be a (possibly non-ergodic) abelian extension of $\X$ by a $(\Fw,\X,U)$-cocycle which is of order $<k+1$.  Then for every character $\chi \in \hat U$ (i.e. every continuous homomorphism $\chi: U \to S^1$), the $(\Fw,\X,S^1)$-cocycle $\chi \circ \rho$ is cohomologous to a $(\Fw,\X,S^1)$-phase polynomial of degree $<O_{k}(1)$, i.e. $\chi \circ \rho \in \Phase_{<O_{k}(1)}(\Fw,\X,S^1) \cdot B^1(\Fw,\X,S^1)$ for all $\chi \in \hat U$.  
\end{theorem}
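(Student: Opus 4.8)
The plan is to pass from the ``order'' hypothesis on the extension to a ``bounded type'' statement for the single cocycle $\chi\circ\rho$, and then invoke the main cocycle theorem of the paper. Since $\rho\in Z^1(\Fw,\X,U)$ and $\chi\colon U\to S^1$ is a continuous homomorphism, $\chi\circ\rho\in Z^1(\Fw,\X,S^1)$, so it suffices to establish two things: (a) that $\chi\circ\rho$ is of type $<k$ relative to $\X$, i.e. that a suitable iterated cubic derivative $d^{[k]}(\chi\circ\rho)$ is a coboundary on the associated cubic system; and (b) (Theorem~\ref{main-4}) that any $(\Fw,\X,S^1)$-cocycle of type $<k$ over an ergodic $\Fw$-system $\X$ of order $<k$ lies in $\Phase_{<O_k(1)}(\Fw,\X,S^1)\cdot B^1(\Fw,\X,S^1)$. (That order $<k+1$ ergodic systems are, up to isomorphism, of the form $\X\times_\rho U$ is Proposition~\ref{typek-prop}; here it is built into the hypothesis.)

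For (a) --- the content of Proposition~\ref{ext-type-k} --- I would use that $\X\times_\rho U$ of order $<k+1$ means $\|F\|_{U^{k+1}(\X\times_\rho U)}>0$ for every nonzero $F\in L^\infty(\X\times_\rho U)$. Decompose $L^2(\X\times_\rho U)=\bigoplus_{\chi\in\hat U}\{F(x)\chi(u):F\in L^2(\X)\}$ into vertical Fourier modes; the Host--Kra formula for the Gowers--Host--Kra seminorms of an abelian extension expresses $\|F(x)\chi(u)\|_{U^{k+1}}$ as an integral over the cubic construction on $\X$ in which the only $\chi$-dependent factor is the vertical character of the cubic derivative $d^{[k]}(\chi\circ\rho)$. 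The crux is then that if $d^{[k]}(\chi\circ\rho)$ failed to be a coboundary, the cancellation introduced by this factor would --- since $\X$ is of order $<k$, hence of order $<k+1$, so all its nonzero functions have positive $U^{k+1}$-seminorm --- produce a nonzero mode of vanishing $U^{k+1}$-seminorm, contradicting order $<k+1$. Hence $d^{[k]}(\chi\circ\rho)$ is a coboundary for every $\chi\in\hat U$, i.e. each $\chi\circ\rho$ has type $<k$. (One may equivalently run this at the level of $\rho$ itself and then push the resulting $U$-valued coboundary forward by $\chi$.) The possible non-ergodicity of $\X\times_\rho U$ is harmless here, since the whole computation is carried out mode by mode over the ergodic base $\X$.

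Step (b), which is Theorem~\ref{main-4}, is the heart of the matter, and I would follow the Host--Kra strategy of integrating out vertical derivatives, arguing by induction on $k$ so that the structural description of order $<k$ systems (Theorem~\ref{struc-thm}) is available: $\X$ may then be taken to be a finite tower of abelian group extensions, each cohomologous to a polynomial, so that some compact abelian group $U'$ acts freely on $\X$ commuting with the $\Fw$-action. Writing $f:=\chi\circ\rho$, one checks that differentiating in the vertical $U'$-direction lowers the type, so after $O_k(1)$ such differentiations there is an $m$ with $\mder_{t_1}\cdots\mder_{t_m}f$ a coboundary for all $t_1,\dots,t_m\in U'$, with antiderivatives $F_{t_1,\dots,t_m}$ chosen measurably in $t_1,\dots,t_m$ (via Lemma~\ref{measurable_choice}). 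One then integrates one derivative at a time: at a stage where $\mder_{t_1}\cdots\mder_{t_j}\tilde f(g,x)=c_{t_1,\dots,t_j}(g,x)\,\mder_g F_{t_1,\dots,t_j}(x)$ with $c$ a phase polynomial, the cocycle identity in the last vertical variable forces an approximate linearity of $F_{t_1,\dots,t_j}$ in $t_j$, which --- using measurability in the $t_i$ and the fact that finite characteristic makes $U'$ a torsion group, so every neighbourhood of the identity contains an open subgroup --- can be conjugated to genuine linearity on an open subgroup $U_0\leq U'$; the finite quotient $U'/U_0$ is then handled by splitting it into cyclic factors. Each such integration recovers one derivative at the price of an extra $U'$-invariant factor, which lives essentially on the base and so, by the inductive structure theorem, is cohomologous to a polynomial and can be absorbed into the phase-polynomial part. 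After $O_k(1)$ steps one arrives at $f$ itself, written as a phase polynomial of degree $O_k(1)$ times a coboundary.

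The main obstacle is step (b), and within it the single ``integration'' step: converting the $j$-fold identity above into a $(j-1)$-fold one. This requires simultaneously maintaining measurability in the vertical parameters, upgrading approximate cocycle/linearity relations to exact ones, and treating the finite quotient $U'/U_0$ essentially by hand --- all while keeping the degrees of the phase polynomials that appear bounded purely in terms of $k$. A related technical nuisance, already flagged in the overview, is that the function $\tilde f$ produced at intermediate stages need not remain an $\Fw$-cocycle, so the argument must be organised around the cocycle behaviour in the vertical variable rather than in $\Fw$.
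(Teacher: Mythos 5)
Your proposal matches the paper's own proof: Theorem~\ref{main-3} is derived exactly by combining Proposition~\ref{ext-type-k} (to conclude $\chi\circ\rho$ is of type $<k$) with Theorem~\ref{main-4}, and your step (a) and step (b) reproduce that reduction, including the observation that Proposition~\ref{typek-prop} is unnecessary here because the abelian-extension structure is given as a hypothesis. Your further sketch of the proofs of Proposition~\ref{ext-type-k} and Theorem~\ref{main-4} (vertical Fourier modes plus $U^{k+1}$-positivity; vertical differentiation, measurable selection, torsion of $U'$, splitting the finite quotient, integration step by step) is likewise faithful to how the paper carries out those subsidiary results.
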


\begin{proof}[Proof of Theorem \ref{main-2} assuming Theorem \ref{main-3}]
We induct on $k$.  The claim is trivial for $k=1$, so suppose that $k \geq 2$ and that Theorem \ref{main-2} has already been proven for $k-1$.  By Proposition \ref{typek-prop}, we may assume that $\X = \Zcal_{<k-1}(\X) \times_\rho U$ for some compact abelian group $U$ and some  $(\Fw,Z_{<k-1}(X),U)$-cocycle $\rho$.

Let $\chi \in \hat U$ be any element of the Pontryagin dual $\hat U$ of $U$, thus $\chi: U \to S^1$ is a character (i.e. a continuous homomorphism).  Applying Theorem \ref{main-3}, we see that there exists a measurable function $F_\chi \in M(\Zcal_{<k-1}(\X), S^1)$ and a $(\Fw, \Zcal_{<k-1}(\X), S^1)$-phase polynomial cocycle $\tilde \rho_\chi$ of degree $<O_{k}(1)$, for which we have the equation
\begin{equation}\label{chirho-eq} \chi \circ \rho = \tilde \rho_\chi \mder F_\chi.
\end{equation}
If we define $\phi_\chi \in L^\infty(\X)$ to be the function $\phi_\chi(x,u) := \overline{F_\chi}(x) \chi(u)$, then we see from \eqref{chirho-eq} that $\mder \phi_\chi = \tilde \rho_\chi$, and thus (by \eqref{integ}) $\phi_\chi$ is a phase polynomial of degree $<O_{k}(1)$.  On the other hand, $F_\chi(x)$ lies in $L^2(\Zcal_{<k-1}(\X))$ and can thus be approximated in that space by finite linear combinations of phase polynomials in $\Phase_{<O_{k}(1)}(\X)$, by the induction hypothesis.  Thus we see that the characters $(x,u) \mapsto \chi(u)$ can also be approximated in $L^2(\X)$ by finite linear combinations of phase polynomials in $\Phase_{<O_{k}(1)}(\X)$.  Since $L^2(\X)$ is generated by $L^2(\Zcal_{<k-1}(\X))$ and these characters, we see that $\X$ is an Abramov system of order $<O_{k}(1)$ as claimed.
\end{proof}

It remains to prove Theorem \ref{main-3}.  This will be the objective of the next few sections.

\section{Functions of type $<k$}\label{typek-sec}

We make a further reduction, introducing the useful notion\footnote{This concept is essentially that of a \emph{cocycle of type $k$} from \cite{hk-cubes}, but generalized to non-cocycles and to more general group actions.  We have replaced ``$k$'' by ``$<k$'' as such cocycles will have ``degree'' strictly less than $k$ in some sense.} of a \emph{function of type $<k$}.

\begin{definition}[Functions of type $<k$]\label{fin-def} Let $(G,+)$ be a discrete countable abelian  group, let $\X = (X, \B_X, \mu_X, (T_g)_{g \in G})$ be a $G$-system, and let $U = (U,\cdot)$ be a compact abelian group.  Let $k \geq 0$, and let $\X^{[k]} = (X^{[k]}, \B^{[k]}, \mu^{[k]}, (T_g^{[k]})_{g \in G} )$ be the $G$-system defined in Definition \ref{cubichk-def}.  
\begin{itemize}
\item For each $f \in M(\X,U)$, we define $d^{[k]} f \in M(\X^{[k]},U)$ to be the function
$$ d^{[k]} f((x_\w)_{\w \in \2^k}) := \prod_{\w \in \2^k} f(x_\w)^{\sgn(\w)}$$
where $\sgn(w_1,\ldots,w_k) := w_1 \ldots w_k \in \{-1,+1\}$.    
\item Similarly, for each $\rho \in M(G,\X,U)$, we define $d^{[k]} \rho \in M(G, \X^{[k]}, U)$ to be the function
$$ d^{[k]} \rho(g,(x_\w)_{\w \in \2^k}) := \prod_{\w \in \2^k} \rho(g, x_\w)^{\sgn(\w)}.$$
\item A \emph{$(G,\X,U)$-function of type $<k$} is any function $\rho \in M(G,\X,U)$ such that $d^{[k]} \rho$ is a $(G,\X^{[k]},U)$-coboundary.  We let $M_{<k}(G,\X,U)$ denote the space of $(G,\X,U)$-functions of type $<k$, and let $Z^1_{<k}(G,\X,U) := Z^1(G,\X,U) \cap M_{<k}(G,\X,U)$ 
denote the space of $(G,\X,U)$-cocycles of type $<k$.  
\end{itemize}
\end{definition}

\begin{figure}\label{include}
$$
\begin{CD}
\mder \Phase_{<k+1}(\X,U) @>>>         B^1(G,\X,U)    @. \\
@VVV                                          @VVV                             @.       \\
\Phase_{<k}(G,\X,U) \cap Z^1(G,\X,U)    @>>>    Z^1_{<k}(G,\X,U)    @>>>  Z^1(G,\X,U) \\
@VVV                                         @VVV                             @VVV       \\
\Phase_{<k}(G,\X,U)   @>>>        M_{<k}(G,\X,U) @>>>   M(G,\X,U)
\end{CD}
$$
\caption{Inclusions between various groups of $(G,\X,U)$ functions.}  
\end{figure}

\begin{example} A $(G,\X,U)$-function is of type $<0$ if and only if it is a $(G,\X,U)$-coboundary, thus $M_{<0}(G,\X,U) = Z^1_{<0}(G,\X,U) = B^1(G,\X,U)$.  
\end{example}

We make some easy observations (see Figure \ref{include}):

\begin{lemma}[Basic facts about functions of type $<k$]\label{basic} Let $G$ be a countable abelian group, let $\X = (X, \B_X, \mu_X, (T_g)_{g \in G})$ be an ergodic $G$-system, let $U = (U,\cdot)$ be a compact abelian group, and let $k \geq 0$.  
\begin{itemize}
\item[(i)] Every $(G,\X,U)$-function of type $<k$ is also a $(G,\X,U)$-function of type $<k+1$.
\item[(ii)] The set of $(G,\X,U)$-functions of type $<k$ is a subgroup of $M(G,\X,U)$ that contains the group $B^1(G,\X,U)$ of coboundaries.  In particular, any function $(G,\X,U)$-cohomologous to a function of type $<k$, is also of type $<k$.  
\item[(iii)] Let $f$ be a $(G,\X,U)$-function.  Then $f$ is a $(G,\X,U)$-phase polynomial of degree $<k$ if and only if $d^{[k]} f = 0$ $\mu^{[k]}$-almost everywhere.  In particular, every $(G,\X,U)$-phase polynomial of degree $<k$ is of type $<k$. 
\item[(iv)] If $f$ is a $(G,\X,U)$-coboundary (resp. a $(G,\X,U)$-cocycle), then $d^{[k]} f$ is a $(G,\X^{[k]},U)$-coboundary (resp. a $(G,\X^{[k]},U)$-cocycle).  Equivalently, we have the commuting diagram
$$
\begin{CD}
0  @>>>   B^1(G,\X,U)  @>>> Z^1(G,\X,U) @>>>  H^1(G,\X,U) @>>> 0 \\
@.         @V{d^{[k]}}VV          @V{d^{[k]}}VV        @V{d^{[k]}}VV  \\
0  @>>>   B^1(G,\X^{[k]},U)  @>>> Z^1(G,\X^{[k]},U) @>>>  H^1(G,\X^{[k]},U) @>>> 0
\end{CD}
$$
of short exact sequences.
\end{itemize}
\end{lemma}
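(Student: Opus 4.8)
The plan is to verify the four items essentially by unwinding the definitions of $d^{[k]}$, the cubic system $\X^{[k]}$, and the notion of type, using only elementary properties of the sign function $\sgn(\w) = w_1 \cdots w_k$ on $\2^k$ and the multiplicativity of the various operators involved. Item (iv) should be proven first, since (i) and (ii) both rely on the fact that $d^{[k]}$ sends coboundaries to coboundaries.

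For \emph{(iv)}: the key observation is that $d^{[k]}: M(\X,U) \to M(\X^{[k]},U)$ is a group homomorphism (being a product of $\pm 1$ powers of a multiplicative gadget), and likewise $d^{[k]}: M(G,\X,U) \to M(G,\X^{[k]},U)$. One checks directly that $d^{[k]}$ intertwines the derivative operator $\mder$ on $\X$ with the derivative operator on $\X^{[k]}$: since $T_g^{[k]}$ acts diagonally on $\X^{[k]}$ (i.e.\ coordinate-wise by $T_g$ — this is how $\X^{[k]}$ is constructed in Definition \ref{cubichk-def}), we have $d^{[k]}(\mder_g F)(g,(x_\w)_\w) = \prod_\w (\mder_g F(x_\w))^{\sgn(\w)} = \mder_g^{[k]} (d^{[k]} F)$. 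Hence $d^{[k]}$ carries $B^1(G,\X,U)$ into $B^1(G,\X^{[k]},U)$. Similarly, expanding the cocycle identity $\rho(g+g',x) = \rho(g,T_{g'}x)\rho(g',x)$ and raising to the $\sgn(\w)$ power and taking products over $\w \in \2^k$ gives $d^{[k]}\rho(g+g', \cdot) = d^{[k]}\rho(g, T_{g'}^{[k]} \cdot) \, d^{[k]}\rho(g',\cdot)$, so cocycles go to cocycles. The commuting diagram then follows formally, the induced vertical map on $H^1$ being well-defined precisely because the two left squares commute.

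For \emph{(iii)}: expand $d^{[k]} f((x_\w)_{\w \in \2^k}) = \prod_{\w} f(x_\w)^{\sgn(\w)}$. Writing $\2^k = \{1,2\}^k$ and peeling off the last coordinate $w_k \in \{1,2\}$, one recognizes this product as an iterated multiplicative derivative: indeed, using the defining "edge" structure of the cube $\X^{[k]}$ (where pairs of points differing only in one coordinate are related by a shift $T_{h_i}$), $d^{[k]} f$ restricted to the relevant generic fibre equals $\mder_{h_1} \cdots \mder_{h_k} f(x)$ up to the coordinate relabeling, as $h_1, \ldots, h_k$ range over $G$ and $x$ ranges over $X$. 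Since $\mu^{[k]}$-almost every point of $\X^{[k]}$ arises this way, $d^{[k]} f = 1$ $\mu^{[k]}$-a.e.\ is equivalent to $\mder_{h_1}\cdots\mder_{h_k} f = 1$ for all $h_i$, which is the definition of $f \in \Phase_{<k}(G,\X,U)$ (or $\Phase_{<k}(\X,U)$). The final sentence is immediate: if $d^{[k]} f = 1$ then $d^{[k]} f$ is certainly a coboundary (the derivative of the constant function $1$), so $f$ is of type $<k$.

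For \emph{(i)}: suppose $\rho$ has type $<k$, so $d^{[k]}\rho = \mder F$ for some $F \in M(\X^{[k]},U)$. I would relate $d^{[k+1]}\rho$ to $d^{[k]}\rho$: the cube $\X^{[k+1]}$ is two copies of $\X^{[k]}$ glued along the extra coordinate, and $d^{[k+1]}\rho((x_\w)_{\w \in \2^{k+1}}) = d^{[k]}\rho((x_{(\w',2)}))/d^{[k]}\rho((x_{(\w',1)}))$ by splitting the product according to the last coordinate (the $\sgn$ over the "$2$"-face is $+\sgn$ over $\2^k$, and over the "$1$"-face is... one must check the sign bookkeeping carefully, noting $\sgn(w_1,\ldots,w_{k+1})$ depends on $w_{k+1}$). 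Substituting $d^{[k]}\rho = \mder F$ on each face exhibits $d^{[k+1]}\rho$ as a coboundary on $\X^{[k+1]}$ (with antiderivative built from $F$ pulled back along the two face maps), giving type $<k+1$.

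For \emph{(ii)}: the group property is immediate from (iv) (which makes $d^{[k]}$ a homomorphism) together with the fact that $B^1(G,\X^{[k]},U)$ is a subgroup of $M(G,\X^{[k]},U)$ — so $M_{<k}(G,\X,U) = (d^{[k]})^{-1}(B^1(G,\X^{[k]},U))$ is a subgroup of $M(G,\X,U)$. That it contains $B^1(G,\X,U)$ is exactly the coboundary half of (iv). The last sentence of (ii) ("cohomologous to type $<k$ implies type $<k$") follows since if $\rho/\rho' \in B^1$ and $\rho'$ has type $<k$, then $\rho = (\rho/\rho')\cdot\rho'$ is a product of two functions of type $<k$, hence of type $<k$ by the group property.

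The main obstacle is not conceptual but bookkeeping: getting the sign conventions in $\sgn(\w) = w_1\cdots w_k$ exactly right when splitting the product $\prod_{\w \in \2^{k+1}}$ over the last coordinate in (i), and correctly matching $d^{[k]} f$ with the iterated derivative $\mder_{h_1}\cdots\mder_{h_k} f$ against the precise description of $\mu^{[k]}$ and $\X^{[k]}$ from Definition \ref{cubichk-def} in (iii). Everything else is formal diagram-chasing.
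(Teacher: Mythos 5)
Your treatment of items (i), (ii), and (iv) is essentially the same as the paper's: for (iv) you verify directly that $d^{[k]}$ intertwines $\mder_g$ on $\X$ with $\mder_{g^{[k]}}$ on $\X^{[k]}$ and sends cocycles to cocycles; for (i) you split $\X^{[k+1]}$ into two copies of $\X^{[k]}$ and build an antiderivative from $F$ pulled back along the two face maps (the paper's antiderivative is $F(\x)/F(\x')$); and for (ii) you observe that $M_{<k}(G,\X,U)=(d^{[k]})^{-1}(B^1(G,\X^{[k]},U))$ is automatically a subgroup and contains $B^1(G,\X,U)$ by (iv). All of this is correct.

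Your argument for (iii), however, has a genuine gap. You assert that in $\X^{[k]}$ ``pairs of points differing only in one coordinate are related by a shift $T_{h_i}$,'' so that $d^{[k]} f$ at a $\mu^{[k]}$-generic point coincides with an iterated derivative $\mder_{h_1}\cdots\mder_{h_k} f(x)$. That is only true in the special case where $\X$ is a finite group $G$ acting on itself by translations (the example after Lemma~\ref{ccs}). For a general ergodic $G$-system, $\mu^{[k]}$ is built recursively as a relatively independent joining over the invariant $\sigma$-algebras $\I_{k-1}$ (Definition~\ref{cubichk-def}); already for $k=1$ one has $\mu^{[1]}=\mu_X\times\mu_X$, so the two coordinates of a typical point of $\X^{[1]}$ are independent and are certainly not related by a shift $T_h$. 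Thus neither direction of your claimed equivalence actually follows from the description you give. The paper instead proves (iii) by induction on $k$: for $k\ge 2$, since $\mder_g f$ has degree $<k-1$, the inductive hypothesis gives $d^{[k-1]}(\mder_g f)=1$ a.e.; the identity $d^{[k-1]}(\mder_g f)=\mder_{g^{[k-1]}}(d^{[k-1]} f)$ then shows $d^{[k-1]}f$ is invariant under the diagonal action, hence constant on a.e.\ ergodic component of $\mu^{[k-1]}$, and the relative-product formula \eqref{def_measures} (Remark~\ref{iterm}) then forces $d^{[k]}f=1$ $\mu^{[k]}$-a.e.; the converse is obtained by reversing these steps, and the base cases $k=0,1$ are handled directly by ergodicity. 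You should replace your ``parametrized cube'' argument with this inductive one, or with some other argument that only uses the actual measure-theoretic structure of $\mu^{[k]}$.
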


\begin{proof} We first show (i).  Let $f$ be a $(G,\X,U)$-function of type $<k$, then we can find a $(\X^{[k]},U)$-function $F$ such that $d^{[k]} f(g,\x) = \mder_{g^{[k]}} F(\x)$ for all $g \in G$ and $\mu^{[k]}$-almost all $\x$.  Expressing $\X^{[k+1]} = \X^{[k]} \times \X^{[k]}$ and using \eqref{def_measures} we conclude that $d^{[k+1]} f(g,\x,\x') = \mder_{g^{[k+1]}} [F(\x)/F(\x')]$ for all $g \in G$ and $\mu^{[k+1]}$-almost all $(\x,\x')$, and so $f$ is of type $<k+1$ as desired.

From (i) we see in particular that coboundaries, being of type $<0$, are of type $<k$.  The claims in (ii) are then easily verified.

To show (iii), we induct on $k$.  The claim is easy for $k=0,1$ (using ergodicity), so suppose that $k \geq 2$ and the claim has already been shown for $k-1$.

Suppose $f$ is a $(G,\X,U)$-phase polynomial of degree $k$.
For any $g \in G$, $\mder_g f$ is a phase polynomial of degree $<k-1$, and so by induction hypothesis $d^{[k-1]}(\mder_g f) = 1$ $\mu^{[k-1]}$-a.e.  Since
$d^{[k-1]}(\mder_g f) = \mder_{g^{[k-1]}}(d^{[k-1]} f)$,
we conclude that $d^{[k-1]} f$ is invariant under the action of the diagonal group $\diag(G^{[k-1]})$.  By Remark \ref{iterm}, $d^{[k-1]}f$ is constant $(\mu^{[k-1]})_s$-almost everywhere for  $P_{k-1}$-a.e $s$.  By Remark \ref{iterm} again, we conclude that $d^{[k]} f=1$ $\mu^{[k]}$-a.e., as desired.  The converse claim follows by reversing all the above steps.

The claim (iv) for coboundaries follows from the identity
$d^{[k]} \mder_g F = \mder_{g^{[k]}} d^{[k]} F$,
valid for any $(\X,U)$-function $F$ and any $g \in G$.  The claim (iv) for cocycles is clear from direct computation.
\end{proof}

The relevance of the type $<k$ concept to us lies in the important observation that abelian extensions of order $<k+1$ arise from functions of type $<k$:

\begin{proposition}\label{ext-type-k}
Let $G$ be a countable abelian group, and let $\X = (X,\B_X,\mu_X, (T_g)_{g \in G})$ be an ergodic $G$-system of order $<k$ for some $k \geq 1$.  Let $U$ be a compact abelian group, and let $\rho$ be a $(G,\X,U)$-cocycle such that the abelian extension $\X \times_\rho U$ is an ergodic $G$-system of order $<k+1$.  Then $\rho$ is a $(G,\X,U)$-cocycle of type $<k$.  In particular, for every character $\chi \in \hat U$, $\chi \circ \rho$ is a $(G,\X,S^1)$-cocycle of type $<k$.
\end{proposition}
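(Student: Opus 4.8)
The plan is to test the hypothesis ``$\X\times_\rho U$ has order $<k+1$'' against the vertical characters of the abelian extension and to translate it into a coboundary statement for the character projections of $d^{[k]}\rho$ on the cube system $\X^{[k]}$. Write $W:=\X\times_\rho U$. Since $\X=\Zcal_{<k}(\X)$ sits inside $\Zcal_{<k}(W)\le\Zcal_{<k+1}(W)$, and intermediate factors of $W$ over $\X$ are of the form $\X\times_{\bar\rho}(U/V)$ for closed subgroups $V\le U$ (with $\bar\rho=\rho\bmod V$), the condition $\Zcal_{<k+1}(W)=W$ is equivalent to $V=\{1\}$, and hence (using that $\E(\cdot\mid\Zcal_{<k+1}(W))$ is equivariant for the vertical $U$-action on $W$, so that each ``pure character'' $f_\chi(x,u):=\chi(u)$ is either $\Zcal_{<k+1}(W)$-measurable or satisfies $\|f_\chi\|_{U^{k+1}(W)}=0$) to: $\|f_\chi\|_{U^{k+1}(W)}>0$ for every $\chi\in\hat U$. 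Conversely, once one knows $\chi\circ d^{[k]}\rho\in B^1(G,\X^{[k]},S^1)$ for all $\chi$, a multiplicative and jointly measurable choice $\chi\mapsto b_\chi$ of antiderivatives (a selection of the kind supplied by Lemma \ref{measurable_choice}), together with the Pontryagin embedding $U\hookrightarrow\prod_{\chi}S^1$, produces an $(\X^{[k]},U)$-function $F$ with $\chi\circ F=b_\chi$ for all $\chi$, whence $\mder F=d^{[k]}\rho$ and $\rho$ is a cocycle of type $<k$, as required. The last assertion of the proposition is then immediate from Lemma \ref{basic}(iv): $\chi\circ\rho$ is again a cocycle, and $d^{[k]}(\chi\circ\rho)=\chi\circ d^{[k]}\rho=\mder(\chi\circ F)$ is a coboundary.

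So the core is to show $\chi\circ d^{[k]}\rho$ is a coboundary on $\X^{[k]}$ for each $\chi$. A direct computation gives $\mder_h f_\chi(x,u)=\chi(\rho(h,x))$, which is the pullback to $W$ of an $S^1$-valued function on $\X$. Since the $U^k$-seminorm is pullback-invariant and satisfies the cubic identity $\|\phi\|_{U^k(\X)}^{2^k}=\int_{\X^{[k]}}d^{[k]}\phi\,d\mu^{[k]}$ for $S^1$-valued $\phi$, feeding this into the recursive definition of $U^{k+1}$ and using $d^{[k]}\bigl(x\mapsto\chi(\rho(h,x))\bigr)=(\chi\circ d^{[k]}\rho)(h,\cdot)$ gives
\[
\|f_\chi\|_{U^{k+1}(W)}^{2^{k+1}}=\lim_{n\to\infty}\E_{h\in\Phi_n}\int_{\X^{[k]}}(\chi\circ d^{[k]}\rho)(h,\x)\,d\mu^{[k]}(\x).
\]
As $d^{[k]}\rho$ is a $(G,\X^{[k]},U)$-cocycle (Lemma \ref{basic}(iv)), $h\mapsto(\chi\circ d^{[k]}\rho)(h,\cdot)$ is an $S^1$-valued cocycle over $(\X^{[k]},(T^{[k]}_g))$, and the mean ergodic theorem (applied to the unitary representation $M_h g:=(\chi\circ d^{[k]}\rho)(h,\cdot)\cdot(g\circ T^{[k]}_h)$ of the amenable group $G$) identifies the right-hand side with $\|P_\chi 1\|_{L^2(\X^{[k]})}^2$, where $P_\chi$ is the orthogonal projection onto the space of $g\in L^2(\X^{[k]})$ with $(\chi\circ d^{[k]}\rho)(h,\cdot)\cdot(g\circ T^{[k]}_h)=g$ for all $h\in G$.

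By the first paragraph $\|f_\chi\|_{U^{k+1}(W)}>0$, so $g_\chi:=P_\chi 1$ is a nonzero twisted-invariant function with $\int_{\X^{[k]}}g_\chi\,d\mu^{[k]}=\|g_\chi\|^2>0$; on the $(T^{[k]}_g)$-invariant set $\{|g_\chi|>0\}$ the function $\overline{g_\chi}/|g_\chi|$ is an antiderivative, so $\chi\circ d^{[k]}\rho$ is at least a $(G,\{|g_\chi|>0\},S^1)$-coboundary. The hypothesis that $\X$ has order $<k$ is genuinely used to upgrade this to $|g_\chi|>0$ $\mu^{[k]}$-almost everywhere, i.e.\ to a coboundary over all of $\X^{[k]}$: a non-null invariant set $\{g_\chi=0\}$ would give rise to a non-trivial $\Zcal_{<k+1}$-null sub-extension of $W$ over $\X$, contradicting $\Zcal_{<k+1}(W)=W$, and ruling this out uses the description of the ergodic decomposition and symmetries of $\mu^{[k]}$ available when $\X$ has order $<k$ (equivalently, one may evaluate $\|\cdot\|_{U^{k+1}(W)}$ directly on functions orthogonal to $\X\times_{\bar\rho}(U/V)$ via the cube group of $W^{[k+1]}\to\X^{[k+1]}$). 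This yields $\chi\circ d^{[k]}\rho\in B^1(G,\X^{[k]},S^1)$ for every $\chi$, completing the proof.

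I expect this globalization to be the main obstacle. The seminorm positivity only produces a coboundary over a positive-measure invariant piece of the (highly non-ergodic) cube system $\X^{[k]}$, and pushing this to all of $\X^{[k]}$ forces one to exploit the order-$<k$ structure of $\X$ — the fine structure of $\X^{[k]}$, its invariant $\sigma$-algebra, and the cube group controlling the fibres of $W^{[k+1]}\to\X^{[k+1]}$. By contrast, the Fourier-analytic packaging (the intermediate-factor description over $\X$ and the measurable-selection lemma for the antiderivatives $b_\chi$) is routine, though it still requires some care.
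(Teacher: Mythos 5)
Your core mechanism is the same as the paper's (and, behind it, Host--Kra's Proposition 6.4): from $\Zcal_{<k+1}(W)=W$ one gets $\|f_\chi\|_{U^{k+1}(W)}>0$, the cubic identity together with $\mder_h f_\chi = (\pi^W_\X)^*\chi(\rho(h,\cdot))$ turns this into a positive mean ergodic average of the $S^1$-cocycle $\chi\circ d^{[k]}\rho$ on $\X^{[k]}$, and the twisted mean ergodic theorem (your $P_\chi$) is exactly the paper's isometry $J$ plus Corollary \ref{uk-norm-I_k-cor} in different clothes. Both arrive at: $\chi\circ d^{[k]}\rho$ is a $(G,A,S^1)$-coboundary on some $\diag(G^{[k]})$-invariant set $A$ of positive $\mu^{[k]}$-measure. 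You are also right that passing from $A$ to all of $\X^{[k]}$ is the real work.

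Where you diverge from the paper is precisely at that globalization step, and the mechanism you sketch does not work. You argue that a non-null $\{g_\chi=0\}$ ``would give rise to a non-trivial $\Zcal_{<k+1}$-null sub-extension of $W$ over $\X$''; but $\{g_\chi=0\}$ is an invariant subset of the (highly non-ergodic) cube system $\X^{[k]}$, and there is no direct way to descend it to a sub-extension of $W$ over $\X$ or to a violation of $\Zcal_{<k+1}(W)=W$. The positivity of $\|f_\chi\|_{U^{k+1}(W)}$ only controls the \emph{integral} of $|g_\chi|^2$, not its essential infimum. What the paper (deferring to \cite[Proposition 6.4]{hk-cubes}) actually does is translate $A$ by side transformations: since both $d^{[k]}\rho$ and $d^{[k-1]}\rho$ are cocycles, $(T_{g'})^{[k]}_\alpha(\chi\circ d^{[k]}\rho)$ is cohomologous to $\chi\circ d^{[k]}\rho$ on all of $\X^{[k]}$ (with explicit antiderivative coming from the cocycle identity, exactly as in the proof of Lemma \ref{descent}), hence $\chi\circ d^{[k]}\rho$ is a coboundary on each translate $(T_{g'})^{[k]}_\alpha A$; ergodicity of the side group $\partial^{[k]}G$ on $\X^{[k]}$ (Lemma \ref{erglem}(i)) makes the union of translates co-null, and the local antiderivatives glue because on overlaps they differ by $\diag(G^{[k]})$-invariant functions. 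This is the step you need to supply; it does not hinge on ``the description of the ergodic decomposition available when $\X$ has order $<k$'' in the way you suggest.

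There is a second, smaller gap in passing from ``$\chi\circ d^{[k]}\rho$ is a coboundary for every $\chi$'' to ``$d^{[k]}\rho$ is a $(G,\X^{[k]},U)$-coboundary.'' Choosing antiderivatives $b_\chi$ that are simultaneously a homomorphism in $\chi$ is exactly the Moore--Schmidt obstruction, and Lemma \ref{measurable_choice} is not the right tool: it concerns measurable selection over a \emph{compact} parameter group and says nothing about multiplicativity in $\chi\in\hat U$ (a discrete group, where measurability is automatic anyway). The clean route is to run the coboundary argument directly with the $U$-valued cocycle $d^{[k]}\rho$, building a single $U$-valued antiderivative from the isometry $J$ (as in Host--Kra) rather than reassembling one from $S^1$-valued pieces. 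That said, for the applications in this paper only the ``in particular'' clause (that each $\chi\circ\rho$ is of type $<k$) is used, and that part does follow from what you have, modulo the globalization step above.
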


\begin{proof}  See \cite[Proposition 6.4]{hk-cubes} (the result there is only stated for $\Z$-actions, but the proof generalizes without difficulty).  We give a brief sketch of the proof here, to indicate why the type $<k$ condition arises naturally.  For simplicity let us take $U=S^1$.  Let $\phi$ be the $(X \times_\rho U,S^1)$-function $\phi(z,u) := u$, then the function $J: L^2(\X^{[k]}) \to L^2((\X \times_\rho U)^{[k]})$ defined by $Jf(\z,\u) = f(\z) d^{[k]} \phi(\z,\u)$ is an isometry whose range is a closed $\diag(G^{[k]})$-invariant space containing $d^{[k]} \phi$.  On the other hand, since $\X \times_\rho U$ has order $<k+1$, we see from Lemma \ref{softfactor} that $\|\phi\|_{U^{k+1}(\X \times_\rho U)} \neq 0$.  Applying Corollary \ref{uk-norm-I_k-cor}, this implies that
$(\pi^{(\X \times_\rho U)^{[k]}}_{\I_k(\X \times_\rho U)})_* d^{[k]} \phi \neq 0$.
By the ergodic theorem, we conclude that the image of $J$ contains a $\diag(G^{[k]})$-invariant function $Jf$.  Unpacking the definitions, this implies that
$f(T^{[k]}_g \z) d^{[k]} \rho(g,\z)=f(\z)$
for $\mu^{[k]}$ a.e $\z$, which roughly speaking asserts that $\rho$ is of type $<k$ on the support of $f$, which is a set of positive measure.  By various cocycle identities one can also conclude that $\rho$ is of type $<k$ on various shifts of the support of $f$, which by ergodicity can be glued together to establish a global type $<k$ condition; see the proof of \cite[Proposition 6.4]{hk-cubes} for details. 
\end{proof}

In view of Proposition \ref{ext-type-k}, Theorem \ref{main-3} will now follow by applying the following result to the $(\Fw,X,S^1)$-cocycle $\chi \circ \rho$.

\begin{theorem}[Third reduction]\label{main-4} Let $\F$ be a finite field, let $m, k \geq 1$, and let $\X$ be an ergodic $\Fw$-system of order $<k$.  Let $f \in M_{<m}(\Fw, \X, S^1)$ be a $(\Fw, \X,S^1)$-function of type $<m$.  Then $f$ is $(\Fw, \X, S^1)$-cohomologous to a $(\Fw, \X,S^1)$-phase polynomial of degree $<O_{k,m}(1)$.  In other words, 
$$ M_{<m}(\Fw, \X, S^1) \subset \Phase_{<O_{k,m}(1)}(\Fw,\X,S^1) \cdot B^1(\Fw,\X,S^1).$$
\end{theorem}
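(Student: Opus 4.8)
The plan is to prove Theorem \ref{main-4} by a double induction: an outer induction on the order $k$ of the system $\X$, and an inner induction on the type $m$ of the function $f$. The base cases are essentially trivial: if $k=1$ then $\X$ is a point and everything is constant, while if $m=0$ then $f$ is already a coboundary (Example after Definition \ref{fin-def}), hence cohomologous to the trivial phase polynomial. For the inductive step, I would begin by using Proposition \ref{typek-prop} to write $\X \equiv \Y \times_\rho U$ where $\Y := \Zcal_{<k-1}(\X)$ is an ergodic $\Fw$-system of order $<k-1$ and $U$ is a compact abelian group. By the outer induction hypothesis, $\Y$ is an Abramov system of order $<O_{k}(1)$; more precisely, every $(\Fw,\Y,S^1)$-function of bounded type on $\Y$ is cohomologous to a bounded-degree phase polynomial. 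Since $\Fw$ has finite characteristic, $U$ is a torsion group, and so every neighborhood of the identity in $U$ contains an open subgroup — this will be exploited repeatedly.

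The heart of the argument is the ``vertical differentiation and integration'' scheme sketched in the overview. The group $U$ acts on $\X$ by $V_u:(y,v)\mapsto(y,uv)$, commuting with the $\Fw$-action, and one differentiates $f$ vertically via $\mder_u f(g,x) := f(g,V_u x)/f(g,x)$. The key technical input is that each vertical differentiation strictly lowers the type: if $f$ has type $<m$, then $\mder_u f$ has type $<m-1$, uniformly in $u$. Iterating, after $m$ steps one finds that $\mder_{t_1}\cdots\mder_{t_m} f$ has type $<0$, i.e.\ is a coboundary, so one obtains \eqref{mder-iter} with antiderivatives $F_{t_1,\ldots,t_m}(x)$ that — and this is crucial — can be chosen jointly measurable in the parameters $t_i$ (this is where Lemma \ref{measurable_choice} and the Steinhaus-type Lemma \ref{tor-lem} enter). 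One then integrates the derivatives away one at a time, maintaining an equation of the form \eqref{mderj}: at stage $j$ one has a function $\tilde f$ differing from $f$ by a bounded-degree phase polynomial, with $\mder_{t_1}\cdots\mder_{t_j}\tilde f(g,x) = c_{t_1,\ldots,t_j}(g,x)\,\mder_g F_{t_1,\ldots,t_j}(x)$. The left side is a cocycle in $t_j$, which forces approximate linearity on $F_{t_1,\ldots,t_j}$ in $t_j$; using the measurability in the parameters one conjugates this into genuine linearity on an open subgroup $U'\le U$, and handles the finite quotient $U/U'$ by splitting into cyclic factors and straightening explicitly. The output is that one may remove the derivative $\mder_{t_j}$, at the cost of an extra $U$-invariant (hence essentially $\Y$-measurable) factor, which by the outer induction hypothesis on $\Y$ is cohomologous to a bounded-degree phase polynomial and can be absorbed. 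Iterating down to $j=0$ yields the theorem.

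The main obstacle, I expect, is the integration step — passing from \eqref{mderj} at level $j$ to level $j-1$. Specifically, the difficulty is twofold: first, ensuring the approximate-linearity-to-linearity upgrade is performed measurably in the parameters $t_1,\ldots,t_{j-1}$, so that the resulting objects remain in the right function spaces and the scheme can be iterated; and second, controlling the $U/U'$ part, where the torsion nature of $U$ is both a help (open subgroups exist) and a nuisance (one must straighten cocycle data along each cyclic factor, and the degrees of the phase polynomials produced must stay bounded in terms of $k$ and $m$ only). Bookkeeping the degrees carefully — so that the final phase polynomial has degree $<O_{k,m}(1)$ rather than an unbounded quantity — is the part requiring the most care, though in the present low-characteristic setting we need only boundedness, not the sharp value. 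The measurable choice of antiderivatives and the Steinhaus/Bogolyubov-type results are the technical backbone making all of this rigorous; without the joint measurability in the $t_i$ the iteration would break down at the first integration.
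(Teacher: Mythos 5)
Your proposal is correct and follows essentially the same route as the paper: the paper first isolates the Conze-Lesigne-type statement as Theorem \ref{main-6}, derives Theorem \ref{main-4} from it by the downward induction on the number $j$ of vertical derivatives (using Lemma \ref{vert-lem} at $j=m$, then Lemma \ref{descent} and the outer induction on $k$ to descend each $U$-invariant remainder), and proves Theorem \ref{main-6} by the reduction to finite $U$ (Proposition \ref{fin-prop}) and the finite-group case (Proposition \ref{propfin}) — precisely the measurable-selection, Steinhaus/open-subgroup, and cyclic-factor straightening steps you describe. One small refinement worth noting: Lemma \ref{vert-lem} actually drops the type by $\min(k,m)$ at each vertical differentiation rather than by $1$, though for your purposes a drop of at least $1$ per step is all that is needed.
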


\begin{remark} When $k=1$, the only ergodic $\Fw$-systems of order $0$ are trivial, and Theorem \ref{main-4} is easily verified in this case.  Unwinding all the previous reductions, this already establishes the $k=1$ case of Theorem \ref{main-thm}; thus $\Zcal_{<2}(\X)$ is precisely the Kronecker system spanned by the eigenfunctions of $\X$ (i.e. the elements of $\Phase_{<2}(\X)$).  Of course, as is well known, this fact can also be established more directly by classical spectral theory methods; see for instance the discussion just before \cite[Lemma 4.2]{hk-cubes}.  

While we will only need Theorem \ref{main-4} in the case when $f$ is a $(\Fw, \X,S^1)$-cocycle of type $<m$, for inductive purposes it is important that we generalize the claim to the case where $f$ is a $(\Fw, \X,S^1)$-function of type $<m$.
\end{remark}

We will prove Theorem \ref{main-4} in later sections.  For now, let us observe that we can use Theorem \ref{main-thm} to obtain some structural control on systems of order $<k$.  We say that a group $U$ is \emph{$m$-torsion} for some $m \geq 1$ if we have $u^m = 1$ for all $u \in U$.

\begin{lemma}[Structure groups for $\Fw$-systems are finite torsion]\label{torlemma}  Let $k \geq 2$ be such that Theorem \ref{main-4} holds for $k-1$.  Let $\F$ be a finite field of characteristic $p$, let $\X$ be an ergodic $\Fw$-system of order $<k-1$, and let $\X \times_\rho U$ be an abelian extension of order $<k$ by a compact abelian group which is also ergodic.  Then $U$ is $p^m$-torsion for some $m = O_{k}(1)$.  If $p$ is sufficiently large depending on $k$, then we can take $m=1$.
\end{lemma}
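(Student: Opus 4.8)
The plan is to leverage Theorem \ref{main-4} (assumed for $k-1$) together with the character-by-character analysis of cocycles, exploiting the fact that every character $\chi \in \hat U$ of a compact abelian group, when composed with $\rho$, gives an $(\Fw,\X,S^1)$-cocycle which — by Proposition \ref{ext-type-k} — is of type $<k-1$, hence (by Theorem \ref{main-4} for $k-1$) cohomologous to an $(\Fw,\X,S^1)$-phase polynomial of degree $<O_k(1)$. The key structural input is that $\Fw$ has characteristic $p$: any $(\Fw,\X,S^1)$-phase polynomial $\psi$ of degree $<d$ satisfies a strong torsion constraint because $\mder_{h}^{p}\psi = \mder_{ph}\psi \cdot (\text{lower order corrections})$, and more precisely, since $ph = 0$ in $\Fw$, iterating the multiplicative derivative in a fixed direction $h$ forces $\psi^{p^{d-1}}$ (roughly) to be a phase polynomial of degree $<1$, i.e.\ constant in the ergodic case; one should phrase this as: a $(\Fw,\X,S^1)$-phase polynomial of degree $<d$ is $p^{\lceil \log_p d\rceil}$-torsion modulo constants, or more crudely $p^{d}$-torsion after normalizing. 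So if $\chi\circ\rho$ is cohomologous to a phase polynomial of bounded degree, then $(\chi\circ\rho)^{p^m}$ is cohomologous to a constant for some $m=O_k(1)$.

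First I would fix $\chi \in \hat U$ and apply Proposition \ref{ext-type-k} to conclude $\chi\circ\rho \in M_{<k-1}(\Fw,\X,S^1)$, then Theorem \ref{main-4} for $k-1$ to write $\chi\circ\rho = \psi_\chi \cdot \mder F_\chi$ with $\psi_\chi$ an $(\Fw,\X,S^1)$-phase polynomial of degree $<D$, $D = O_k(1)$, and $F_\chi \in M(\X,S^1)$. Next I would establish the torsion lemma for phase polynomials over $\Fw$: if $\psi \in \Phase_{<D}(\Fw,\X,S^1)$ then $\psi^{p^{m_0}} \in \Phase_{<1}(\Fw,\X,S^1)$ for $m_0 = O(\log D)$ (using that $\mder$ is a homomorphism $\Phase_{<j+1}\to\Phase_{<j}$ by Lemma \ref{trivpoly-lem}(ii) and that raising to the $p$th power kills one degree: $\mder_{h_1}\cdots\mder_{h_{j}}(\psi^p) = \mder_{ph_1}\mder_{h_2}\cdots\mder_{h_{j}}\psi = 1$ since $ph_1 = 0$, which drops the degree by at least one per application of $(\cdot)^p$, so $\psi^{p^{D-1}}$ has degree $<1$; in the ergodic case this means $\psi^{p^{D-1}}$ is a constant $(\Fw,\X,S^1)$-function, i.e.\ lies in $M(\Fw,S^1)$). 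Then $(\chi\circ\rho)^{p^{D-1}} = \psi_\chi^{p^{D-1}}\cdot \mder(F_\chi^{p^{D-1}})$ is cohomologous to an element of $M(\Fw,S^1)$; since $\X\times_\rho U$ is assumed ergodic and this element depends only on $g$, I can further argue (as in the reduction to the Kronecker factor / using ergodicity of the base) that it must be trivial, so $(\chi\circ\rho)^{p^{D-1}} \in B^1(\Fw,\X,S^1)$.

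Finally I would push this back from characters to $U$ itself. The map $\chi \mapsto [\chi\circ\rho] \in H^1(\Fw,\X,S^1)$ is a homomorphism from $\hat U$ to $H^1(\Fw,\X,S^1)$, and we have just shown its image is $p^{D-1}$-torsion. Now the ergodicity of $\X\times_\rho U$ means the cocycle $\rho$ is not cohomologous to one taking values in a proper closed subgroup of $U$ (otherwise the extension would not be ergodic — this is the standard fact that ergodicity forces $\rho$ to be ``non-degenerate''), and combined with Pontryagin duality this forces $\hat U$ itself, modulo the kernel of $\chi\mapsto[\chi\circ\rho]$, to be $p^{D-1}$-torsion; but the kernel corresponds to characters trivial on a closed subgroup, which by ergodicity must be all of $U$, so the kernel is trivial, hence $\hat U$ is $p^{D-1}$-torsion, hence $U$ is $p^{D-1}$-torsion, giving $m = D-1 = O_k(1)$. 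For the high-characteristic refinement $m=1$: when $p > D$ (equivalently $p$ large depending on $k$), the sharper exact counterparts of Theorem \ref{main-4} — referenced in the paper as the high-characteristic analysis, e.g.\ Lemma \ref{L:values-in-F_p} — give that $\psi_\chi$ can be taken of degree $<k-1 \le p$, and a degree $<p$ phase polynomial over $\Fw$ is already $p$-torsion modulo constants (one application of $(\cdot)^p$ suffices because $\mder_{ph_1}=\mathrm{id}$-on-the-constant-direction collapses all $p$ repeated derivatives at once when $p$ exceeds the degree, by a $\binom{p}{j}\equiv 0$ argument), so the same chase yields $m=1$. The main obstacle I anticipate is the last step: cleanly deducing the torsion of $U$ from the torsion of $\hat U$-image in cohomology, which requires carefully invoking the ergodicity hypothesis to rule out that $\rho$ secretly lives on a subgroup — I would handle this via the standard ergodic-decomposition/Mackey-range characterization, or alternatively by directly showing the set of characters $\chi$ with $(\chi\circ\rho)^{p^{D-1}}\in B^1$ is a closed subgroup of $\hat U$ that separates points (using the already-proven per-character statement), and then applying Pontryagin duality to that subgroup.
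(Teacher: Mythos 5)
Your overall plan matches the paper's proof in its first three moves: reduce to characters $\chi\in\hat U$, use Proposition~\ref{ext-type-k} to get that $\chi\circ\rho$ has type $<k-1$, invoke Theorem~\ref{main-4} for $k-1$ to write $\chi\circ\rho = q_\chi\mder F_\chi$ with $q_\chi$ a phase polynomial (hence a cocycle) of degree $<O_k(1)$, and then use the finite-characteristic torsion of $\Fw$-phase polynomials (Lemma~\ref{L:values-in-F_p}) to conclude that $q_\chi$ is $C_{p^m}$-valued for $m=O_k(1)$ (with $m=1$ for $p$ large). Where you differ is the closing step. You argue that the homomorphism $\chi\mapsto[\chi\circ\rho]\in H^1(\Fw,\X,S^1)$ has trivial kernel by ergodicity (a $\chi$ with $\chi\circ\rho$ a coboundary yields a nonconstant invariant function $F(x)^{-1}\chi(u)$ on $\X\times_\rho U$), and that its image is $p^{O_k(1)}$-torsion, so $\hat U$ is too. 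The paper instead observes that $\X\times_{\chi\circ\rho}\chi(U)$ is an ergodic factor and uses $F_\chi$ to twist the measure, exhibiting the system as $\X\times_{q_\chi}S^1$ with a shifted measure; then any $C_{p^m}$-invariant vertical set is invariant, forcing $\chi(U)\subset C_{p^m}$. Both routes work; yours has the virtue of making the injectivity of $\chi\mapsto[\chi\circ\rho]$ explicit, while the paper's stays inside the measure-theoretic frame and avoids phrasing things in $H^1$.

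Two local corrections are needed. First, the identity you write to drop degree, $\mder_{h_1}\cdots\mder_{h_j}(\psi^p)=\mder_{ph_1}\mder_{h_2}\cdots\mder_{h_j}\psi$, is false; $\mder_h(\psi^p)=(\mder_h\psi)^p$, and the relation between $(\cdot)^p$ and degree over $\Fw$ comes from the binomial/Neumann-series calculation in Lemma~\ref{L:values-in-F_p}(i), not from $ph=0$ acting on a single derivative. You should simply cite Lemma~\ref{L:values-in-F_p}(iv), which applies directly to the \emph{cocycle} $q_\chi$ and gives that $q_\chi$ takes values in $C_{p^{\lfloor D/p\rfloor+1}}$. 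Second, if you instead use part~(ii) (phase polynomials are torsion \emph{modulo constants}) you are off by one power of $p$: reducing $q_\chi^{p^{D-1}}$ to a constant $c(g)$ does not put $(\chi\circ\rho)^{p^{D-1}}$ in $B^1$, since $c$ may be a nontrivial character of $\Fw$. You then need one further $p$-th power (using that any character $\Fw\to S^1$ is $p$-torsion) to land in $B^1$, or again just use Lemma~\ref{L:values-in-F_p}(iv), which already absorbs the constant direction because it is stated for cocycles. With either fix the argument closes and gives the claimed $m=O_k(1)$, and the $m=1$ refinement for $p$ large follows exactly as you indicate.
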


\begin{proof}  Since characters separate points, it suffices to show that $\chi(U)$ is a $p^m$-torsion group for every character $\chi \in \hat U$. By Proposition \ref{ext-type-k}, $\chi \circ \rho$ is a $(\Fw,\X,S^1)$-cocycle of type $<k-1$, and thus by Theorem \ref{main-4} we have $\chi \circ \rho = q_\chi \mder F$ for some $(\Fw,\X,S^1)$-phase polynomial $q_\chi$ of degree $<O_{k}(1)$ and some $(\X,S^1)$-function $F$.  Clearly $q_\chi$ is also a cocycle.  By Lemma \ref{L:values-in-F_p}, $q_\chi$ takes values in the cyclic group $C_{p^m}$ for some $m = O_{k}(1)$, and for $p$ sufficiently large depending on $k$, one can take $m=1$.

As $\chi$ is a character, $\chi(U)$ is a compact subgroup of $S^1$.  The system $\X \times_{\chi \circ \rho} \chi(U)$ is then a factor of $\X \times_\rho U$ and is therefore ergodic.  Since  $\chi \circ \rho = q_\chi \mder F$, the system $\X'$ defined with the same set, $\sigma$-algebra, and action as the system $\X \times_{q_\chi} S^1$, but with the measure $\mu_X \times \mu_{S^1}$ replaced with the measure 
$$\int_{\X'} f(x,u)\ d\mu_{\X'} := \int_\X \int_{\chi(U)} f( x, F(x) u )\ d\mu_{\chi(U)}(u) d\mu_X(x),$$
is equivalent to $\X \times_{\chi \circ \rho} \chi(U)$ and thus also ergodic.  On the other hand, since $q_\chi$ takes values in $C_{p^m}$, any set of the form $X \times A$, where $A$ is invariant under $C_{p^m}$, will be invariant in $\X'$.  These two statements are only consistent with each other if $\chi(U)$ is a subgroup of $C_{p^m}$, and is thus $p^m$-torsion, as desired.
\end{proof}


\begin{theorem}[Structure theorem]\label{struc-thm} Let $k \geq 1$ be such that Theorem \ref{main-4} holds for all smaller values of $k$.  Let $\F$ be a finite field of characteristic $p$, and let $\X$ be an ergodic $\Fw$-system.  Then $\Zcal_{<1}(\X)$ is trivial, and for all $2 \leq j \leq k$, we can write
$ \Zcal_{<j}(\X) \equiv \Zcal_{<j-1}(\X) \times_{\rho_{j-1}} U_{j-1}$
where $U_{j-1}$ is $p^m$-torsion for some $m =O_{k,p}(1)$, and $\rho_{j-1}$ is a $(\Fw,\Zcal_{<j-1}(\X),U_{j-1})$-phase polynomial cocycle of degree $O_{k}(1)$.  In particular, we have
$$ \Zcal_{<k}(\X) \equiv U_0 \times_{\rho_1} U_1 \times_{\rho_2} \ldots \times_{\rho_{k-1}} U_{k-1}$$
where $U_0$ is trivial.
\end{theorem}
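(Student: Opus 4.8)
The plan is to argue by induction on $j$, building the tower one floor at a time, with Lemma \ref{torlemma} and the cases $<k$ of Theorem \ref{main-4} (available by hypothesis) supplying what is needed at each floor. The base case $j=1$ is immediate: a factor of an ergodic system is ergodic, and $\Zcal_{<1}$ of an ergodic system is a point (Example \ref{smallk}), which we take for the trivial group $U_0$.

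For the inductive step, fix $2 \le j \le k$ and suppose $\Zcal_{<j-1}(\X)$ has already been exhibited as a tower of the stated form; in particular it is an ergodic $\Fw$-system of order $<j-1$. The factor $\Zcal_{<j}(\X)$ is ergodic of order $<j$, and by the nesting of characteristic factors (see \eqref{mono-2}) its $<j-1$ characteristic factor is again $\Zcal_{<j-1}(\X)$, so Proposition \ref{typek-prop} provides a compact abelian group $U$ and a $(\Fw,\Zcal_{<j-1}(\X),U)$-cocycle $\rho$ with $\Zcal_{<j}(\X) \equiv \Zcal_{<j-1}(\X) \times_\rho U$, this extension being ergodic and of order $<j$. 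Since $j \le k$, Theorem \ref{main-4} holds for $j-1$, so Lemma \ref{torlemma} (with its parameter ``$k$'' set to $j$) shows $U$ is $p^m$-torsion for some $m = O_k(1)$; we put $U_{j-1} := U$. By Proposition \ref{ext-type-k}, $\rho$ has type $<j-1$, hence so does $\chi \circ \rho$ for every $\chi \in \hat U$, and Theorem \ref{main-4} on the order $<j-1$ system $\Zcal_{<j-1}(\X)$ (legitimate since $j-1<k$) gives a factorization $\chi \circ \rho = q_\chi\,\mder F_\chi$ with $q_\chi$ an $S^1$-valued phase polynomial cocycle of degree $<D$, $D = O_k(1)$ \emph{independent of} $\chi$, and $F_\chi \in M(\Zcal_{<j-1}(\X),S^1)$.

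The only non-formal step is to assemble the family $(q_\chi,F_\chi)_{\chi\in\hat U}$ into a single $U$-valued phase polynomial cocycle $\rho_{j-1}$ of degree $O_k(1)$ that is $(\Fw,\Zcal_{<j-1}(\X),U)$-cohomologous to $\rho$; granting this, Remark \ref{measure-equiv} lets us replace $\rho$ by $\rho_{j-1}$ without changing the isomorphism type of $\Zcal_{<j}(\X)$, which closes the induction, and the final displayed identity follows by iterating $\Zcal_{<j}(\X) \equiv \Zcal_{<j-1}(\X) \times_{\rho_{j-1}} U_{j-1}$ over $j=2,\dots,k$. To perform the assembly I would make the selection $\chi \mapsto (q_\chi,F_\chi)$ measurable in $\chi$ (cf. Lemma \ref{measurable_choice}), present $U$ as an inverse limit of finite $p^m$-torsion abelian quotients, split each quotient into cyclic factors $C_{p^a}$ with $a \le m$, and treat one cyclic coordinate at a time: for a $C_{p^a}$-valued cocycle written $\sigma = q\,\mder F$ one has $q^{p^a} = \mder(F^{-p^a}) \in \mder \Phase_{<D+1}(\Zcal_{<j-1}(\X),S^1)$ by \eqref{integ}, and since $\Zcal_{<j-1}(\X)$ is (inductively) a tower of $C_{p^{O_k(1)}}$-extensions, Lemma \ref{L:values-in-F_p} pins the relevant antiderivative to a coset of $C_{p^{O_k(1)}}$; normalising $F$ by a constant then makes $F$, and hence $q = \sigma\,\mder(F^{-1})$, take values in $C_{p^{O_k(1)}}$ while leaving the degree at $<D$. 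This assembly is the main obstacle, and its difficulty is uniformity: one must hold the degree at $O_k(1)$ \emph{simultaneously} over all characters and all levels of the inverse limit, and hold the torsion of the values at $p^{O_k(1)}$ so the limiting cocycle lands in a group of the right type; it is the character-independence of $D = O_k(1)$ from Theorem \ref{main-4}, together with the torsion of $U$ and the inductively known shape of $\Zcal_{<j-1}(\X)$, that makes this go through, with the control of the ``integration constants'' produced by \eqref{integ} (via Lemma \ref{L:values-in-F_p}) being the delicate point.
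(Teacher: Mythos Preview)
Your overall strategy coincides with the paper's: induct on $j$, invoke Proposition \ref{typek-prop} to write $\Zcal_{<j}(\X) \equiv \Zcal_{<j-1}(\X) \times_\rho U$, use Lemma \ref{torlemma} for the torsion, and apply Theorem \ref{main-4} to each character $\chi\circ\rho$. The divergence is in the assembly step, and there is a genuine gap there.

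First, two simplifications: the inverse-limit presentation and the measurable-selection lemma are both unnecessary. A compact abelian $p^m$-torsion group is already (topologically) a direct product $\prod_\chi C_{p^{a_\chi}}$ of cyclic $p$-groups with $a_\chi\le m$ (this is \cite[Chapter 5, Theorem 18]{morris}, used verbatim in the paper), so one can work coordinate by coordinate on this product without any inverse limits and without any measurability issues in $\chi$.

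The real gap is this. For the coordinate $\chi:U\to C_{p^{a}}$ you obtain $\chi\circ\rho = q_\chi\,\mder F_\chi$ with $q_\chi$ a phase polynomial of degree $<D$. From $(\chi\circ\rho)^{p^a}=1$ you correctly deduce that $F_\chi^{p^a}$ is a phase polynomial, and Lemma \ref{L:values-in-F_p} then places $F_\chi$ (after a constant rotation) in $C_{p^{O_k(1)}}$. But that bound is generically \emph{larger} than $p^a$: you end up with $F_\chi,q_\chi \in C_{p^M}$ for some fixed $M=O_k(1)$, not in $C_{p^{a}}$. Consequently $(q_\chi)_\chi$ and $(F_\chi)_\chi$ define maps into $\prod_\chi C_{p^M}$, not into $U=\prod_\chi C_{p^{a_\chi}}$, and you do not get a $(\Fw,\Zcal_{<j-1}(\X),U)$-cohomology between $\rho$ and a $U$-valued phase polynomial. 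Normalising $F_\chi$ by a scalar cannot repair this: it does not reduce the exponent from $M$ down to $a$.

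The paper closes this gap with Corollary \ref{root}. Knowing $F_\chi^{p^a}$ is a phase polynomial of degree $<O_k(1)$, one produces a phase polynomial $\tilde F_\chi$ (also of degree $<O_k(1)$) with $\tilde F_\chi^{\,p^a}=F_\chi^{\,p^a}$; then $F'_\chi:=F_\chi/\tilde F_\chi$ lies \emph{exactly} in $C_{p^{a}}$, and $q'_\chi:=q_\chi\,\mder\tilde F_\chi$ is still a phase polynomial of degree $<O_k(1)$ which, since $\chi\circ\rho$ and $\mder F'_\chi$ are both $C_{p^a}$-valued, is forced into $C_{p^a}$ as well. Now $(q'_\chi)_\chi$ and $(F'_\chi)_\chi$ land in $U$, giving $\rho = q'\,\mder F'$ with $q'$ a $(\Fw,\Zcal_{<j-1}(\X),U)$-phase polynomial cocycle of degree $<O_k(1)$, and Remark \ref{measure-equiv} finishes. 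So the missing ingredient in your sketch is precisely the root-extraction Corollary \ref{root}; once you add it (and drop the inverse-limit detour), your argument becomes the paper's.
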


\begin{proof}  Fix $\F$, $\X$. The triviality of $\Zcal_{<1}(\X)$ follows from ergodicity.
 Now suppose inductively that $k \geq 2$, and the claim has already been proven for smaller values of $k$. By Proposition \ref{typek-prop}, we can write
$\Zcal_{<k}(\X) \equiv \Zcal_{<k-1}(\X) \times_{\rho_{k-1}} U_{k-1}$
for some compact abelian group $U_{k-1}$ and some $(\Fw,\Zcal_{<k-1}(\X),U_{k-1})$-cocycle $\rho_{k-1}$.  By Lemma \ref{torlemma}, the compact abelian group $U_{k-1}$ is thus $p^m$-torsion for some $m=O_{k}(1)$ (with $m=1$ if $p$ is large enough).   This implies (see \cite[Chapter 5, Theorem 18]{morris}) that $U_{k-1}$ is topologically isomorphic to the direct product of some cyclic $p$-groups $C_{p^m}$ for $m = O_k(1)$.  Let $\chi: U_{k-1} \to C_{p^m}$ be one of the coordinate maps.  By Theorem \ref{main-4}, $\chi \circ \rho_{k-1}$ is cohomologous to a $(\Fw,\Zcal_{<k-1}(\X),S^1)$-phase polynomial cocycle $q_{\chi}$ of degree $<O_{k}(1)$, which by Lemma \ref{L:values-in-F_p} also takes values some $p$-group $C_{p^n}$ for some $n = O_{k}(1)$.  By increasing $n$ if necessary we can take $n \geq m$; but note that if $p$ is sufficiently large depending on $k$ then we can take $n=m=1$.  

By construction, there exists a $(\Zcal_{<k-1}(\X),S^1)$-function $F_\chi$ such that
$\chi \circ \rho_{k-1} = q_{\chi} \mder F_\chi$.  Since $(\chi \circ \rho_{k-1})^{p^m} = 1$, we conclude that $1 = q_\chi^{p^m} \mder F_\chi^{p^m}$.

We claim that there exists a $(\Zcal_{<k-1}(\X),S^1)$-phase polynomial $\tilde F_\chi$ of degree $<O_{k}(1)$ such that $F_\chi^{p^m} = \tilde F_\chi^{p^m}$.  There are two cases.  If $p$ is sufficiently large depending on $k$, then $q_\chi^{p^m}=1$, so by ergodicity $F_\chi^{p^m}$ is constant, and the claim is trivial.  Now suppose instead that $p = O_k(1)$.  Then $\mder F_\chi^{p^m}$ is a $(\Fw,\Zcal_{<k-1}(\X),S^1)$ phase polynomial of degree $<O_{k}(1)$, and so (by \eqref{integ}) $F_\chi^{p^m}$ is a $(\Zcal_{<k-1}(\X),S^1)$-phase polynomial of degree $<O_{k}(1)$.  By Lemma \ref{root}, we can find a $(\Zcal_{<k-1}(\X),S^1)$-phase polynomial $\tilde F_\chi$ of degree $<O_{k}(1)$ such that $F_\chi^{p^m} = \tilde F_\chi^{p^m}$, as claimed.  

Now write $q'_\chi := q_\chi \mder \tilde F_\chi$ and $F'_\chi := F_\chi/\tilde F_\chi$, then we have
$\chi \circ \rho = q'_\chi \mder F'_\chi$.
Since $F'_\chi$ takes values in $C_{p^m}$, and $q'_\chi$ does also.  By construction, $q'_\chi$ is thus a $(\Fw,\X,C_{p^m})$-phase polynomial of degree $<O_{k}(1)$.

If we let $q' := (q'_\chi)_\chi$ and $F' := (F_\chi)_{\chi}$, where $\chi$ ranges over all the coordinate projection maps, then $q'$ is a $(\Fw,\X,U)$-phase polynomial of degree $<O_{k}(1)$ and
$\rho = q' \mder F'$.
Thus $\rho$ is cohomologous to a phase polynomial of degree $<O_{k}(1)$, and thus we may assume 
(see Remark \ref{measure-equiv}) that $\rho$ is equal to a phase polynomial of degree $<O_{k}(1)$.  The claim then follows from the induction hypothesis.
\end{proof}

Our remaining task is to prove Theorem \ref{main-4}.

\section{Reduction to solving a Conze-Lesigne type equation}

To prove Theorem \ref{main-4} we will use two lemmas to reduce matters to solving a certain equation of Conze-Lesigne type.
The first lemma allows one to descend a type condition on an extension to a type condition on a base, worsening the type if necessary:

\begin{lemma}[Descent of type]\label{descent}  Let $G$ be a countable abelian group, let $\Y$ be a $G$-system, let $k, m \geq 1$, and let $\X = \Y \times_\rho U$ be an ergodic abelian extension of $\Y$ by a $(G,\Y,U)$-phase polynomial cocycle $\rho$ of degree $<m$.  Let $\pi: X \to Y$ be the factor map, and let $f$ be a $(\Y,S^1)$-function such that $\pi^* f$ is of type $<k$.  Then $f$ is of type $<k+m+1$.
\end{lemma}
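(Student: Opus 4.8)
\textbf{Proof strategy for Lemma \ref{descent}.}

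The plan is to lift the coboundary condition witnessing ``$\pi^* f$ has type $<k$'' from $\X^{[k]}$ down to $\Y^{[k]}$, and absorb the obstruction to doing this into the extra degree $m+1$. Recall that $\pi^* f$ of type $<k$ means that $d^{[k]}(\pi^* f)$ is a $(G,\X^{[k]},S^1)$-coboundary: there exists $H \in M(\X^{[k]}, S^1)$ with $d^{[k]}(\pi^* f)(g,\x) = \mder_{g^{[k]}} H(\x)$. Since $\pi^* f = f \circ \pi^{[k]}$ (where $\pi^{[k]} : \X^{[k]} \to \Y^{[k]}$ is the induced factor map on cubes), we have $d^{[k]}(\pi^* f) = (\pi^{[k]})^* (d^{[k]} f)$; this function on $\X^{[k]}$ is pulled back from $\Y^{[k]}$. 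The key point is that $\X^{[k]} = \Y^{[k]} \times_{\rho^{[k]}} U^{[k]}$ is itself an abelian extension of $\Y^{[k]}$ by the structure group $U^{[k]} = U^{\2^k}$, with cocycle $\rho^{[k]} = d^{[k]}(\text{something})$ built coordinatewise from $\rho$; in particular the vertical group $U^{[k]}$ acts on $\X^{[k]}$ commuting with the $G^{[k]}$-action, and one can Fourier-decompose $H$ along this action. Because $d^{[k]} f$ is $U^{[k]}$-invariant, only the trivial-character component of $H$ actually matters modulo coboundaries; one wants to replace $H$ by a $U^{[k]}$-invariant function $\tilde H$ pushed down from $\Y^{[k]}$, at the cost of dividing out a coboundary. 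This is precisely the kind of ``Fourier analysis on abelian extensions'' reduction carried out (for ergodic extensions) when proving Theorem \ref{main-3} assuming \ref{main-4}, and the tool is the cocycle identity together with the observation that the vertical derivative $\mder_u$ of $H$ is a coboundary $\mder_{g^{[k]}}$ of a fixed function (measurable in $u$), by the measurable-choice lemma referenced in the overview.

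More concretely, here is the step-by-step plan. First I would record that $\pi^{[k]}$ exhibits $\X^{[k]}$ as an abelian extension of $\Y^{[k]}$ whose structure cocycle is a phase polynomial cocycle built from $\rho$: since $\rho$ has degree $<m$, each coordinate copy of $\rho$ composed into $\rho^{[k]}$ has degree $<m$, hence $\rho^{[k]} \in \Phase_{<m}(G,\Y^{[k]},U^{[k]})$ — or more precisely it is cohomologous to such on $\X^{[k]}$. Second, I would apply the hypothesis to get $H$ with $(\pi^{[k]})^*(d^{[k]} f) = \mder_{g^{[k]}} H$, then for each vertical rotation $u \in U^{[k]}$ compute $\mder_{u} H$: since the left-hand side is $U^{[k]}$-invariant, $\mder_u H$ is a $(G,\X^{[k]},S^1)$-coboundary, and by a measurable-selection argument one gets $\mder_u H = \mder_{g^{[k]}} G_u$ with $G_u$ jointly measurable in $(u, \x)$. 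Third — and this is the heart — I would use ergodicity of $\X$ (hence of $\X^{[k]}$ along ergodic components, with the diagonal $G^{[k]}$-action, via the usual cubic-measure disintegration) to show that the map $u \mapsto G_u$ can be normalized to a character-like object, and then that $H$ agrees, modulo a coboundary, with a function of the form $(\pi^{[k]})^* H_0 \cdot (\text{phase polynomial in the vertical direction of degree} < m+1)$; the degree $m+1$ enters because integrating the phase-polynomial cocycle $\rho^{[k]}$ of degree $<m$ once in the vertical variable raises the degree by one (cf.\ the short exact sequence \eqref{integ}). Fourth, descending $H_0$ to $\Y^{[k]}$ via functoriality (Lemma \ref{trivpoly-lem}(iv) and its coboundary analogue) gives $d^{[k]} f = \mder_{g^{[k]}} H_0 \cdot (\text{coboundary})$ on $\Y^{[k]}$ up to the vertical phase-polynomial factor of degree $<m+1$; combining this with the fact that a phase polynomial of degree $<m+1$ has $d^{[m+1]}(\cdot) = 1$ (Lemma \ref{basic}(iii)), one concludes $d^{[k+m+1]} f$ is a coboundary on $\Y^{[k+m+1]}$, i.e.\ $f$ has type $<k+m+1$. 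The bookkeeping that turns ``$d^{[k]}f$ is a coboundary times a degree-$<m+1$ vertical phase polynomial'' into ``$d^{[k+m+1]}f$ is a coboundary'' is the identity $\X^{[k+m+1]} = \X^{[k]} \times (\X^{[k]})^{[m+1]}$-type splitting used in the proof of Lemma \ref{basic}(i), applied $m+1$ times.

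\textbf{Main obstacle.} The delicate point is making the vertical Fourier analysis on $\X^{[k]}$ rigorous without ergodicity of the extension $\X \times_\rho U$ being available in full strength — one only knows $\X$ is ergodic, while $\X^{[k]}$ with its diagonal action is typically not ergodic, so one must work with the disintegration of the cubic measure $\mu^{[k]}$ over $\I_k$ (the $\diag(G^{[k]})$-invariant factor) and carry out the argument fiberwise, then reassemble measurably. Keeping the phase-polynomial degree bounded by exactly $m+1$ (rather than something larger) requires that each vertical integration of $\rho^{[k]}$ costs exactly one degree and that no further losses are incurred when passing between $H$, $G_u$, and $H_0$; this is where one leans on the exact short exact sequence \eqref{integ} and on $\rho$ being a genuine phase polynomial cocycle (not merely cohomologous to one on $\Y$, though the latter would suffice after conjugation). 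The measurability-in-$u$ of $G_u$, supplied by the measurable-choice lemma, is what allows the fiberwise argument to be globalized, so citing that lemma correctly is essential.
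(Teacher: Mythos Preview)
Your proposal has a genuine gap at its foundation: the claim that $\X^{[k]} = \Y^{[k]} \times_{\rho^{[k]}} U^{[k]}$ as an abelian extension by the \emph{full} group $U^{[k]}$ is false in general. The cubic measure $\mu_\X^{[k]}$ does not sit over $\mu_\Y^{[k]}$ as a product with Haar measure on all of $U^{[k]}$; only certain face transformations $(V_t)^{[k]}_\alpha$ preserve $\mu_\X^{[k]}$ (Lemma~\ref{erglem}(iv)), not arbitrary vertical rotations by $u \in U^{[k]}$. Hence you cannot Fourier-decompose $H$ along $U^{[k]}$, nor can you assert that $\mder_u H$ is a coboundary for all such $u$. (A concrete example: take $\Y$ a point and $\X$ a circle rotation; then $\mu_\X^{[2]}$ is supported on a $3$-dimensional subtorus of $(S^1)^4$, not on all of $U^{[2]} = (S^1)^4$.)

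The paper's proof handles this by a different mechanism. It first passes from $k$ to $k+1$ (via Lemma~\ref{basic}(i)), then views $\X^{[k+1]}$ set-theoretically as $\Y^{[k+1]} \times U^{[k+1]}$ and disintegrates into ergodic components: on each such component the extension is by a \emph{Mackey subgroup} $M \leq U^{[k+1]}$, and one Fourier-expands the antiderivative $F$ only over $\hat M$. Picking a character with nonvanishing coefficient and extending it to some $\chi \in \widehat{U^{[k+1]}}$, one obtains that $d^{[k+1]} f(g,\y)/\prod_{\w} \chi_\w(\rho(g,\y_\w))$ is a coboundary --- but only on a \emph{positive-measure} $\diag(G^{[k+1]})$-invariant set $A \subset \Y^{[k+1]}$. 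A second essential step, absent from your outline, is then to propagate this to all of $\Y^{[k+1]}$ by applying side transformations $(T_h)^{[k+1]}_\alpha$; this works precisely because both $d^{[k]} f$ and $\rho$ are \emph{cocycles} (the paper checks $d^{[k]} f$ is a cocycle at the outset, using that its pullback is a coboundary and Lemma~\ref{cocycle}), so the quotient changes only by an explicit $\mder_{g^{[k+1]}}$-coboundary under side shifts. After gluing, $d^{[k+1]} f$ is cohomologous to $\prod_\w \chi_\w \circ \rho$, each factor of which has degree $<m$, and applying $d^{[m]}$ kills these factors to give type $<k+m+1$. Your vertical-integration picture does not substitute for the Mackey-group analysis plus the side-transformation gluing.
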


\begin{remark}\label{descent-remark} For cocycles, a more general (and stronger) statement appears in \cite[Corollary 7.8]{hk-cubes}.  However, for technical reasons, it is necessary for us to work with more general functions than just cocycles.  (But see Corollary \ref{exact_descent} below.)
\end{remark}

\begin{proof}  Note that $\pi^* d^{[k]} f = d^{[k]} \pi^* f$ is a $(G,\X^{[k]},S^1)$-coboundary, hence a $(G,\X^{[k]},S^1)$-cocycle, and so (by Lemma \ref{cocycle}) $d^{[k]} f$ is a $(G,\Y^{[k]},S^1)$-cocycle.

It is convenient to move from $k$ to $k+1$. By Lemma \ref{basic}(i), $\pi^* f$ is of type $<k+1$, thus we have an $(\X^{[k+1]},S^1)$-function $F$ such that
$ d^{[k+1]}(\pi^* f)(g, \x) = \mder_{g^{[k+1]}} F(\x)$
for all $g \in G$ and $\mu^{[k+1]}$-almost every $\x \in X^{[k+1]}$.  

We view $\X^{[k+1]}$ as $\Y^{[k+1]} \times U^{[k+1]}$. Since the projection of $\mu^{[k+1]}$ on
 $\Y^{[k+1]}$  is $(\pi_* \mu)^{[k+1]}$, 
every ergodic component of $\X^{[k+1]}$ is an abelian extension of an ergodic component of $\Y^{[k+1]}$, with a Mackey group $M \leq U^{[k+1]}$ (see for example \cite{fw-char}).  In other words, for each ergodic component of $\Y^{[k+1]}$, there exists a map $G: Y^{[k+1]} \to U^{[k+1]}$ such that the $(\Fw,\Y^{[k+1]},U^{[k+1]})$-cocycle
$$ \tilde \rho(g, \y) := (\rho(g, \y_\w))_{\w \in \2^{k+1}} \mder_{g^{[k+1]}} G(\y)$$
takes values in $M$ (i.e. it is a $(\Fw,\Y^{[k+1]},M)$-cocycle).  On each ergodic component of $\X^{[k+1]}$, if we set
$K(\y,\m) := F( \y, G(\y) \m )$
then we have
$$ d^{[k+1]} f(g, \y ) = \frac{ K( S_g^{[k+1]} \y, \tilde \rho(g,\y) \m ) }{K( \y, \m)}.$$
If we expand into a Fourier series
$ K(\y,\m) = \sum_{\chi \in \hat M} a_\chi(\y) \chi(\m)$
and compare Fourier coefficients, we conclude
$$ a_\chi( S_{g^{[k+1]}} \y ) \chi( \tilde \rho(g,\y) ) = d^{[k+1]} f(g,\y) a_\chi(\y)$$
almost everywhere on any ergodic component.  In particular, $|a_\chi|$ is invariant and thus constant a.e. on any ergodic component.  We extend $\chi$ arbitrarily to a character $\chi \in \widehat{U^{[k+1]}} \equiv {\hat U}^{[k+1]}$.

On almost every component, at least one of the $a_\chi$ must be non-vanishing; since there are at most countably many characters (by the separability of $\X$), we conclude that for some $\chi \in \hat U^{[k+1]}$, we have $|a_\chi| \neq 0$ on a positive measure collection of ergodic components.  On each such component, if one sets $b := a_\chi / |a_\chi|$, we thus have
$$ d^{[k+1]} f(g,\y) = \chi( \tilde \rho(g,\y) ) \mder_{g^{[k+1]}} b(\y)
= [\prod_{\w \in \2^{k+1}} \chi_\w( \rho(g,\y_\w) )] \mder_{g^{[k+1]}} Gb(\y),$$
where $\chi_\w$ are the components of $\chi$.  Thus the $(G,\Y^{[k+1]},S^1)$-function
$$ H(g,\y) := d^{[k+1]} f(g,\y) / \prod_{\w \in \2^{k+1}} \chi_\w( \rho(g,\y_\w) )$$
is a $(G,A,S^1)$-coboundary for some $\diag(G^{[k]})$-invariant subset $A$ of $\Y^{[k+1]}$ of positive measure.  

We now claim that $(T_h)^{[k+1]}_\alpha H$ is also a $(G,A,S^1)$-coboundary for every positive side transformation $(T_h)^{[k+1]}_\alpha H$.  But a computation shows that
$$ \frac{(T_h)^{[k+1]}_\alpha H}{H}(g,y) = \Delta_{h^{[k]}} d^{[k]} f(g, \partial(\alpha)_*(\y)) / 
\prod_{\w \in \alpha} \chi_\w( \Delta_h \rho(g,\y_\w) ).$$
Now we crucially use the fact that $d^{[k]} f$ and $\rho$ are cocycles to write this as
$$ \frac{(T_h)^{[k+1]}_\alpha H}{H}(g,y) = \Delta_{g^{[k]}} d^{[k]} f(h, \partial(\alpha)_*(\y)) / 
\prod_{\w \in \alpha} \chi_\w( \Delta_g \rho(h,\y_\w) ).$$
The right-hand side is then a $(G,\Y^{[k+1]},S^1)$-coboundary, and the claim follows.

Arguing as in the proof of Proposition \ref{ext-type-k}, we can glue various translated coboundaries together and conclude that $H$ is a $(G,\Y,S^1)$-coboundary, thus $d^{[k+1]} f(g,\y)$ is $(G,\Y^{[k+1]},S^1)$-cohomologous to $\prod_{\w \in \2^{k+1}} \chi_\w( \rho(g,\y_\w) )$.  On the other hand, the factors $\chi_\w( \rho(g,\y_\w) )$ are of degree $<m$ and thus $d^{[m]} \chi_\w( \rho(g,\y_\w) ) = 0$.  It then follows that $d^{[m+k+1]} f(g,\y)$ is a $(G,\Y^{[m+k+1]},S^1)$-coboundary, and so $f$ is of type $<m+k+1$ as claimed. 
\end{proof}

The second lemma allows us to reduce the type of a function by differentiation in the vertical direction.

\begin{lemma}[Vertical differentiation lemma]\label{vert-lem} Let $G$ be a countable abelian group, let $k \geq 1$, and let $\X = \Y \times_\rho U$ be a $G$-system of order $<k$ for some compact abelian $U$ and a $(G,\Y,U)$-cocycle $\rho$.  Let $f$ be a $(G,\X,S^1)$-function of type $<m$ for some $m \geq 1$.  Then for every $t \in U$, the $(G,\X,S^1)$-function $\mder_t f$ is of type $<m-\min(k,m)$, where $t$ acts on $\X$ by the action $V_t: (y,u) \mapsto (y,tu)$ for $y \in Y$ and $u \in U$.
\end{lemma}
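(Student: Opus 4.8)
The plan is to follow the strategy of Host and Kra, passing to the cube systems $\X^{[\ell]}$ and reading off the drop in type from a coboundary presentation of $d^{[m]}f$, with the vertical rotations $V_t$ playing the role of the ``vertical'' symmetries of the cube. Set $r:=\min(k,m)$. I would begin with two soft reductions. First, since $V_t$ is a measure-preserving automorphism of $\X$ commuting with the $G$-action, it fixes every characteristic factor $\Zcal_{<j}(\X)$, and hence, by the recursive construction of the cubic measures, the diagonal rotation $V_t^{[\ell]}\colon(x_\w)_\w\mapsto(V_tx_\w)_\w$ is a measure-preserving automorphism of $\X^{[\ell]}$ commuting with $(T^{[\ell]}_g)_{g\in G}$; consequently $d^{[\ell]}(\mder_tf)$ equals $(d^{[\ell]}f)\circ V_t^{[\ell]}/d^{[\ell]}f$. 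Second, under the standard identification $(\X^{[a]})^{[b]}\cong\X^{[a+b]}$ of $G$-systems (for which $d^{[a+b]}$ coincides with $d^{[b]}$ applied, on the system $\X^{[a]}$, to $d^{[a]}$), the hypothesis ``$f$ has type $<m$'' is exactly the statement that $h:=d^{[m-r]}f$ has type $<r$ as a $(G,\X^{[m-r]},S^1)$-function, while the conclusion ``$\mder_tf$ has type $<m-r$'' is exactly the statement that $d^{[m-r]}(\mder_tf)=(d^{[m-r]}f)\circ V_t^{[m-r]}/d^{[m-r]}f$ is a $(G,\X^{[m-r]},S^1)$-coboundary. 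Moreover $\X^{[m-r]}$ is a self-joining of $\X$, hence is generated by factors of $\X$ and is therefore again of order $<k$, and the diagonal vertical rotation endows it with a compact group acting precisely as in the hypothesis. Since $r\le k$, this reduces the lemma to the case $m\le k$, where one must show that $\mder_tf$ is a $(G,\X,S^1)$-coboundary.

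For that case I would write $d^{[m]}f=\mder_{g^{[m]}}F$ with $F\in M(\X^{[m]},S^1)$ and differentiate this relation in the vertical direction on the cube. Besides the diagonal rotations, $\X^{[m]}$ carries a rich family of ``vertical'' measure-preserving automorphisms commuting with the $G$-action --- rotations by $V_t$ along the faces of the combinatorial cube $\2^m$, together with the structure group of $\X^{[m]}$ over its base --- and here the hypothesis that $\X$ has order $<k$ (with $m\le k$) is what guarantees that enough of these are available. Applying such an automorphism $W$ to $d^{[m]}f=\mder_{g^{[m]}}F$ and unwinding the definition of $d^{[m]}$, the surviving part of $d^{[m]}f\circ W/d^{[m]}f$ is, up to a sign, a pullback of $\mder_tf$ along a coordinate projection $\X^{[m]}\to\X$, while the right-hand side becomes $\mder_{g^{[m]}}((F\circ W)/F)$; this exhibits $\mder_tf$, pulled back to $\X^{[m]}$, as a $(G,\X^{[m]},S^1)$-coboundary. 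What remains is to descend this coboundary relation from $\X^{[m]}$ down to $\X$.

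That descent, together with the fact that $F$ is only defined $\mu^{[m]}$-a.e.\ and so cannot be restricted naively to a $\mu^{[m]}$-null face, is the main obstacle. I would deal with both by using measurability in the parameter $t\in U$: by a measurable selection (Lemma \ref{measurable_choice}) the antiderivatives can be chosen jointly measurable in $(t,\cdot)$, so that Fubini converts the identities ``valid for a.e.\ $t$ and a.e.\ point of a null face'' into identities holding on a positive-measure subset of the cube, legitimising the restriction; and the passage from a coboundary on $\X^{[m]}$ down to $\X$ is carried out as in the gluing arguments of Proposition \ref{ext-type-k} and Lemma \ref{descent} --- the relevant ratios are $\diag(G^{[m]})$-invariant, one expands in a Fourier series over the fibre group and patches the coboundaries obtained on the various translates, invoking the torsion (hence open-subgroup) structure of $U$ and a Steinhaus-type argument where necessary (cf.\ Lemma \ref{tor-lem}). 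The essential use of ``$\X$ has order $<k$'' is in making the requisite vertical face-rotations available on $\X^{[m]}$, and hence in producing the full drop by $\min(k,m)$; when $f$ is moreover a cocycle this collapse can also be seen directly, by matching the two expressions one gets for the iterated derivative $\mder_s\mder_tf$ --- one coming from type $<m$ (a coboundary), the other from the Conze--Lesigne structure of $\mder_tf$ (a constant times a coboundary) --- which forces the ambiguous constant to be a coboundary.
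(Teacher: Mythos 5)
Your proposal correctly identifies the core mechanism: apply a vertical face rotation to $\X^{[m]}$, on which $d^{[m]}f$ is a coboundary, divide, and recognise the quotient as a pullback of $d^{[m-\min(k,m)]}\mder_t f$ along a face projection. Your observation that the general case amounts to the case $m\le k$ applied on a subcube is, morally, the same manoeuvre as the paper's choice of an $(m-\min(k,m))$-dimensional face $\alpha$. But there are two genuine gaps.

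The first is your assertion that $\X^{[m-r]}$ ``is generated by factors of $\X$ and is therefore again of order $<k$.'' This is not justified: $\X^{[m-r]}$ is non-ergodic once $m-r\ge 1$, and a joining of systems of order $<k$ need not be of order $<k$; nothing in the paper supports this claim, and the paper never needs it. Its proof never leaves $\X^{[m]}$, and the measure-invariance of the face rotation $(V_t)^{[m]}_\alpha$ is supplied directly by Lemma \ref{erglem}(iv), applied once, with no auxiliary reduction.

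The second, more serious, gap is the descent. Once one knows that $(\partial(\alpha)_*)^*\bigl(d^{[m-\min(k,m)]}\mder_t f\bigr)$ is a $(G,\X^{[m]},S^1)$-coboundary, the paper descends in two clean steps: Lemma \ref{cocycle} gives that $d^{[m-\min(k,m)]}\mder_t f$ is a $(G,\X^{[m-\min(k,m)]},S^1)$-cocycle, and Lemma \ref{hkc-lem} (injectivity of $H^1(G,\X^{[l]},U)\to H^1(G,\X^{[m]},U)$ under face maps) then forces it to be a coboundary. Your proposal never reaches for Lemma \ref{hkc-lem}. In its place you propose a gluing argument with Fourier expansion in the fibre, measurable selection in the parameter $t$, and a Steinhaus/open-subgroup argument. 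These are not adequate substitutes: Lemma \ref{hkc-lem} is a separate, nontrivial result of Host--Kra that is not a consequence of measurability in $t$, and indeed the present lemma holds for an arbitrary countable abelian $G$, so the open-subgroup/torsion structure of $\Fw$ that you invoke is not even available in general. Relatedly, your concern about $F$ being undefined on a ``$\mu^{[m]}$-null face'' is a non-issue: the face rotation $(V_t)^{[m]}_\alpha$ is a genuine measure-preserving automorphism of $\X^{[m]}$, so all identities are manipulated on full measure in $\X^{[m]}$ and one never restricts to a face.
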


\begin{proof}  By hypothesis, there exists a $(\X^{[m]},S^1)$-function $F$ such that
$d^{[m]} f = \mder F$.  Let $\alpha$ be a $m-\min(k,m)$-dimensional side of $\2^m$.  By Lemma \ref{erglem}(iv), the side transformation $(V_t)^{[m]}_\alpha$ leaves $\mu^{[m]}$ invariant, and commutes with the $\diag(G^{[k]})$-action.  Thus we have
$d^{[m]} (V_t)^{[m]}_\alpha f = \mder (V_t)^{[m]}_\alpha F$.
Dividing through, we conclude
$$ d^{[m]} \frac{(V_t)^{[m]}_\alpha f}{f} = \mder \frac{(V_t)^{[m]}_\alpha F}{F}$$
and so $d^{[m]} \frac{(V_t)^{[m]}_\alpha f}{f}$ is a $(G, \X^{[m]}, S^1)$-coboundary.  This expression can be rewritten as
$$ d^{[m]} \frac{(V_t)^{[m]}_\alpha f}{f} = (\partial(\alpha)_*)^* (d^{[m-\min(k,m)]} \mder_t f),$$
and so $(\partial(\alpha)_*)^* (d^{[m-\min(k,m)]} \mder_t f)$ is a $(G, \X^{[m]}, S^1)$-coboundary, and thus a $(G,\X^{[m]},S^1)$-cocycle, which (by Lemma \ref{cocycle}) implies that $d^{[m-\min(k,m)]}\mder_t f$ is a $(G,\X^{[m-1]},S^1)$-cocycle.  Applying Lemma \ref{hkc-lem}, we conclude that $d^{[m-\min(k,m)]} \mder_t f$ is in fact a $(G, \X^{[m-\min(k,m)]}, S^1)$-coboundary, and so $\mder_t f$ is of type $<m-\min(k,m)$ as desired.
\end{proof}

Because of these two lemmas, Theorem \ref{main-4} will follow from

\begin{theorem}[Fourth reduction]\label{main-6} Let $k \geq 2$ and $m \geq 1$, and assume that Theorem \ref{main-4} has already been proven for all smaller values of $k$. Let $\F$ be a finite field, and let $\X$ be an ergodic $\Fw$-system of order $<k$.  Write $\X = \Zcal_{<k-1}(\X) \times_\rho U$ for some abelian $(\Fw,\Zcal_{<k-1}(\X),U)$-cocycle $\rho$.  Let $f \in M_{<m}(\Fw, \X, S^1)$ be a function of type $<m$, be such that
$$ \mder_t f \in \Phase_{<m}(\Fw,\X,S^1) \cdot B^1(\Fw,\X,S^1)$$
for all $t \in U$ (i.e. $\mder_t f$ is always cohomologous to a phase polynomial of degree $<m$).  Then $f$ is cohomologous to $P \pi^* \tilde f$ for some $P \in \Phase_{<O_{k,m}(1)}(\Fw,\X,S^1)$, and some $(\Fw,\Zcal_{<k-1}(\X),S^1)$-function $\tilde f$, where $\pi$ is the factor map from $\X$ to $\Zcal_{<k-1}(\X)$.
\end{theorem}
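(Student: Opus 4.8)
The plan is to ``integrate'' the single vertical derivative $\mder_t f$ over the group $U$, picking up only phase polynomials of degree $O_{k,m}(1)$; this works because $t\mapsto\mder_t f$ is a cocycle in $t$ for the vertical $U$-action $V_t\colon(y,u)\mapsto(y,tu)$ (which commutes with the $\Fw$-action), and because finite characteristic forces $U$ to be torsion. First I would note that $\X=\Zcal_{<k-1}(\X)\times_\rho U$ has order $<k$ while $\Zcal_{<k-1}(\X)$ has order $<k-1$, so Lemma~\ref{torlemma} (usable since Theorem~\ref{main-4} is assumed below $k$) gives that $U$ is $p^M$-torsion with $M=O_k(1)$, and Theorem~\ref{struc-thm} lets us assume $\rho$ is a phase-polynomial cocycle of degree $O_k(1)$. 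The hypothesis provides, for each $t$, a factorization $\mder_t f=c_t\,\mder F_t$ with $c_t\in\Phase_{<m}(\Fw,\X,S^1)$ and $F_t\in M(\X,S^1)$, chosen measurably in $t$ via Lemma~\ref{measurable_choice}. Feeding this into the cocycle identity $\mder_{tt'}f=(V_{t'}\mder_t f)\,\mder_{t'}f$ and using $V_{t'}\mder F_t=\mder(V_{t'}F_t)$ gives
\[
 \frac{c_{tt'}}{(V_{t'}c_t)\,c_{t'}}=\mder\!\left(\frac{(V_{t'}F_t)\,F_{t'}}{F_{tt'}}\right),
\]
so the left side is a degree-$<m$ phase polynomial that is also a coboundary, hence (by \eqref{integ}) the derivative of a degree-$<m+1$ phase polynomial; comparing $\mder_s\mder_t f=\mder_t\mder_s f$ likewise makes $\mder_s c_t/\mder_t c_s$ the derivative of a phase polynomial. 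Thus $t\mapsto c_t$ is a cocycle in $t$ valued in $\Phase_{<m}(\Fw,\X,S^1)$ up to phase-polynomial coboundaries, and $t\mapsto F_t$ is additive in $t$ up to constants and phase-polynomial factors: this is the Conze-Lesigne-type equation we need.

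Next, since $U$ is torsion it is totally disconnected, so its open subgroups form a neighbourhood basis of the identity. Using the measurability in $t$ together with a Steinhaus/Bogolyubov-type argument (in the spirit of Lemma~\ref{tor-lem}), I would straighten the identities above on a suitable open subgroup $U'\leq U$: produce $P\in\Phase_{<O_{k,m}(1)}(\Fw,\X,S^1)$, $G\in M(\X,S^1)$, and consistent choices of representatives, so that $\mder_t\bigl(f/(P\,\mder G)\bigr)=1$ for all $t\in U'$, absorbing the phase-polynomial errors into $P$ and the ``integration constants'' into $G$. Then $h:=f/(P\,\mder G)$ is $V_t$-invariant for every $t\in U'$.

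The function $h$ therefore descends to $\X':=\Zcal_{<k-1}(\X)\times_{\rho'}(U/U')$, a finite abelian extension of $\Zcal_{<k-1}(\X)$ on which the hypotheses of the theorem still hold for $h$ (its type is still $O_{k,m}(1)$, and $\mder_t h$ is still cohomologous to a bounded-degree phase polynomial for $t\in U/U'$). Writing $U/U'$ as a product of finite cyclic groups, one straightens $h$ along each cyclic factor by an explicit finite computation, each step costing a phase-polynomial factor of degree $O_{k,m}(1)$; after finitely many steps one obtains $P'\in\Phase_{<O_{k,m}(1)}(\Fw,\X,S^1)$ and $G'\in M(\X,S^1)$ with $f/(P'\,\mder G')$ invariant under all of $V_U$. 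A $V_U$-invariant measurable function on $\Zcal_{<k-1}(\X)\times_\rho U$ agrees $\mu_X$-almost everywhere with $\pi^*\tilde f$ for some $(\Fw,\Zcal_{<k-1}(\X),S^1)$-function $\tilde f$ (Fubini), and hence $f$ is cohomologous to $P'\,\pi^*\tilde f$, as required.

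The main obstacle is the integration on the open subgroup: turning the cocycle-up-to-coboundary data of the first step into an honest vertical primitive $P\,\mder G$ for $f$. This needs the measurability in the parameter $t$ (hence Lemma~\ref{measurable_choice}), Steinhaus-type lemmas to pass from positive measure to open subgroups, the torsion structure of $U$ coming from finite characteristic (so that every neighbourhood of the identity contains an open subgroup), and careful bookkeeping to keep every phase-polynomial degree bounded by $O_{k,m}(1)$. The non-finite-generation of $\Fw$ rules out the Host--Kra ``lifting'' shortcut and is what forces this more hands-on route.
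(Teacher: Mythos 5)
Your plan is essentially the paper's: split $U$ into an open subgroup (straightened using measurability in $t$, the torsion of $U$, and a Steinhaus-type argument) and a finite quotient (handled by integrating the cocycle in $u$ cyclic-factor-by-cyclic-factor), which is precisely the content of Propositions~\ref{fin-prop} and~\ref{propfin}. One step you elide that is worth making explicit: after the Conze--Lesigne straightening you only arrive at $\mder_t\bigl(f/(P\,\mder G)\bigr)$ being a coboundary (not the constant $1$) for $t\in U'$, and you need Lemma~\ref{straighten-lemma} to upgrade this to honest $U''$-invariance on a possibly smaller open subgroup $U''\leq U'$; similarly, verifying that the type and the cohomology hypotheses descend from $\X$ to $\X'=\Zcal_{<k-1}(\X)\times(U/U'')$ is nontrivial and is exactly what Lemmas~\ref{descent} and~\ref{descent-lem} are for.
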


\begin{proof}[Proof of Theorem \ref{main-4} assuming Theorem \ref{main-6}] Let $k$, $m$, $\F$, $\X$, $f$ be as in Theorem \ref{main-4}.  We can assume inductively that Theorem \ref{main-4} has already been proven for smaller values of $k$.

We claim for each $0 \leq j \leq m$ that
\begin{equation}\label{main5-induct}
\mder_{t_1} \ldots \mder_{t_j} f \in \Phase_{<O_{k,m,j}(1)}(\Fw,\X,S^1) \cdot B^1(\Fw,\X,S^1)
\end{equation}
for all $t_1,\ldots,t_j \in U$.

We establish this by downward induction on $j$.  When $j=m$, the claim follows by $m$ applications of Lemma \ref{vert-lem} (and no $\Phase_{<O_{k,m,j}(1)}(\Fw,\X,S^1)$ term appears in this case).  Now suppose that $0 \leq j < m$ and the claim \eqref{main5-induct} has already been shown for $j+1$.  Applying Theorem \ref{main-6}, we see that for all $t_1,\ldots,t_j$,
$\mder_{t_1} \ldots \mder_{t_j} f$ is $(\Fw,\X,S^1)$-cohomologous to $P_{t_1,\ldots,t_j} \pi^* \tilde f_{t_1,\ldots,t_j}$ for some $P_{t_1,\ldots,t_j} \in \Phase_{<O_{k,m,j}(1)}(\Fw,\X,S^1)$ and $\tilde f \in M(\Fw,\Zcal_{<k-1}(\X),S^1)$.

Since $f$ is of type $<m$, and the $U$ action commutes with the $G$ action, we see that $\mder_{t_1} \ldots \mder_{t_j} f$ is also of type $<m$.  By Lemma \ref{basic}(ii), (iii) we conclude that the $(\Fw,\X,S^1)$-function $\pi^* \tilde f_{t_1,\ldots,t_j}$ is of type $<O_{k,m,j}(1)$.  Applying Lemma \ref{descent}, we conclude that the $(\Fw,\Zcal_{<k-1}(\X),S^1)$-function $\tilde f_{t_1,\ldots,t_j}$ is of type $<O_{k,m,j}(1)$.  Applying the inductive hypothesis, we conclude that $\tilde f_{t_1,\ldots,t_j}$ is $(\Fw,\Zcal_{<k-1}(\X),S^1)$-cohomologous to a $(\Fw,\Zcal_{<k-1}(\X),S^1)$-phase polynomial of degree $<O_{k,m,j}(1)$.  By functoriality, this implies that $\pi^* \tilde f_{t_1,\ldots,t_j}$ is $(\Fw,\X,S^1)$-cohomologous to a $(\Fw,\X,S^1)$-phase polynomial of degree $<O_{k,m,j}(1)$.  Since $\mder_{t_1} \ldots \mder_{t_j} f$ was $(\Fw,\X,S^1)$-cohomologous to $P_{t_1,\ldots,t_j} \pi^* \tilde f_{t_1,\ldots,t_j}$, the claim \eqref{main5-induct} follows.

Theorem \ref{main-4} now follows by specializing \eqref{main5-induct} to the case $j=0$.
\end{proof}

It remains to establish Theorem \ref{main-6}.

\section{Reduction to a finite $U$}

The purpose of this section is to obtain the following reduction.

\begin{proposition}[Reduction to finite $U$]\label{fin-prop} In order to prove Theorem \ref{main-6}, it suffices to do so in the case when $U$ is finite.
\end{proposition}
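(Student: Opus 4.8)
The plan is to present $\X$ as an inverse limit of extensions of $\Zcal_{<k-1}(\X)$ with \emph{finite} structure group, and to reduce the statement about $f$ to a statement about one such finite extension. Since Theorem \ref{main-4} is assumed for all values below $k$, Theorem \ref{struc-thm} applies to $\X$, so we may assume the cocycle $\rho$ in $\X=\Zcal_{<k-1}(\X)\times_\rho U$ is a phase polynomial cocycle of degree $O_k(1)$; and by Lemma \ref{torlemma} the group $U$ is $p^n$-torsion for some $n=O_k(1)$, hence (using \cite[Chapter 5, Theorem 18]{morris} and the separability of $\X$) topologically isomorphic to a finite or countable product of finite cyclic $p$-groups. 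In particular $U$ has a neighbourhood basis at the identity consisting of open subgroups $U'$ of finite index; for each such $U'$, composing $\rho$ with the projection $U\to U/U'$ gives a phase polynomial cocycle $\rho_{U'}$ of degree $O_k(1)$, and we set $\X_{U'}:=\Zcal_{<k-1}(\X)\times_{\rho_{U'}}(U/U')$, which has finite structure group. Each $\X_{U'}$ is an ergodic factor of $\X$ (namely the quotient $\X/U'$ by the free $U'$-action $V_{u'}\colon(y,w)\mapsto(y,u'w)$), it is of order $<k$, and since $\Zcal_{<k-1}(\W)=\W\wedge\Zcal_{<k-1}(\X)$ for any factor $\W$ of $\X$ we have $\Zcal_{<k-1}(\X_{U'})=\Zcal_{<k-1}(\X)=:\Y$; moreover $\X=\varprojlim_{U'}\X_{U'}$, and $\X\equiv\X_{U'}\times_{\sigma}U'$ for a phase polynomial cocycle $\sigma$ of degree $O_k(1)$ obtained from $\rho$. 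Write $\pi_{U'}\colon\X\to\X_{U'}$ and $\varpi_{U'}\colon\X_{U'}\to\Y$ for the factor maps, so $\varpi_{U'}\circ\pi_{U'}=\pi$.

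The crux is the following claim: \emph{there exist an open finite-index subgroup $U'\le U$, a phase polynomial $P\in\Phase_{<O_{k,m}(1)}(\Fw,\X,S^1)$, and a function $\bar f\in M(\Fw,\X_{U'},S^1)$ such that $f$ is $(\Fw,\X,S^1)$-cohomologous to $P\cdot\pi_{U'}^*\bar f$}. To obtain this, one works with the hypothesis $\mder_tf=q_t\cdot\mder F_t$ with $q_t\in\Phase_{<m}(\Fw,\X,S^1)$; by the measurable choice lemma (Lemma \ref{measurable_choice}) the antiderivatives $F_t$, and hence the $q_t$, can be taken measurable in $t\in U$. The vertical cocycle identity $\mder_{st}f=(V_t^*\mder_sf)\cdot\mder_tf$ imposes on $(q_t)_{t\in U}$ and $(F_t)_{t\in U}$ relations of approximate-cocycle type; integrating these over the compact group $U$ one derivative at a time — peeling off at each step a bounded-degree phase polynomial and a coboundary — reduces matters to the situation where $\mder_tf$ is a coboundary for all $t$ in some open finite-index $U'$, which, by the measurable choice lemma and standard manipulations, amounts to $f$ being cohomologous to a function pulled back from $\X/U'=\X_{U'}$. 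The measurable bookkeeping here — the measurable selection, and the trivialisation (after shrinking $U'$) of a measurable $2$-cocycle $U'\times U'\to S^1$ recording the failure of the $F_t$ to commute — is exactly where the torsion, hence profinite, structure of $U$ enters: it supplies the arbitrarily small open subgroups and forces the relevant cohomology class to be finitely supported.

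Granting the claim, we check that $(\X_{U'},\bar f)$ satisfies the hypotheses of Theorem \ref{main-6} with degree bounds worsened only by $O_k(1)$. Since $f=P\cdot\pi_{U'}^*\bar f\cdot\mder H$ for some $H\in M(\X,S^1)$, with $P$ a phase polynomial (hence of type $<O_{k,m}(1)$ by Lemma \ref{basic}(iii)) and $f$ of type $<m$, the pullback $\pi_{U'}^*\bar f$ is of type $<O_{k,m}(1)$ on $\X$; applying Lemma \ref{descent} to the extension $\X\equiv\X_{U'}\times_{\sigma}U'$ by the degree-$O_k(1)$ phase polynomial cocycle $\sigma$ shows $\bar f$ is of type $<O_{k,m}(1)$ on $\X_{U'}$. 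Likewise, for $t$ in the finite group $U/U'$ with a lift $\tilde t\in U$, the function $\pi_{U'}^*(\mder_t\bar f)$ differs from $\mder_{\tilde t}f$ by a coboundary (as $\pi_{U'}$ intertwines $V_{\tilde t}$ with $V_t$), hence is cohomologous on $\X$ to a phase polynomial of degree $<m$; by the descent lemmas for phase-polynomial cohomology (Lemma \ref{descent} and the cocycle-type refinements indicated in Remark \ref{descent-remark}), $\mder_t\bar f$ is cohomologous on $\X_{U'}$ to a phase polynomial of degree $<O_{k,m}(1)$. Thus the finite-$U$ case of Theorem \ref{main-6}, applied to $\X_{U'}$ and $\bar f$, gives $\bar f$ cohomologous to $P'\cdot\varpi_{U'}^*\tilde f$ for some $P'\in\Phase_{<O_{k,m}(1)}(\Fw,\X_{U'},S^1)$ and $\tilde f\in M(\Fw,\Y,S^1)$. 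Pulling back along $\pi_{U'}$ and using functoriality of phase polynomials (Lemma \ref{trivpoly-lem}(iv)) together with $\varpi_{U'}\circ\pi_{U'}=\pi$, we find $f$ cohomologous to $(P\cdot\pi_{U'}^*P')\cdot\pi^*\tilde f$ with $P\cdot\pi_{U'}^*P'\in\Phase_{<O_{k,m}(1)}(\Fw,\X,S^1)$, which is the conclusion of Theorem \ref{main-6}.

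The step I expect to be the main obstacle is the claim of the second paragraph: extracting, from the purely qualitative information that every $\mder_tf$ is cohomologous to a phase polynomial of bounded degree, an open finite-index subgroup $U'$ off which $f$ ``lives'' up to a bounded-degree phase polynomial and a coboundary. This requires integrating the $t$-dependence over the compact group $U$ while keeping the degree of the phase polynomial error bounded, together with the attendant measurable-cohomology bookkeeping, and both rely crucially on $U$ being torsion — so that arbitrarily small open subgroups are available.
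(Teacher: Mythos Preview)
Your overall strategy matches the paper's: show that $f$ is cohomologous to a bounded-degree phase polynomial times a function pulled back from an extension of $\Zcal_{<k-1}(\X)$ by a \emph{finite} quotient of $U$, then invoke the finite-$U$ case on that factor and pull back. The structure of paragraph~3 (descending type with Lemma~\ref{descent}, applying the finite case, pulling back) is also correct in outline.

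There are, however, two concrete gaps. First, your sketch of the ``claim'' in paragraph~2 does not capture the mechanism the paper actually uses. The paper does not ``integrate over $U$ one derivative at a time''; rather, it sets $\phi_{t,u}:=F_{tu}/((V_uF_t)F_u)$, observes this is a phase polynomial of degree $<m+1$ by~\eqref{integ}, and then uses Lusin's theorem together with the separation lemma (Lemma~\ref{sep-lem}) to show that for $u$ in an open neighbourhood of the identity, $\phi_{t,u}$ is independent of $t$ modulo constants. A second-order cocycle identity then upgrades this to an exact $U'$-cocycle relation $q'_{uv}=(V_uq'_v)q'_u$ on an open subgroup $U'$ (obtained via Lemma~\ref{tor-lem}, which is where the torsion is used). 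Only then does integration enter, via Lemma~\ref{poly-integ} (giving $q'_u=\mder_uQ$) followed by Lemma~\ref{straighten-lemma}. Your phrase ``trivialisation of a measurable $2$-cocycle'' gestures at this, but the Lusin-plus-separation step is the crux and is absent from your sketch.

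Second, in paragraph~3 you invoke Lemma~\ref{descent} (and Remark~\ref{descent-remark}) to descend the property ``cohomologous to a phase polynomial'' from $\X$ to $\X_{U'}$. But Lemma~\ref{descent} only descends \emph{type}; to recover ``cohomologous to a phase polynomial'' from bounded type on $\X_{U'}$ you would need Theorem~\ref{main-4} at level $k$ on $\X_{U'}$, which is exactly what is being proved. The paper instead uses Lemma~\ref{descent-lem} (the Fourier-analytic descent lemma), which shows directly that if $\pi^*g$ is cohomologous on $\X$ to a pullback $\pi^*p$, then $g$ is cohomologous on the factor to $p$ times a term $\chi\circ\rho\circ\pi^{\X'}_{\Zcal_{<k-1}(\X)}$ for some character $\chi\in\hat U$; since $\rho$ is itself a phase polynomial of degree $<O_k(1)$, this extra term is absorbed. (A related minor slip: $\pi_{U'}^*(\mder_t\bar f)$ differs from $\mder_{\tilde t}f$ not just by a coboundary but also by the phase polynomial $\mder_{\tilde t}P$.)
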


\begin{proof} Fix $k$, and assume that Theorem \ref{main-6} has already been proven in the case of finite $U$.  Let $m, \F, \X, \rho, U, f$ be as in Theorem \ref{main-6}.  By Theorem \ref{struc-thm} we may assume that $\rho$ is a $(\Fw,\Zcal_{<k-1}(\X),U)$-phase polynomial of degree $<O_{k,p}(1)$.  By hypothesis, for each $t \in U$ there exists $q_t \in \Phase_{<m}(\Fw,\X,S^1)$ and $F_t \in M(\X,S^1)$ such that
\begin{equation}\label{mdert}
\mder_t f = q_t \mder F_t.
\end{equation}
By Lemma \ref{measurable_choice} we can take $q_t, F_t$ to be measurable with respect to $t$.

The next step is to linearize $q_t$ on an open subgroup of $U$, by arguing as follows.
Let $t, u \in U$.  Then the cocycle identity $\mder_{tu} f = (\mder_t (V_u f)) \mder_u f$ and \eqref{mdert} give
$$
q_{tu} \mder F_{tu} = (V_u q_t) q_u \mder((V_u F_t) F_u)$$
and hence
$$ \mder \frac{F_{tu}}{(V_u F_t) F_u} \in \Phase_{<m}(\Fw,\X,S^1)$$
and thus (by \eqref{integ}), the function
\begin{equation}\label{phitu}
\phi_{t,u} := \frac{F_{tu}}{(V_u F_t) F_u}
\end{equation}
is a $(\X,S^1)$-phase polynomial of degree $<m+1$.

Let $\eps > 0$ be a small number.  By Lusin's theorem, the function $t \to F_t$ is equal to a uniformly continuous function (in $L^2(\X)$) outside of an open set $E$ of measure $O(\eps)$ in $U$.  In particular, there exists an open neighborhood $U'$ of the identity in $U$ such that $F_{tu}$ and $V_u F_t$ both lie within $O(\eps)$ of $F_t$ whenever $u \in U'$ and $t, tu \not \in E$.  In particular, we see that $\phi_{t,u}$ lies within $O(\eps)$ of $\overline{F_u}$ when $u \in U'$ and $t, tu \not \in E$.  Applying Lemma \ref{sep-lem} and taking $\eps$ small enough, we conclude that for each $u \in U'$ there exists $\phi'_u \in \Phase_{<m+1}(\X,S^1)$ such that $\phi_{t,u}/\phi'_u$ is constant whenever $t, tu \not \in E$.  Arguing as in the proof of Lemma \ref{measurable_choice} we can ensure that $u \mapsto \phi'_u$ is measurable.

Now let $u, v \in U'$.  From \eqref{phitu} we have the second-order cocycle identity
$\phi_{uv,w} \phi_{u,v} = \phi_{u,vw} V_u \phi_{v,w}$
for all $w \in U$.  If $\eps$ is small enough, we can find $w$ so that $w, vw,uvw \not \in E$.  We conclude that 
\begin{equation}\label{phuv}
 \phi_{u,v} = c(u,v) \frac{\phi'_u V_u \phi'_v}{\phi'_{uv}}
\end{equation}
for some constant $c(u,v)$.  If we then set $F'_u := F_u / \phi'_u$ and $q'_u := q_u \mder \phi'_u$ for $u \in U$, then $q'_u$ is a $(G,\X,S^1)$-phase polynomial of degree $<m$, and we conclude from \eqref{mdert} that
\begin{equation}\label{mdertu}
\mder_u f = q'_u \mder F'_u 
\end{equation}
for all $u \in U$, while from \eqref{phitu}, \eqref{phuv} we have
$\mder \frac{F'_{uv}}{(V_u F'_v) F'_u} = 1$
and thus (by \eqref{mdertu})
\begin{equation}\label{coca}
\frac{q'_{uv}}{(V_u q'_v) q'_u} = 1.
\end{equation}
for $u,v \in U'$.

Now we make a crucial use of the finite characteristic hypothesis.  By Lemma \ref{torlemma}, $U$ is $p^n$-torsion for some $n = O_{k}(1)$.  By Lemma \ref{tor-lem}, we conclude that $U'$ contains an open subgroup of $U$; by reducing $U'$ if necessary, we may assume that $U'$ is in fact equal to an open subgroup.  

Now we pass from $U$ to $U'$ as follows.  By Lemma \ref{subgroup}, we may write $U = U' \times W$ for some finite group $W$ (shrinking $U'$ if necessary).  We can then factorise $\X = \Zcal_{<k-1}(\X) \times_\rho U$ as $\X = \Y \times_{\rho'} U'$, where $\Y := \Zcal_{<k-1}(\X) \times_{\rho'} W$ and $\rho'$, $\rho''$ are the projections of $\rho$ to $U'$ and $W$ respectively.  Note that $\rho'$ is a $(\Fw,\Y,U')$-phase polynomial of degree $<O_{k}(1)$ (since $\rho$ is also).

Applying Lemma \ref{poly-integ} once for each $g \in \Fw$ and then pasting together, we may write $q'_u = \mder_u Q$ for all $u \in U$ and some $(G,\X,S^1)$-phase polynomial $Q$ of degree $<O_{k,m}(1)$.
If we let $f'$ be the $(\Fw,\X,S^1)$-function $f' := f/Q$, we thus conclude from \eqref{mdertu} that
$
\mder_u f' = \mder F'_u
$
for all $u \in U'$.

The compact abelian group $U$ acts freely on $\X$ in a manner commuting with the $G$ action, and so the compact abelian subgroup $U'$ does also.  We can thus apply Lemma \ref{straighten-lemma} and conclude that $f'$ is $(\Fw,\X,S^1)$-cohomologous to a $(\Fw,\X,S^1)$-function $f''$ which is invariant with respect to some open subgroup $U''$ of $U'$.

Let $\sigma: U \to U/U''$ be the quotient map, let $\X' := \Zcal_{<k-1}(\X) \times_{\sigma \circ \rho} U/U''$, and let $\pi: \X \to \X'$ be the associated factor map.  Then we can write $f'' = \pi^* \tilde f$ for some $(\Fw, \X',S^1)$-function $\tilde f$.  By functoriality, $\X'$ is of order $<k$. 

By construction, $\pi^* \tilde f$ is cohomologous to $f'=f/Q$.  Since $f$ has type $<m$ and $Q$ is a phase polynomial of degree $<O_{k,m}(1)$, we see from Lemma \ref{basic}(ii), (iii) that $\pi^* \tilde f$ is a $(\Fw,\X,S^1)$-function of type $<O_{k,m}(1)$.

From \eqref{mdert} and the polynomial nature of $Q$, we know that $\pi^* \mder_t \tilde f = \mder_t \pi^* \tilde f$ is $(\Fw,\X,S^1)$-cohomologous to a $(\Fw,\X,S^1)$-phase polynomial of degree $<O_{k,m}(1)$.  By Lemma \ref{descent-lem}, $\mder_t \tilde f$ is thus $(\Fw,\X',S^1)$-cohomologous to a $(\Fw,\X',S^1)$-phase polynomial of degree $<O_{k,m}(1)$, times a function of the form $\chi \circ \rho \circ \pi^{\X'}_{\Zcal_{<k-1}(\X)}$ for some $\chi \in \hat U$.  But recall that $\rho$ is a $(\Fw,\Zcal_{<k-1}(\X),U)$-phase polynomial of degree $<O_{k}(1)$.  We conclude that $\mder_t \tilde f$ is $(\Fw,\X',S^1)$-cohomologous to a $(\Fw,\X',S^1)$-phase polynomial of degree $<O_{k,m}(1)$.  We can now invoke Theorem \ref{main-6} for the finite group $U/U''$ and conclude that $\tilde f$ is $(\Fw,\X',S^1)$-cohomologous to $P (\pi^{\X'}_{\Zcal_{<k-1}(\X)})^* \tilde f'$ for some $(\Fw,\X',S^1)$-phase polynomial $P$ of degree $<O_{k,m}(1)$, and some $(\Fw,\Zcal_{<k-1}(\X),S^1)$-function $\tilde f'$.  Since $f$ is cohomologous to $Q \pi^* \tilde f$, we obtain the desired factorization of $f$ required to establish Theorem \ref{main-6}.
\end{proof}

\section{The finite group case}\label{finite-sec}

\begin{proposition}[The finite case]\label{propfin}  Theorem \ref{main-6} is true when $U$ is finite.
\end{proposition}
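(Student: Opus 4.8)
The plan is to establish Theorem \ref{main-6} for finite $U$ by ``integrating out'' the group $U$, reducing $f$ --- modulo a phase polynomial of bounded degree and a coboundary --- to a function lifted from the base $\Zcal_{<k-1}(\X)$. First I would observe that a finite $U$ is automatically a finite abelian $p$-group (Lemma \ref{torlemma}). By hypothesis, for each of the finitely many $t \in U$ I can write
\[ \mder_t f = q_t \, \mder F_t, \qquad q_t \in \Phase_{<m}(\Fw,\X,S^1), \quad F_t \in M(\X,S^1), \]
with no measurable-selection issue (Lemma \ref{measurable_choice} is not needed) since $U$ is finite. Exactly as in the opening of the proof of Proposition \ref{fin-prop}, the cocycle identity $\mder_{tu} f = (V_u \mder_t f)(\mder_u f)$ forces $\phi_{t,u} := F_{tu}/\big((V_u F_t)\, F_u\big)$ to be an $(\X,S^1)$-phase polynomial of degree $<m+1$ satisfying a second-order cocycle identity, i.e. a class in $Z^2\big(U, \Phase_{<m+1}(\X,S^1)\big)$ for the natural action of $U$ by the vertical rotations $V_u$, and one has $q_{tu}/\big((V_u q_t)\, q_u\big) = (\mder \phi_{t,u})^{-1}$.

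The heart of the argument is to straighten $(q_t)_{t\in U}$ into a \emph{genuine} $(\Fw,\X,S^1)$-valued $U$-cocycle, $q_{tu} = (V_u q_t)\, q_u$, by adjusting each $F_t$ by a phase polynomial of degree $<m+1$ (and $q_t$ by the corresponding derivative) while keeping the identity $\mder_t f = q_t \, \mder F_t$ intact. In Proposition \ref{fin-prop} this was accomplished for infinite $U$ by a Lusin/continuity argument passing to an open subgroup; with $U$ finite that route is unavailable, and here is where the finite characteristic genuinely enters: I would exploit that $|U|$ is a power of $p$, that the obstruction class $[\phi]$ is $|U|$-torsion in $H^2\big(U,\Phase_{<m+1}(\X,S^1)\big)$, and the root-extraction result for phase polynomials (Lemma \ref{root}), in order to trivialize $[\phi]$ modulo constants without inflating degrees beyond $O_{k,m}(1)$. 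I expect this straightening step to be the main obstacle, since the relevant class need not vanish for purely formal reasons.

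Once $(q_t)_{t\in U}$ is a genuine phase-polynomial $U$-cocycle of degree $<m$, I would integrate it: applying Lemma \ref{poly-integ} once for each $g \in \Fw$ and pasting the results together, there is a $(\Fw,\X,S^1)$-phase polynomial $Q$ of degree $<O_{k,m}(1)$ with $q_t = \mder_t Q$ for all $t \in U$. Setting $f' := f/Q$ then gives $\mder_t f' = \mder F_t$, a $(\Fw,\X,S^1)$-coboundary for every $t \in U$. Since $U$ acts freely on $\X$ in a manner commuting with the $\Fw$-action, Lemma \ref{straighten-lemma} shows that $f'$ is $(\Fw,\X,S^1)$-cohomologous to a $U$-invariant function, which --- $\X$ being $\Zcal_{<k-1}(\X) \times_\rho U$ --- must be of the form $\pi^*\tilde f$ for some $(\Fw, \Zcal_{<k-1}(\X),S^1)$-function $\tilde f$, where $\pi$ is the factor map. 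Hence $f$ is cohomologous to $Q f'$, and therefore to $Q\, \pi^*\tilde f$ with $Q \in \Phase_{<O_{k,m}(1)}(\Fw,\X,S^1)$, which is precisely the conclusion of Theorem \ref{main-6}. If Lemma \ref{straighten-lemma} only delivers invariance under a proper open subgroup $U'' \le U$, I would instead descend to $\Zcal_{<k-1}(\X) \times_\rho U/U''$, verify via Lemmas \ref{basic} and \ref{descent-lem} that $f'$ still has type $<O_{k,m}(1)$ and that its vertical derivatives remain cohomologous to phase polynomials of degree $<O_{k,m}(1)$ (exactly as in the last paragraph of the proof of Proposition \ref{fin-prop}), and conclude by induction on $|U|$.
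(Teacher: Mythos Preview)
Your proposal has a genuine gap at the straightening step, which you yourself flag as ``the main obstacle.'' The argument you sketch --- that $[\phi]$ is $|U|$-torsion in $H^2\big(U,\Phase_{<m+1}(\X,S^1)\big)$ and that Lemma~\ref{root} supplies $p$-th roots --- does not trivialize $[\phi]$. Torsion plus divisibility would suffice if the multiplication-by-$|U|$ map on the coefficient module were an isomorphism, but by Lemma~\ref{L:values-in-F_p} the group $\Phase_{<m+1}(\X,S^1)$ (or any $\Phase_{<d}$) is full of $p$-torsion, so the $p$-th power map is far from injective, and $H^2(U,\Phase_{<d})$ can be nonzero even though every element is $|U|$-torsion. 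Concretely, taking roots of a cobounding $1$-cochain for $\phi^{|U|}$ only reduces $\phi$ to a $C_{|U|}$-valued $2$-cocycle, and $H^2(U,C_{|U|})$ is typically nonzero for $p$-groups $U$ of rank $\geq 2$. Your fallback induction on $|U|$ via Lemma~\ref{straighten-lemma} is also dangerous: for finite $U$ that lemma may return an arbitrarily small subgroup $U''$, so the number of inductive steps is unbounded in terms of $k,m$, and the degree bounds would accumulate to something depending on $|U|$ rather than $O_{k,m}(1)$.

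The paper's proof avoids both issues by abandoning the cohomological straightening entirely. Instead it fixes generators $e_1,\dots,e_N$ of $U$, writes $\mder_{e_j} f = p_j\,\mder F_j$, and builds an explicit antiderivative $F$ by a ``path-integral'' formula (the multiplicative analogue of integrating a curl-free vector field on $\R^N$). The commutator defects $\omega_{ij}=\mder_{e_j}\tilde F_i/\mder_{e_i}\tilde F_j$ are phase polynomials of degree $<O_{k,m}(1)$ by the same computation you did, and the key point is that the error terms in $\mder_{e_j}F/\tilde F_j$ --- products over paths of the $\omega_{ij}$ composed with polynomial coordinate maps --- remain phase polynomials of degree $<O_{k,m}(1)$ regardless of $N$, by Lemma~\ref{polycomp} and Corollary~\ref{func} (since $\Phase_{<d}$ is closed under products). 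After a second explicit correction for the $U$-invariant residues, one arrives at $\mder_u f''\in\Phase_{<O_{k,m}(1)}$ for all $u\in U$; now the $U$-cocycle identity holds automatically (it always does for $u\mapsto\mder_u f''$), so Lemma~\ref{poly-integ} applies directly and yields $f''/P$ genuinely $U$-invariant, with no appeal to Lemma~\ref{straighten-lemma} or induction on $|U|$.
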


\begin{proof}
Let $k, m,\F,\X,\rho,U,f$ be as in Theorem \ref{main-6}, and let $p$ be the characteristic of $\F$.  By Lemma \ref{torlemma}, $U$ is a finite abelian group which is $p^n$-torsion for some $n = O_{k}(1)$, and so by the classification of finite abelian groups we may assume that 
$$ U = C_{p^{n_1}} \times \ldots \times C_{p^{n_N}}$$
for some $1 \leq n_1,\ldots,n_N \leq O_{k}(1)$ and some finite (but unbounded) $N$.  When $p$ is large enough, depending on $k$, we can take $n_1=\ldots=n_N=1$.

Let $e_1,\ldots,e_N$ be the standard set of generators of $U$.  
By hypothesis, for every $1 \leq j \leq N$ we can find $p_j \in \Phase_{<m}(\Fw,\X,S^1)$ and $F_j \in M(\X,S^1)$ such that
\begin{equation}\label{mderif}
\mder_{e_j} f = p_j \mder F_j.
\end{equation}
The idea here is to express $F_j$ as $\mder_{e_j} F$ times a polynomial error, for some $F$ independent of $j$; we will then ``integrate'' this to express $f$ as $\mder F$ times a polynomial error, times a function invariant under $e_1,\ldots,e_N$; this is basically what we need to establish Theorem \ref{main-6}.

We turn to the details.  
The first task is to measure two potential obstructions to $F_j$ being expressible as $\mder_{e_j} F$ (modulo polynomial errors), namely the obstruction coming from the torsion of $U$, and the obstruction coming from the multi-dimensionality of $U$.

Observe that we have the telescoping identity
\begin{equation}\label{Tele}
\prod_{t=0}^{p^{n_j}-1} V_{e_j^t} \mder_{e_j} f = 1
\end{equation}
and thus by \eqref{mderif}
$$ \mder \prod_{t=0}^{p^{n_j}-1} V_{e_j^t} F_j \in \Phase_{<m}(\Fw,\X,S^1)$$
and so (by \eqref{integ}) we have
\begin{equation}\label{vfj}
\prod_{t=0}^{p^{n_j}-1} V_{e_j^t} F_j \in \Phase_{<m+1}(\X,S^1).
\end{equation}
Note, conversely, that $\prod_{t=0}^{p^{n_j}-1} V_{e_j^t} F_j$ needed to be polynomial in order to have any chance to express $F_j$ as $\mder_{e_j} F$ times a polynomial.

Suppose now that $p$ is sufficiently large depending on $k$, so that $n_j=1$.  By Lemma \ref{L:values-in-F_p} we have
$\prod_{t=0}^{p^{n_j}-1} V_{e_j^t} p_j = 1$,
and hence by \eqref{Tele}, \eqref{mderif} we may now strengthen \eqref{vfj} to
$$ \mder \prod_{t=0}^{p^{n_j}-1} V_{e_j^t} F_j = 1.$$

In a similar fashion, from the commutation identity
$\frac{\mdersmall_{e_i} \mdersmall_{e_j} f}{\mdersmall_{e_j} \mdersmall_{e_i} f} = 1$
for any $1 \leq i,j \leq N$, we see from Lemma \ref{polycomp}(i) and \eqref{mderif} that
$$ \mder ( \frac{\mder_{e_i} F_j}{\mder_{e_j} F_i} ) \in \Phase_{<m-1}(\Fw,\X,S^1)$$
and hence by \eqref{integ} we have
\begin{equation}\label{midori}
\frac{\mder_{e_i} F_j}{\mder_{e_j} F_i} \in \Phase_{<m}(\Fw,\X,S^1).
\end{equation}
Again, observe that $\frac{\mdersmall_{e_i} F_j}{\mdersmall_{e_j} F_i}$ had to be polynomial in order to have a chance to express $F_i$, $F_j$ as $\mder_{e_i} F$, $\mder_{e_j} F$ modulo polynomials.

We now clean up the first condition \eqref{vfj}.  We claim that there exists a $e_j$-invariant phase polynomial $\Psi_j \in \Phase_{<O_{k,m}(1)}(\X,S^1)$ such that
$\prod_{t=0}^{p^{n_j}-1} V_{e_j^t} F_j = \Psi_j^{p^{n_j}}.$

When $p$ is sufficiently large depending on $k$, then as observed before, $\prod_{t=0}^{p^{n_j}-1} V_{e_j^t} F_j$ is constant, and the claim is trivial, so suppose instead then $p=O_k(1)$.  By \eqref{vfj}, the $(\X,S^1)$-function $\prod_{t=0}^{p^{n_j}-1} V_{e_j^t} F_j$ is polynomial of degree $<m+1$.  By inspection, it is also $e_j$-invariant.  Quotienting out by the $e_j$ action, applying Corollary \ref{root}, and pulling back, we obtain the claim in this case.

If we then write
$ \tilde F_j := F_j / \Psi_j$,
then by construction we have
\begin{equation}\label{vfj2}
\prod_{t=0}^{p^{n_j}-1} V_{e_j^t} \tilde F_j = 1
\end{equation}
and
\begin{equation}\label{mderif2}
\mder_{e_j} f \in  (\mder F_j) \cdot \Phase_{<O_{k,m}(1)}(\Fw,\X,S^1)
\end{equation}
while from \eqref{midori} we have
\begin{equation}\label{midori2}
\frac{\mder_{e_i} \tilde F_j}{\mder_{e_j} \tilde F_i} \in \Phase_{<O_{k,m}(1)}(\Fw,\X,S^1)
\end{equation}
Note from \eqref{vfj2} and Lemma \ref{L:values-in-F_p} that the $\tilde F_i$ take values in $C_{p^M}$ for some $M = O_{k,m}(1)$; when $p$ is sufficiently large depending on $k$, $m$, then we can take $M=1$.

Recall that in the $N$-dimensional Euclidean space $\R^N$, a (smooth) vector field $(\tilde F_i)_{1 \leq i \leq N}$ which obeys the curl-free condition $\frac{\partial F_i}{\partial x_j} - \frac{\partial F_j}{\partial x_i} = 0$ can be expressed as a gradient $\tilde F_i = \frac{\partial F}{\partial x_i}$, where $F$ can be given explicitly by the formula
$$ F( (t_1,\ldots,t_N) ) := \sum_{i=1}^N \int_0^{t_i} \tilde F_i( (t_1,\ldots,t_{i-1},t'_i,0,\ldots,0) )\ dt'_i.$$
If in addition the $\tilde F_i$ are periodic modulo $\Z^N$, and obey the necessary condition $\int_0^1 \tilde F_i( x + t_i e_i )\ dt_i = 0$ for all $1 \leq i \leq N$ and $x \in \R^N$, then the function $F$ defined above is also periodic modulo $\Z^N$ and descends to the torus $\R^N/\Z^N$.

We can perform exactly the same construction in our finite multiplicative setting.  By Theorem \ref{struc-thm}, we may write $\X = \Zcal_{<k-1}(\X) \times_\rho U$ where $\rho$ is a $(\Fw,\Zcal_{<k-1}(\X),U)$-phase polynomial cocycle of degree $<O_{k}(1)$.  We adopt the notation
$$ [t_1,\ldots,t_N] := e_1^{t_1} \ldots e_N^{t_N}$$
for any integers $t_1,\ldots,t_N$, thus for instance $[t_1,\ldots,t_N]$ is periodic in $t_j$ with period $p^{n_j}$.  We let $F$ be the $(\X,S^1)$ function defined by the formula
$$ F( y, [t_1,\ldots,t_N] ) := \prod_{i=1}^N \prod_{0 \leq t'_i < t_i} \tilde F_i( y, [t_1,\ldots,t_{i-1},t'_i,0,\ldots,0] )$$
for $y \in \Zcal_{<k-1}(\X)$ and $t_1,\ldots,t_N \in \Z$, with the convention that $\prod_{0 \leq t'_i < t_i} a_{t'_i} := (\prod_{t_i \leq t'_i < 0} a_{t'_i})^{-1}$ when $t_i$ is negative.  Note from \eqref{vfj2} that the right-hand side here is periodic in $t_j$ with period $p^{n_j}$, and so $F$ is well-defined.  Since the $\tilde F_i$ take values in a cyclic group $C_{p^M}$, $F$ does also.

Now we compute a derivative of $F$.  We clearly have
$$ \mder_{e_j} F( y, [t_1,\ldots,t_N] ) = \prod_{i=1}^N \prod_{0 \leq t'_i < t_i} \mder_{e_j} \tilde F_i( y, [t_1,\ldots,t_{i-1},t'_i,0,\ldots,0] )$$
for any $1 \leq j \leq N$.  On the other hand, we have the telescoping identity
$$ \prod_{i=1}^N \prod_{0 \leq t'_i < t_i} \mder_{e_i} \tilde F_j( y, [t_1,\ldots,t_{i-1},t'_i,0,\ldots,0] ) = \tilde F_j( y, [t_1,\ldots,t_N] ) / \tilde F_j(y, 1)$$
and thus
\begin{equation}\label{mderej}
 \mder_{e_j} F( y, [t_1,\ldots,t_N] ) = \frac{ \tilde F_j( y, [t_1,\ldots,t_N] ) }{\tilde F_j(y, 1)} \prod_{i=1}^N \prod_{0 \leq t'_i < t_i} \omega_{ij}( y, [t_1,\ldots,t_{i-1},t'_i,0,\ldots,0] )
\end{equation}
where $\omega_{ij} := \frac{\mdersmall_{e_j} \tilde F_i}{\mdersmall_{e_i} \tilde F_j}$.

Since $\rho$ has degree $<O_{k}(1)$, the map $(y,u) \mapsto u$ is a $(\X,U)$-phase polynomial of degree $<O_{k}(1)$, which implies for any fixed $1 \leq i \leq N$ and $0 \leq t'_i < p^{n_i}$ that the map $(y,[t_1,\ldots,t_N]) \mapsto [t_1,\ldots,t_{i-1},t'_i,0,\ldots,0]$ is also a $(\X,U)$-phase polynomial of degree $<O_{k}(1)$, since the map from $[t_1,\ldots,t_N]$ to $[t_1,\ldots,t_{i-1},t'_i,0,\ldots,0]$ is a homomorphism.
By Lemma \ref{polycomp}(iii), we conclude that the functions $(y,[t_1,\ldots,t_N]) \mapsto \tilde F_i( y, [t_1,\ldots,t_{i-1},t'_i,0,\ldots,0] )$ are $(\X,S^1)$-phase polynomials of degree $<O_{k,m}(1)$ for all $1 \leq i \leq N$.  The map $(y,[t_1,\ldots,t_N]) \mapsto t_i$ is also a $(\X,S^1)$-phase polynomial of degree $<O_{k,m}(1)$.  Also, by \eqref{midori2}, the $\omega_{ij}$ are $(\X,S^1)$-phase polynomials of degree $<O_{k,m}(1)$.
We now claim that
$$ (y,[t_1,\ldots,t_N]) \mapsto \prod_{0 \leq t'_i < t_i} \omega_{ij}( y, [t_1,\ldots,t_{i-1},t'_i,0,\ldots,0] )$$
is also a $(\X,S^1)$-phase polynomial of degree $<O_{k,m}(1)$.  

When $p=O_{k,m}(1)$, this claim follows from Corollary \ref{func}, so suppose now that $p$ is sufficiently large depending on $k, m$.  Then $\omega_{ij}( y, [t_1,\ldots,t_{i-1},t'_i,0,\ldots,0] )$ is a phase polynomial of degree $O_{k,m}(1)$ in $t'_i$ that takes values in $C_p$.  By Taylor expansion we may thus write 
$$\omega_{ij}( y, [t_1,\ldots,t_{i-1},t'_i,0,\ldots,0] ) = \prod_{0 \leq j \leq O_{k,m}(1)} [\mder_{e_i}^j \omega_{ij}( y, [t_1,\ldots,t_{i-1},0,0,\ldots,0] )]^{\binom{t'_i}{j}}$$
and thus
$$ \prod_{0 \leq t'_i < t_i} \omega_{ij}( y, [t_1,\ldots,t_{i-1},t'_i,0,\ldots,0] ) = \prod_{0 \leq j \leq O_{k,m}(1)}
[\mder_{e_i}^j \omega_{ij}( y, [t_1,\ldots,t_{i-1},0,0,\ldots,0] )]^{\binom{t_i}{j+1}}.$$
The claim now follows from Lemma \ref{polycomp}.

Inserting the above claim into \eqref{mderej} we conclude that
$$ \tilde F_j \in \mder_{e_j} F (\pi^* F'_j) \cdot \Phase_{<O_{k,m}(1)}(\Fw,\X,S^1)$$
where $F'_j(y,u) := \tilde F_j(y,1)$, and hence by \eqref{mderif2}
$$ \mder_{e_j} f \in  (\mder_{e_j} \mder F) \cdot (\pi^* \mder F'_j) \cdot \Phase_{<O_{k,m}(1)}(\Fw,\X,S^1).$$
Thus if we set $f' := f / \mder F$, then $f$ is cohomologous to $f'$ and 
\begin{equation}\label{mderp-eq} \mder_{e_j} f' \in (\pi^* \mder F'_j) \cdot \Phase_{<O_{k,m}(1)}(\Fw,\X,S^1).
\end{equation}
Now we need to work on the $F'_j$ term.  From the telescoping identity
$\prod_{0 \leq t_j < p^{n_j}} V_{e_j^{t_j}} \mder_{e_j} f' = 1$
and \eqref{mderp-eq}, we have
$$ \pi^* \mder (F'_j)^{p^{n_j}} \in \Phase_{<O_{k,m}(1)}(\Fw,\X,S^1);$$
pushing forward by $\pi$ and then using \eqref{integ}, we conclude
$$ (F'_j)^{p^{n_j}} \in \Phase_{<O_{k,m}(1)}(\Fw,\Zcal_{<k-1}(\X),S^1).$$
In the case that $p$ is sufficiently large depending on $k, m$, we see from Lemma \ref{L:values-in-F_p} that we have the improvement
$ \pi^* \mder (F'_j)^{p^{n_j}}  = 1$,
and so $(F'_j)^{p^{n_j}}$ is constant in this case.  

We now claim that $F'_j = q_j F''_j$, where $q_j \in \Phase_{<O_{k,m}(1)}(\Fw,\Zcal_{<k-1}(\X),S^1)$ and $F''_j$ takes values in $C_{p^{n_j}}$.  For $p$ sufficiently large depending on $k,m$, this is immediate from the previous discussion; for $p=O_{k,m}(1)$, the claim follows instead from Corollary \ref{root}.

Inserting this claim back into \eqref{mderp-eq} we obtain
\begin{equation}\label{mderp-eq2} \mder_{e_j} f' \in (\pi^* \mder F''_j) \cdot \Phase_{<O_{k,m}(1)}(\Fw,\X,S^1).
\end{equation}
Now let $F^*$ be the $(\X,S^1)$-function
$ F^*(y, [t_1,\ldots,t_N]) := \prod_{j=1}^N F''_j(y)^{t_j};$
this is well-defined since $F''_j$ takes values in $C_{p^{n_j}}$.  We observe that $\pi^* \mder F''_j = \mder_{e_j} \mder F^*$.  Thus if $f'' := f'/F^*$, then $f''$ is $(\X,S^1)$-cohomologous to $f$ and
$\mder_{e_j} f'' \in \Phase_{<O_{k,m}(1)}(\Fw,\X,S^1)$
for all $1 \leq j \leq N$.  By repeated use of the cocycle identity $\mder_{uu'} f'' = (\mder_u f) V_u (\mder_{u'} f)$ we conclude that
$\mder_u f'' \in \Phase_{<O_{k,m}(1)}(\Fw,\X,S^1)$
for all $u \in U$.  Applying Lemma \ref{poly-integ} once for each $g \in \Fw$ we may thus write
$\mder_u f'' = \mder_u P$
for some $P \in \Phase_{<O_{k,m}(1)}(\Fw,\X,S^1)$.  Thus $f''/P$ is $U$-invariant, and so $f'' = P \pi^* \tilde f$ for some $(\Fw,\Zcal_{<k-1}(\X),S^1)$-function $\tilde f$, and Proposition \ref{propfin} follows.
\end{proof}

From Proposition \ref{propfin} and Proposition \ref{fin-prop}, we obtain Theorem \ref{main-6}, and thus Theorem \ref{main-2}.

%
%
\section{The high characteristic case}

We now develop high characteristic analogues of the above theory, establishing the sharp Theorem \ref{main-thm-high} instead of Theorem \ref{main-thm} in this setting.  The arguments here will be similar to those used to prove Theorem \ref{main-thm}; the main new difficulty is to be careful to not lose anything in the degree of various functions beyond what is absolutely necessary.

\subsection{Preliminary reductions}

Just as Theorem \ref{main-thm} follows from Theorem \ref{main-4}, Theorem \ref{main-thm-high} will follow from

\begin{theorem}[First reduction of high characteristic case]\label{main-high-char} 
Let $\F$ be a finite field, let $1 \leq k \leq \charac(\F)$, and let $\X$ be an ergodic $\Fw$-system of order $<k-1$.  Let $f \in Z^1_{<k}(\Fw, \X, S^1)$ be a $(\Fw, \X,S^1)$-cocycle of type $<k$.  Then $f$ is $(\Fw, \X, S^1)$-cohomologous to a $(\Fw, \X,S^1)$-phase polynomial of degree $<k$.
\end{theorem}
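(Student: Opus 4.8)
The plan is to mirror, step for step, the reduction of Theorem~\ref{main-thm} to Theorems~\ref{main-4} and~\ref{main-6}, but to replace each auxiliary lemma that loses a bounded amount of degree by an \emph{exact} counterpart, valid because $k\le\charac(\F)$. I would induct on $k$. The cases $k\le 2$ are immediate, since an ergodic $\Fw$-system of order $<1$ is a point and a $(\Fw,\pt,S^1)$-cocycle is constant in the space variable, hence already a phase polynomial of degree $<1$. So assume $k\ge 3$ and that the theorem holds for $k-1$. By Proposition~\ref{typek-prop} write $\X=\Zcal_{<k-2}(\X)\times_\rho U$; by Proposition~\ref{ext-type-k} the cocycle $\rho$ has type $<k-2$, and by the sharp (high-characteristic) form of the structure theorem~\ref{struc-thm} I may take $\rho$ to be a phase polynomial cocycle of degree $<k-1$. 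Since $k\le\charac(\F)$, Lemma~\ref{torlemma} lets me take $U$ to be $p$-torsion.

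The first move is a sharp vertical differentiation: for $t\in U$ let $V_t\colon(y,u)\mapsto(y,tu)$, and apply the exact analogue of Lemma~\ref{vert-lem}. Because $\X$ has order $<k-1$ and $f$ has type $<k$, a single differentiation $\mder_t f$ already has type $<k-\min(k-1,k)=1$, and a second differentiation $\mder_{t'}\mder_t f$ has type $<1-\min(k-1,1)=0$, i.e. is a coboundary; thus after two vertical derivatives one is at type $<0$, which is cohomologous to a phase polynomial of degree $0$.

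The substance of the argument is then to ``integrate'' these vertical derivatives back one at a time while holding the degree to exactly $<k$. At the relevant stage one has $\mder_{t_1}\cdots\mder_{t_j}f$ cohomologous to a phase polynomial of degree $<k-j$ for all $t_1,\ldots,t_j\in U$, and one must upgrade this to the same statement with $j-1$ derivatives. Exactly as in the proof of Theorem~\ref{main-6} and its reduction to finite $U$ in Proposition~\ref{fin-prop}, this needs: linearising the phase-polynomial factor $q_t\in\Phase_{<k-j}(\Fw,\X,S^1)$ in $t$ over an open neighbourhood, which, $U$ being $p$-torsion, may be taken to be an open subgroup; solving the resulting curl-free system $\mder_u F=(\text{explicit data})$ over the finite quotient by the discrete antiderivative $F((t_i)):=\prod_i\prod_{0\le t'<t_i}\tilde F_i(\ldots)$, exactly as in Proposition~\ref{propfin}; and absorbing the residual base-system error by the inductive hypothesis on $\Zcal_{<k-2}(\X)$, using the exact descent-of-type lemma (Corollary~\ref{exact_descent}) to keep the descended function of type $<k-1$. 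At the end one gets $f$ cohomologous to $P\cdot\pi^*\tilde f$ with $P\in\Phase_{<k}(\Fw,\X,S^1)$ and $\tilde f$ of type $<k-1$ on $\Zcal_{<k-2}(\X)$; the inductive hypothesis makes $\tilde f$ cohomologous to a phase polynomial of degree $<k-1$, and multiplying gives $f$ cohomologous to a phase polynomial of degree $<k$. (As in the general-characteristic argument, it may be convenient to phrase part of the induction for arbitrary $(\Fw,\X,S^1)$-functions rather than only for cocycles.)

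The hard part is the bookkeeping, not the architecture: unlike in Theorem~\ref{main-4}, where $O_{k,m}(1)$ losses are harmless, here every auxiliary result — vertical differentiation, descent of type, the measurable linearisation, extraction of $p$-th roots of phase polynomials, and the Taylor/telescoping estimates in the antiderivative construction — must be established in an exact form, and one must check in each case that $k\le\charac(\F)$ is precisely the hypothesis that suffices. The decisive input is Lemma~\ref{L:values-in-F_p}: a phase polynomial of degree $<k\le\charac(\F)$ takes values in a coset of $C_p$ and has constant ``integral'' along every line, so the two obstructions to integrability — the torsion obstruction $\prod_{0\le t<p}V_{e_j^t}F_j$ and the multidimensionality obstruction $\mder_{e_i}F_j/\mder_{e_j}F_i$ — are themselves phase polynomials of the \emph{same} degree, and the $\binom{t}{j+1}$ Taylor expansion which in general characteristic would inflate the degree is either unnecessary or harmless. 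Verifying this uniformly across all the lemmas, so that the single degree budget $<k$ is never exceeded, is the real work; the reduction to finite $U$ and the finite-$U$ case themselves then go through exactly as in Propositions~\ref{fin-prop} and~\ref{propfin}, now with the exact bounds in force.
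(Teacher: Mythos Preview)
Your proposal correctly identifies that one wants exact versions of the auxiliary lemmas, with Lemma~\ref{L:values-in-F_p} as the decisive input. But the architecture you borrow from the general-characteristic proof---a single induction on $k$, with an inner downward induction on the number of vertical derivatives---does not close with exact degrees. The obstruction is the exact integration: Proposition~\ref{exact_int} integrates a $U$-cocycle $t\mapsto q_t$ of degree $<l$ on $\X=\Y\times_\rho U$ with $\Y\geq\Zcal_{<j}(\X)$ to a $Q$ of degree $<l+j$, and this is sharp. In your scheme $\X$ stays of order $<k-1$ throughout, so each integration contributes $k-2$ to the degree regardless of how small $l$ has become; already the first nontrivial step yields $Q$ of degree $<2k-4$, exceeding the budget once $k\geq 4$. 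Your claim that the residual $\tilde f$ descends with type $<k-1$ is also unjustified---exact descent (Proposition~\ref{exact_descent}) preserves type but does not lower it---so the outer $k$-induction cannot be invoked on $\tilde f$. (The differentiation count is off by one as well: $V_t$ is trivial only on $\Zcal_{<k-2}(\X)$, not on $\Zcal_{<k-1}(\X)=\X$, so Lemma~\ref{erglem}(iv) gives $\mder_t f$ of type $<2$, not $<1$; cf.\ Lemma~\ref{exactdiff}.)

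The paper's repair is structural: it reformulates the statement as Theorem~\ref{main-high-char2}, a \emph{double} induction on $(k,j)$ with $j$ the order of the system. One differentiates once to get $q_t$ of degree $<k-j+1$, integrates to $Q$ of degree exactly $<(k-j+1)+(j-1)=k$, and descends to a system of order $<j-1$---same $k$, smaller $j$. For this to close one must weaken ``cocycle'' to ``line cocycle and quasi-cocycle of order $<k-1$'' (Definition~\ref{quasicocycle}), since $f/Q$ is no longer a cocycle; your parenthetical ``arbitrary functions'' is too coarse, as exact descent and the finite-type characterisation (Lemma~\ref{fint}) genuinely require the quasi-cocycle hypothesis. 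Moreover, the outer $k$-induction applied to $\mder_t f$ (type $<k-j+1$ on the same order-$<j$ system) may land in the regime $j>k-j+1$, so the high-order case $j>k$ must be handled separately, via good tuples (Proposition~\ref{good-descent}, Lemma~\ref{fint-csg})---a component with no counterpart in the general-characteristic argument.
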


The deduction of Theorem \ref{main-thm-high} from Theorem \ref{main-high-char} is completely analogous to the corresponding derivation of Theorem \ref{main-thm} from Theorem \ref{main-4} and is omitted.  It remains to establish Theorem \ref{main-high-char}.  It should not be surprising that this will be accomplished by an induction on $k$.  But it will also be convenient to induct on a secondary parameter $j$, measuring the order of $\X$.  For technical reasons, in this induction the function $f$ will no longer be a cocycle, but instead have two weaker properties, which we refer to as being a \emph{quasi-cocycle} and a \emph{line cocycle} respectively:

\begin{definition}[Quasi-cocycles]\label{quasicocycle}  Let $G$ be a countable abelian group, let $\X$ be an ergodic $G$-system, and let $f$ be a $(G,\X,S^1)$-function.  Let $k \ge 0$.  We say that $f$ is a \emph{$(G,\X,S^1)$-quasi-cocycle of order $<k$} if, for every $g, g' \in G$, one has
$$ f(g+g',x) = f(g,T_{g'} x) f(g',x) p_{g,g'}(x)$$
for some $p_{g,g'} \in \Phase_{<k}(\X,S^1)$. 
\end{definition}

\begin{examples}  A $(G,\X,S^1)$-cocycle is precisely a $(G,\X,S^1)$-quasi-cocycle of order $<0$.  Every quasi-cocycle of order $<k$ is of course a quasi-cocycle of order $<k+1$; in particular, cocycles are quasi-cocycles of every order.  Every $(G,\X,S^1)$-phase polynomial of order $<k$ is also a $(G,\X,S^1)$-quasi-cocycle of order $<k$.  The space of $(G,\X,S^1)$-quasi-cocycles of order $<k$ form a group.  One can of course define this concept for  compact abelian groups other than $S^1$, but we will only need the $S^1$ quasi-cocycles in our arguments.
\end{examples}

\begin{definition}[Line cocycle]  Let $\F$ be a finite field of characteristic $p$, let $\X$ be an ergodic $\Fw$-system, and let $f$ be a $(\Fw,\X,S^1)$-function.  We say that $f$ is a \emph{line cocycle} if for every $g \in \Fw$, $\prod_{j=0}^{p-1} f(g, T_{g^j} x) = 1$ for $\mu$-a.e. $x$.
\end{definition}

\begin{example}  Every $(\Fw,\X,S^1)$-cocycle is a line cocycle, and the set of line cocycles forms a group.  In particular, any function cohomologous to a line cocycle is again a line cocycle.
\end{example}

It is clear that Theorem \ref{main-high-char} then follows from the $j=k$ case of

\begin{theorem}[Second reduction of high characteristic case]\label{main-high-char2} 
Let $\F$ be a finite field, let $1 \leq j, k \leq \charac(\F)$, and let $\X$ be an ergodic $\Fw$-system of order $<j$.  Let $f \in M_{<k}(\Fw, \X, S^1)$ be a $(\Fw,\X,S^1)$-function of type $<k$ which is also a line cocycle and a quasi-cocycle of order $<k-1$. Then $f$ is $(\Fw, \X, S^1)$-cohomologous to a $(\Fw, \X,S^1)$-phase polynomial $P$ of degree $<k$; furthermore, $P$ takes values in the cyclic group $C_p = \{ z \in S^1: z^p=1 \}$.
\end{theorem}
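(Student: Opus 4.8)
The plan is to establish Theorem \ref{main-high-char2} by a double induction: an outer induction on $k$ (needed to invoke the high-characteristic structure theory and the exact descent lemma) and, for each fixed $k$, an inner induction on $j$, the order of $\X$. The base cases are disposed of directly: when $j=1$ the system $\X$ is trivial, so $f$ depends only on $g\in\Fw$, and the line-cocycle condition forces $f(g)^p=1$, exhibiting $f$ itself as a $C_p$-valued phase polynomial of degree $<1\le k$; when $k=1$ one argues as in the classical Kronecker/spectral theory (cf.\ the discussion just before \cite[Lemma 4.2]{hk-cubes}), with the line-cocycle condition again supplying the $C_p$-valued constant.

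For the inductive step, assume $j,k\ge 2$. By Proposition \ref{typek-prop} we may write $\X\equiv\Zcal_{<j-1}(\X)\times_\rho U$, and by the high-characteristic structure theory (Lemma \ref{torlemma}, Theorem \ref{struc-high}, using $j\le p$) we may take $U$ to be a $p$-torsion compact abelian group and $\rho$ to be a $(\Fw,\Zcal_{<j-1}(\X),U)$-phase polynomial cocycle of degree $<j$. The core of the proof is then a downward induction on the number $\ell$ of vertical derivatives, showing that $\mder_{t_1} \ldots \mder_{t_\ell} f$ is $(\Fw,\X,S^1)$-cohomologous to a $C_p$-valued phase polynomial of degree $<k-\ell$, for all $t_1,\ldots,t_\ell\in U$. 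The base case $\ell=k$ follows from $k$ applications of Lemma \ref{vert-lem}: since each vertical differentiation strictly lowers the type (the order of $\X$ being at least $1$), the function $\mder_{t_1} \ldots \mder_{t_k} f$ has type $<0$ and is hence a coboundary. Here one first records, using that $V_t$ commutes with the $\Fw$-action, that each $\mder_{t_i}$ preserves the line-cocycle property, sends a quasi-cocycle of order $<k-1$ to one of order $<k-2$, and lowers phase-polynomial degree by one; thus at stage $\ell$ the function is a line cocycle and a quasi-cocycle of order $<k-1-\ell$, which is exactly what a ``type/degree $<k-\ell$'' regime requires.

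The passage from $\ell$ to $\ell-1$ is the Conze--Lesigne step. Writing $g:=\mder_{t_1} \ldots \mder_{t_{\ell-1}} f$, the $\ell$-stage conclusion says that $\mder_{t_\ell} g$ is cohomologous to a phase polynomial of degree $<k-\ell$ for every $t_\ell\in U$, and we must ``integrate'' this in $t_\ell$ to control $g$. Following the general-characteristic scheme (Propositions \ref{fin-prop} and \ref{propfin}), this is reduced first to the case of finite $U$ --- via Lusin's theorem, the measurable selection of the solving data (Lemma \ref{measurable_choice}), and the fact, which crucially uses that $U$ is torsion (cf.\ Lemma \ref{tor-lem}), that $U$ has arbitrarily small open subgroups --- and the finite case $U\cong C_p^N$ is then handled by an explicit discrete integration of the $F_t$'s along the coordinate directions, now invoking Lemma \ref{L:values-in-F_p} at each turn to keep all auxiliary phase polynomials $C_p$-valued and of degree not exceeding $k-\ell+1$. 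The upshot is $g\equiv P\cdot\pi^*\tilde g$ with $P$ a $C_p$-valued phase polynomial of degree $<k-\ell+1$ and $\tilde g$ a $(\Fw,\Zcal_{<j-1}(\X),S^1)$-function; by the exact descent lemma (Corollary \ref{exact_descent}) $\tilde g$ has type $<k-\ell+1$, remains a line cocycle and a quasi-cocycle of the required order, and lives on the system $\Zcal_{<j-1}(\X)$ of order $<j-1$. The inner induction hypothesis therefore applies to $\tilde g$ and shows that $\tilde g$, hence $\pi^*\tilde g$, hence $g$, is cohomologous to a $C_p$-valued phase polynomial of degree $<k-\ell+1$, completing the downward induction. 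Taking $\ell=0$ gives that $f$ is cohomologous to a phase polynomial $P$ of degree $<k$; that $P$ can be taken $C_p$-valued rather than merely coset-valued uses once more Lemma \ref{L:values-in-F_p} together with the line-cocycle property of $f$ (inherited by $P$, since cohomology preserves line cocycles), which kills the residual constant via the vanishing of the integral of $P$ along lines.

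I expect the main obstacle to be the degree and type bookkeeping. Unlike the general-characteristic argument, where it suffices that every degree stay $O_k(1)$, here each cohomological lemma --- vertical differentiation, exact descent, the finite-$U$ integration, and the propagation of the quasi-cocycle and line-cocycle hypotheses --- must be used in a sharp form, and one must verify at each of the many steps that no more than the single unavoidable unit of degree is lost, so that the final bound is exactly $<k$ and the values stay in $C_p$. A subsidiary technical point, as in the general case, is guaranteeing measurable dependence on the parameters $t_1,\ldots,t_\ell$, which is what makes the reduction to finite $U$ via Lusin's theorem possible.
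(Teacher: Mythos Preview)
Your proposal follows the architecture of the general-characteristic proof (Theorem \ref{main-4}) rather than the paper's high-characteristic proof, and this creates two genuine gaps.

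\textbf{The high-order case $j>k$ is not addressed.} Your descent step invokes Proposition \ref{exact_descent} to conclude that $\tilde g$ has type $<k-\ell+1$, but that proposition requires the ambient system to have order $<k-\ell+1$; since $\X$ has order $<j$, this forces $j\le k-\ell+1$. Already at $\ell=1$ this fails whenever $j>k$. The paper treats $j>k$ by a completely different argument: it packages $f$ into a \emph{good tuple} $(f^{\sgn(\w)})_{\w\in\2^k}$, shows (Proposition \ref{good-descent}) that such tuples descend from order $<j$ to order $<j-1$ by correcting each component with a character $\chi_\w\circ\rho_{j-1}$ coming from Lemma \ref{descent-lem}, iterates this down to $\Zcal_{<k}(\X)$, and then invokes a Cauchy--Schwarz--Gowers argument (Lemma \ref{fint-csg}) to recover the type $<k$ condition for each individual component. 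None of this machinery appears in your plan, and your exact-descent route cannot substitute for it.

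\textbf{The degree bookkeeping in the $\ell$-induction is off, even for $j\le k$.} You claim $\mder_t$ lowers quasi-cocycle order and degree by one, giving degree $<k-\ell$ at stage $\ell$; but Proposition \ref{exact_int} returns a polynomial $Q$ of degree $<(k-\ell)+(j-1)$, which exceeds your target $<k-\ell+1$ as soon as $j\ge 3$. The paper avoids this by differentiating \emph{once}, applying the outer induction on $k$ (now with $k'=k-j+1$, using Lemma \ref{exactdiff} to get the sharp type/quasi-cocycle drop of $j-1$), and then integrating once, so that Proposition \ref{exact_int} yields degree $<(k-j+1)+(j-1)=k$ exactly. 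In other words, the paper's scheme is ``differentiate once, outer-induct on $k$, integrate once, inner-induct on $j$'' rather than a multi-step $\ell$-downward-induction; the single differentiation is what makes the arithmetic close.
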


\subsection{Vertical differentiation}

We now begin the proof of Theorem \ref{main-high-char2}.  We first deal with the easy case $k=1$.  In this case, $f$ is a quasi-cocycle of order $<0$, and is thus a cocycle.  By hypothesis, $d^{[1]} f$ is a $(\Fw,\X^{[1]},S^1)$-coboundary, which by Lemma \ref{split} implies that $f$ is cohomologous to a constant $c(g)$, thus $f(g,x) = c(g) \mder_g F(x)$ for some $(\X,S^1)$-function $F$.  Since $f$ is a cocycle, $c$ is a character, and thus takes values in $C_p$, and the claim follows.

Now suppose that $2 \leq k \leq \charac(\F)$ and assume inductively that the claim has already been proven for smaller values of $k$.  In particular, Theorem \ref{main-high-char} holds for smaller values of $k$.  On the other hand, by repeating the proof of Theorem \ref{struc-thm}, we have the following structure theorem:

\begin{corollary}[Exact structure theorem, high characteristic]\label{struc-high}  Let $\F$ be a finite field, and let $1 \leq k \leq \charac(\F)$ be such that Theorem \ref{main-high-char} holds for all values smaller equal than $k$. Let $\X$ be an ergodic $\Fw$-system.  Then $\Zcal_{<1}(\X)$ is trivial, and for all $2 \leq j \leq k$, we can write
$ \Zcal_{<j}(\X) \equiv \Zcal_{<j-1}(\X) \times_{\rho_{j-1}} U_{j-1}$,
where $U_{j-1}$ is $\charac(\F)$-torsion and $\rho_{j-1}$ is a $(\Zcal_{<j-1}(\X),U_{j-1})$-phase polynomial of degree $<j-1$.  In particular, we have
$$ \Zcal_{<k}(\X) \equiv U_0 \times_{\rho_1} U_1 \times_{\rho_2} \ldots \times_{\rho_{k-1}} U_{k-1}$$
where $U_0$ is trivial.
\end{corollary}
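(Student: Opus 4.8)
The plan is to run the proof of Theorem~\ref{struc-thm} essentially verbatim, making two substitutions dictated by the high-characteristic hypothesis $k\leq\charac(\F)$: every appeal to Theorem~\ref{main-4} is replaced by an appeal to Theorem~\ref{main-high-char2} (available at the relevant smaller parameters by the outer induction), and every $O_{k,p}(1)$ torsion exponent or degree bound is replaced by its sharp value. As there, $\Zcal_{<1}(\X)$ is trivial by ergodicity, and I would induct on $j$ from $2$ to $k$: granting the asserted tower decomposition for all indices below $j$, one produces it at level $j$.

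For the inductive step, $\Zcal_{<j}(\X)$ is ergodic of order $<j=<(j-1)+1$, so Proposition~\ref{typek-prop} gives $\Zcal_{<j}(\X)\equiv\Zcal_{<j-1}(\X)\times_{\rho_{j-1}}U_{j-1}$ for a compact abelian $U_{j-1}$ and a $(\Fw,\Zcal_{<j-1}(\X),U_{j-1})$-cocycle $\rho_{j-1}$. Lemma~\ref{torlemma}, applied in the regime $j\leq\charac(\F)$ where its exponent may be taken equal to $1$, shows $U_{j-1}$ is $\charac(\F)$-torsion, hence by \cite[Chapter~5, Theorem~18]{morris} topologically isomorphic to a product, necessarily countable by separability, of copies of $C_p$; write $(\chi_i)_i$ for the coordinate projections $U_{j-1}\to C_p$. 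Since $\Zcal_{<j}(\X)$ has order $<(j-1)+1$, Proposition~\ref{ext-type-k} shows $\rho_{j-1}$ is of type $<j-1$, so each $\chi_i\circ\rho_{j-1}$ is a $(\Fw,\Zcal_{<j-1}(\X),S^1)$-cocycle of type $<j-1$ and, being a cocycle, is also a line cocycle and a quasi-cocycle of order $<j-2$. Applying Theorem~\ref{main-high-char2} with order parameter and type parameter both equal to $j-1$ --- legitimate because $\Zcal_{<j-1}(\X)$ is ergodic of order $<j-1$, $j-1\leq\charac(\F)$, and the corresponding instance of Theorem~\ref{main-high-char} holds by hypothesis --- produces $\chi_i\circ\rho_{j-1}=q_i\,\mder F_i$ with $q_i\in\Phase_{<j-1}(\Fw,\Zcal_{<j-1}(\X),S^1)$ a cocycle taking values in $C_p$ and $F_i\in M(\Zcal_{<j-1}(\X),S^1)$.

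It then remains to assemble these coordinatewise. Set $q:=(q_i)_i$ and $F:=(F_i)_i$, which are well-defined and measurable since the index set is countable; because each $q_i$ is $C_p$-valued, $q$ is a $(\Fw,\Zcal_{<j-1}(\X),U_{j-1})$-phase-polynomial cocycle of degree $<j-1$, and $\rho_{j-1}=q\,\mder F$ (the $\chi_i$ separate points of $U_{j-1}$). Thus $\rho_{j-1}$ is cohomologous to $q$, and by Remark~\ref{measure-equiv} one may replace $\rho_{j-1}$ by $q$ without altering $\Zcal_{<j}(\X)$ up to measure equivalence, giving exactly the required decomposition at level $j$. Iterating from $j=2$ to $j=k$ and composing the abelian extensions yields the displayed tower $\Zcal_{<k}(\X)\equiv U_0\times_{\rho_1}U_1\times_{\rho_2}\cdots\times_{\rho_{k-1}}U_{k-1}$ with $U_0$ trivial.

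The substance of the argument --- and the only genuine departure from the proof of Theorem~\ref{struc-thm} --- is the exact degree bookkeeping: one must confirm that no step inflates a degree beyond $<j-1$, and this rests squarely on the sharp conclusion of Theorem~\ref{main-high-char2} (degree exactly $<j-1$, not $O_{k,m}(1)$) together with its assertion that the phase polynomial may be chosen $C_p$-valued. Because $q_i$ is already $C_p$-valued, the root-extraction step of Theorem~\ref{struc-thm} (the auxiliary $\tilde F_\chi$ and the appeal to Lemma~\ref{root}) is simply not needed here; the one minor point still to be checked carefully is that the improved ($m=1$) conclusion of Lemma~\ref{torlemma} --- and likewise of Lemma~\ref{L:values-in-F_p}, used inside it --- genuinely applies whenever $j\leq\charac(\F)$.
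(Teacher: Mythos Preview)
Your approach is correct and is exactly what the paper intends: it states only that one should repeat the proof of Theorem~\ref{struc-thm}, and you have carried this out with the appropriate sharpening of degrees and torsion exponents.

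One small imprecision: your claim that the root-extraction step is ``simply not needed'' is not quite right. The functions $F_i$ you obtain are a priori only $S^1$-valued, so the tuple $F=(F_i)_i$ need not land in $U_{j-1}\cong\prod_i C_p$, and without this $\rho_{j-1}=q\,\mder F$ is not a cohomology relation in $U_{j-1}$. What survives is the \emph{trivial} branch of the root-extraction already present in Theorem~\ref{struc-thm}: since $\mder F_i=(\chi_i\circ\rho_{j-1})/q_i$ is $C_p$-valued, $F_i^p$ is $G$-invariant and hence constant by ergodicity; dividing $F_i$ by any $p^{\mathrm{th}}$ root of that constant makes $F_i$ itself $C_p$-valued without disturbing $\mder F_i$. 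Your flag about Lemma~\ref{torlemma} is well taken: its ``$m=1$ for $p$ sufficiently large'' clause is not literally $p\geq j$, so to obtain $m=1$ under merely $j\leq\charac(\F)$ one should rerun its short proof with Theorem~\ref{main-high-char} (at parameter $j-1$) in place of Theorem~\ref{main-4}, giving $q_\chi$ of degree $<j-1<p$ and hence $C_p$-valued by Lemma~\ref{L:values-in-F_p}(iv).
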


Note that the torsion of the groups $U_j$ here is just $\charac(\F)$ rather than a power of $\charac(\F)$, due to the high characteristic hypothesis (see Lemma \ref{torlemma}).  By hypothesis, Corollary \ref{struc-high} is applicable for our fixed value of $k$.

When $j=1$, then $\X$ is a point, and the claim is trivial.  Now suppose $2 \leq j \leq \charac(\F)$ and assume inductively that the claim has already been proven for the same value of $k$ and all smaller values of $j$.

We first deal with the low order case $j \leq k$, returning to the high order case $j > k$ later.  We use Corollary \ref{struc-high} to write $\X = \Zcal_{<j-1}(\X) \times_{\rho_{j-1}} U_{j-1}$.

Let $t \in U_{j-1}$.  We observe the following properties of the $(\Fw,\X,S^1)$-function $\mder_t f$:

\begin{lemma}[Exact differentiation]\label{exactdiff} $\mder_t f$ is a line cocycle, is of type $<k-j+1$, and is a quasi-cocycle of order $<k-j$.
\end{lemma}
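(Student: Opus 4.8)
The plan is to verify the three assertions separately, using throughout that the vertical rotation $V_t$ (for $t\in U_{j-1}$) commutes with every $T_g$ and preserves $\mu_X$, together with the decomposition $\X\equiv\Zcal_{<j-1}(\X)\times_{\rho_{j-1}}U_{j-1}$ already obtained from Corollary \ref{struc-high}.

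\textbf{Line cocycle.} Since $V_t$ commutes with $T_g^i$ for every $i$, evaluating the line-cocycle identity for $f$ at the point $V_tx$ gives $\prod_{i=0}^{p-1}(V_tf)(g,T_{g^i}x)=\prod_{i=0}^{p-1}f(g,T_{g^i}(V_tx))=1$, so $V_tf$ is again a line cocycle; as line cocycles form a group, $\mder_t f=(V_tf)/f$ is a line cocycle. For the \textbf{type}, the decomposition $\X\equiv\Zcal_{<j-1}(\X)\times_{\rho_{j-1}}U_{j-1}$ exhibits $\X$ as an abelian extension of order $<j$ by the cocycle $\rho_{j-1}$; since $f$ is of type $<k$ and $2\le j\le k$, Lemma \ref{vert-lem} (with its parameter ``$k$'' taken to be $j$ and its ``$m$'' taken to be $k$) shows that $\mder_t f$ is of type $<k-\min(j,k)=k-j$, and hence \emph{a fortiori} of type $<k-j+1$.

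\textbf{Quasi-cocycle.} Write the order-$<k-1$ quasi-cocycle relation for $f$ as $f(g+g',x)=f(g,T_{g'}x)\,f(g',x)\,p_{g,g'}(x)$ with $p_{g,g'}\in\Phase_{<k-1}(\X,S^1)$. Evaluating this at $V_tx$ and dividing by it (again using $V_tT_{g'}=T_{g'}V_t$) produces
\[ \mder_t f(g+g',x)=\mder_t f(g,T_{g'}x)\;\mder_t f(g',x)\;\mder_t p_{g,g'}(x), \]
so it remains to show $\mder_t p_{g,g'}\in\Phase_{<k-j}(\X,S^1)$. This follows from the general claim that, for $t\in U_{j-1}$, the operator $\mder_t$ sends $\Phase_{<\ell}(\X,S^1)$ into $\Phase_{<\ell-j+1}(\X,S^1)$ for every $\ell\ge0$, which I would prove by induction on $\ell$. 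For $\ell\le j-1$ any $P\in\Phase_{<\ell}(\X,S^1)$ is $\Zcal_{<\ell}(\X)$-measurable (Lemma \ref{easy-thm}), hence $\Zcal_{<j-1}(\X)$-measurable by monotonicity of the characteristic factors; but $\Zcal_{<j-1}(\X)$ is the base of the above extension and is therefore pointwise fixed by $V_t$, so $\mder_t P=1$. For $\ell\ge j$, Lemma \ref{trivpoly-lem}(ii) gives $\mder_g P\in\Phase_{<\ell-1}(\X,S^1)$ for every $g\in\Fw$, so the induction hypothesis together with the commutation $\mder_t\mder_g=\mder_g\mder_t$ yields $\mder_g(\mder_t P)\in\Phase_{<\ell-j}(\X,S^1)$ for all $g$, and then $\mder_t P\in\Phase_{<\ell-j+1}(\X,S^1)$ by the polynomiality criterion Lemma \ref{trivpoly-lem}(iii). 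Taking $\ell=k-1$ gives $\mder_t p_{g,g'}\in\Phase_{<k-j}(\X,S^1)$, whence $\mder_t f$ is a quasi-cocycle of order $<k-j$.

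\textbf{Main obstacle.} The only substantive point is the degree-reduction claim, and in particular its base case: the fact that vertical differentiation by an element of $U_{j-1}$ lowers the degree of a phase polynomial by $j-1$ (rather than merely by $1$) rests on identifying, via Corollary \ref{struc-high}, the abstract factor $\Zcal_{<j-1}(\X)$ with the $V_t$-invariant base of the extension, combined with the monotonicity $\Zcal_{<\ell}(\X)\le\Zcal_{<j-1}(\X)$ for $\ell\le j-1$. One should also check that Lemma \ref{vert-lem} is applicable, which requires $\rho_{j-1}$ to be a genuine cocycle — and this is part of the conclusion of Corollary \ref{struc-high}.
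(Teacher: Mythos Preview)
Your proof is correct. The line-cocycle and type assertions are handled exactly as in the paper (both invoke Lemma~\ref{vert-lem}); your remark that Lemma~\ref{vert-lem} actually yields type $<k-j$ rather than merely $<k-j+1$ is accurate, though only the weaker bound is needed downstream to match the quasi-cocycle order.

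For the quasi-cocycle assertion you take a genuinely different route. Both you and the paper reduce to showing that $\mder_t$ sends $\Phase_{<k-1}(\X,S^1)$ into $\Phase_{<k-j}(\X,S^1)$, but you prove this by induction on the degree $\ell$: the base case $\ell\le j-1$ comes from Lemma~\ref{easy-thm} (polynomials of degree $<\ell$ are $\Zcal_{<\ell}$-measurable, hence $V_t$-invariant), and the inductive step reduces by one via the polynomiality criterion. The paper instead does it in one shot with the cubic-measure machinery: for $P\in\Phase_{<k-1}$ one has $d^{[k-1]}P=0$ by Lemma~\ref{basic}(iii), and since $V_t$ fixes $\Zcal_{<j-1}(\X)$, Lemma~\ref{erglem}(iv) shows the face transformation $(V_t)^{[k-1]}_\alpha$ preserves $\mu^{[k-1]}$ for any $(k-j)$-face $\alpha$; dividing and pushing forward gives $d^{[k-j]}\mder_tP=0$ directly. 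Both arguments ultimately rest on the same fact---that $V_t$ acts trivially on $\Zcal_{<j-1}(\X)$---but yours accesses it through measurability while the paper accesses it through the symmetry of $\mu^{[k-1]}$. Your approach is more elementary in that it avoids touching the higher cubic measures explicitly; the paper's is shorter and makes the degree drop of exactly $j-1$ visible in a single step.
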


\begin{proof} The first claim follows from the fact that $f$ is a line cycle, and that the action $V_t$ of $t$ on $\X$ commutes with the $\Fw$ action.
The second claim follows from Lemma \ref{vert-lem}.

Finally, we prove the quasi-cocycle claim.  As the action $V_t$ of $t$ commutes with the action of $\Fw$, it suffices from Definition \ref{quasicocycle} to show that $\mder_t$ maps $\Phase_{<k-1}(\X,S^1)$ to $\Phase_{<k-j}(\X,S^1)$.

Let $f \in \Phase_{<k-1}(\X,S^1)$, then by Lemma \ref{basic}(iii), $d^{[k-1]} f = 0$ $\mu^{[k-1]}$-a.e..  Let $\alpha$ be a $(k-j)$-face of $\2^{k-1}$.  By Lemma \ref{erglem}(iv), $(V_t)^{[k-1]}_\alpha$ preserves $\mu^{[k-1]}$.  We conclude that $(V_t)^{[k-1]}_\alpha(d^{[k-1]} f) = 0$ $\mu^{[k-1]}$-a.e..  Dividing these two equations, we conclude that
$$ (V_t)^{[k-1]}_\alpha(d^{[k-1]} f) / (d^{[k-1]} f) = (\partial(\alpha)_*)^*( d^{[k-j]} \mder_t f ) = 0$$
$\mu^{[k-1]}$-a.e.  Since $\partial(\alpha)_*$ pushes forward $\mu^{[k]}$ to $\mu^{[k-j]}$ (see Lemma \ref{ccs}), we conclude that $d^{[k-j]} \mder_t f=0$ $\mu^{[k-j]}$-a.e..    Applying Lemma \ref{basic}(iii) again we conclude that $\mder_t f \in \Phase_{<k-j}(\X,S^1)$ as required.
\end{proof}

By the induction hypothesis, $\mder_t f$ is $(\Fw,\X,S^1)$-cohomologous to a $(\Fw,\X,S^1)$-phase polynomial $q_t$ of degree $<k-j+1$ which takes values in $C_p$.  Since $\mder_t f$ is a line cocycle and a quasi-cocycle of order $<k-j$, $q_t$ is also.


\subsection{Reduction to the finite $U$ case}\label{Red}

We now argue (as in Proposition \ref{fin-prop}) that in order to conclude the proof of Theorem \ref{main-high-char2}, it suffices to do so in the case when the vertical structure group $U_{j-1}$ is finite.

To do this, we follow the procedure in as in Proposition \ref{fin-prop}, to make $q_t$ a $U'$-cocycle 
for an open subgroup $U'$ of $U_{j-1}$.  Note that the modifications done to $q_t$
in this procedure are by $\Fw$-cocycles of degree $<k-j+1$, so none of the properties of 
$q_t$ are damaged (it is still a $k-j+1$ side cocycle, a line cocycle, and of the correct degree).

As in Proposition \ref{fin-prop}, we can write $U_{j-1} = U' \times W$ for some finite $W$, and write $\X = \Y \times_{\rho'} U'$, where $\Y := \Zcal_{<j-1}(\X) \times_{\rho''} W$ and $\rho', \rho''$ are the projections of $\rho_{j-1}$.  Note that as $\rho_{j-1}$ is of degree $<j$, $\rho'$ is also.  We write $x \in X$ as $x = (y,u)$ for $y \in Y$ and $u \in U'$.

We now invoke the following variant of Lemma \ref{poly-integ} which is more efficient with the degree.

\begin{proposition}[Exact integration]\label{exact_int}  Let $G$ be a countable abelian group, let $j \geq 0$, let $U$ be a compact abelian group, and let $\X= \Y \times_\rho U$ be an ergodic $G$-system with $\Y \geq \Zcal_{<j}(\X)$, where $\rho$ a $(G,\Y,U)$-phase polynomial cocycle of degree $<j$.  For  any $t \in U$, let $p_t(x)$ be a $(\X,S^1)$-phase polynomial of degree $<l$, and suppose that for any $t,s \in U$, $p_{t\cdot s}(x)=p_t(V_sx)\cdot p_s(x)$. 
Then there exists a $(\X,S^1)$-phase polynomial $Q(x)$ of degree $<l+j$ such that $\mder_t Q(x)=p_t(x)$. 
Furthermore we can take $Q(y,u u_0):=p_u(y,u_0)$ for some $u_0 \in U$. 
\end{proposition}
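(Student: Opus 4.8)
The plan is to verify that the explicit formula in the statement produces a valid antiderivative, and then to control its degree by recognising $Q$ as a substitution of a bounded-degree phase polynomial into the given family $(p_t)_{t\in U}$; this is the same construction as in Lemma \ref{poly-integ}, the only new work being the sharp degree count.

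\emph{Construction of $Q$ and the derivative identity.} Assuming as usual (cf.\ Lemma \ref{measurable_choice}) the joint measurability of $(t,x)\mapsto p_t(x)$ on $U\times X$, we write $X=Y\times U$ and apply Fubini: for almost every $u_0\in U$ the function $(u,y)\mapsto p_{uu_0^{-1}}(y,u_0)$ is measurable and the cocycle hypothesis $p_{t\cdot s}(x)=p_t(V_sx)p_s(x)$ holds along this slice in the appropriate almost-everywhere sense. Fix such a $u_0$ and set $Q(y,u):=p_{uu_0^{-1}}(y,u_0)$. For $t\in U$, using the cocycle hypothesis with $s=uu_0^{-1}$ and the identity $V_{uu_0^{-1}}(y,u_0)=(y,u)$,
$$\mder_tQ(y,u)=\frac{p_{t\cdot uu_0^{-1}}(y,u_0)}{p_{uu_0^{-1}}(y,u_0)}=\frac{p_t(y,u)\,p_{uu_0^{-1}}(y,u_0)}{p_{uu_0^{-1}}(y,u_0)}=p_t(x),$$
so $Q$ is an antiderivative of $(p_t)_{t\in U}$ of the stated form. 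It remains to check that $Q\in\Phase_{<l+j}(\X,S^1)$.

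\emph{Degree bound.} The coordinate projection $\pi_U\colon(y,u)\mapsto u$ satisfies $\mder_g\pi_U(y,u)=\rho(g,y)$ for all $g\in G$, and since $\rho$ is a phase polynomial cocycle of degree $<j$, each $\mder_g\pi_U$ lies in $\Phase_{<j}(\X,U)$; by the polynomiality criterion (Lemma \ref{trivpoly-lem}(iii)) we get $\pi_U\in\Phase_{<j+1}(\X,U)$, and hence also $\theta:=\pi_U\cdot u_0^{-1}\in\Phase_{<j+1}(\X,U)$ (translation by a constant preserves degree). Now $Q(x)=p_{\theta(x)}(\pi(x),u_0)$, where $\pi\colon\X\to\Y$ is the factor map, exhibits $Q$ as obtained from the type-$<l$ family $(p_w)_{w\in U}$ by substituting the degree-$<(j+1)$ phase polynomial $\theta$ into the $U$-parameter. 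The $w$-dependence of $p_w$ is governed entirely by the cocycle identity $p_{ws}(x)=p_w(V_sx)p_s(x)$, which combined with $p_w\in\Phase_{<l}(\X,S^1)$ for each fixed $w$ lets one invoke the composition/substitution estimates for phase polynomials (Lemma \ref{polycomp}, resp.\ Corollary \ref{func}) and conclude $Q\in\Phase_{<l+(j+1)-1}(\X,S^1)=\Phase_{<l+j}(\X,S^1)$. As a sanity check in the extreme case $l=1$: ergodicity forces each $p_t$ to be constant, the cocycle identity then makes $t\mapsto p_t$ a character $\chi\in\hat U$, and $Q=\chi(u_0)^{-1}(\chi\circ\pi_U)$ indeed has degree $<j+1$ since characters preserve degree.

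\emph{Main obstacle.} The only genuine work is the degree bookkeeping in the second paragraph: one must show that substituting a degree-$<(j+1)$ phase polynomial into the $U$-parameter of a cocycle-valued family of degree $<l$ raises the degree by exactly $j$ and no more (not, say, by something like $lj$). This is precisely the kind of sharp tracking flagged at the start of the high-characteristic section, and is handled by the composition lemmas for phase polynomials; everything else — the explicit antiderivative and the derivative identity $\mder_tQ=p_t$ — is a formal consequence of the cocycle hypothesis on $(p_t)_{t\in U}$.
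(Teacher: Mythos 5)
Your construction of $Q$ and the verification $\mder_t Q = p_t$ match the paper exactly (the paper takes $Q(y,uu_0):=p_u(y,u_0)$ and notes the derivative identity is ``as in Lemma \ref{poly-integ}''). The problem is the degree bound, which is the whole content of ``exact'' integration, and your argument there has a genuine gap.

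You invoke Lemma \ref{polycomp} (resp.\ Corollary \ref{func}) to deduce that substituting a degree-$<(j{+}1)$ phase polynomial into the $U$-parameter raises the degree by $j$. But every part of Lemma \ref{polycomp} beyond (i) gives only a qualitative bound of the form $O_{d,j,d_1,\ldots}(1)$, and Corollary \ref{func} likewise gives only $O_{p,d,m,n}(1)$. None of these produce the sharp estimate $<l+j$; indeed in Lemma \ref{poly-integ} (which uses exactly those tools) the resulting degree is only $O_{m,k}(1)$. The paper proves a separate \emph{refined} composition lemma (Lemma \ref{mod}) whose part (ii) gives the precise $<l+m-1$ bound, and then cites it with $r(y,u)=u/u_0$, $q(y,u)=u_0/u$, $m=j+1$. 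Crucially, that refined lemma uses the hypothesis $\Y \geq \Zcal_{<j}(\X)$ together with Lemma \ref{erglem}(iv) on the invariance of $\mu^{[l]}$ under $(l{-}j)$-face transformations $(V_u)^{[l]}_\alpha$ — this is where the bookkeeping is actually made tight. Your degree argument never uses the hypothesis $\Y \geq \Zcal_{<j}(\X)$ at all, which is a sign that it cannot be reaching the sharp conclusion: without that hypothesis one only gets the $O(1)$ bounds of Lemma \ref{polycomp}. So the formula for $Q$ and its antiderivative property are fine, but the degree estimate $Q\in\Phase_{<l+j}(\X,S^1)$ does not follow from the lemmas you cite; you would need to establish (or locate) the sharp composition bound first, and that is the part of the proposition with actual content.
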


\begin{remark} In the converse direction, one can show (by using the properties of the nilpotent group ${\mathcal G}^{[k]}$ studied in \cite{hk-cubes}) that if $Q$ has degree $<l+j$, then $\mder_t Q$ has degree $<l$.  This may explain the terminology ``exact''.
\end{remark}

We will prove this proposition in Section \ref{xac}.  Assuming it for now, we see (by applying it once for each $g$) that we can write $q_t = \mder_t Q$ for all $t \in U'$ and some $(\Fw,\X,S^1)$-phase polynomial of degree $<k$, such that $Q(g,y,uu_0) = q_u(g,y,u_0)$ for all $y \in Y$, $u \in U'$, and some $u_0 \in U'$.  Since $q_t$ takes values in $C_p$, we see that $Q$ does also.

We now claim that $Q$ is a $(\Fw,\X,S^1)$-quasicocycle of order $<k-1$.  Indeed, for any $g, h \in G$ and $x = (y,uu_0) \in \X$, we have
\begin{align*}
\frac{Q(g+h,x)}{Q(g,x) Q(h,T_g x)} &= \frac{Q(g+h,x)}{Q(g,x) Q(h,x) \mder_g Q(h,x)} \\ 
&= \frac{q_u(g+h,y,u_0)}{q_u(g,y,u_0) q_{u}(h, y, u_0) \mder_g Q(h,x)} \\
&= \frac{q_u(g+h,y,u_0)}{q_u(g,y,u_0) q_u(h,T_g y,\rho_{j-1}(g,y)u_0)} \frac{q_u(h,T_g y,\rho_{j-1}(g,y)u_0)}{q_{u}(h, y, u_0) \mder_g Q(h,x)} \\
&= P_{u,g,h}(y, u_0) \frac{\mder_g q_u(h,x)}{\mder_g Q(h,x)} 
\end{align*}
where
$ P_{u,g,h}(x) := \frac{q_u(g+h,x)}{q_u(g,x) q_u(h,T_g x)}.$
Fix $g,h$.  Since $q_u$ and $Q$ are $(\Fw,\X,S^1)$-phase polynomials of degree $<k-j+1$ and $<k$ respectively, we see that $\frac{\mdersmall_g q_u(h,x)}{\mdersmall_g Q(h,x)}$ is a phase polynomial in $x$ of degree $<k-1$.  Also, as $q_u$ is a quasi-cocycle of order $<k-j$, we see that $P_{u,g,h}$ is a $(\X,S^1)$-phase polynomial of degree $<k-j$.  Since $q_u$ is a cocycle in $u$, $P_{u,g,h}$ is also.  Applying Proposition \ref{exact_int} we conclude that $P_{u,g,h}(y,u_0)$ is a $(\X,S^1)$-phase polynomial of degree $<k-1$.  Putting this all together we see that $\frac{Q(g+h,x)}{Q(g,x) Q(h,T_g x)}$ is a $(\X,S^1)$-phase polynomial of degree $<k-1$, and the claim follows.

Write $f' := f / Q$, then (as in Proposition \ref{fin-prop}) $f'$ is a $(\Fw,\X,S^1)$-function with the property that $\mder_u f'$ is a $(\Fw,\X,S^1)$-coboundary for all $u \in U'$.  Applying Lemma \ref{straighten-lemma} just as in Proposition \ref{fin-prop}, we conclude that $f'$ is $(\Fw,\X,S^1)$-cohomologous to an $(\Fw,\X,S^1)$-function $f''$ which is invariant with respect to some open subgroup $U''$ of $U'$.  Thus we can write $f'' = \pi^* \tilde f$ for some $(\Fw,\X',S^1)$-function $\tilde f$, where $\X' := \Zcal_{<j}(\X) \times_{\sigma \circ \rho_{<j-1}} U_{j-1}/U''$, $\sigma: U_{j-1} \to U_{j-1}/U''$ is the quotient map, and $\pi: \X \to \X'$ is the factor map.

Since $Q$ takes values in $C_p$ and is of degree $<p$, we see from Lemma \ref{L:values-in-F_p} that $Q$ is a line cocycle. Since $\pi^* \tilde f$ is cohomologous to $f/Q$, we conclude that $\pi^* \tilde f$, and hence $\tilde f$, are also line cocycles.  Similarly, since $f, Q$ are $(\Fw,\X,S^1)$-quasicocycles of order $<k-1$, $\pi^* \tilde f$ is also, which implies in turn that $\tilde f$ is a $(\Fw,\X',S^1)$-quasicocycle of order $<k-1$ (cf. Lemma \ref{cocycle}).

From Lemma \ref{basic}(iii) see that $Q$ has type $<k$.  Since $f$ also has type $<k$, we conclude that $\pi^* \tilde f$ has type $<k$ also.  We now use the following variant of Lemma \ref{descent}, which does not concede any losses in the type:

\begin{proposition}[Exact descent]\label{exact_descent} Let $G$ be a discrete countable abelian group and $k \geq 0$.
Let $\X$ be an ergodic $G$-system of order $<k$. Let $\Y$ be a factor of $\X$, with factor map $\pi:\X \to \Y$.  Suppose that $f$ is a $(G,\Y,S^1)$-quasi-cocycle of order $<k$.  If $\pi^* f$ is of type $<k$, then $f$ is of type $<k$.
\end{proposition}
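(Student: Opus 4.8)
The plan is to imitate the proof of Lemma~\ref{descent} while exploiting the two features of the present hypotheses absent there --- that $f$ is a quasi-cocycle of order $<k$, and that $\X$, hence every factor of $\X$ (in particular $\Y$), is of order $<k$ --- in order to eliminate the degree losses incurred in Lemma~\ref{descent}. Two preliminary observations set the stage. First, since $f$ is a quasi-cocycle of order $<k$, a direct computation in which the correction terms $p_{g,g'}\in\Phase_{<k}(\Y,S^1)$ drop out once $d^{[l]}$ is applied for $l\ge k$ (by Lemma~\ref{basic}(iii)) shows that $d^{[l]}f$ is a genuine $(G,\Y^{[l]},S^1)$-cocycle for every $l\ge k$; this is exactly where our setting improves on that of Lemma~\ref{descent}, in which $f$ is merely a function and one is therefore forced to pass from level $k$ to $k+1$. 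Second, since $\pi^*f$ is of type $<k$ we have $d^{[k]}\pi^*f=\mder F$ on $\X^{[k]}$, and as $d^{[k]}\pi^*f$ is the pullback of $d^{[k]}f$ along the cube factor map $\X^{[k]}\to\Y^{[k]}$, the task is to manufacture an antiderivative for $d^{[k]}f$ already on $\Y^{[k]}$.

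Next I would reduce to $\X=\Y\times_\rho U$ with $U$ compact abelian and $\rho$ a $(G,\Y,U)$-phase-polynomial cocycle of degree $<k$. Indeed, $\Y$ is a factor of the order $<k$ system $\X$, hence itself of order $<k$; applying the structure theorem (Theorem~\ref{struc-thm}, and in the high-characteristic setting its sharp form Corollary~\ref{struc-high}) and interpolating the chain $\Y=\Y\vee\Zcal_{<1}(\X)\le\Y\vee\Zcal_{<2}(\X)\le\cdots\le\Y\vee\Zcal_{<k}(\X)=\X$ exhibits $\X$ as a finite tower of abelian extensions of $\Y$, each by a phase-polynomial cocycle of degree $<k$. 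Both hypotheses descend one layer at a time (pulling back preserves being a quasi-cocycle of order $<k$ by functoriality, and the type-$<k$ hypothesis over the top passes to the pullback of $f$ at each intermediate factor), so it suffices to treat a single layer.

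With $\X=\Y\times_\rho U$, I would run the Mackey-theoretic/Fourier analysis of Lemma~\ref{descent} but at level $k$ (now permissible, since $d^{[k]}f$ is a cocycle): realise the ergodic components of $\X^{[k]}$ as abelian extensions of those of $\Y^{[k]}$ with a Mackey group $M\le U^{[k]}$, expand an antiderivative of $d^{[k]}\pi^*f$ into a Fourier series over $\hat M$, and glue the resulting local identities over their side-translates as in Proposition~\ref{ext-type-k} (using that $d^{[k]}f$ and $\rho$ are cocycles to commute side transformations past them). This gives that $d^{[k]}f$ is $(G,\Y^{[k]},S^1)$-cohomologous to the diagonal cocycle $(g,\y)\mapsto\prod_{\w\in\2^k}\chi_\w(\rho(g,y_\w))$, a $(G,\Y^{[k]},S^1)$-phase-polynomial cocycle of degree $<k$ (as $\rho$ is). Where Lemma~\ref{descent} would apply $d^{[m]}$ and concede a loss, I would instead invoke the rigidity of phase-polynomial cocycles on the $k$-th cube of a system of order $<k$ (the content of Lemma~\ref{hkc-lem} and the cube base-change lemmas, fed now by $\Y$ being of order $<k$ and by $d^{[l]}f$ remaining a cocycle for all $l\ge k$) to conclude that this residual cocycle is actually a $(G,\Y^{[k]},S^1)$-coboundary; then $d^{[k]}f$ is a coboundary and $f$ is of type $<k$.

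The hard part is exactly this last step: showing that the residual diagonal phase-polynomial cocycle on $\Y^{[k]}$ is a genuine coboundary there. A naive degree count, as in Lemma~\ref{descent}, only produces type $<k+\deg\rho$, not type $<k$, so the argument must genuinely use the order $<k$ structure --- it constrains the Mackey group $M$ to the subgroup of $U^{[k]}$ cut out by the $d^{[k]}$-relation (pushing it towards the diagonal) and links the cube coordinates $y_\w$ on $\Y^{[k]}$ tightly enough that a degree $<k$ phase polynomial evaluated at different coordinates changes only by a coboundary. Carrying this out, and checking that neither the tower reduction nor the gluing of Proposition~\ref{ext-type-k} ever breaches $\deg<k$, is where the real work lies.
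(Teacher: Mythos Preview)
Your approach diverges substantially from the paper's, and the divergence is instructive. You try to imitate Lemma~\ref{descent} closely: reduce to a single abelian extension $\X=\Y\times_\rho U$ via the structure theorem, run Mackey/Fourier analysis on $\X^{[k]}$ over $\Y^{[k]}$, and then argue that the residual diagonal phase-polynomial cocycle $\prod_\w \chi_\w(\rho(g,y_\w))$ is a $(G,\Y^{[k]},S^1)$-coboundary. You correctly identify this last step as the hard part, but you do not actually carry it out; the invocation of Lemma~\ref{hkc-lem} is not apt here (that lemma transfers coboundary information \emph{down} from $\X^{[k]}$ to $\X^{[l]}$ along a face map, not across a factor map $\X^{[k]}\to\Y^{[k]}$), and the ``rigidity'' heuristic about the Mackey group being constrained to the diagonal is not made precise. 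There is also a structural issue with your reduction: the tower $\Y\vee\Zcal_{<j}(\X)$ is not obviously a chain of abelian extensions by phase-polynomial cocycles, and in any case the structure theorem (Theorem~\ref{struc-thm}/Corollary~\ref{struc-high}) is specific to $G=\Fw$, whereas the proposition is stated and proved for arbitrary countable abelian $G$.

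The paper sidesteps all of this with a different device: Lemma~\ref{fint}, which characterises ``$f$ is of type $<k$'' (for a quasi-cocycle $f$ on an order $<k$ system) by the \emph{soft} condition that $\limsup_n |\E_{g\in\Phi_n} d^{[k]}f(g,\x)|\neq 0$ on a set of positive $\mu^{[k]}$-measure. This averaging condition is manifestly preserved under pushforward along the factor map $\pi^{[k]}:\X^{[k]}\to\Y^{[k]}$, because $d^{[k]}\pi^*f=(\pi^{[k]})^*d^{[k]}f$ is a pullback. One then applies Lemma~\ref{fint} once on $\X$ (direction (i)$\Rightarrow$(ii)) and once on $\Y$ (direction (ii)$\Rightarrow$(i), using that $\Y$, as a factor of $\X$, is also of order $<k$). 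No structure theory, no Mackey groups, no residual-cocycle elimination. The moral: rather than trying to build an antiderivative on $\Y^{[k]}$ by hand, find an equivalent formulation of type $<k$ that is visibly functorial.
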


We will prove this proposition in Section \ref{xd}.  Assuming it for now, we conclude that $\tilde f$ is of type $<k$.  Observe that $\X'$ is an extension of $\Zcal_{<j-1}(\X) = \Zcal_{<j-1}(\X')$ by a finite abelian group by a cocycle of degree $<j-1$. If Theorem \ref{main-high-char2} (for this choice of $k$ and $j$) has already been established in the case when $U_{j-1}$ is finite, then $\tilde f$ is $(\Fw,\X',S^1)$-cohomologous to a $(\Fw,\X',S^1)$-phase polynomial of degree $<k$.  Pulling this back by $\pi$, we conclude that $f''$ is $(\Fw,\X,S^1)$-cohomologous to a $(\Fw,\X,S^1)$-phase polynomial of degree $<k$.  Since $f''$ is $(\Fw,\X,S^1)$-cohomologous to $f/Q$, and $Q$ is also a $(\Fw,\X,S^1)$-phase polynomial of degree $<k$, we conclude that $f$ is $(\Fw,\X,S^1)$-cohomologous to a 
$(\Fw,\X,S^1)$-phase polynomial of degree $<k$, and Theorem \ref{main-high-char2} follows.

The remaining tasks (in the low order case $j \leq k$) are to verify Theorem \ref{main-high-char2} in the case of finite $U_{j-1}$, and to also verify Proposition \ref{exact_int} and Proposition \ref{exact_descent}.

\subsection{The finite group case}\label{fingp}

We now establish Theorem \ref{main-high-char2} in the case when $U_{j-1}$ is finite.  This is the analogue of Proposition \ref{propfin}, but our arguments here are somewhat simpler thanks to the high characteristic (which allows us to use the full power of Lemma \ref{L:values-in-F_p}).

Since $U_{j-1}$ is finite and $p$-torsion, we can write $U_{j-1} = C_p^L$ for some finite $L$.  We will now induct on the dimension $L$.  The case $L=0$ is trivial, so suppose inductively that $L \geq 1$ and that the claim has already been proven for $L-1$.  We write $U_{j-1} = C_p^{L-1} \times \langle e \rangle$, where $e$ is a generator of $C_p$.  Recall that $\mder_e f$ is $(\Fw,\X,S^1)$-cohomologous to a $(\Fw,\X,S^1)$-phase polynomial $q_e$ of degree $<k-j+1$ which takes values in $C_p$.  We now extend this from $e$ to $\langle e \rangle$ in a manner which is a cocycle with respect to this parameter. 

Fix $q_e$, and then define the $(\Fw,\X,S^1)$-function $q_{e^s}$ for all $0 \leq s < p$ by the formula
$
q_{e^s}(g,x) := \prod_{i=0}^{s-1} q_e(g, V_e^i x).
$
Since $q_e$ is a $(\Fw,\X,S^1)$-phase polynomial of degree $<k-j+1$, $q_{e^s}$ is also.  Since $\mder_e f$ is $(\Fw,\X,S^1)$-cohomologous to $q_e$, we see from the cocycle identity 
\begin{equation}\label{mder}
\mder_{e^s} f(g,x) = \prod_{i=0}^{s-1} \mder_e f(g, V_e^i x)
\end{equation}
that $\mder_u f$ is $(\Fw,\X,S^1)$-cohomologous to $q_u$ for all $u \in \langle e \rangle$.

By repeated application of Lemma \ref{exactdiff}, we know that $q_e$ has degree $<p$ with respect to differentiation $\mder_e$ in the $e$ direction.  By Lemma \ref{L:values-in-F_p}, we conclude that
$\prod_{i=0}^{p-1} q_e(g, V_e^i x) = 1$,
and thus the $q_{e^s}$ form a cocycle in the $e^s$ variable, in the sense that
\begin{equation}\label{quv}
 q_{uv}(g,x) = q_u(g,x) q_v(g,V_u x)
\end{equation}
for all $g \in \Fw$, $u,v \in \langle e \rangle$, and $\mu$-a.e. $x$.  Applying Proposition \ref{exact_int}, we may find a $(\Fw,\X,S^1)$-phase polynomial $Q$ of degree $<k$ such that $q_u = \mder_u Q$ for all $u \in \langle e \rangle$.  Since $q_e$ takes values in $C_p$, $Q$ does also, and thus (by Lemma \ref{L:values-in-F_p}) is a line cocycle.

By repeating the arguments in the previous section, we also see that $Q$ is a $(\Fw,\X,S^1)$-quasi-cocycle of order $<k-1$. 

Recall that $\mder_e f$ is $(\Fw,\X,S^1)$-cohomologous to $q_e = \mder_e Q$, thus we can find an $(\X,S^1)$-function $F_e$ such that
$\mder_e f = (\mder_e Q) \mder F_e$.
Using the cocycle identity \eqref{mder} for $f$ and $Q$ we conclude that
$ \mder \prod_{i=0}^{p-1} V_e^i F_e = 1$
and thus by ergodicity, $\prod_{i=0}^{p-1} V_e^i F_e$ is equal to some constant in $S^1$.  Taking $p^{th}$ roots, we can express this constant as $c^p$ for some $c \in S^1$; dividing the $F_e$ by this constant (which does not affect any of the properties of $F_e$), we may take $c=1$, thus
$ \prod_{i=0}^{p-1} V_e^i F_e = 1.$
If we then define
$ F_{e^s} := \prod_{i=0}^{s-1} V_e^i F_e $
for $0 \leq s < p$, then the $F_{e^s}$ form a cocycle in the $e^s$ variable in the sense of \eqref{quv}.  Applying Lemma \ref{free-lem}, this cocycle is a coboundary, thus we can write $F_e = \mder_e F$ for some $(\X,S^1)$-function $F$.  We conclude that
$\mder_e \frac{f}{Q \mder F} = 1,$
thus $f/Q$ is cohomologous to a function which is $\langle e \rangle$-invariant.  We can now argue as in the preceding subsection (with $\langle e \rangle$ playing the role of $U''$) to deduce Theorem \ref{main-high-char2} for $U_{j-1} = C_p^L$ from the corresponding claim for $U_{j-1} = C_p^{L-1}$, which we have by induction.  This establishes Theorem \ref{main-high-char2} in the low order case $j \leq k$, contingent on Propositions \ref{exact_int} and \ref{exact_descent}.

\subsection{The high order case}

We now modify the above arguments to deal with the high order case $j > k$.  We need a key definition: we say that a $\2^k$-tuple $(f_\w)_{\w \in \2^k}$ of $(\Fw,\X,S^1)$-functions $f_\w$ for $\w \in \2^k$ is a \emph{good $(\Fw,\X,S^1)$-tuple} if the following properties hold:
\begin{itemize}
\item[(i)] Each $f_\w$ is a line cocycle and a $(\Fw,\X,S^1)$-quasi-cocycle of order $<k-1$.
\item[(ii)] For each face $\alpha$ of $\2^k$, $\prod_{\w \in \alpha} f_\w$ is a $(\Fw,\X^{[k]},S^1)$-cocycle.
\item[(iii)] $\prod_{\w \in \2^k} f_\w$ is a $(\Fw,\X^{[k]},S^1)$-coboundary.
\end{itemize}

From hypothesis we see that $(f^{\sgn(\w)})_{\w \in \2^k}$ is a good $(\Fw,\X,S^1)$-tuple.  Observe also that if $(f_\w)_{\w \in \2^k}$ is a good $(\Fw,\X,S^1)$-tuple, and we replace one or more of the $f_\w$ by a $(\Fw,\X,S^1)$-cohomologous function, then we still obtain a good $(\Fw,\X,S^1)$-tuple.

We now claim the following proposition:

\begin{proposition}[Descent of good tuples]\label{good-descent}  Let $1 \leq k < j \leq \charac(\F)$, and let $\X = \Zcal_{<j-1}(\X) \times_{\rho_{j-1}} U_{j-1}$ be an ergodic $\Fw$-system of order $<j$.  Suppose that $(f_\w)_{\w \in \2^k}$ is a good $(\Fw,\X,S^1)$-tuple.  Then there exists a good $(\Fw,\Zcal_{<j-1}(\X),S^1)$-tuple $(\tilde f_\w)_{\w \in \2^k}$ such that for every $\w$, $f_\w$ is $(\Fw,\X,S^1)$-cohomologous to $(\pi^\X_{\Zcal_{<j-1}(\X)})^* \tilde f_\w$, where $\pi^\X_{\Zcal_{<j-1}(\X)}: \X \to \Zcal_{<j-1}(\X)$ is the factor map.
\end{proposition}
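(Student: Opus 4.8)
The plan is to follow the pattern of Sections~\ref{Red} and \ref{fingp}: reduce the vertical structure group $U_{j-1}$ in the tower $\X=\Zcal_{<j-1}(\X)\times_{\rho_{j-1}}U_{j-1}$ step by step until it becomes trivial, each step being a ``vertical integration''. The point that makes everything work cleanly here is that $j>k$, so --- in contrast with the low-order case --- vertical derivatives of a type-$<k$ object inside a system of order $<j$ are plain coboundaries, with no residual degree to track. By Corollary~\ref{struc-high} we may take $\rho_{j-1}$ to be a phase polynomial cocycle of degree $<j-1$ and $U_{j-1}$ to be $\charac(\F)$-torsion (in fact a product of finite cyclic $p$-groups).

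The core step is as follows. For each $t\in U_{j-1}$ consider the tuple $(\mder_tf_\w)_{\w\in\2^k}$. Running the side-transformation computation of Lemma~\ref{vert-lem} (via Lemma~\ref{erglem}(iv)) on each face of $\2^k$ and using $k<j$, one shows that every $\mder_tf_\w$ is a $(\Fw,\X,S^1)$-coboundary, with antiderivative measurable in $t$ (Lemma~\ref{measurable_choice}). Since $U_{j-1}$ acts freely on $\X$ commuting with the $\Fw$-action, Lemma~\ref{straighten-lemma} then makes each $f_\w$ cohomologous to a function invariant under a common open subgroup $U''\le U_{j-1}$. Setting $\X'':=\Zcal_{<j-1}(\X)\times_{\bar\rho}(U_{j-1}/U'')$ with $\bar\rho$ the induced phase polynomial cocycle (still of degree $<j-1$) and $\pi:\X\to\X''$ the factor map, we get $f_\w$ cohomologous to $\pi^*h_\w$ for some $(\Fw,\X'',S^1)$-functions $h_\w$. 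Functoriality (Lemma~\ref{cocycle}) shows $(h_\w)_\w$ inherits properties~(i) and~(ii); for~(iii) one descends the coboundary $\prod_{\w\in\2^k}f_\w$ from $\X^{[k]}$ to $(\X'')^{[k]}$ by a Mackey--Fourier analysis as in the proof of Lemma~\ref{descent} (the extension $\X^{[k]}\to(\X'')^{[k]}$ being by the finite group $(U'')^{[k]}$ with defining cocycle pulled back from the degree-$(<j-1)$ cocycle $\rho_{j-1}$, and $\prod_\w f_\w$ being also a cocycle by~(ii)): the only obstruction is a non-trivial Fourier mode that would make $\prod_\w h_\w$ cohomologous to a phase polynomial built from $\rho_{j-1}$, and this is excluded using the line-cocycle property of $\prod_\w f_\w$ together with Lemma~\ref{L:values-in-F_p} --- here $k\le\charac(\F)$ is essential. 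Thus $(h_\w)_\w$ is a good $(\Fw,\X'',S^1)$-tuple; since $\X''$ is again an ergodic extension of $\Zcal_{<j-1}(\X)$ of order $<j$ by a degree-$(<j-1)$ phase polynomial cocycle but with strictly smaller finite structure group, we may recurse on $|U_{j-1}|$ until the structure group is trivial and $\X''=\Zcal_{<j-1}(\X)$. Composing factor maps and cohomologies along the way produces the required good $(\Fw,\Zcal_{<j-1}(\X),S^1)$-tuple $(\tilde f_\w)_\w$.

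The main obstacle lies in the two ``exactness'' points. First, a priori only the full product $\prod_{\w\in\2^k}f_\w$ is a coboundary on $\X^{[k]}$, not the individual $f_\w$, so deducing that $\mder_tf_\w$ is a coboundary means propagating the type-$<k$ information through the face-cocycle relations~(ii) and the line/quasi-cocycle relations~(i), exploiting that $\X^{[k]}$ is an abelian extension of $\Zcal_{<j-1}(\X)^{[k]}$ by a bounded-degree ($<j-1$) phase polynomial cocycle and has order exceeding $k$; this is essentially an exact version of the gluing step in Lemma~\ref{descent}, performed for the whole $\2^k$-tuple at once. Second, the descent of condition~(iii) must lose nothing: it is precisely in suppressing the phase-polynomial correction coming from $\rho_{j-1}$ that the hypotheses $j>k$ and $k\le\charac(\F)$ are indispensable, since without them the Mackey--Fourier analysis leaves behind a correction of degree $\ge k$ that cannot be absorbed --- exactly the loss recorded in Lemma~\ref{descent}.
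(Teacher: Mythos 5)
Your high-level plan matches the paper's: reduce $U_{j-1}$ to a finite group via Lemma~\ref{straighten-lemma}, then strip off one $C_p$ factor at a time, with each step exploiting that $j>k$ so that vertical derivatives are genuine coboundaries with no residual degree. You also correctly identify that the delicate point is condition~(iii), that $\X^{[k]}\to(\X')^{[k]}$ is an abelian extension by $(U'')^{[k]}$ with defining cocycle built from $\rho_{j-1}$ (via Lemma~\ref{conditional-product}, which is exactly where $j>k$ is needed), and that the Fourier analysis leaves behind an obstruction $\bigotimes_\w p_\w$ where each $p_\w=\chi_\w\circ\rho_{j-1}\circ\pi^{\X'}_{\Zcal_{<j-1}(\X)}$ is a phase-polynomial cocycle coming from $\rho_{j-1}$.

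However, your treatment of this obstruction is wrong, and the gap is substantive. You claim the Fourier mode is \emph{excluded} (shown trivial) via the line-cocycle property of $\prod_\w f_\w$ and Lemma~\ref{L:values-in-F_p}, with $k\le\charac(\F)$ being essential. In fact there is no reason for the obstruction to vanish, and the line-cocycle condition does not constrain the character $\chi^{[k]}$ at all. The paper's mechanism is different: the obstruction is \emph{absorbed}, not excluded. One replaces $\tilde f_\w$ by $f'_\w:=\tilde f_\w/p_\w$, so that $\bigotimes_\w f'_\w$ is a $(\Fw,(\X')^{[k]},S^1)$-coboundary, and then observes that the pullback $\pi^*p_\w$ is a $(\Fw,\X,S^1)$-coboundary on $\X$ itself, because $\rho_{j-1}\circ\pi^\X_{\Zcal_{<j-1}(\X)}$ is the derivative of the vertical coordinate function $(y,u)\mapsto u$ on $\X$. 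Consequently $f_\w$ remains $(\Fw,\X,S^1)$-cohomologous to $\pi^*f'_\w$; the $p_\w$'s are cocycles so the modified tuple retains properties (i) and (ii); and the recursion proceeds. Neither the line-cocycle property nor Lemma~\ref{L:values-in-F_p} nor the bound $k\le\charac(\F)$ plays a role in this particular step (the characteristic hypothesis is used elsewhere in the high-characteristic argument). You should replace the ``exclusion'' claim by this absorption argument, noting explicitly that $\pi^*p_\w\in B^1(\Fw,\X,S^1)$ because $\rho_{j-1}$ becomes a coboundary once pulled up to the total space of the extension.
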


\begin{proof}  As in previous arguments, we first reduce to the case when $U_{j-1}$ is finite, and then establish the finite case.

If $t \in U_{j-1}$, then by Lemma \ref{erglem}(iv), the transformation $(V_t)_{\{\w\}}^{[k]}$ preserves $\mu^{[k]}$ for every $\w \in \2^k$.  Arguing as in the proof of Lemma \ref{exactdiff}, we thus conclude that $\mder_t f_\w$ is a $(\Fw,\X,S^1)$-coboundary for every $\w \in \2^k$.  By repeatedly applying Lemma \ref{straighten-lemma}, we conclude that there exists an open subgroup $U''$ of $U_{j-1}$ such that each $f_\w$ is $(\Fw,\X,S^1)$-cohomologous to an $U''$-invariant function, which we can write as $\pi^* \tilde f_\w$ for some $(\Fw,\X',S^1)$-function $\tilde f_\w$, where $\X' := \Zcal_{<j}(\X) \times_{\sigma \circ \rho_{<j-1}} U_{j-1}/U''$, $\sigma: U_{j-1} \to U_{j-1}/U''$ is the quotient map, and $\pi: \X \to \X'$ is the factor map, thus we have
$ f_\w = (\pi^* \tilde f_\w) \mder F_\w$
for some $(\X,S^1)$-functions $\F_w$.

Arguing as in Section \ref{Red} (and using Lemma \ref{cocycle}), we see that the tuple $(\tilde f_\w)_{\w \in \2^k}$ obeys properties (i) and (ii) of being a good $(\Fw,\X',S^1)$-tuple.  Unfortunately, it need not obey (iii); we know that $\bigotimes_\w \pi^* \tilde f_\w$ is a $(\Fw,\X^{[k]},S^1)$-coboundary, but this does not automatically imply that $\bigotimes_\w \tilde f_\w$ is a $(\Fw,(\X')^{[k]},S^1)$-coboundary.  However, in the high order case $j > k$, we see from Lemma \ref{conditional-product} that $\X^{[k]}$ is an abelian extension $\X^{[k]} = \Zcal_{<j-1}(\X)^{[k]} \times_{\rho_{<j-1}^{[k]}} U_{j-1}^{[k]}$, where the $(\Fw,\Zcal_{<j-1}(\X)^{[k]},U_{j-1}^{[k]})$-cocycle $\rho_{<j-1}^{[k]} = \bigotimes_{\w \in \2^k} \rho_{<j-1}$ is the tensor product of $2^k$ copies of the $(\Fw,\Zcal_{<j-1}(\X),U_{j-1})$-cocycle $\rho$, and similarly for $(\X')^{[k]}$.  Applying Lemma \ref{descent-lem}, we conclude that $\bigotimes_{\w \in \2^k} \tilde f_\w$ is $(\Fw, (\X')^{[k]}, S^1)$-cohomologous to $\chi^{[k]} \circ \rho^{[k]} \circ (\pi^{\X'}_{\Zcal_{<j-1}(\X)})^{[k]}$ for some character $\chi^{[k]} \in \hat{U_{j-1}^{[k]}}$, where $\pi^{\X'}_{\Zcal_{<j-1}(\X)}: \X' \to \Zcal_{<j-1}(\X)$ is the factor map.  We can factorize the latter as a tensor product
$$ \chi^{[k]} \circ \rho^{[k]} \circ (\pi^{\X'}_{\Zcal_{<j-1}(\X)})^{[k]} = \bigotimes_{\w \in \2^k} \chi_\w \circ \rho_{<j-1} \circ \pi^{\X'}_{\Zcal_{<j-1}(\X)} =: \bigotimes_{\w \in \2^k} p_\w$$
for some characters $\chi_\w \in \hat U_{j-1}$.  We thus see that $\bigotimes_\w f'_\w$ is a $(\Fw,(\X')^{[k]},S^1)$-coboundary.  
Since $\rho_{<j-1} \circ \pi^{\X}_{\Zcal_{<j-1}(\X)}$ is a $(\Fw,\X,S^1)$-coboundary (being the derivative of the coordinate function $(y,u) \mapsto u$), we see that $\pi^* p_\w$ is also a $(\Fw,\X,S^1)$-coboundary.  Thus $f_\w$ is $(\Fw,\X,S^1)$-cohomologous to $\pi^* f'_\w$.  Also, since $(\tilde f_\w)_{\w \in \2^k}$ obeys properties (i), (ii) of being a $(\Fw,\X',S^1)$-good tuple, $(f'_\w)_{\w \in \2^k}$ does also.  Since we have already established (iii), we conclude that $(f'_\w)_{\w \in \2^k}$ is a $(\Fw,\X',S^1)$-good tuple.  Thus Proposition \ref{good-descent} for $U_{j-1}$ will follow from that for $U_{j-1}/U''$, thus reducing matters to the case when $U_{j-1}$ is finite.

We now repeat the arguments from Section \ref{fingp}.  As in that section, we can write $U_{j-1} = C_p^L = C_p^{L-1} \times \langle e \rangle$ for some $e \in C_p$ and induct on $L$.  Arguing as in the start of this proof (i.e. using the invariance of $\mu^{[k]}$ with respect to $(V_e)_{\{\w\}}^{[k]}$), we know that $\mder_e f_\w$ is a $(\Fw,\X,S^1)$-coboundary, thus we can find a $(\X,S^1)$-function $F_{e,\w}$ such that
$\mder_e f_\w = \mder F_{e,\w}$.
Using the cocycle identity \eqref{mder}, this implies that
$ \mder \prod_{i=0}^{p-1} V_e^i F_{e,\w} = 1$
and so (by ergodicity) $\prod_{i=0}^{p-1} V_e^i F_{e,\w}$ is constant.  Dividing $F_{e,\w}$ by the $p^{th}$ root of this constant as in Section \ref{fingp} we may thus take
$ \prod_{i=0}^{p-1} V_e^i F_{e,\w} = 1$.
Applying Lemma \ref{free-lem} as in Section \ref{fingp}, we can write $F_{e,\w} = \mder_e F_\w$ for some $(\X,S^1)$-function $F_\w$.  We conclude that $\mder_e (f_\w / \mder F_\w) = 1$, i.e. $f_\w$ is cohomologous to an $e$-invariant function.  From this we see (as in the reduction to the finite $U_{j-1}$ case) that Proposition \ref{good-descent} for $C_p^L$ will follow from Proposition \ref{good-descent} for $C_p^{L-1}$.  Since this proposition is trivial when $L=0$, the claim follows.
\end{proof}

Applying this proposition iteratively, starting with the good $(\Fw,\X,S^1)$-tuple $(f^{\sgn(\w)})_{\w \in \2^k}$ and decrementing $j$, we see that there exists a good $(\Fw,\Zcal_{<k}(\X),S^1)$-tuple $(\tilde f_\w)_{\w \in \2^k}$ such that $f^{\sgn(\w)}$ is $(\Fw,\X,S^1)$-cohomologous to $(\pi^\X_{\Zcal_{<k}(\X)})^* \tilde f_\w$ for all $\w \in \2^k$.

We now invoke the following lemma:

\begin{lemma}[Cauchy-Schwarz-Gowers for finite type]\label{fint-csg}  Let $G$ be a countable abelian group, let $k \geq 0$, let $\X$ be an ergodic $G$-system of order $<k$, and for each $\w \in \2^k$, let $f_\w$ be a $(G,\X,S^1)$-quasi-cocycle of order $<k$ such that $\bigotimes_\w f_\w$ is a $(G,\X^{[k]},S^1)$-coboundary.  Then each $f_\w$ is of type $<k$.
\end{lemma}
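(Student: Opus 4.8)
The plan is to prove, for each $\w\in\2^k$, the assertion equivalent to ``$f_\w$ is of type $<k$'' (Definition \ref{fin-def}), namely that the $(G,\X^{[k]},S^1)$-function $d^{[k]}f_\w$ is a coboundary. The role of the quasi-cocycle hypothesis is to make this function a genuine cocycle in the first place: if $f_\w(g+g',x)=f_\w(g,T_{g'}x)f_\w(g',x)p_{g,g'}(x)$ with $p_{g,g'}\in\Phase_{<k}(\X,S^1)$, then applying $d^{[k]}$ and using $d^{[k]}p_{g,g'}=1$ (Lemma \ref{basic}(iii)) shows that $d^{[k]}f_\w$ obeys the cocycle equation on $\X^{[k]}$. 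Moreover, every automorphism of the combinatorial cube $\2^k$ lifts to a $\mu^{[k]}$-preserving, $\diag(G^{[k]})$-equivariant automorphism of $\X^{[k]}$ that merely permutes the tuple $(f_\w)_\w$ while preserving both hypotheses (the product $\bigotimes_\w f_\w$ is symmetric, so an antiderivative of it pulls back to an antiderivative of the permuted product); since this automorphism group is transitive on vertices, it suffices to treat a single vertex $\w_0$, which we normalise so that $\sgn(\w_0)=+1$.

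The key manipulation is the identity of $(G,\X^{[k]},S^1)$-functions
$$\bigotimes_{\w\in\2^k}f_\w\;=\;\bigl(d^{[k]}f_{\w_0}\bigr)\cdot\bigotimes_{\w\ne\w_0}\bigl(f_\w\,f_{\w_0}^{-\sgn(\w)}\bigr),$$
whose second factor involves only the coordinates $x_\w$ with $\w\ne\w_0$ and is therefore the pullback, under the factor map $\pi_{\w_0}\colon\X^{[k]}\to\Y_{\w_0}$ which forgets the $\w_0$-coordinate, of a $(G,\Y_{\w_0},S^1)$-function $\sigma_{\w_0}$. Since $\bigotimes_\w f_\w$ is a coboundary by hypothesis, we conclude that $d^{[k]}f_{\w_0}$ is $(G,\X^{[k]},S^1)$-cohomologous to $\pi_{\w_0}^*\sigma_{\w_0}$; in particular $\sigma_{\w_0}$ is a cocycle (Lemma \ref{cocycle}). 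Conjugating by the ``bit-flip'' symmetries $R_S$ of $\2^k$ (which send $d^{[k]}f_{\w_0}$ to $(d^{[k]}f_{\w_0})^{(-1)^{|S|}}$ and send $\pi_{\w_0}$ to the coordinate-forgetting map at the vertex $R_S\w_0$) and letting $S$ range over all subsets, we obtain the same conclusion with $\w_0$ replaced by an arbitrary vertex: for every $\w\in\2^k$, the cocycle $d^{[k]}f_{\w_0}$ is cohomologous to a cocycle pulled back from $\Y_\w$.

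It remains to upgrade these statements to ``$d^{[k]}f_{\w_0}$ is a coboundary''. Here I would run the gluing scheme of the proof of Proposition \ref{ext-type-k}: because $\X$, and hence every factor of $\X$, is ergodic of order $<k$, the decomposition of $\mu^{[k]}$ into its ergodic components under $\diag(G^{[k]})$ is controlled as in Remark \ref{iterm}; on the fibres of $\pi_{\w_0}$ the relation $d^{[k]}f_{\w_0}=\pi_{\w_0}^*\sigma_{\w_0}\cdot\mder H$ lets one ``integrate out'' the $\w_0$-coordinate, and repeating this coordinate by coordinate (using the versions attached to the other vertices $\w$) and pasting the resulting partial antiderivatives together by ergodicity, together with a measurable-selection argument in the ergodic-decomposition parameter (cf. Lemma \ref{measurable_choice}), produces a global antiderivative of $d^{[k]}f_{\w_0}$. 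The main obstacle is exactly this final descent: $\X^{[k]}$ is not ergodic under the diagonal $G$-action and the factors $\Y_\w$ are not relatively independent over their common sub-factors, so one cannot merely intersect the ``independent of $x_\w$'' conclusions; controlling this is where the order $<k$ hypothesis on $\X$ (through the structure of the cubic measures $\mu^{[j]}$) is used, and where the bulk of the work lies.
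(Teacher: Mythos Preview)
Your reduction up to and including the identity
\[
\bigotimes_{\w}f_\w \;=\; d^{[k]}f_{\w_0}\cdot\pi_{\w_0}^*\sigma_{\w_0}
\]
is correct, and your use of the quasi-cocycle hypothesis to make $d^{[k]}f_{\w_0}$ a genuine $(G,\X^{[k]},S^1)$-cocycle is exactly the point of that hypothesis.  The symmetry argument then gives you that $d^{[k]}f_{\w_0}$ is cohomologous to a cocycle pulled back from $\Y_\w$ for \emph{every} vertex $\w$.  But your final paragraph does not turn this into a proof.  Having $d^{[k]}f_{\w_0}=\pi_\w^*\sigma_\w\cdot\mder H_\w$ for $2^k$ different vertices $\w$, with $2^k$ \emph{unrelated} functions $\sigma_\w$ on $2^k$ different factors, is not the input to the Proposition~\ref{ext-type-k} gluing scheme; that scheme starts from a coboundary on a positive-measure \emph{invariant} set and uses side transformations to translate it around.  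Your proposed ``integrate out one coordinate at a time'' is not a precise procedure: eliminating the $\w_0$-coordinate via $H_{\w_0}$ replaces $d^{[k]}f_{\w_0}$ by $\pi_{\w_0}^*\sigma_{\w_0}$, but you have no control over how $\sigma_{\w_0}$ depends on the remaining coordinates, and the relations at the other vertices do not descend in any useful way.  As you yourself say, this is where ``the bulk of the work lies'', and that work is not done.

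The paper avoids this entirely by routing through Lemma~\ref{fint}, which characterises ``type $<k$'' by the analytic condition
\[
\limsup_{n\to\infty}\bigl|\E_{g\in\Phi_n}\,d^{[k]}f(g,\x)\bigr|\neq 0\quad\text{on a set of positive }\mu^{[k]}\text{-measure.}
\]
The (i)$\Rightarrow$(ii) half of that lemma is really a statement about any coboundary of the form $\mder F$ with $F$ approximable by tensors not involving the $-\1$ coordinate; running that argument verbatim with $\bigotimes_\w f_\w=\mder F$ in place of $d^{[k]}f$ yields the analytic lower bound for $d^{[k]}f_{-\1}$.  Then the (ii)$\Rightarrow$(i) direction of Lemma~\ref{fint} (which \emph{does} contain a gluing argument, but one that is already proved) gives that $f_{-\1}$ is of type $<k$.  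The other vertices follow by symmetry.  So the missing idea in your approach is to replace the attempted direct cohomological descent by the ergodic-average criterion of Lemma~\ref{fint}.
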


We prove this lemma later in Section \ref{xd}.  Assuming it for now, we conclude in particular that $\tilde f_{-\1}$ is of type $<k$.  It is also a line cocycle and a $(\Fw,\Zcal_{<k}(\X),S^1)$-quasi-cocycle of order $<k-1$.  Since we have already established Theorem \ref{main-high-char2} in the case $j=k$, we conclude that $\tilde f_{-\1}$ is is $(\Fw, \Zcal_{<k}(\X), S^1)$-cohomologous to a $(\Fw, \Zcal_{<k}(\X),S^1)$-phase polynomial $P$ of degree $<k$ that takes values in $C_p$.  Since $f$ is $(\Fw,\X,S^1)$-cohomologous to $(\pi^\X_{\Zcal_{<k}(\X)})^* \tilde f_{-\1}$, we conclude that $f$ is $(\Fw,\X,S^1)$-cohomologous to the $(\Fw,\X,S^1)$-phase polynomial $(\pi^\X_{\Zcal_{<k}(\X)})^* P$, and Theorem \ref{main-high-char2} then follows.

Our remaining tasks are to prove Proposition \ref{exact_int}, Proposition \ref{exact_descent}, and Lemma \ref{fint-csg}.

\subsection{Exact integration}\label{xac}
 
In this subsection we establish Proposition \ref{exact_int}.  We begin with the analogue of Lemma \ref{polycomp}.

\begin{lemma}[Refined composition of polynomials]\label{mod}
Let $G$ be a countable abelian group, let $j \geq 0$, let $U$ be a compact abelian group, and let $\X= \Y \times_\rho U$ be an ergodic $G$-system such that $\Y \geq \Zcal_{>j}(\X)$.  For any $t \in U$,
let $p_t$ be a $(\X,S^1)$-phase polynomial of degree $<l$ for some $l \geq 1$, and suppose that for any $t,s \in U$, we have the cocycle property
\begin{equation}\label{cocycle-p}
p_{t\cdot s}(x)=p_t(V_sx)\cdot p_s(x).
\end{equation}
Let $r$ be a $(\X,U)$-phase polynomial of degree $<m$ for some $1 \leq m \leq j+1$.  For any $x \in X$, write $x=(y,u)$, $y \in \Y$, $u \in U$.
\begin{itemize}
\item[(i)]  The function $(y,u) \mapsto p_{r(y,u)}(y,u)$ is a $(\X,S^1)$-phase polynomial of degree $<l+m-1$.
\item[(ii)]  More generally, if $q$ is a $(\X,U)$-phase polynomial of degree $<j+1$, then $(y,u) \mapsto p_{r(y,u)}(y,u q(y,u))$ is a $(\X,S^1)$-phase polynomial of degree $<l+m-1$.
\end{itemize}
\end{lemma}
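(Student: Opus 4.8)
The plan is to prove part~(ii) by induction on $m$, with a secondary induction on $l$ (equivalently, by induction on $l+m$); part~(i) is then the special case $q\equiv 1$ of part~(ii), since $V_1=\id$. Throughout I will use that the $U$-action $V_t$ on $\X=\Y\times_\rho U$ commutes with the $G$-action, that $\mder_h$ (for $h\in G$) lowers the degree of a phase polynomial by one, and that pre-composing with a $G$-shift $T_h$ preserves both the degree of a phase polynomial and the cocycle identity \eqref{cocycle-p}.

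The key preliminary is a structural fact about the base. Since $\Y\geq\Zcal_{<j}(\X)$ and $1\leq m\leq j+1$, for any $(\X,U)$-phase polynomial $a$ of degree $<m$ and any $h\in G$ the function $\mder_h a$ has degree $<m-1\leq j$, so each character $\chi\circ\mder_h a$ lies in $\Phase_{<j}(\X,S^1)$ and is therefore $\Zcal_{<j}(\X)$-measurable (Lemma~\ref{easy-thm}), hence $\Y$-measurable; as characters separate points, $\mder_h a$ is $\Y$-measurable, i.e.\ independent of the $U$-coordinate. In particular $\mder_t\mder_h a=1$ for all $t\in U$, so $\mder_t a$ is $G$-invariant and hence constant. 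Applied to $r$ and $q$, this says that the $G$-derivatives $\mder_h r,\ \mder_h q$ live on $\Y$ and that the vertical derivatives of $r$ and $q$ are constants — this is precisely where the hypotheses $m\leq j+1$ and $\Y\geq\Zcal_{<j}(\X)$ enter. The base cases are then handled directly: when $l=1$, \eqref{cocycle-p} forces $p_t(x)=c(t)$ for a character $c\colon U\to S^1$, and $\mder_{h_1}\cdots\mder_{h_m}[c\circ r]=c\circ(\mder_{h_1}\cdots\mder_{h_m}r)=1$, so $x\mapsto c(r(x))$ has degree $<m=l+m-1$; when $m=1$, $r$ is $G$-invariant and hence constant, and one checks — using \eqref{cocycle-p} and the constancy of $\mder_t q$ — that pre-composing a degree $<l$ phase polynomial with $x\mapsto V_{q(x)}x$ does not raise its degree.

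For the inductive step ($m\geq 2$, $l\geq 2$), put $D(x):=p_{r(x)}(V_{q(x)}x)$; by the polynomiality criterion (Lemma~\ref{trivpoly-lem}(iii)) it suffices to show $\mder_h D\in\Phase_{<l+m-2}(\X,S^1)$ for every $h\in G$. With $s:=\mder_h r$ (degree $<m-1$, $\Y$-measurable) and $w:=\mder_h q$ (degree $<j$, $\Y$-measurable) one has $r(T_hx)=r(x)s(x)$, $q(T_hx)=q(x)w(x)$ and, using $V_aV_b=V_{ab}$ and $V_aT_h=T_hV_a$, $V_{q(T_hx)}T_hx=T_hV_{w(x)q(x)}x$. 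Expanding the subscript $r(x)s(x)$ via \eqref{cocycle-p} writes $\mder_h D$ as a product of a bounded number of factors of two types: (a) factors of the form $\widetilde p_{s(x)}(V_{b(x)}x)$ or $(\mder_h p_{r(x)})(V_{b(x)}x)$, where $\widetilde p_t:=p_t\circ T_h$, where $b\in\Phase_{<j+1}(\X,U)$ is assembled from $r,q,w$, and where the inner family has degree $<l$ (resp.\ $<l-1$) and the parameter has degree $<m-1$ (resp.\ $<m$) — these are instances of~(ii) with $l+m$ strictly smaller, so the induction hypothesis applies; and (b) a single ``mixed'' vertical term, which after repeated use of \eqref{cocycle-p} (to move the genuinely varying parameter $r(x)$ out of the $p$-subscript and into the argument, leaving only $\Y$-measurable low-degree data in the subscript) and after absorbing the $T_h$- and $V_{q(x)}$-shifts into the argument, is displayed as a ratio of two instances $\widetilde p_{v(x)}(V_{(rq)(x)}x)$ again covered by the induction hypothesis.

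I expect the main obstacle to be the mixed term~(b): treated naively it only yields degree $<l+j-1$, which is too weak when $m<j+1$. Recovering the sharp bound $<l+m-2$ forces one to use that the shift-increments ($\mder_h q$ and its iterated $G$-derivatives) are $\Y$-measurable, so that such data never has to appear in a $p$-subscript at its full degree $<j$, but only inside an argument-shift $V_{(\cdot)}$, where the weaker bound $<j+1$ suffices; the only data left genuinely in a subscript is $\mder_h r$, of degree $<m-1$. Organizing the cocycle manipulations so as to realize this separation cleanly — and checking that every instance produced has strictly smaller $l+m$, so the induction is well-founded — is the heart of the argument.
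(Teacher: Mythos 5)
Your proposal correctly identifies the key structural fact (phase polynomials of degree $\leq j$ are $\Y$-measurable, hence $U$-invariant) and correctly identifies the crucial obstacle (the ``mixed term'' arising when the $U$-valued increment $\mder_h q$, of degree up to $j$, lands in a $p$-subscript), but you do not actually get past that obstacle, and you acknowledge as much in your final paragraph.  The gap is genuine: your $m=1$ base case asserts that precomposing a degree $<l$ phase polynomial with $x\mapsto V_{q(x)}x$ does not raise its degree, and says one can check this using \eqref{cocycle-p} and the constancy of $\mder_t q$.  But a direct computation of $\mder_h(\phi\circ V_q)$ produces the factor $(\mder_{\mder_h q(x)}\phi)(V_{q(x)}x)$, whose subscript $\mder_h q$ is $\Y$-measurable of degree $<j$ --- not degree $<m-1=0$.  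This is a fresh instance of (ii) with parameter $l'=l$ and $m'\approx j$, i.e.\ with $l'+m'$ potentially \emph{much larger} than $l+1$, so the $l+m$ induction is not well-founded.  Using \eqref{cocycle-p} to ``move the subscript into the argument'' does not help, because it just trades $\mder_{\mder_h q}\phi$ for $p_{t\mder_h q}/p_{\mder_h q}$, both of which still carry $\mder_h q$ in a subscript.

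The paper avoids the calculation entirely at this point.  Its ``(i) implies (ii)'' step is the statement that $(y,u)\mapsto p_t(y,uq(y,u))$ has degree $<l$ for each fixed $t$, and this is proven \emph{not} by differentiating, but by a measure-theoretic symmetry argument: since $U$ preserves $\Zcal_{<j}(\X)$, Lemma \ref{erglem}(iv) makes the cubic measure $\mu^{[l]}$ invariant under $(V_{u_\w})_\w$ whenever the $u_\w$ satisfy the face constraints of a degree $<j+1$ $U$-polynomial; combining this with the degree $<l$ condition for $p_t$ via Lemma \ref{basic}(iii) gives the claim directly.  This is what your direct cocycle manipulation cannot reproduce --- the $\Y$-measurability of $\mder_h q$ is used in the paper not to control a subscript, but to verify the hypothesis of Lemma \ref{erglem}(iv).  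Without bringing in the cubic measure invariance (or proving an equivalent separate lemma), the mixed term will keep producing subscripts of degree $<j$ and the sharp bound $<l+m-1$ will not close.  I would encourage you to isolate the claim ``$\phi\circ V_q$ has degree $<d$ whenever $\phi$ has degree $<d$ and $q\in\Phase_{<j+1}(\X,U)$'' as a standalone lemma, prove it via Lemmas \ref{erglem}(iv) and \ref{basic}(iii), and then your inductive calculation (which otherwise matches the paper's) can be run cleanly.
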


\begin{proof}  We prove (i) and (ii) simultaneously by induction on $l$.  If $l=1$, then $p_t$ is a constant (by ergodicity), and so the map $t \mapsto p_t$ is a homomorphism, and the claim (i) then easily follows.  Also in this case (ii) is clearly equivalent to (i).

Now suppose inductively that $l \geq 2$, and that the claim has already been proven for $l-1$.  We now induct on $m$.  When $m=1$ then $r$ is constant, and (i) is clear.

Now we show (for any $m$) that (i) implies (ii).  Suppose first that $l \leq j$.  Then $p_t$ is measurable with respect to $\Abr_{<l}(\X) \leq \Abr_{<j}(\X)$, and hence measurable with respect to $\Zcal_{<j}(\X)$ (and hence $\Y$) by Lemma \ref{easy-thm}.  In particular, $p_t$ is $U$-invariant, and (i) and (ii) are clearly equivalent.  Now suppose instead that $l>j$.  The action of $U$ preserves $\Zcal_{<j}(\X)$, and thus by Lemma \ref{erglem}(iv), the measure $\mu^{[l]}$ is invariant under the face transformations $(V_u)^{[l]}_\alpha$ for any $(l-j)$-face $\alpha$ and any $u \in U$.  As $U$ is abelian, this implies that $\mu^{[l]}$ is invariant under any $(V_{u_\w})_{\w \in \2^l}$, where the $u_\w$ obey the conditions $\prod_{\w \in \alpha} u_{\w}^{\sgn(\w)}=1$ for every $(j+1)$-face $\alpha$.  Since $q$ has degree $<1$, we conclude (by Lemma \ref{basic}(iii)) that $\mu^{[l]}$ is invariant under $(V_{q(y_\w,u_\w)})_{\w \in \2^l}$ for $\mu^{[l]}$-a.e. $(y_\w,u_\w)_{\w \in \2^l}$.  On the other hand, since $p_t$ has degree $<l$ for every $t \in U$, we have $\prod_{\w \in \2^l} p_t( y_\w, u_\w )^{\sgn(\w)} = 1$ $\mu^{[l]}$-a.e. for such $t$ by Lemma \ref{basic}(iii).  We conclude that 
$\prod_{\w \in \2^l} p_t( y_\w, u_\w q(y_\w,u_\w) )^{\sgn(\w)} = 1$ $\mu^{[l]}$-a.e. By one last application of Lemma \ref{basic}(iii) we obtain that $(y,u) \mapsto p_t(y,uq(y,u))$ is of degree $<l$.  The claim (ii) now follows from (i).

Finally, we assume that (i) (and hence (ii)) have been proven for $m-1$, and then establish (i) for $m$.  Let $F$ be the $(\X,S^1)$-function $F(x) := p_{r(x)}(x)$.  From \eqref{cocycle-p} we have
$$ F(T_g x) = p_{r(x) \mdersmall_g r(x)}(T_g x) = p_{r(x)}(T_g x) p_{\mdersmall_g r(x)}(V_{r(x)} T_g x)$$
and thus (since the action of $U$ commutes with that of $G$)
$$ \mder_g F(x) = (\mder_g p_{r(x)})(x) (T_g p_{\mdersmall_g r(x)})( V_{r(x)} x ).$$
Observe that for each $t \in U$, $\mder_g p_t$ is a $(\X,S^1)$-phase polynomial of degree $<l-1$, and that $\mder_g p_t$ obeys \eqref{cocycle-p}.  By the induction hypothesis (i) with $l$ replaced by $l-1$, $(\mder_g p_{r(x)})(x)$ is a $(\X,S^1)$-phase polynomial of degree $<l+m-2$.  Similarly, $T_g p_t$ is a $(\X,S^1)$-phase polynomial of degree $<l$ that also obeys \eqref{cocycle-p}, $\mder_g r$ is a $(\X,U)$-phase polynomial of degree $<m-1$, and $r$ has degree $<j+1$ by hypothesis on $m$.  Applying the induction hypothesis (ii) with $m$ replaced by $m-1$, we conclude that $(T_g p_{\mdersmall_g r(x)})( V_{r(x)} x )$ is a $(\X,S^1)$-phase polynomial of degree $<l+m-2$.  Putting this all together, we see that $\mder_g F$ has degree $<l+m-2$ for all $g$, and thus (by \eqref{integ}) $F$ has degree $<l+m-1$, establishing (i) as required.
\end{proof}

Now we can prove Proposition \ref{exact_int}.

\begin{proof}[Proof of Proposition \ref{exact_int}]
Write $x=(y,u)$, $y \in \Y$, $u \in U$.   
Take $Q(y,u u_0):=p_u(y,u_0)$ for $u_0 \in U$ a generic point (actually we can take $u_0=1$ since polynomials are continuous).  Now as in Lemma \ref{poly-integ}, $\mder_t Q(x)=p_t(x)$.  It remains to show that $Q$ has degree $<l+j$.  But this follows from Lemma \ref{mod}(ii) with $r(y,u) := u/u_0$, $q(y,u) := u_0/u$, and $m:=j+1$; note that as $\rho_j$ has degree $<j$, $r$ and $q$ have degree $<j+1$.
\end{proof}

\subsection{Exact descent}\label{xd}

We now prove Proposition \ref{exact_descent} and Lemma \ref{fint-csg}.  As noted already in Remark \ref{descent-remark}, an exact descent result for cocycles already appears in \cite[Corollary 7.8]{hk-cubes}; Proposition \ref{exact_descent} can be viewed as an extension of that result to quasi-cocycles.

Our main tool for both of these tasks is the following equivalent characterization of the finite type condition.

\begin{lemma}[Characterization of finite type]\label{fint}  Let $G$ be a countable abelian group, let $k \geq 0$, let $\X$ be an ergodic $G$-system of order $<k$, and let $f$ be a $(G,\X,S^1)$-quasi-cocycle of order $<k-1$.  Let $\Phi_n$ be a F{\o}lner sequence for $G$.  Then the following two statements are equivalent.
\begin{itemize}
\item[(i)] $f$ is of type $<k$ (i.e. $d^{[k]} f$ is a $(G,\X^{[k]},S^1)$-coboundary).
\item[(ii)] For all $\x$ in a set of positive $\mu^{[k]}$-measure, $\limsup_{n \to \infty} |\E_{g \in \Phi_n} d^{[k]} f(g,\x)| \neq 0$.
\end{itemize}
\end{lemma}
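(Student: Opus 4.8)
The plan is to pass to a skew product and invoke the mean ergodic theorem. First I would note that, since $f$ is a quasi-cocycle of order $<k-1$, the function $d^{[k]}f$ is actually a genuine $(G,\X^{[k]},S^1)$-cocycle for the diagonal action $(T_g^{[k]})_{g\in G}$: applying $d^{[k]}$ in the point variable to the identity $f(g+g',x)=f(g,T_{g'}x)f(g',x)p_{g,g'}(x)$ yields the cocycle equation for $d^{[k]}f$ together with an error $d^{[k]}p_{g,g'}$, which is trivial $\mu^{[k]}$-a.e.\ by Lemma \ref{basic}(iii) because $p_{g,g'}\in\Phase_{<k-1}(\X,S^1)\subseteq\Phase_{<k}(\X,S^1)$. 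Form $\W:=\X^{[k]}\times_{d^{[k]}f}S^1$ with the diagonal $G$-action and let $\phi\in L^\infty(\W)$ be $\phi(\x,s):=s$, so that $T_g\phi=d^{[k]}f(g,\cdot)\,\phi$. The mean ergodic theorem for the diagonal action on $L^2(\W)$ gives $\E_{g\in\Phi_n}T_g\phi\to\E(\phi|\Zcal_{<1}(\W))$ in $L^2(\W)$; writing $\E(\phi|\Zcal_{<1}(\W))(\x,s)=L(\x)s$, we obtain $\E_{g\in\Phi_n}d^{[k]}f(g,\cdot)\to L$ in $L^2(\X^{[k]})$, hence $\mu^{[k]}$-a.e.\ along a subsequence. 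Thus statement (ii) is equivalent to $L\neq 0$ on a set of positive $\mu^{[k]}$-measure.

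For (ii)$\Rightarrow$(i): if $L\neq 0$ on a positive-measure set then, restricting to an ergodic component $\Z_s$ of $\mu^{[k]}$ on which $L$ is not a.e.\ zero, the cocycle identity gives $L\circ T_{g'}^{[k]}=\overline{d^{[k]}f(g',\cdot)}\,L$, so $|L|$ is invariant and therefore constant on $\Z_s$, and $\overline{L}/|L|$ is an $S^1$-valued antiderivative of $d^{[k]}f$ on $\Z_s$. Hence $d^{[k]}f$ is a $(G,A,S^1)$-coboundary for a $\diag(G^{[k]})$-invariant set $A\subseteq X^{[k]}$ of positive measure. I would then glue: using the invariance of $\mu^{[k]}$ under the side-translations $(T_h)^{[k]}_\alpha$ and under the symmetries of the cube $\2^k$, together with the cocycle equations satisfied by $d^{[k]}f$, one propagates the coboundary relation from $A$ to its translates and assembles a global antiderivative, exactly as in the gluing argument in the proof of Proposition \ref{ext-type-k}. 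This yields (i).

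For (i)$\Rightarrow$(ii): if $d^{[k]}f=\mder_{g^{[k]}}F$ with $F\in M(\X^{[k]},S^1)$, then $\E_{g\in\Phi_n}d^{[k]}f(g,\cdot)=\overline{F}\,\E_{g\in\Phi_n}(F\circ T_g^{[k]})\to\overline{F}\,\E(F|\I_k(\X))$, so $L=\overline{F}\,\E(F|\I_k)$ and (ii) reduces to the assertion that $\E(F|\I_k)\neq 0$ on a positive-measure set. Verifying this is the crux of the lemma, since a general $S^1$-valued cocycle can be a coboundary while all of its antiderivatives are orthogonal to the invariant factor (e.g.\ a nontrivial eigenvalue cocycle). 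The plan is to use that $\mu^{[k]}$ is the relatively independent self-joining of $\mu^{[k-1]}$ with itself over $\I_{k-1}$ (see \eqref{def_measures}), so that $\X^{[k]}$ splits as $\X^{[k-1]}\times\X^{[k-1]}$ with $d^{[k]}f(g,(\y,\y'))=d^{[k-1]}f(g,\y)\,\overline{d^{[k-1]}f(g,\y')}$, and to run an induction on $k$ (the cases $k\leq 1$ being immediate). The hypotheses that $\X$ be of order $<k$ and that $f$ be a quasi-cocycle of order $<k-1$ are precisely what should let this induction close and, more importantly, what rule out the degenerate case in which $d^{[k]}f$ is cohomologous, on a full-measure set of ergodic components of $\mu^{[k]}$, to a cocycle that is constant in the point variable --- the only obstruction to $\E(F|\I_k)\neq 0$. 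I expect this last point (showing the antiderivative $F$ has nontrivial conditional expectation onto $\I_k$) to be the main difficulty, and to be where the characteristic-factor machinery --- in particular the relation between $\I_k$, $\Zcal_{<k}(\X)$ and the $U^k$ seminorm recorded around Corollary \ref{uk-norm-I_k-cor} --- must be brought in, rather than soft ergodic theory alone.
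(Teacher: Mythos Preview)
Your treatment of (ii)$\Rightarrow$(i) is essentially the paper's argument: build the skew product, use the ergodic theorem to get the limit $L$, obtain a coboundary relation on a positive-measure $\diag(G^{[k]})$-invariant set, then glue via side transformations. One small point: for the gluing you need not only that $d^{[k]}f$ is a cocycle but also that $d^{[k-1]}f$ is a $(G,\X^{[k-1]},S^1)$-cocycle (this too follows from the quasi-cocycle hypothesis of order $<k-1$ and Lemma~\ref{basic}(iii)), since the computation of $(T_h)^{[k]}_\alpha d^{[k]}f / d^{[k]}f$ goes through $(\partial(\alpha)_*)^*d^{[k-1]}f$.

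For (i)$\Rightarrow$(ii), however, you have correctly located the difficulty but not resolved it. Your proposed induction on $k$ via the splitting $\X^{[k]}=\X^{[k-1]}\times_{\I_{k-1}}\X^{[k-1]}$ is not developed into an argument, and it is unclear what inductive hypothesis would close: the type $<k$ condition on $f$ does not obviously descend to a type $<k-1$ condition, and the ``obstruction'' you name (that $d^{[k]}f$ be cohomologous on each ergodic component to a constant-in-$\x$ cocycle) is exactly what must be ruled out, not assumed away. The paper does \emph{not} proceed by induction here. Instead it argues directly: writing $d^{[k]}f=\mder_{g^{[k]}}F$, one uses the order $<k$ hypothesis on $\X$ to replace $F$ by a function independent of the coordinate $x_{-\1}$ (since every measurable function on $\X^{[k]}$ agrees $\mu^{[k]}$-a.e.\ with one that does not depend on $x_{-\1}$). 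Approximating $F$ in $L^2(\X^{[k]})$ by a finite sum $\sum_{j=1}^N\bigotimes_{\w\neq -\1}F_{j,\w}$, one rearranges the coboundary equation so that $f(g,x_{-\1})$ is close in $L^2(\X^{[k]})$ to a finite sum of products each depending on a single coordinate; since $|f|=1$, pigeonholing and the Cauchy--Schwarz--Gowers inequality \eqref{csg} give a lower bound $\|f(g,\cdot)\|_{U^k(\X)}\geq 1/(2N^2)$ \emph{uniform in $g$}. By Lemma~\ref{welldefined} this yields $\int d^{[k]}f(g,\x)\,d\mu^{[k]}\geq (2N^2)^{-2^k}$ for every $g$, and averaging over $\Phi_n$ followed by monotone convergence gives (ii). The key idea you are missing is this uniform-in-$g$ lower bound on $\|f(g,\cdot)\|_{U^k}$, which comes from the order $<k$ hypothesis via the ``drop the first coordinate'' trick and CSG, not from an induction.
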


\begin{proof}  We first show that (i) implies (ii).  By hypothesis, we have
$ d^{[k]} f(g,\x) = F( T_g^{[k]} \x ) \overline{F( \x )}$
for all $g \in G$, $\mu^{[k]}$-a.e. $\x$, and some $(\X^{[k]},S^1)$-function $F$.  We rearrange this as
$$ f(g,x_{-\1}) = F( (T_g x_\w)_{\w \in \2^k} ) \overline{F( (x_\w)_{\w \in \2^k} )} \prod_{\w \in \2^k \backslash -\1} f_\w(g,x_\w)$$
where each $f_\w$ is either equal to $f$ or its complex conjugate.

As $\X$ is of order $<k$, we see from Definition \ref{ucf-soft-def} that any measurable function on $\X^{[k]}$ that depends only on the first coordinate $x_{-\1}$, is equal $\mu^{[k]}$-a.e. to a function that is independent of this coordinate.  Since these two classes of functions together generate all measurable functions on $\X^{[k]}$, we conclude that all measurable functions on $\X^{[k]}$ are equal $\mu^{[k]}$-a.e. to a function independent of the first coordinate.  In particular, we may assume without loss of generality that $F$ is independent of the first coordinate.

Let $\eps > 0$ be a small number.  By definition of the product $\sigma$-algebra, we can approximate $F$ up to an error which is $O(\eps)$ in $L^2(\X^{[k]})$ by a function $\tilde F$ bounded in magnitude by $1$ of the form
$ \tilde F( (x_\w)_{\w \in \2^k} ) = \sum_{j=1}^N \prod_{\w \in \2^k \backslash -\1} F_{j,\w}(x_\w)$
for some functions $f_{j,\w} \in L^\infty(\X)$ with norm $\|f_{j,\w}\|_{L^\infty(\X)} \leq 1$ and some finite $N$.  We conclude that $f(g,x_{-\1})$ differs by $O(\eps)$ in $L^2(\X^{[k]})$ from the function
$$ \sum_{j=1}^N \sum_{j'=1}^N \prod_{\w \in \2^k \backslash -\1} F_{j,\w}(T_g x_\w) \overline{F_{j',\w}(x_\w)} f_\w(g,x_\w).$$
On the other hand, $f$ has magnitude $1$.  We conclude (for $\eps$ small enough) that
$$ |\sum_{j=1}^N \sum_{j'=1}^N \int_{\X^{[k]}} f(g,x_{-\1}) \prod_{\w \in \2^k \backslash -\1} \overline{F_{j,\w}(T_g x_\w)} F_{j',\w}(x_\w) \overline{f_\w(g,x_\w)}\ d\mu^{[k]}| \geq \frac{1}{2}$$
and thus by the pigeonhole principle that
$$ |\int_{\X^{[k]}} f(g,x_{-\1}) \prod_{\w \in \2^k \backslash -\1} \overline{F_{j,\w}(T_g x_\w)} F_{j',\w}(x_\w) \overline{f_\w(g,x_\w)}\ d\mu^{[k]}| \geq \frac{1}{2N^2}$$
for some $j,j'$ (depending on $g$).  Applying the Cauchy-Schwarz-Gowers inequality \eqref{csg} we conclude that
$\|f(g,\cdot) \|_{U^k(\X)} \geq \frac{1}{2N^2}$
for all $g \in G$; the point here is that the lower bound is uniform in $g$.  Applying Lemma \ref{welldefined}, we conclude that
$$ \int_{\X^{[k]}} d^{[k]} f(g,\x)\ d\mu^{[k]} \geq (\frac{1}{2N^2})^{2^k}$$
for all $g \in G$.  Averaging this over a F{\o}lner set $\Phi_n$, we conclude in particular that
$$ \int_{\X^{[k]}} |\E_{g \in \Phi_n} d^{[k]} f(g,\x)|\ d\mu^{[k]} \geq (\frac{1}{2N^2})^{2^k}$$
for all $n$.  From the monotone convergence theorem we conclude that
$$ \int_{\X^{[k]}} \limsup_{n \to \infty} |\E_{g \in \Phi_n} d^{[k]} f(g,\x)|\ d\mu^{[k]} \geq (\frac{1}{2N^2})^{2^k}$$
and (ii) follows.

Now we show that (ii) implies (i).  We will use some arguments related to those used to prove Proposition \ref{ext-type-k}.  We first observe from Definition \ref{quasicocycle} and Lemma \ref{basic}(iii) that $d^{[k]} f$ is a $(G,\X^{[k]},S^1)$-cocycle.  This allows us to build the circle extension $\X^{[k]} \times_{d^{[k]} f} S^1$ of $\X^{[k]}$.  Applying the ergodic theorem to the vertical function $(\x,u) \mapsto u$ in this extension, we conclude that the limit $F(\x) := \lim_{n \to \infty} \E_{g \in \Phi_n} d^{[k]} f(g,\x)$ exists $\mu^{[k]}$-a.e., and is invariant under the diagonal action of $G$.  By (ii), $F$ is non-zero on a set $A$ of positive measure in $\X^{[k]}$, which we can take to be invariant under the action of $G$.  In particular, $A$ corresponds to some set $B \in \I_k(\X)$ with $P_k(B) > 0$, where $P_k$ is the restriction of $\mu^{[k]}$ to $\I_k(\X)$.

Since $d^{[k]} f$ is a $(G,\X^{[k]},S^1)$-cocycle, we have
$d^{[k]} f(g+g',\x) = (d^{[k]} f(g,\x)) d^{[k]} f(g',T_{g}^{[k]} \x)$.
Averaging $g'$ over the F{\o}lner sequence $\Phi_n$ and taking limits, we conclude that
$F(\x) = (d^{[k]} f(g,\x)) F( T_g^{[k]} \x )$.
This implies that $d^{[k]} f$ is a $(G,B,S^1)$-coboundary.  

Now let $\alpha$ be any face of $\2^k$.  From Definition \ref{quasicocycle} and Lemma \ref{basic}(iii) we see that $d^{[k-1]} f$ is also a $(G,\X^{[k-1]},S^1)$-cocycle, and thus $(\partial(\alpha)_*)^* d^{[k-1]} f$ is a $(G,\X^{[k]},S^1)$-cocycle.  This implies that
$$ \frac{(T_h)^{[k]}_\alpha d^{[k]} f( g, \x )}{d^{[k]} f(g, \x)} = \mder_{g^{[k]}} (\partial(\alpha)_*)^* d^{[k-1]}f(h, \x )$$
for every $h \in G$, and so $(T_h)^{[k]}_\alpha d^{[k]} f$ is $(G,\X^{[k]},S^1)$-cohomologous to $d^{[k]} f$.  In particular, $d^{[k]} f$ is a $(G,(T_h)^{[k]}_\alpha B,S^1)$-coboundary.  Using the same gluing argument used in the proof of Proposition \ref{ext-type-k} we conclude that $d^{[k]} f$ is a $(G, \X^{[k]}, S^1)$-coboundary, and the claim follows.
\end{proof}

We can now prove Proposition \ref{exact_descent} and Lemma \ref{fint-csg}.

\begin{proof}[Proof of Proposition \ref{exact_descent}]  By hypothesis and Lemma \ref{fint}, we have
$$\limsup_{n \to \infty} |\E_{g \in \Phi_n} d^{[k]} \pi^* f(g,\x)| \neq 0$$
for all $\x$ in a set of positive measure in $\X^{[k]}$.

By Lemma \ref{functor}, $\Y^{[k]}$ is a factor of $\X^{[k]}$.  Let $\pi^{[k]}$ be the factor map, then we have $d^{[k]} \pi^* f = (\pi^{[k]})^* d^{[k]} f$.  We conclude that
$\limsup_{n \to \infty} |\E_{g \in \Phi_n} d^{[k]} f(g,\y)| \neq 0$
for all $\y$ in a set of positive measure in $\Y^{[k]}$.

Since $\X$ is of order $<k$, $\Y$ is also.  The claim then follows from another application of Lemma \ref{fint}.
\end{proof}

\begin{proof}[Proof of Lemma \ref{fint-csg}]  By hypothesis, we have
$ \bigotimes f_\w(g,\x) = F( T_g^{[k]} \x ) \overline{F( \x )}$
for some $(X^{[k]},S^1)$-function $F$.  Arguing exactly as in the proof of Lemma \ref{fint}, we see that for all $\x$ in a set of positive $\mu^{[k]}$-measure, $\limsup_{n \to \infty} |\E_{g \in \Phi_n} d^{[k]} f_{-\1}(g,\x)| \neq 0$.  Applying Lemma \ref{fint}, we conclude that $f_{-\1}$ is of type $<k$ as desired.  The corresponding claims for the other $f_\w$ are established similarly.
\end{proof}

The proof of Theorem \ref{main-thm-high} is now complete.

\appendix

\section{General theory of Gowers-Host-Kra seminorms}\label{basic-appendix}

In this appendix we set out the general ``soft'' theory of the Gowers-Host-Kra seminorms.  The theory for $\Z$-systems is discussed in detail in \cite{hk-cubes}.  This theory carries over without any difficulties to other discrete abelian groups, such as $\Fw$, but for the convenience of the reader we reproduce the theory from \cite{hk-cubes} here. 

Throughout this appendix, $(G,+)$ is a fixed countable abelian group, the system $\X = (X,\B_X,\mu_X,(T_g)_{g \in G})$ is a fixed $G$-system, and $k \geq 1$ is a fixed integer. 

\subsection{The cubic construction}

The first step is to construct the cubic spaces and measures from \cite{hk-cubes}, generalised from $\Z$-actions to $G$-actions. We slightly modify the terminology and notation of  \cite{hk-cubes} to make it compatible with standard notation of the theory of cubic complexes (see Section \ref{cubic-sec}).  

\begin{definition}[The discrete cube and its faces]\label{faces}
Let  $\2^k$ denote  the set $\{-1,1\}^k$, which we equip with the lexicographic order.  One can identify $\2^k$ with the vertices of the 
standard cube ${\bf I}^k := \{(x_1,\ldots,x_k) \in \R^k: -1 \leq x_i \leq 1\}$. Under this identification we will refer to $\2^k$ as the standard {\em discrete}
($k$-dimensional) cube. We will denote the elements of $\2^k$ by $\w=(w_1,\ldots,w_k)$.
For each $j=1,\ldots, k$, the sets $\beta_j^+ := \{\w \in \2^k: w_j=1\}$,  $\beta_j^- :=\{\w \in \2^k: w_j=-1\}$ correspond to the sets of vertices on opposite sides of the cube ${\bf I}^k$. We will  refer to $\beta_j^+, \beta_j^-$ as \emph{opposite sides} or \emph{parallel sides} of $\2^k$, and to the $\beta_j^+$ as the {\em positive sides}. 

More generally, for any $0 \leq l \leq k$, define an \emph{$l$-dimensional face} or \emph{$l$-face} to be any set formed by intersecting $k-l$ distinct non-parallel sides. Thus $\2^k$ has one $k$-face, $2k$ faces of dimension $k-1$ (i.e. the sides $\beta_j^{\pm}$), $k2^{k-l}$ faces of dimension $(k-l)$, and so forth down to $2^k$ faces of dimension $0$ (which are the vertices of the discrete cube).  

Let $\alpha$ be an $l$-face. Enumerating the elements of $\alpha$ in lexicographic order gives a natural bijection $\partial(\alpha): \alpha \to \2^l$, which we call the \emph{coordinate map} of $\alpha$, which maps the faces of $\2^k$, which are subsets of $\alpha$, to the faces of $\2^l$. 
\end{definition}
  
\begin{definition}[Cubic complexes]\label{complex} Let $S$ be an arbitrary set.
We write $S^{[k]}:=S^{\2^k}$ for the set of functions $\s:\2^k \to S$.  For each $0 \leq l \leq k$ and each $l$-face $\alpha$, we let $\partial(\alpha)_*: S^{[k]} \to S^{[l]}$ be the pushforward map given by the formula
$ \partial(\alpha)_*(\s)(\w) := \s( \partial(\alpha)^{-1}(\w) )$
for all $\w \in \2^l$.  For any $1 \leq j \leq k$ and sign $\pm$, we abbreviate the \emph{cubic boundary map} $\partial( \beta_j^\pm )_*: S^{[k]} \to S^{[k-1]}$ as $\partial_j^\pm = \partial_{j,k}^\pm$.  
\end{definition}

\begin{example}  If $k=2$, then
\begin{align*}
\partial_1^- (s_{(-1,-1)},s_{(-1,1)},s_{(1,-1)},s_{(1,1)})&=(s_{(-1,-1)},s_{(-1,1)}) \\
\partial_1^+ (s_{(-1,-1)},s_{(-1,-1)},s_{(1,-1)},s_{(1,1)})&=(s_{(1,-1)},s_{(1,1)}) \\
\partial_2^- (s_{(-1,-1)},s_{(-1,1)},s_{(1,-1)},s_{(1,1)})&=(s_{(-1,-1)},s_{(1,-1)})\\
\partial_2^+ (s_{(-1,-1)},s_{(-1,1)},s_{(1,-1)},s_{(1,1)})&=(s_{(-1,1)},s_{(1,1)}).
\end{align*}
\end{example}

\begin{remark}\label{iter} For future reference we make the trivial observation that the map $\s \mapsto (\partial_{k+1}^- \s, \partial_{k+1}^+ \s)$ is a bijection between $S^{[k+1]}$ and $S^{[k]} \times S^{[k]}$. 
\end{remark}

\begin{definition}[Face groups]\label{facebook}  Let $G$ be a (possibly non-abelian) group with identity $\id_G$, and let $\alpha$ be a face of $\2^k$.  For every $g \in G$, we let $g^{[k]}_\alpha \in G^{[k]}$ denote the group element whose components $(g^{[k]}_\alpha)_\w$ for $\w \in \2^k$ are defined to equal $g$ when $\w \in \alpha$, and to equal $\id_G$ otherwise.  The map $g \mapsto g^{[k]}_\alpha$ is a bijection from $G$ to the \emph{face group} $G^{[k]}_\alpha := \{ g^{[k]}_\alpha: g \in G \} \leq G^{[k]}$.  When $\alpha$ is a side (resp. a positive side) we refer to $G^{[k]}_\alpha$ as a \emph{side group} (resp. a \emph{positive side group}); when $\alpha = \2^k$ is the entire cube we refer to $G^{[k]}_{\2^k}$ as the \emph{diagonal group} and denote it as $\diag(G^{[k]})$, and abbreviate $g^{[k]}_{\2^k}$ as $g^{[k]}$.  We also write $\partial^{[k]} G$ (resp. $\partial^{[k]}_+$) for the subgroup of $G^{[k]}$ generated by all the side groups (resp. all the positive side groups).
\end{definition}

\begin{example} For $k=2$, the group  $\partial^{[2]}G$ is generated by 
$$\bigcup_{g\in G} \{(\id_G,\id_G,g,g),(g,g,\id_G,\id_G),(\id_G,g,\id_G,g),(g,\id_G,g,\id_G)\}_{g \in G}$$
while the group $\partial_+^{[2]} G$ is generated by 
$\bigcup_{g \in G} \{(\id_G,\id_G,g,g),(\id_G,g,\id_G,g)\}_{g \in G}.$ 
\end{example}

\begin{remark}\label{side-gen} For future reference we observe that the side group $\partial^{[k]} G$ is the group generated by the positive side group $\partial^{[k]}_+ G$ and the diagonal group $\diag(G^{[k]})$.
\end{remark}

\begin{definition}[Face actions]\label{faceact}  Let $G$ be a group acting on a space $X$ by transformations $T_g: X \to X$ for $g \in G$.  Then $G^{[k]}$ acts on $X^{[k]}$ in the obvious manner, with the action $T^{[k]}_\g$ of a group element $\g = (g_\w)_{\w \in \2^k} \in G^{[k]}$ mapping each point $(x_\w)_{\w \in \2^k} \in X^{[k]}$ to $(T_{g_\w}(x_\w))_{\w \in \2^k}$.  If $\alpha$ is a face, we abbreviate the \emph{face transformation} $T^{[k]}_{g^{[k]}_\alpha}$ as $(T_g)^{[k]}_\alpha$, thus $((T_g)^{[k]}_\alpha)_{g \in G}$ is an action of $G$ on $X^{[k]}$.  If $\alpha$ is a side (resp. a positive side), we refer to $(T_g)^{[k]}_\alpha$ as a \emph{side transformation} (resp. \emph{positive side transformation}), and if $\alpha = \2^k$ is the entire cube, we refer to $(T_g)^{[k]}_{\2^k}$ as a \emph{diagonal transformation} and abbreviate it further as $(T_g)^{[k]}$.
\end{definition}

\begin{example} If $k=2$, then
$$(T_g)^{[2]}_{\beta_1^-} (x_{(-1,-1)},x_{(-1,1)},x_{(1,-1)},x_{(1,1)})=(T_g x_{(-1,-1)},T_g x_{(-1,1)}, x_{(1,-1)},x_{(1,1)})$$
and similarly 
$$(T_g)^{[2]}_{\beta_2^+}   (x_{(-1,-1)},x_{(-1,1)},x_{(1,-1)},x_{(1,1)})=(x_{(-1,-1)},T_g x_{(-1,1)},x_{(1,-1)},T_g x_{(1,1)}).$$
Finally,
$$(T_g)^{[2]}
 (x_{(-1,-1)},x_{(-1,1)},x_{(1,-1)},x_{(1,1)})=(T_g x_{(-1,-1)},T_g x_{(-1,1)}, T_g x_{(1,-1)}, T_g x_{(1,1)}).$$
\end{example}

\begin{remark}\label{preserve} For future reference we observe that all positive side maps preserve the $(-1,\ldots,-1)$ coordinate of $X^{[k]}$.
\end{remark}

We recall the notion of a relative product:

\begin{definition}
Let $G$ be a countable group, and let $\X_1, \X_2$ be two $G$-systems with a common factor $\Y$.  Let $\pi_1: X_1 \to Y$ and $\pi_2: X_2 \to Y$ be the factor maps.  For $i=1,2$, let 
$\mu_{\X_i,y}$ represent the disintegration of $\mu_{\X_i}$ with respect to 
$Y$, thus 
$$ \int_{X_i} (\pi_i^* f) F\ d\mu_{\X_i} = \int_{Y_i} f(y) (\int_{X_i} F\ d\mu_{\X_i,y})\ d\mu_Y(y)$$
for all $f \in L^\infty(\Y)$, $F \in L^\infty(\X_i)$.  It is well known that this disintegration exists and is unique up to almost everywhere equivalence.

Let $\mu_{\X_1}\times_Y \mu_{\X_2}$ denote the measure  defined by the formula
\begin{equation}\label{relative_product}
 \mu_{\X_1}\times^{}_Y \mu_{\X_2}(A):=\int \mu_{\X_1,y}\times^{}_Y \mu_{\X_2,y}(A) d\mu_Y
\end{equation}
for $A \in \B_{\X_1} \times \B_{\X_2}$; this is known as the \emph{relatively independent joining} of $\mu_{\X_1}$ and $\mu_{\X_2}$ over $\mu_Y$. 
We refer to the $G$-system 
\[ \X_1 \times_\Y \X_2 := (X_1 \times X_2, \B_{\X_1} \times  \B_{\X_2}, 
   \mu_{\X_1}\times^{}_{\Y} \mu_{\X_2}, \diag(G \times G) )\] 
where $\diag(G \times G)$ denotes the diagonal action of $G$, as the \emph{relative product} of $\X_1$ and $\X_2$ with respect to $\Y$.
\end{definition}

Now we can introduce the cubic measure spaces of Host and Kra.

\begin{definition}[Cubic measure spaces]\label{cubichk-def}\cite[Section 3]{hk-cubes}  Let $\X = (X,\B_X,\mu_X,(T_g)_{g \in G})$ be a $G$-system.  For each $k \geq 0$, we endow $X^{[k]}$ with the product $\sigma$-algebra $\B^{[k]} := (\B_X)^{\2^k}$, and define the cubic measures $\mu^{[k]}$ and $\sigma$-algebras $\I_k \subset \B^{[k]}$ inductively as follows:
\begin{itemize}
\item Set $\I_0$ to be the $\sigma$-algebra of invariant sets in $\X$, and $\mu^{[0]}$ to be $\mu_X$.
\item Once $\mu^{[k]}$ and $\I_k$ are defined, we identify $X^{[k+1]}$ with $X^{[k]} \times X^{[k]}$ as per Remark \ref{iter}, and define $\mu^{[k+1]} := \mu^{[k]} \times_{\I_k} \mu^{[k]}$ to be the relatively independent joining of $\mu^{[k]}$ with itself over $\I_k$.  We then let $\I_{k+1}= \I_{k+1}(\X)$ be the $\sigma$-algebra of invariant sets of the system $\X^{[k+1]}$. 
\end{itemize}
We then define the $G$-system $\X^{[k]}$ as
$ \X^{[k]} := (X^{[k]}, \B^{[k]}, \mu^{[k]}, (T_g^{[k]})_{g \in G})$, i.e $\B_{\X^{[k]}}=\B^{[k]}$ and  $\mu_{\X^{[k]}}=\mu^{[k]}$.
\end{definition}

\begin{remark}\label{iterm} From construction we see that
\begin{equation}\label{def_measures}
\mu^{[k+1]} = \int_{X^{[k]}} (\mu^{[k]})_s \times (\mu^{[k]})_s dP_k(s) 
\end{equation}
where
\begin{equation}\label{decomposition}
\mu^{[k]} = \int (\mu^{[k]})_s dP_k(s) 
\end{equation}
is the ergodic decomposition of $\mu^{[k]}$ with respect to the diagonal action $(T_g^{[k]})_{g \in G}$ of $G$.  
\end{remark}

The cubic measures have a useful symmetry property:

\begin{lemma}[Symmetry of cubic measures]\label{symcube}\cite[Proposition 3.7]{hk-cubes} The measure $\mu^{[k]}$ is invariant under all the symmetries of the cube $\2^k$ (which act in an obvious manner on $\X^{[k]}$).
\end{lemma}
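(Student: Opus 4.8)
The plan is to argue by induction on $k$, following Host and Kra, and to reduce the statement to checking invariance on a generating set of the hyperoctahedral symmetry group of $\2^k$. For $k=0,1$ the claim is trivial: the only nontrivial symmetry of $\2^1$ is the swap of its two vertices, and $\mu^{[1]} = \mu_X \times_{\I_0} \mu_X$ is a self-joining, hence invariant under exchanging its two factors. For the inductive step, recall that the symmetry group of $\2^k$ is generated by the transpositions $\tau_j$ of directions $j,j+1$ for $1 \le j \le k-1$ together with the reflections $r_j$ of direction $j$; in fact $\tau_1,\dots,\tau_{k-1}$ together with $r_k$ already generate it. Throughout I will use the identification $X^{[k]} \equiv X^{[k-1]} \times X^{[k-1]}$ via $\s \mapsto (\partial_k^-\s, \partial_k^+\s)$ from Remark \ref{iter}, under which $\mu^{[k]} = \mu^{[k-1]} \times_{\I_{k-1}} \mu^{[k-1]}$ by Definition \ref{cubichk-def}.

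First I would dispose of the ``easy'' generators, namely the symmetries fixing direction $k$ (these include $\tau_1,\dots,\tau_{k-2}$ and $r_1,\dots,r_{k-1}$) and the reflection $r_k$. A symmetry $\sigma$ fixing direction $k$ acts on $X^{[k-1]} \times X^{[k-1]}$ diagonally as $\bar\sigma \times \bar\sigma$, where $\bar\sigma$ is the induced symmetry of $\2^{k-1}$; by the inductive hypothesis $\mu^{[k-1]}$ is $\bar\sigma$-invariant, and since $\bar\sigma$ merely permutes and negates coordinates it commutes with the diagonal $G$-action $(T_g^{[k-1]})_{g\in G}$ and therefore preserves $\I_{k-1}$. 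A one-line computation on product functions, using $\E(f \circ \bar\sigma \mid \I_{k-1}) = \E(f\mid\I_{k-1})\circ\bar\sigma$ and the identity $\int f\otimes F \, d(\mu^{[k-1]}\times_{\I_{k-1}}\mu^{[k-1]}) = \int \E(f\mid\I_{k-1})\,\E(F\mid\I_{k-1})\, d\mu^{[k-1]}$, then shows $\bar\sigma\times\bar\sigma$ preserves $\mu^{[k]}$. The reflection $r_k$ acts as the flip $(\s_1,\s_2) \mapsto (\s_2,\s_1)$, under which the relatively independent self-joining $\mu^{[k-1]}\times_{\I_{k-1}}\mu^{[k-1]}$ is manifestly invariant.

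The one remaining generator is $\tau_{k-1}$, the transposition of directions $k-1$ and $k$, and this is where the real work lies: $\tau_{k-1}$ neither acts diagonally nor as a factor-swap on $X^{[k-1]} \times X^{[k-1]}$. To handle it I would iterate the construction one further step, writing $X^{[k]} \equiv (X^{[k-2]})^{\2^2}$ with the four copies of $X^{[k-2]}$ labelled by $(w_{k-1}, w_k) \in \2^2$, so that $\mu^{[k]}$ becomes the ``double'' relatively independent self-joining $\mu^{[k]} = \bigl(\mu^{[k-2]}\times_{\I_{k-2}}\mu^{[k-2]}\bigr) \times_{\I_{k-1}} \bigl(\mu^{[k-2]}\times_{\I_{k-2}}\mu^{[k-2]}\bigr)$, with $\tau_{k-1}$ exchanging the two $\2$-factors of $\2^2$. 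The task is then to prove that this iterated joining is symmetric under that exchange — a Fubini-type statement asserting that the order in which one forms the successive relatively independent joinings over the invariant factors does not matter. This reduces to understanding precisely how $\I_{k-1} = \I(\X^{[k-1]})$ sits over $\I_{k-2}$ inside $\X^{[k-1]} = \X^{[k-2]}\times_{\I_{k-2}}\X^{[k-2]}$, and it is exactly the point carried out in \cite[Proposition 3.7]{hk-cubes}.

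I expect step (3), the invariance under the direction-mixing transposition $\tau_{k-1}$, to be the main obstacle; steps (1) and (2) are routine once the relative-product bookkeeping is in place. An alternative route that avoids the iterated-joining Fubini argument would be to extract from Lemma \ref{welldefined} a manifestly $\2^k$-symmetric F{\o}lner-average formula for $\int_{X^{[k]}} F \, d\mu^{[k]}$ (averaging independently over $k$ shift parameters, one per direction) and to observe that F{\o}lner averages are asymptotically invariant under both translation and negation of each averaging variable; permuting directions, negating digits, and exchanging digits then all become immediate. The cost of that approach is establishing the symmetric formula in the first place, so I would present the inductive relative-product argument as the primary proof and only mention the formula approach as a remark.
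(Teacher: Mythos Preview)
The paper does not give its own proof of this lemma; it simply cites \cite[Proposition 3.7]{hk-cubes}. Your sketch is a correct outline of precisely that Host--Kra argument: induction on $k$, reduction to a generating set of the hyperoctahedral group, with the symmetries fixing direction $k$ and the reflection $r_k$ handled directly via the relative-product structure, and the transposition $\tau_{k-1}$ identified as the one genuinely nontrivial step (which you, like the paper, defer to the cited reference). So there is nothing to compare --- you have reconstructed the approach the paper points to.
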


The cubic measures also behave well with respect to passage to subcubes.  

\begin{lemma}[Cubic complex structure]\label{ccs}\cite[Corollary 3.8]{hk-cubes}  Let $0 \leq l \leq k$, and let $\alpha$ be an $l$-face of $\2^k$.  Then the map $\partial(\alpha)_*: \X^{[k]} \to \X^{[l]}$ is a factor map from $\X^{[k]}$ to $\X^{[l]}$.
\end{lemma}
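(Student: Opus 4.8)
The plan is to check the three defining conditions of a factor map (Definition \ref{factor-def}) for the coordinate map $\partial(\alpha)_*$. Measurability is immediate, since $\partial(\alpha)_*$ merely selects and relabels (according to the lexicographic bijection $\partial(\alpha)$) a subset of the coordinates of the product measurable space $X^{[k]}$. The intertwining relation $\partial(\alpha)_* \circ T^{[k]}_g = T^{[l]}_g \circ \partial(\alpha)_*$ holds \emph{everywhere} (not just $\mu^{[k]}$-a.e.) and is a one-line computation straight from Definitions \ref{complex} and \ref{faceact}: evaluating either side at $\s \in X^{[k]}$ and $\w \in \2^l$ produces $T_g\big(\s(\partial(\alpha)^{-1}(\w))\big)$, because the diagonal transformation $T^{[k]}_g$ acts coordinatewise by $T_g$. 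Thus the only substantive point is the measure-preservation identity $(\partial(\alpha)_*)_* \mu^{[k]} = \mu^{[l]}$.

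For this I would first reduce to a single, conveniently chosen $l$-face, using the symmetry of the cubic measure. Every element $\sigma$ of the symmetry group of $\2^k$ (a signed permutation of coordinates) induces a transformation $\sigma_*: X^{[k]} \to X^{[k]}$ that preserves $\mu^{[k]}$ by Lemma \ref{symcube}, maps $l$-faces to $l$-faces, and, because $\partial(\cdot)$ carries faces to faces, satisfies $\partial(\sigma\alpha)_* \circ \sigma_* = \tau_* \circ \partial(\alpha)_*$ for a suitable symmetry $\tau$ of $\2^l$; since symmetries of $\2^l$ also preserve $\mu^{[l]}$ (Lemma \ref{symcube} again), the identity $(\partial(\alpha)_*)_*\mu^{[k]} = \mu^{[l]}$ for one $l$-face implies it for $\sigma\alpha$. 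As the symmetry group of $\2^k$ acts transitively on $l$-faces (given two faces, choose a coordinate permutation matching the fixed coordinates and sign flips matching the fixed signs), it suffices to treat the particular face $\alpha_0 := \{\w \in \2^k : w_{l+1} = \cdots = w_k = -1\}$.

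For $\alpha_0$ a direct inspection of the coordinate maps shows $\partial(\alpha_0)_* = \partial_{l+1}^- \circ \partial_{l+2}^- \circ \cdots \circ \partial_k^-$, a composition of $k - l$ of the cubic boundary maps. Under the identification of Remark \ref{iter} each $\partial_j^- : X^{[j]} \to X^{[j-1]}$ is precisely the projection onto the first factor of $X^{[j]} = X^{[j-1]} \times X^{[j-1]}$, and since by Definition \ref{cubichk-def} $\mu^{[j]} = \mu^{[j-1]} \times_{\I_{j-1}} \mu^{[j-1]}$ is a joining of two copies of $\mu^{[j-1]}$, its first marginal is $\mu^{[j-1]}$; hence $(\partial_j^-)_* \mu^{[j]} = \mu^{[j-1]}$ for every $j \geq 1$. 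Composing these $k - l$ identities telescopes to $(\partial(\alpha_0)_*)_*\mu^{[k]} = \mu^{[l]}$, completing the proof. There is no deep obstacle here; the only care required is the combinatorial bookkeeping of the lexicographic coordinate maps $\partial(\alpha)$ — verifying the conjugation relation $\partial(\sigma\alpha)_* \circ \sigma_* = \tau_* \circ \partial(\alpha)_*$ and the factorization $\partial(\alpha_0)_* = \partial_{l+1}^- \circ \cdots \circ \partial_k^-$ — together with the correct invocation of Lemma \ref{symcube}. (An alternative route, avoiding the symmetry reduction, inducts on $k$ and, for faces transverse to the last coordinate, identifies $\partial(\alpha)_*$ with $\pi \times \pi$ for a lower-dimensional face map $\pi$ and invokes uniqueness of the ergodic decomposition to see that $\pi \times \pi$ sends the relatively independent self-joining over the invariant $\sigma$-algebra to the corresponding joining downstairs.)
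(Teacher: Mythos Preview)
Your proof is correct. The paper does not supply its own argument for this lemma but simply cites \cite[Corollary 3.8]{hk-cubes}; your approach---reducing via Lemma \ref{symcube} to a single convenient face and then peeling off one coordinate at a time using the defining self-joining $\mu^{[j]} = \mu^{[j-1]} \times_{\I_{j-1}} \mu^{[j-1]}$---is exactly the Host--Kra argument that the citation points to, and your bookkeeping of the lexicographic coordinate maps is handled carefully enough.
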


\begin{example} Let $\X = (X,\B_X,\mu_X,(T_g)_{g \in G})$ be an ergodic $G$-system.  For $k=1$ we have 
$\mu^{[1]}=\mu_X \times \mu_X$, 
$\X^{[1]}=(X \times X,\B_X \times \B_X,\mu_X \times \mu_X,(T_g  \times T_g)_{g \in G})$, and $\I_1$ is the $\sigma$-algebra of measurable subsets of $X \times X$ that are invariant under the action of $T_g^{[1]}=T_g  \times T_g$, for all $g \in G$. 
Now if
$\mu_X \times \mu_X = \mu^{[1]}  = \int ( \mu^{[1]})_s dP_1(s)$
is the ergodic decomposition of $\mu_X \times \mu_X$ with respect to the action of $(T_g^{[1]})_{ g \in G}$, then
$
 \mu^{[2]} =  \int ( \mu^{[1]})_s \times  ( \mu^{[1]})_s dP_1(s)$.
\end{example} 

\begin{example} Let $G$ be a finite abelian group, and let $\X$ be $G$ with the translation action, normalized counting measure, and the discrete $\sigma$-algebra.  Then $\mu^{[k]}$ is the normalized counting measure on the space of cubes
$$ \{ ( x + h_1 w_1 + \ldots + h_k w_k )_{(w_1,\ldots,w_k) \in \2^k}: x,h_1,\ldots,h_k \in G \}$$
and $\I_k$ is the $\sigma$-algebra consisting of subsets of $G^{[k]}$ which are invariant under the diagonal translations $(x_\w)_{\w \in \2^k} \mapsto (x_\w + h)_{\w \in \2^k}$ for $h \in G$.  Thus the probability space $(X^{[k]}, \B^{[k]}, \mu^{[k]})$ is measure isomorphic to the space $G^{k+1} = \{ (x,h_1,\ldots,h_k): x,h_1,\ldots,h_k \in G \}$ with the discrete $\sigma$-algebra and normalized counting measure, whilst $(X^{[k]}, \I_k, \mu^{[k]})$ is measure isomorphic to the space
$G^{k} = \{ (h_1,\ldots,h_k): h_1,\ldots,h_k \in G \}$ with the discrete $\sigma$-algebra and normalized counting measure. 
\end{example}

\subsection{Existence of the seminorms}

The objective of this section is to establish that the Gowers-Host-Kra seminorms from Definition \ref{ghk-uni} are in fact well-defined, and to relate them to the cubic measures just constructed.

For $\w=(w_1,\ldots,w_k) \in \2^k$ denote $\sgn(\w):=\prod_{i=1}^k w_i \in \{-1,1\}$.   
For any functions $f_{\w}: X \to \C$, $\w \in \2^k$ we denote by $\bigotimes_{\w \in \2^k} f_{\w}: X^{[k]} \to \C$ the tensor product
$$\bigotimes_{\w \in \2^k} f_{\w}( (x_\w)_{\w \in \2^k} ) :=
\prod_{\w \in 2^{\w}} f_{\w}(x_{\w}).$$

\begin{lemma}\label{welldefined}  Let $f \in L^\infty(\X)$.
The limits in Definition \ref{ghk-uni} exist and do not depend on the choice of F\o lner sequences. Furthermore if $f_{\w}:=f$ when $\sgn(\w)=1$, and $f_{\w}:=\bar{f}$ when $\sgn(\w)= -1$ ($\bar f$ denotes the complex conjugate of $f$),
then 
\[
\|f\|^{2^k}_{U^k(\X)}= (\pi^{\X^{[k]}}_{\pt})_*  (\bigotimes_{\w \in \2^k} f_{\w} )= \int_{X^{[k]}}  \bigotimes_{\w \in \2^k} f_{\w}  \ d \mu^{[k]} 
\]
\end{lemma}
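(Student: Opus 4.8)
The plan is to prove Lemma~\ref{welldefined} by induction on $k$, following the standard Host--Kra argument adapted to a general countable abelian group $G$, and tracking simultaneously (a) the existence of the limits and their independence of the F\o lner sequences, and (b) the identification of $\|f\|_{U^k(\X)}^{2^k}$ with the integral $\int_{X^{[k]}} \bigotimes_{\w \in \2^k} f_\w\, d\mu^{[k]}$.

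\textbf{Base case $k=1$.} Here $\bigotimes_{\w \in \2^1} f_\w = f(x_{-1})\overline{f(x_1)}$ and $\mu^{[1]} = \mu_X \times_{\I_0} \mu_X$, i.e.\ the relatively independent joining of $\mu_X$ with itself over the invariant $\sigma$-algebra $\I_0$. Thus $\int_{X^{[1]}} f \otimes \bar f\, d\mu^{[1]} = \int_X |\E(f \mid \I_0)|^2\, d\mu_X = \|\E(f\mid\I_0)\|_{L^2(\X)}^2$. On the other hand, by the mean ergodic theorem for the F\o lner sequence $\Phi_n^1$ (valid for any countable amenable, in particular abelian, group $G$), $\E_{h \in \Phi_n^1} T_h f \to \E(f\mid\I_0)$ in $L^2(\X)$, so $\lim_n \|\E_{h\in\Phi_n^1}T_h f\|_{L^2(\X)} = \|\E(f\mid\I_0)\|_{L^2(\X)}$, independently of the choice of $\Phi_n^1$. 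This proves both parts for $k=1$.

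\textbf{Inductive step.} Assume the lemma for $k-1$. Fix $f$ and set $f_\w$ as in the statement. Identify $X^{[k]} = X^{[k-1]} \times X^{[k-1]}$ via Remark~\ref{iter}, so that a point of $X^{[k]}$ is a pair $(\x,\x')$ with $\x$ the ``bottom'' and $\x'$ the ``top'' copy, and the coordinate $x_{(\w',+1)}$ for $\w' \in \2^{k-1}$ corresponds to the $\w'$-coordinate of $\x'$; with this bookkeeping $\bigotimes_{\w\in\2^k} f_\w(\x,\x') = \overline{\bigotimes_{\w'\in\2^{k-1}} g_{\w'}(\x')} \cdot \bigotimes_{\w'\in\2^{k-1}} g_{\w'}(\x)$ where $g_{\w'} = f_{\w'}$ is the order-$(k-1)$ assignment for $f$ (the top half picks up an extra complex conjugate everywhere because $\sgn(\w',+1) = \sgn(\w')$ while $\sgn(\w',-1) = -\sgn(\w')$; one checks the signs carefully). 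Wait---more precisely, for $h \in G$ write $g^{(h)}_{\w'}$ for the order-$(k-1)$ assignment attached to the function $\mder_h f$ when $\sgn(\w')=1$ and its conjugate when $\sgn(\w')=-1$. Using $\mu^{[k]} = \mu^{[k-1]} \times_{\I_{k-1}} \mu^{[k-1]}$ together with the ergodic decomposition formula \eqref{def_measures}--\eqref{decomposition}, namely $\mu^{[k]} = \int_{X^{[k-1]}} (\mu^{[k-1]})_s \times (\mu^{[k-1]})_s\, dP_{k-1}(s)$, and the fact that $\bigotimes_{\w'} f_{\w'}$ is, on each ergodic component, being integrated against a product measure, one obtains
$$ \int_{X^{[k]}} \bigotimes_{\w\in\2^k} f_\w\, d\mu^{[k]} = \int_{X^{[k-1]}} \Bigl| \int_{X^{[k-1]}} \bigotimes_{\w'\in\2^{k-1}} f_{\w'}\, d(\mu^{[k-1]})_s \Bigr|^2 dP_{k-1}(s) = \bigl\| \E\bigl( \bigotimes_{\w'} f_{\w'} \bigm| \I_{k-1}\bigr) \bigr\|_{L^2(\X^{[k-1]})}^2. $$
Now apply the mean ergodic theorem to the diagonal $G$-action $(T_h^{[k-1]})_{h\in G}$ on $\X^{[k-1]}$ along the F\o lner sequence $\Phi_n^k$: the averages $\E_{h\in\Phi_n^k} T_h^{[k-1]}(\bigotimes_{\w'} f_{\w'})$ converge in $L^2(\X^{[k-1]})$ to the conditional expectation onto $\I_{k-1}$, with limit independent of $\Phi_n^k$. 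Taking inner products and using $T_h^{[k-1]}(\bigotimes_{\w'}f_{\w'}) \cdot \overline{\bigotimes_{\w'}f_{\w'}} = \bigotimes_{\w'}(\mder_h f)_{\w'}$ wait---more carefully, $\langle T_h^{[k-1]}\bigotimes f_{\w'}, \bigotimes f_{\w'}\rangle = \int_{X^{[k-1]}} \bigotimes_{\w'}(\overline{T_h f})^{?}\cdots$; the point is that this inner product equals $\int_{X^{[k-1]}} \bigotimes_{\w'} (\mder_h f)_{\w'}\, d\mu^{[k-1]}$, which by the induction hypothesis equals $\|\mder_h f\|_{U^{k-1}(\X)}^{2^{k-1}}$. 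Therefore
$$ \bigl\|\E\bigl(\bigotimes_{\w'} f_{\w'}\bigm|\I_{k-1}\bigr)\bigr\|_{L^2(\X^{[k-1]})}^2 = \lim_{n\to\infty} \E_{h\in\Phi_n^k} \bigl\langle T_h^{[k-1]}\textstyle\bigotimes f_{\w'}, \bigotimes f_{\w'}\bigr\rangle = \lim_{n\to\infty} \E_{h\in\Phi_n^k} \|\mder_h f\|_{U^{k-1}(\X)}^{2^{k-1}}, $$
and this last limit is exactly $\|f\|_{U^k(\X)}^{2^k}$ by Definition~\ref{ghk-uni}. Moreover the limit exists and is independent of all the F\o lner sequences $\Phi_n^1,\ldots,\Phi_n^k$: the inner ones by the induction hypothesis (which makes $\|\mder_h f\|_{U^{k-1}(\X)}$ well-defined for each $h$), and the outer one $\Phi_n^k$ by the mean ergodic theorem just invoked, since the quantity being averaged, $h \mapsto \|\mder_h f\|_{U^{k-1}(\X)}^{2^{k-1}} = \langle T_h^{[k-1]}\bigotimes f_{\w'},\bigotimes f_{\w'}\rangle$, is a genuine matrix coefficient of the unitary $G$-representation on $L^2(\X^{[k-1]})$, hence a positive-definite function on $G$ whose F\o lner average converges.

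\textbf{Main obstacle.} The one genuinely delicate point is the bookkeeping in the inductive step: correctly matching the $2^k$ tensor factors split across the two copies of $X^{[k-1]}$ with the corresponding factors for $\mder_h f$, getting all the complex conjugations (signs $\sgn(\w)$) right, and justifying the passage from the product-measure integral on the ergodic components to the conditional-expectation-squared expression. This is exactly the content of the Host--Kra computation in \cite[Section~3]{hk-cubes}; the only new ingredient here is replacing the $\Z$-mean ergodic theorem by its F\o lner-sequence counterpart for countable abelian (more generally amenable) $G$, and checking that independence of the F\o lner sequence propagates through the induction---both of which are routine. I would organize the write-up so that the sign bookkeeping is done once, cleanly, at the start of the inductive step, and then cite \cite{hk-cubes} for the measure-theoretic identity, noting that the proof carries over verbatim.
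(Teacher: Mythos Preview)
Your proposal is correct and follows essentially the same route as the paper: induction on $k$, base case via the mean ergodic theorem, and the inductive step by rewriting $\E_{h\in\Phi_n^k}\|\mder_h f\|_{U^{k-1}}^{2^{k-1}}$ as an inner product $\langle T_h^{[k-1]}\bigotimes f_{\w'},\bigotimes f_{\w'}\rangle$ on $L^2(\X^{[k-1]})$, applying the mean ergodic theorem for the diagonal $G$-action, and identifying the resulting $\|\E(\cdot\mid\I_{k-1})\|^2$ with the $\mu^{[k]}$-integral via the relatively independent joining. The only cosmetic issue is that your write-up contains several mid-sentence self-corrections (``Wait---more precisely'', ``wait---more carefully''); you should simply fix the sign bookkeeping once at the outset (the identity $\bigotimes_{\w'\in\2^{k-1}}(\mder_h f)_{\w'} = \bigl(\bigotimes_{\w'} f_{\w'}\bigr)\cdot T_h^{[k-1]}\bigl(\bigotimes_{\w'}\overline{f_{\w'}}\bigr)$ is what you need) and delete the hedging.
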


\begin{proof}
We prove this by induction on $k$. For $k=1$ this follows from the mean ergodic theorem. 
Assume the induction hypothesis holds for $k-1$.  Then 
\[\begin{aligned}
 \E_{h \in {\Phi^k_n} } \| \mder_h f \|_{U^{k-1}(\X)}^{2^{k-1}}& = \pi^{\X^{[k-1]}}_{pt *}  (\bigotimes_{\w \in \2^{k-1}} f_{\w}\cdot T_h \bar{f}_{\w} )\\
&  = \E_{h \in {\Phi^k_n} } \int  (\bigotimes_{\w \in \2^{k-1}} f_{\w}) \cdot    (\bigotimes_{\w \in \2^{k-1}}  T_h \bar{f}_{\w} ) \ d \mu^{[k-1]}  \\
&  = \E_{h \in {\Phi^k_n} } \int  (\bigotimes_{\w \in \2^{k-1}} f_{\w}) \cdot   T_h^{[k]} (\bigotimes_{\w \in \2^{k-1}}  \bar{f}_{\w}) \  d \mu^{[k-1]} 
\end{aligned}
 \]
Since $\mu^{[k-1]} $ is invariant with respect to the action of  $(T_g^{[k]})_{ g \in G}$,  by the ergodic theorem the above averages converge to 
\[\begin{aligned}
\int  \pi^{\X^{[k-1]}}_{\I_{k-1}*}(\bigotimes_{\w \in \2^{k-1}} f_{\w}) \cdot   \pi^{\X^{[k-1]}}_{\I_{k-1}*} (\bigotimes_{\w \in \2^{k-1}}  \bar{f}_{\w})  \  d \mu^{[k-1]}  
&=\int (\bigotimes_{\w \in \2^{k-1}} f_{\w})(\bigotimes_{\w \in \2^{k-1}} \bar{f}_{\w})  \  d \mu^{[k]}\\
&= (\bigotimes_{\w \in \2^{k}} f_{\w}) \  d \mu^{[k]} = \pi^{\X^{[k]}}_{pt *}  (\bigotimes_{\w \in \2^{k}} f_{\w}) 
\end{aligned}\]
thus closing the induction.
\end{proof}

\begin{remark} The above expression is used in \cite{hk-cubes} as the \emph{definition} of the $U^k(\X)$ norm.  
\end{remark}

We record some basic properties of the Gowers-Host-Kra seminorms:

\begin{lemma}[Basic properties of $U^k$]\label{uk-basic}\cite[Lemma 3.9]{hk-cubes} (See also \cite[Lemmas 3.8, 3.9]{gowers})   
\begin{itemize}
\item[(i)] For any $\2^k$-tuple $(f_\w)_{\w \in \2^k}$ of functions $f_\w \in L^\infty(\X)$, we have the \emph{Cauchy-Schwarz-Gowers inequality}
\begin{equation}\label{csg}
|\int_{X^{[k]}}  \bigotimes_{\w \in \2^k} f_{\w}  \ d \mu^{[k]} | \leq \prod_{\w \in \2^k} \|f_\w\|_{U^k(\X)}.
\end{equation}
\item[(ii)] The function $f \mapsto \|f\|_{U^k(\X)}$ is a seminorm on $L^\infty(\X)$.
\item[(iii)] We have the monotonicity property
\begin{equation}\label{mono}
\| f \|_{U^k(\X)} \leq \|f\|_{U^{k+1}(\X)}.
\end{equation}
\end{itemize}
\end{lemma}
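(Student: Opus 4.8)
The plan is to establish (i), then derive (ii) and (iii) from it, working throughout with the identity of Lemma \ref{welldefined} that writes $\|f\|_{U^k(\X)}^{2^k} = \int_{X^{[k]}} \bigotimes_{\w \in \2^k} f_\w\, d\mu^{[k]}$ (with $f_\w := f$ when $\sgn(\w) = 1$ and $f_\w := \bar f$ when $\sgn(\w) = -1$) and with the cube symmetry of $\mu^{[k]}$ (Lemma \ref{symcube}). For (i) I would iterate the ordinary Cauchy--Schwarz inequality once in each of the $k$ coordinate directions. The point is that by Lemma \ref{symcube}, for each $1 \le j \le k$ the measure $\mu^{[k]}$ is the relatively independent self-joining of $\mu^{[k-1]}$ over the invariant $\sigma$-algebra $\I_{k-1}$ taken along the $j$-th coordinate; that is, writing $X^{[k]} = X^{[k-1]} \times X^{[k-1]}$ according to the value of $w_j$ and $\partial_j^\pm$ for the two coordinate projections, $\int_{X^{[k]}} (F \circ \partial_j^-)(G \circ \partial_j^+)\, d\mu^{[k]} = \int_{X^{[k-1]}} \E(F \mid \I_{k-1})\,\E(G \mid \I_{k-1})\, d\mu^{[k-1]}$ for $F, G \in L^\infty(\X^{[k-1]})$. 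Applying $L^2(\mu^{[k-1]})$ Cauchy--Schwarz and then unfolding the square of each conditional expectation back into an integral over $X^{[k]}$ (using that conjugation commutes with $\E(\cdot\mid\I_{k-1})$) bounds $|\int_{X^{[k]}} \bigotimes_\w f_\w\, d\mu^{[k]}|$ by the geometric mean of two analogous integrals, each with the $j$-th slice of the tuple replaced by a ``doubled'' slice (the same $\2^{k-1}$-tuple on one copy, its conjugate on the other). Iterating over $j = k, k-1, \dots, 1$ collapses the tuple to a single function $f_\w$ on each of the $2^k$ branches; tracking the accumulated conjugations and applying a coordinate reflection (a cube symmetry) to normalize the conjugation pattern, each of these $2^k$ terms equals $\int_{X^{[k]}} \bigotimes_{\u} g_\u\, d\mu^{[k]}$ with $g_\u = f_\w$ or $\bar f_\w$ according to $\sgn(\u)$, which is $\|f_\w\|_{U^k(\X)}^{2^k}$. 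Taking $2^k$-th roots yields \eqref{csg}.

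Part (ii) then follows formally. Finiteness of $\|f\|_{U^k(\X)}$ is clear ($f \in L^\infty(\X)$, $\mu^{[k]}$ a probability measure), and nonnegativity follows by splitting off the last coordinate exactly as in the proof of Lemma \ref{welldefined}, which rewrites $\|f\|_{U^k(\X)}^{2^k}$ as $\int_{X^{[k-1]}} |\E(\bigotimes_{\w' \in \2^{k-1}} f_{\w'} \mid \I_{k-1})|^2\, d\mu^{[k-1]} \ge 0$. Homogeneity $\|cf\|_{U^k(\X)} = |c|\,\|f\|_{U^k(\X)}$ is immediate from the formula, since $2^{k-1}$ of the $2^k$ tensor factors contribute $c$ and the other $2^{k-1}$ contribute $\bar c$. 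For the triangle inequality, expand $\|f + g\|_{U^k(\X)}^{2^k} = \int_{X^{[k]}} \bigotimes_\w (f+g)_\w\, d\mu^{[k]}$ by multilinearity into a sum over the $2^{2^k}$ choices of $h_\w \in \{f, g\}$; by part (i) (and $\|\bar h\|_{U^k(\X)} = \|h\|_{U^k(\X)}$, again from the formula and cube symmetry) each term has absolute value at most $\prod_\w \|h_\w\|_{U^k(\X)}$, and the binomial theorem sums these to $(\|f\|_{U^k(\X)} + \|g\|_{U^k(\X)})^{2^k}$; taking $2^k$-th roots gives the seminorm property.

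For (iii), let $(f_\w)_{\w \in \2^{k+1}}$ be the $\2^{k+1}$-tuple equal to $f$ or $\bar f$ according to $\sgn(\w')$ on the slice $w_{k+1} = -1$ (writing $\w = (\w', w_{k+1})$) and equal to the constant function $1$ on the slice $w_{k+1} = +1$. Splitting $\mu^{[k+1]}$ along the last coordinate and using $\E(1 \mid \I_k) = 1$ gives $\int_{X^{[k+1]}} \bigotimes_\w f_\w\, d\mu^{[k+1]} = \int_{X^{[k]}} \bigotimes_{\w' \in \2^k} f_{\w'}\, d\mu^{[k]} = \|f\|_{U^k(\X)}^{2^k}$. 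On the other hand part (i) in dimension $k+1$ bounds the left side by $\prod_{\w \in \2^{k+1}} \|f_\w\|_{U^{k+1}(\X)}$, in which the $2^k$ factors on the slice $w_{k+1} = -1$ each equal $\|f\|_{U^{k+1}(\X)}$ and the $2^k$ factors on the slice $w_{k+1} = +1$ each equal $\|1\|_{U^{k+1}(\X)} = 1$ (immediate from the recursive definition, as $\mu_X$ is a probability measure). Hence $\|f\|_{U^k(\X)}^{2^k} \le \|f\|_{U^{k+1}(\X)}^{2^k}$, which is \eqref{mono}.

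The one nontrivial step is part (i): all the content lies in justifying the ``directional'' relatively independent joining decompositions of $\mu^{[k]}$ from Lemma \ref{symcube}, and in the bookkeeping of the complex conjugates accumulated over the $k$ successive Cauchy--Schwarz applications, so that after a suitable cube automorphism each of the $2^k$ final terms is precisely $\|f_\w\|_{U^k(\X)}^{2^k}$; parts (ii) and (iii) are then purely formal once (i) and the formula of Lemma \ref{welldefined} are in hand.
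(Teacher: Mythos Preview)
Your proposal is correct and follows the standard Host--Kra/Gowers argument. The paper itself does not supply a proof of this lemma: it simply cites \cite[Lemma 3.9]{hk-cubes} and \cite[Lemmas 3.8, 3.9]{gowers}, so there is no ``paper's own proof'' to compare against beyond those references. Your iterated Cauchy--Schwarz for (i), the multilinear expansion for the triangle inequality in (ii), and the ``pad with $1$'s'' trick for (iii) are exactly the arguments found in those sources, adapted to the general countable abelian group setting via Lemma \ref{symcube} and Lemma \ref{welldefined}.
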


\begin{corollary}\label{uk-norm-I_k-cor} Let $f \in L^\infty(\X)$.  Then $\|f\|_{U_k(\X)}=0$ if and only if 
$(\pi^{\X^{[k-1]}}_{\I_{k-1}})_* \prod_{\w \in \2^{k-1}} f_\w = 0$
$\mu^{[k-1]}$-a.e., where $f_\w$ is as in Lemma \ref{welldefined}.
\end{corollary}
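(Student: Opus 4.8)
The plan is to deduce Corollary \ref{uk-norm-I_k-cor} directly from Lemma \ref{welldefined}, Lemma \ref{uk-basic}, and the ergodic-theoretic identity for the cubic measures that was already used in the proof of Lemma \ref{welldefined}. First I would recall from Lemma \ref{welldefined} that, with $f_\w := f$ when $\sgn(\w)=1$ and $f_\w := \bar f$ when $\sgn(\w)=-1$, one has
\[
\|f\|_{U^k(\X)}^{2^k} = \int_{X^{[k]}} \bigotimes_{\w \in \2^k} f_\w \ d\mu^{[k]}.
\]
Now I would view $\X^{[k]}$ as $\X^{[k-1]} \times \X^{[k-1]}$ via Remark \ref{iter} and use that $\mu^{[k]} = \mu^{[k-1]} \times_{\I_{k-1}} \mu^{[k-1]}$ is the relatively independent joining over $\I_{k-1}$. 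Splitting the $2^k$ coordinates of $\2^k$ into the two copies of $\2^{k-1}$ (noting that the $\sgn$ of a vertex on the second copy flips, so that $\bigotimes_{\w \in \2^k} f_\w$ factors as $(\bigotimes_{\w \in \2^{k-1}} f_\w) \otimes \overline{(\bigotimes_{\w \in \2^{k-1}} f_\w)}$ up to relabelling), the defining formula of the relative product gives
\[
\|f\|_{U^k(\X)}^{2^k} = \int_{X^{[k-1]}} \left| (\pi^{\X^{[k-1]}}_{\I_{k-1}})_* \textstyle\prod_{\w \in \2^{k-1}} f_\w \right|^2 d\mu^{[k-1]},
\]
which is exactly the computation carried out in the proof of Lemma \ref{welldefined}.

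From this identity the equivalence is immediate: the right-hand side is the squared $L^2$-norm of $(\pi^{\X^{[k-1]}}_{\I_{k-1}})_* \prod_{\w \in \2^{k-1}} f_\w$, so it vanishes if and only if that conditional expectation is zero $\mu^{[k-1]}$-a.e. (equivalently $P_{k-1}$-a.e.). Since $U^k$ is genuinely a seminorm by Lemma \ref{uk-basic}(ii), $\|f\|_{U^k(\X)} = 0$ is equivalent to $\|f\|_{U^k(\X)}^{2^k} = 0$, and we are done. I would also remark that one should be slightly careful about which coordinate block one conjugates, but since $\mu^{[k-1]}$ is symmetric under the cube symmetries (Lemma \ref{symcube}) and $\prod_{\w} f_\w$ with the alternating conjugation pattern is mapped to its complex conjugate under the reflection swapping the two halves, the two pushforwards that appear are complex conjugates of one another, so the product is genuinely $|\cdot|^2$.

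The main obstacle here is essentially bookkeeping rather than mathematics: one must check that the sign pattern $\w \mapsto \sgn(\w)$ behaves correctly under the identification $\2^k \cong \2^{k-1} \sqcup \2^{k-1}$, i.e. that appending a coordinate $w_k = +1$ preserves $\sgn$ while $w_k = -1$ flips it, so that the tensor $\bigotimes_{\w \in \2^k} f_\w$ really does restrict to $\bigotimes_{\w \in \2^{k-1}} f_\w$ on one face and its conjugate on the opposite face. Once that is in place, the result is a one-line consequence of the relative-product construction in Definition \ref{cubichk-def} and the formula in Lemma \ref{welldefined}; indeed this corollary is really just an extraction of an intermediate step already present in that lemma's proof, recorded separately for later use (e.g. in Proposition \ref{ext-type-k}).
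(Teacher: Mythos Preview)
Your proof is correct and follows essentially the same route as the paper: both derive the key identity $\|f\|_{U^k(\X)}^{2^k} = \int_{X^{[k-1]}} |(\pi^{\X^{[k-1]}}_{\I_{k-1}})_* \prod_{\w \in \2^{k-1}} f_\w|^2\, d\mu^{[k-1]}$ from the relative-product structure of $\mu^{[k]}$ and Lemma \ref{welldefined}, and then read off the equivalence. You simply spell out more of the sign-pattern and symmetry bookkeeping than the paper's one-line version does.
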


\begin{proof}  Using \eqref{def_measures} and Lemma \ref{welldefined}, we can write
$$ \|f\|^{2^k}_{U_k(\X)} = \int_{X^{[k-1]}} |(\pi^{X^{[k]}}_{\I_{k-1}})_* \prod_{\w \in \2^{k-1}} f_\w|^2\ d\mu^{[k-1]}$$
and the claim follows.
\end{proof}

The above construction is functorial:

\begin{lemma}[Functoriality]\label{functor}\cite[Lemma 4.5]{hk-cubes}  If $\Y = (Y, \B_Y, \mu_Y, \pi^X_Y)$ is a factor of $\X$, then $\Y^{[k]} = (Y^{[k]}, \B_Y^{[k]}, \mu^{[k]}_Y, (\pi^X_Y)^{[k]})$ is a factor of $\X^{[k]}$, and for every $f \in L^\infty(\Y)$ one has $\| (\pi^X_Y)^* f\|_{U^k(\X)} = \|f\|_{U^k(\Y)}.$ 
\end{lemma}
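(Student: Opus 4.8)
The plan is to establish the two assertions in turn: first that $(\pi^X_Y)^{[k]}\colon\X^{[k]}\to\Y^{[k]}$ is a factor map, and then to deduce the seminorm identity from Lemma \ref{welldefined}. Throughout write $\pi:=\pi^X_Y$ and $\pi^{[k]}:=(\pi^X_Y)^{[k]}$, the latter being the coordinatewise application of $\pi$ to the $\2^k$ coordinates; it is clearly measurable with $(\pi^{[k]})^{-1}(\B_Y^{[k]})\subseteq\B_X^{[k]}$.

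First I would verify the intertwining relation $\pi^{[k]}\circ T_g^{[k]}=S_g^{[k]}\circ\pi^{[k]}$ $\mu^{[k]}_X$-almost everywhere. This follows from $\pi\circ T_g=S_g\circ\pi$ $\mu_X$-a.e.\ together with Lemma \ref{ccs}: each coordinate projection $\X^{[k]}\to\X$ is a factor map, so the $\mu^{[k]}_X$-preimage of the $\mu_X$-null exceptional set is null for each of the $2^k$ coordinates, and hence so is their union. The substantive point is then that $(\pi^{[k]})_*\mu^{[k]}_X=\mu^{[k]}_Y$, which I would prove by induction on $k$; the case $k=0$ is precisely the defining property of the factor $\Y$.

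The key ingredient for the inductive step is the identity
\[
\E(\pi^*G\mid\I_k(\X))=\pi^*\E(G\mid\I_k(\Y))
\]
for $G\in L^\infty(\Y^{[k]})$. Assuming $(\pi^{[k]})_*\mu^{[k]}_X=\mu^{[k]}_Y$ (the inductive hypothesis), $\pi^*$ is an $L^2$-isometry from $L^2(\Y^{[k]})$ into $L^2(\X^{[k]})$ which intertwines the diagonal $G$-actions; since $G$ has a Følner sequence $(\Phi_n)$, the mean ergodic theorem gives $\E(\pi^*G\mid\I_k(\X))=\lim_n\E_{g\in\Phi_n}T_g^{[k]}\pi^*G=\pi^*\bigl(\lim_n\E_{g\in\Phi_n}S_g^{[k]}G\bigr)=\pi^*\E(G\mid\I_k(\Y))$ in $L^2$ (the right-hand side being $\I_k(\X)$-measurable since it is invariant under the diagonal action). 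To pass from $k$ to $k+1$ I would identify $\X^{[k+1]}\cong\X^{[k]}\times\X^{[k]}$ as in Remark \ref{iter} and test $(\pi^{[k+1]})_*\mu^{[k+1]}_X$ against products $G\otimes G'$ with $G,G'\in L^\infty(\Y^{[k]})$; using $\mu^{[k+1]}_X=\mu^{[k]}_X\times_{\I_k(\X)}\mu^{[k]}_X$,
\[
\int_{X^{[k+1]}}(\pi^*G)(\x)(\pi^*G')(\x')\,d\mu^{[k+1]}_X=\int_{X^{[k]}}\E(\pi^*G\mid\I_k(\X))\,\E(\pi^*G'\mid\I_k(\X))\,d\mu^{[k]}_X,
\]
and rewriting the right-hand side by the displayed identity and the inductive hypothesis yields $\int_{Y^{[k]}}\E(G\mid\I_k(\Y))\E(G'\mid\I_k(\Y))\,d\mu^{[k]}_Y=\int_{Y^{[k+1]}}(G\otimes G')\,d\mu^{[k+1]}_Y$. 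As such products generate $L^\infty(\Y^{[k+1]})$, this closes the induction and shows $\Y^{[k]}$ is a factor of $\X^{[k]}$.

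For the seminorm identity, with $f\in L^\infty(\Y)$ and $f_\w$ as in Lemma \ref{welldefined}, I would note that pullback commutes with complex conjugation, so $(\pi^*f)_\w=\pi^*f_\w$ and hence $\bigotimes_{\w\in\2^k}(\pi^*f)_\w=(\pi^{[k]})^*\bigotimes_{\w\in\2^k}f_\w$. Applying Lemma \ref{welldefined} on both $\X$ and $\Y$, together with $(\pi^{[k]})_*\mu^{[k]}_X=\mu^{[k]}_Y$, then gives
\[
\|\pi^*f\|_{U^k(\X)}^{2^k}=\int_{X^{[k]}}(\pi^{[k]})^*\Bigl(\bigotimes_{\w\in\2^k}f_\w\Bigr)\,d\mu^{[k]}_X=\int_{Y^{[k]}}\bigotimes_{\w\in\2^k}f_\w\,d\mu^{[k]}_Y=\|f\|_{U^k(\Y)}^{2^k}.
\]
I expect the genuinely delicate point to be the conditional-expectation identity $\E(\pi^*G\mid\I_k(\X))=\pi^*\E(G\mid\I_k(\Y))$ — that conditioning onto the invariant factor commutes with pullback along a factor map — while the remaining steps are bookkeeping around the inductive construction of the cubic measures.
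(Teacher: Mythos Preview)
The paper does not supply its own proof of this lemma; it simply cites \cite[Lemma 4.5]{hk-cubes}. Your argument is correct and is essentially the standard one: induct on $k$ to show $(\pi^{[k]})_*\mu^{[k]}_X=\mu^{[k]}_Y$, using that conditional expectation onto the diagonal-invariant factor is the $L^2$-limit of ergodic averages and hence commutes with pullback along an equivariant map; then read off the seminorm identity from Lemma \ref{welldefined}. Your identification of the conditional-expectation identity as the one nontrivial step is apt. One cosmetic point: you use $G$ both for the acting group and for a test function in $L^\infty(\Y^{[k]})$, which is momentarily confusing; renaming the function would help.
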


If $\X$ is ergodic with respect to $G$, it is certainly not the case in general that $\X^{[k]}$ is ergodic with respect to the diagonal action $\diag(G^{[k]})$.  Nevertheless there are some other important ergodicity-preserving properties of the above construction:

\begin{lemma}[Ergodicity-preserving properties]\label{erglem}\cite{hk-cubes}  Let $\X$ be an ergodic $G$-system, and let $k \geq 1$.
\begin{itemize}
\item[(i)] $\X^{[k]}$ is ergodic with respect to the action of $\partial^{[k]} G$.
\item[(ii)] The measure $P_k$ is ergodic with respect to the action of $\partial^{[k]}_+ G$.
\item[(iii)] For any measure-preserving transformation $u: X \to X$ that commutes with the $G$-action, and any side $\alpha$, the side transformation $u^{[k]}_\alpha$ preserves $\mu^{[k]}$.
\item[(iv)] More generally, if $u: X \to X$ is a measure-preserving transformation that commutes with the $G$-action and leaves $\Zcal_{<l}(\X)$ invariant for some $1 \leq l \leq k$, and $\alpha$ is a $k-l$-dimensional face of $\2^k$, then $u^{[k]}_\alpha$ preserves $\mu^{[k]}$.
\end{itemize}
\end{lemma}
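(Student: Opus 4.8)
The plan is to follow \cite{hk-cubes}, proving the four parts in order: (i) and (ii) by a single simultaneous induction on $k$, and (iii), (iv) by further inductions that exploit the self‑joining structure $\X^{[k+1]}=\X^{[k]}\times_{\I_k}\X^{[k]}$ from Definition \ref{cubichk-def} and Remark \ref{iter}.

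For (i) and (ii): the base case $k=1$ is immediate, since $\mu^{[1]}=\mu_X\times\mu_X$, $\partial^{[1]}G=G\times G$, and ergodicity of $\X$ forces $(G\times G)$-invariant functions to be constant. In the inductive step I would first derive (ii) at level $k$ from (i) at level $k$: by Remark \ref{side-gen}, $\partial^{[k]}G$ is generated by $\partial^{[k]}_+G$ together with the central subgroup $\diag(G^{[k]})$ (central because $G$ is abelian); since $P_k$ is by construction the ergodic decomposition of $\mu^{[k]}$ under $\diag(G^{[k]})$ and $\mu^{[k]}$ is $\partial^{[k]}G$-ergodic, any $\partial^{[k]}_+G$-invariant set of $\diag(G^{[k]})$-ergodic components integrates to a $\partial^{[k]}G$-invariant subset of $\X^{[k]}$ and is therefore trivial. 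Then I would deduce (i) at level $k+1$ from (ii) at level $k$: a $\partial^{[k+1]}G$-invariant function $F$ on $\X^{[k+1]}=\X^{[k]}\times_{\I_k}\X^{[k]}$ is, by invariance under the two "top" side groups $G^{[k+1]}_{\beta_{k+1}^{\pm}}$ (which act as $\diag(G^{[k]})$ on one factor and trivially on the other), measurable with respect to the common $\I_k$, hence a function $\Psi$ of the shared invariant coordinate; invariance under the remaining "doubled" side groups, whose action on $\I_k$ factors through $\partial^{[k]}_+G$ alone since $\diag(G^{[k]})$ acts trivially on the invariant factor, then forces $\Psi$ to be $\partial^{[k]}_+G$-invariant on $(X^{[k]},\I_k,P_k)$, hence constant by (ii) at level $k$.

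For (iii): induct on $k$, the base $k=1$ being clear. For the step, given a side $\beta_j^{\pm}$ of $\2^k$, use the coordinate-permutation symmetry of $\mu^{[k]}$ (Lemma \ref{symcube}) to reduce to $j\neq k$; then, splitting $\X^{[k]}=\X^{[k-1]}\times_{\I_{k-1}}\X^{[k-1]}$ on the last coordinate, the transformation $u^{[k]}_{\beta_j^{\pm}}$ becomes $v\times v$ with $v:=u^{[k-1]}_{\beta_j^{\pm}}$. By the inductive hypothesis $v$ preserves $\mu^{[k-1]}$, and since $u$ commutes with the $G$-action, $v$ commutes with $\diag(G^{[k-1]})$ and so descends to a transformation $\bar v$ of the invariant factor; a short computation using $P_{k-1}=(\pi_{\I_{k-1}})_*\mu^{[k-1]}$ shows $\bar v_*P_{k-1}=P_{k-1}$. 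Hence $v\times v$ sends $\mu^{[k-1]}\times_{\I_{k-1}}\mu^{[k-1]}$ to the relatively independent joining over $\I_{k-1}$ with base $\bar v_*P_{k-1}=P_{k-1}$, i.e. back to $\mu^{[k]}$.

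For (iv), which I expect to be the main obstacle: I would run a similar argument, inducting on $l$ (with (iii) as the case $l=1$, valid because $\Zcal_{<1}(\X)$ is trivial for ergodic $\X$), but now one must track how the hypothesis that $u$ acts as the identity on $\Zcal_{<l}(\X)$ compensates for the smaller face dimension $k-l$. After reducing by Lemma \ref{symcube} to the case where $\alpha$ is an intersection of $l$ of the "last" sides, I would peel these coordinates off one at a time as in (iii): when the face becomes a full side in a splitting step the "doubling" argument applies verbatim, but when it is restricted to a proper sub-face inside a single copy the other copy carries the identity, so one needs the surviving transformation to fix the relevant invariant $\sigma$-algebra $\I_j$ — and this is where the structure theory must be invoked, namely that each $\I_j(\X)$ is controlled by a characteristic factor of $\X$ of bounded order, on which $u$ acts trivially precisely because $u$ fixes $\Zcal_{<l}(\X)$ with $l$ calibrated to the codimension of $\alpha$. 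The delicate point is the exact bookkeeping relating $l$, the face dimension, and the order of the factor governing each $\I_j$ (equivalently, that the appropriate iterated derivative of the extension cocycle $\Zcal_{<l}(\X)\times_\rho V\to\Zcal_{<l}(\X)$ vanishes on the relevant cube); for this I would follow the corresponding lemma of \cite{hk-cubes} closely rather than reprove it from scratch.
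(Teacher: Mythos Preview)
Your proposal is correct and follows exactly the approach of \cite{hk-cubes}, which is all the paper does here (it simply cites \cite[Corollary 3.5, Corollary 3.6, Lemma 5.5]{hk-cubes} for the four parts without reproducing any argument). Your sketches of (i)--(iii) are accurate reconstructions of those proofs, and for (iv) you correctly identify the extra structural input---that the invariant factors $\I_j$ are controlled by characteristic factors of bounded order (cf.\ Lemma \ref{conditional-product})---which is precisely what \cite{hk-cubes} uses to make the face-dimension bookkeeping work.
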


\begin{proof} For (i), see \cite[Corollary 3.5]{hk-cubes}; for (ii), see \cite[Corollary 3.6]{hk-cubes}.  For (iii) and (iv), see \cite[Lemma 5.5]{hk-cubes}.  
\end{proof}

\subsection{Dual functions}

A ``soft'' way to describe the universal characteristic factors $\Zcal_{<k}(\X)$ is via the convenient device of \emph{dual functions}.

\begin{definition}[Dual functions]\label{dual_functions}
Let $\X = (X,\B_X,\mu_X,(T_g)_{g \in G})$ be a $G$-system, and let  $\{\Phi^i_n\}_{n=1}^{\infty}$,  $i=1,\ldots, k$ be  $k$  F\o lner sequences  in $G$.  
Define the nonlinear operators $\D_k:L^{\infty }(\X) \to L^{\infty }(\X)$ inductively by setting $\D_1 f  := 1$ and $\D_k f :=\lim_{n \to  \infty }\E_{h \in {\Phi^k_n} } T_h f \cdot \D_{k-1}( \overline{\mder_h f} ) $ for $k>1$, where the limit is in $L^2(\X)$.   
\end{definition}

\begin{remark}
The limits above exist, as in Lemma \ref{welldefined}, by repeated applications of the ergodic theorem.  Indeed, we easily verify that
\begin{equation}\label{dk}
\D_k f = (\pi^{\X^{[k]}}_{\X})_*  (\bigotimes_{\w \in \2^k \backslash \{-\1\}} \overline{f_{\w}} )
\end{equation}
where $-\1 := (-1,\ldots,-1)$ and the factor map is given by $(x_\w)_{\w \in \2^k} \mapsto x_{-\1}$ (i.e. the pushforward map $\partial(\{-\1\})_*$).  As a consequence we have
\begin{equation}\label{fdk}
\| f \|_{U^k(\X)}^{2^k}= \int_\X f \overline{\D_k f}\ d\mu_X
\end{equation}
and similarly (by repeated applications of the Cauchy-Schwarz inequality) that
\begin{equation}\label{fdkf}
\int_X f_1 \overline{\D_k f_2}\ d\mu_X \leq \|f_1\|_{U_k(\X)} \cdot \| f_2\|_{U_k(\X)}^{2^{k-1}}
\end{equation}
\end{remark}

\begin{example} When $k=2$, we have
$$
\D_2 f:= \lim_{m \to  \infty }\lim_{n \to  \infty } \E_{h_2 \in {\Phi^2_m} } \E_{h_1 \in {\Phi^1_n} }
(T_{h_2} f) (T_{h_1} f) \overline{T_{h_1 + h_2} f}.
$$
The limit above is a repeated limit, but \emph{a posteriori}, using Theorem \ref{main-thm}, one can show that the double (simultaneous) limit exists as well, and both limits coincide, by modifying the proof of \cite[Theorem 1.2]{hk-cubes}, and similarly for higher values of $k$.  We omit the details.
\end{example}

\begin{example} If $G$ is a finite abelian group, and $\X$ is $G$ with the translation action, then
for any $f: G \to \C$, the dual function $\D_k f: G \to \C$ is given by the formula
$$ \D_k f(x) = \E_{h_1,\ldots,h_k \in G} \prod_{(w_1,\ldots,w_k) \in \{0,1\}^k \backslash \{0\}^k} {\mathcal C}^{w_1+\ldots+w_k-1} f(x+w_1 h_1 +\ldots + w_k h_k)$$
where ${\mathcal C}: z \mapsto \overline{z}$ is the complex conjugation operator.  Dual functions in this setting play an important role in the finitary theory of arithmetic progressions and similar patterns; see \cite{gt-primes}.
\end{example}

\begin{remark} From Lemma \ref{functor} we have the functoriality property
$\D_k((\pi^X_Y)^* f) = (\pi^X_Y)^* \D_k f$
whenever $\Y$ is a factor of $\X$ and $f \in L^\infty(\Y)$.
\end{remark}

\begin{definition}[Universal characteristic factor]\label{ucf-soft-def}\cite[Definition 4.1]{hk-cubes}  We let $\Zcal_{<k} = \Zcal_{<k}(\X)$ be the sub-$\sigma$-algebra of $\B_X$ consisting of all sets $B \in \B_X$ such that $(\pi^{\X^{[k]}}_\X)^{-1}(B)$ is $\mu^{[k]}$-a.e. equivalent to a set $A$ in $\X^{[k]}$ which does not depend on the first coordinate $x_{-\1}$ (or equivalently, the set is invariant under the face transformation $g^{[k]}_{\{-\1\}}$ for all $g \in G$).
\end{definition}

\begin{example} Let $\X$ be an ergodic $G$-system. Then $\Zcal_{<1}(\X)$ is trivial.  One can use classical arguments to show that $\Zcal_{<2}(\X)$ is the Kronecker factor (i.e. the factor generated by the eigenfunctions of $\X$); see the discussion just before \cite[Lemma 4.2]{hk-cubes}.
\end{example}

\begin{remark}\label{Dualin}
It is easy to see that $\Zcal_{<k}(\X)$ is an invariant sub-$\sigma$-algebra and therefore a factor.  As the function on the right-hand side of \eqref{dk} does not depend on the first coordinate, we see that the dual function $\D_k f$ lies in $\Zcal_{<k}(\X)$ for all $f \in L^\infty(\X)$.  
\end{remark}

\begin{lemma}[$\Zcal_{<k}$ is universal]\label{softfactor}\cite[Lemma 4.3]{hk-cubes}  For any $f \in L^\infty(\X)$, we have $\|f\|_{U^k(\X)} = 0$ if and only if $(\pi^X_{Z_{<k}(X)})_* f = 0$.  
\end{lemma}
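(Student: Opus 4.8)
The plan is to obtain both implications of the equivalence from the dual-function machinery already set up; throughout I write $\E(f\mid\Zcal_{<k}(\X))$ for the conditional expectation, so that $(\pi^X_{Z_{<k}(X)})_* f=0$ is just $\E(f\mid\Zcal_{<k}(\X))=0$. The main ingredients will be the identity $\|f\|_{U^k(\X)}^{2^k}=\int_X f\,\overline{\D_k f}\,d\mu_X$ (equation \eqref{fdk}), the Cauchy-Schwarz-Gowers inequality \eqref{csg}, the observation that the first-coordinate projection $\pi:=\pi^{\X^{[k]}}_\X=\partial(\{-\1\})_*$ is a factor map from $\X^{[k]}$ onto $\X=\X^{[0]}$ (Lemma \ref{ccs} with $l=0$), and the fact that $\D_k f$ is a bounded $\Zcal_{<k}(\X)$-measurable function (Remark \ref{Dualin}).

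For the implication ``$\E(f\mid\Zcal_{<k}(\X))=0\Rightarrow\|f\|_{U^k(\X)}=0$'' I would argue in one line: by \eqref{fdk}, $\|f\|_{U^k(\X)}^{2^k}=\langle f,\D_k f\rangle_{L^2(\X)}$, and since $\D_k f$ is $\Zcal_{<k}(\X)$-measurable this inner product equals $\langle\E(f\mid\Zcal_{<k}(\X)),\D_k f\rangle$, which vanishes by hypothesis.

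For the converse I would show that $\langle f,q\rangle_{L^2(\X)}=0$ for every bounded $\Zcal_{<k}(\X)$-measurable function $q$, which is enough since such $q$ are dense in $L^2(\Zcal_{<k}(\X))$. Because $\pi$ is a factor map, $\pi^*$ is an isometric embedding $L^2(\X)\hookrightarrow L^2(\X^{[k]})$, so $\langle f,q\rangle_\X=\int_{X^{[k]}}(\pi^* f)\,\overline{\pi^* q}\,d\mu^{[k]}$, where $(\pi^* f)(\x)=f(x_{-\1})$. By Definition \ref{ucf-soft-def}, $\pi^* q$ agrees $\mu^{[k]}$-a.e.\ with a function $q^\sharp$ independent of the coordinate $x_{-\1}$, hence measurable with respect to the $\sigma$-algebra generated by the coordinates $x_\w$, $\w\in\2^k\setminus\{-\1\}$. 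I would approximate $q^\sharp$ in $L^2(\mu^{[k]})$ by finite linear combinations of elementary tensors $\prod_{\w\neq-\1}q_\w(x_\w)$ with $q_\w\in L^\infty(\X)$, and use boundedness of $\pi^* f$ to reduce to showing $\int_{X^{[k]}}(\pi^* f)\prod_{\w\neq-\1}\overline{q_\w(x_\w)}\,d\mu^{[k]}=0$ for each such tensor. But this integral equals $\int_{X^{[k]}}\bigotimes_\w g_\w\,d\mu^{[k]}$ with $g_{-\1}=f$ and $g_\w=\overline{q_\w}$ for $\w\neq-\1$, so by \eqref{csg} its modulus is at most $\|f\|_{U^k(\X)}\prod_{\w\neq-\1}\|q_\w\|_{U^k(\X)}$ (using the trivial fact $\|\overline{q_\w}\|_{U^k(\X)}=\|q_\w\|_{U^k(\X)}$), and this is zero since $\|f\|_{U^k(\X)}=0$. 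Letting the approximations converge gives $\langle f,q\rangle_\X=0$, completing the converse.

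The only step requiring genuine care is this converse: one must translate the ``independence of the first coordinate'' description of $\Zcal_{<k}(\X)$ in Definition \ref{ucf-soft-def} into the $L^2$-density statement that $\pi^* q$ lies in the closed linear span of elementary tensors over the coordinates $\w\neq-\1$, and then apply the Cauchy-Schwarz-Gowers inequality with the distinguished coordinate $-\1$ carrying $f$ and the remaining coordinates carrying the tensor factors. Everything else is routine, and the argument parallels \cite[Lemma 4.3]{hk-cubes}.
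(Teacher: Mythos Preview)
Your argument is correct and is essentially the proof of \cite[Lemma 4.3]{hk-cubes} that the paper cites without reproducing; the paper gives no independent proof of this lemma, so there is nothing further to compare. The only minor remark is that in the converse direction you do not need density of bounded $\Zcal_{<k}$-measurable functions in $L^2(\Zcal_{<k})$: since $f\in L^\infty(\X)$, the function $q=\E(f\mid\Zcal_{<k})$ is itself bounded, so it suffices to test against this single $q$.
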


\begin{remark} From this lemma and Remark \ref{Dualin} it is not hard to show that $\Zcal_{<k}(\X)$ is in fact generated by the dual functions $\D_k f$ for $f \in L^\infty(X)$, although we will not use this fact here.
\end{remark}

Note that Lemma \ref{softfactor} immediately implies Proposition \ref{ucf-prop} in the introduction.  From this lemma and \eqref{mono} we also have 
\begin{equation}\label{mono-2}
\Zcal_{<j}(\Zcal_{<k}(\X))=\Zcal_{<j}(\X)
\end{equation}
for $0 < j \leq k$ (cf. \cite[Corollary 4.4]{hk-cubes}). 

From Lemma \ref{softfactor} and Lemma \ref{functor} one can show that universal characteristic factors are functorial:

\begin{lemma}[Functoriality]\label{functor2}\cite[Proposition 4.6]{hk-cubes}  If $\Y = (Y, \B_Y, \mu_Y, \pi^X_Y)$ is a factor of $\X$, then $\Zcal_{<k}(\Y)$ is a factor of $\Zcal_{<k}(\X)$ for any $k \geq 1$.  In fact, for any $f \in L^\infty(\Y)$, $(\pi^X_Y)^* f$ is $\Zcal_{<k}(\X)$-measurable if and only if $f$ is $\Zcal_{<k}(\Y)$-measurable.
\end{lemma}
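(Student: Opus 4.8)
The plan is to establish the ``in fact'' clause, since the assertion that $\Zcal_{<k}(\Y)$ is a factor of $\Zcal_{<k}(\X)$ is, on unwinding Definition \ref{factor-def}, exactly the inclusion $(\pi^X_Y)^{-1}(\B_{\Zcal_{<k}(\Y)}) \subseteq \B_{\Zcal_{<k}(\X)}$ of invariant $\sigma$-algebras, with the pushforward-of-measure and equivariance conditions inherited from the factor map $\pi^X_Y$ itself; and this inclusion is the set-theoretic form of the implication ``$f$ is $\Zcal_{<k}(\Y)$-measurable $\Rightarrow$ $(\pi^X_Y)^* f$ is $\Zcal_{<k}(\X)$-measurable''. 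So I would prove that implication first and then the reverse one.

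For the implication ``$f$ is $\Zcal_{<k}(\Y)$-measurable $\Rightarrow$ $(\pi^X_Y)^* f$ is $\Zcal_{<k}(\X)$-measurable'' I would argue directly from the $\sigma$-algebra description of the universal characteristic factor in Definition \ref{ucf-soft-def}, not from the seminorm. Write $\pi := \pi^X_Y$, and recall from Lemma \ref{functor} that $\pi^{[k]} := (\pi^X_Y)^{[k]}\colon \X^{[k]} \to \Y^{[k]}$ is a factor map, so in particular $(\pi^{[k]})_* \mu^{[k]}_\X = \mu^{[k]}_\Y$. Moreover $\pi^{[k]}$ acts coordinatewise, $\pi^{[k]}((x_\w)_{\w \in \2^k}) = (\pi(x_\w))_{\w \in \2^k}$, so it intertwines the first-coordinate projections $\partial(\{-\1\})_*$ on $\X^{[k]}$ and $\Y^{[k]}$ with $\pi$, and it commutes with the face transformations $g^{[k]}_{\{-\1\}}$ for every $g \in G$. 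Now let $B' \in \B_{\Zcal_{<k}(\Y)}$; by Definition \ref{ucf-soft-def} the cylinder set $(\partial(\{-\1\})_*)^{-1}(B') \subseteq Y^{[k]}$ agrees $\mu^{[k]}_\Y$-a.e.\ with a $g^{[k]}_{\{-\1\}}$-invariant set $A'$. Pulling back through $\pi^{[k]}$, and using $(\pi^{[k]})_* \mu^{[k]}_\X = \mu^{[k]}_\Y$ to transport the null discrepancy, the cylinder set $(\partial(\{-\1\})_*)^{-1}(\pi^{-1}(B'))$ agrees $\mu^{[k]}_\X$-a.e.\ with $(\pi^{[k]})^{-1}(A')$, which is $g^{[k]}_{\{-\1\}}$-invariant because $\pi^{[k]}$ is coordinatewise. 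Hence $\pi^{-1}(B') \in \B_{\Zcal_{<k}(\X)}$, which yields both this implication and the factor statement.

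For the reverse implication I would combine Lemma \ref{softfactor} with Lemma \ref{functor} and the implication just proved. Suppose $f \in L^\infty(\Y)$ and $(\pi^X_Y)^* f$ is $\Zcal_{<k}(\X)$-measurable. Decompose $f = \E(f|\Zcal_{<k}(\Y)) + f_2$, so that $\E(f_2|\Zcal_{<k}(\Y)) = 0$; by Lemma \ref{softfactor}, $\|f_2\|_{U^k(\Y)} = 0$, hence $\|\pi^* f_2\|_{U^k(\X)} = 0$ by Lemma \ref{functor}, hence $\pi^* f_2 \perp L^2(\Zcal_{<k}(\X))$ by another application of Lemma \ref{softfactor}. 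On the other hand $\pi^* f_2 = \pi^* f - \pi^* \E(f|\Zcal_{<k}(\Y))$ lies in $L^2(\Zcal_{<k}(\X))$: its first summand by hypothesis, its second by the implication already established. Therefore $\pi^* f_2 = 0$; since $\pi^*$ is an $L^2$-isometry (as $(\pi^X_Y)_* \mu_X = \mu_Y$), we conclude $f_2 = 0$, i.e.\ $f = \E(f|\Zcal_{<k}(\Y))$ is $\Zcal_{<k}(\Y)$-measurable.

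I expect the only genuinely delicate point to be the bookkeeping in the forward implication: one must confirm that the coordinatewise factor map $\pi^{[k]}$ really does preserve the distinguished structure of $\X^{[k]}$ defining $\Zcal_{<k}$ — namely $g^{[k]}_{\{-\1\}}$-invariance of cylinder sets over the first coordinate — and that the modulo-null equalities of Definition \ref{ucf-soft-def} pull back faithfully along $\pi^{[k]}$, which is exactly where one invokes $(\pi^{[k]})_* \mu^{[k]}_\X = \mu^{[k]}_\Y$ from Lemma \ref{functor}. The remaining steps are formal.
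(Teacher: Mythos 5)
Your proof is correct, and it takes the route the paper indicates: the paper gives no proof of its own here, citing \cite[Proposition 4.6]{hk-cubes} and remarking only that the claim follows from Lemmas \ref{softfactor} and \ref{functor}, and your argument uses exactly those two lemmas for the reverse implication while reading off the forward implication directly from Definition \ref{ucf-soft-def} via the coordinatewise factor map $\pi^{[k]}$ supplied by Lemma \ref{functor}. This matches the Host--Kra argument in spirit and detail.
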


If $\phi \in L^\infty(\X)$ is a phase polynomial of degree $<k$, an easy induction on $k$ using Definition \ref{phase-def} and Definition \ref{dual_functions} shows that $\phi = \D_k \phi$.  As a consequence of this and Remark \ref{Dualin} we obtain the easy direction of Theorem \ref{main-thm}, valid for any discrete abelian group $G$:

\begin{lemma}\label{easy-thm}  Every phase polynomial of degree at most $<k$ is $\Zcal_{<k}$-measurable, or in other words $\Phase_{<k}(\X) \subset L^2(\Zcal_{<k}(\X))$ (or $\Abr_{<k}(\X) \leq \Zcal_{<k}(\X)$).
\end{lemma}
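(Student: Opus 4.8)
The plan is to follow the route indicated in the paragraph preceding the statement: show that a phase polynomial of degree $<k$ is a fixed point of the $k$-th dual operator $\D_k$, and then invoke Remark~\ref{Dualin}, which records that dual functions always lie in $\Zcal_{<k}(\X)$. So the substantive step is the claim that $\D_k\phi=\phi$ for every $\phi\in\Phase_{<k}(\X)$.

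I would establish this by induction on $k$, working directly from Definition~\ref{dual_functions}. For $k=1$ a function $\phi\in\Phase_{<1}(\X)$ is $G$-invariant (and $S^1$-valued), so $\D_1\phi=\lim_{n}\E_{h\in\Phi^1_n}T_h\phi=\phi$; equivalently, the $k=1$ instance of the lemma is trivial since $\Zcal_{<1}(\X)$ is the invariant $\sigma$-algebra (Example~\ref{smallk}), and one may start the induction at $k=2$. For the inductive step, let $k\geq 2$ and $\phi\in\Phase_{<k}(\X)$. By Lemma~\ref{trivpoly-lem}(ii), for each $h\in G$ we have $\mder_h\phi\in\Phase_{<k-1}(\X)$, and since $\Phase_{<k-1}(\X)$ is a group under pointwise multiplication (same lemma) also $\overline{\mder_h\phi}\in\Phase_{<k-1}(\X)$; the inductive hypothesis then gives $\D_{k-1}(\overline{\mder_h\phi})=\overline{\mder_h\phi}$. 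The only computation is the pointwise identity
$$ T_h\phi\cdot\D_{k-1}(\overline{\mder_h\phi})=T_h\phi\cdot\overline{\mder_h\phi}=T_h\phi\cdot\phi\cdot\overline{T_h\phi}=\phi , $$
where we used $\mder_h\phi=\overline{\phi}\,T_h\phi$ together with $|\phi|=|T_h\phi|=1$. Substituting into $\D_k\phi=\lim_{n}\E_{h\in\Phi^k_n}T_h\phi\cdot\D_{k-1}(\overline{\mder_h\phi})$ gives $\D_k\phi=\lim_{n}\E_{h\in\Phi^k_n}\phi=\phi$, closing the induction.

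Given $\phi=\D_k\phi$, Remark~\ref{Dualin} yields $\phi=\D_k\phi\in L^2(\Zcal_{<k}(\X))$, which is the assertion $\Phase_{<k}(\X)\subset L^2(\Zcal_{<k}(\X))$. The equivalent formulation $\Abr_{<k}(\X)\leq\Zcal_{<k}(\X)$ is then immediate: by Definition~\ref{phase-def} and Definition~\ref{factor-def}, $\Abr_{<k}(\X)$ is the minimal factor of $\X$ with respect to which every member of $\Phase_{<k}(\X)$ is measurable, and we have just seen that $\Zcal_{<k}(\X)$ — which is itself a factor (Proposition~\ref{ucf-prop}) — has this property. I do not anticipate any real obstacle here (this is ``the easy direction''); the only points requiring a touch of care are the use of unimodularity of $\phi$ to telescope $T_h\phi\cdot\overline{\mder_h\phi}=\phi$, and the trivial base case of the induction.
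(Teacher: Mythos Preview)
Your proposal is correct and follows exactly the route the paper indicates in the paragraph preceding the lemma: prove $\phi=\D_k\phi$ by induction on $k$ from Definitions~\ref{phase-def} and~\ref{dual_functions}, then invoke Remark~\ref{Dualin}. One cosmetic point: Definition~\ref{dual_functions} literally sets $\D_1 f:=1$, not $\lim_n\E_{h\in\Phi^1_n}T_h f$ as you wrote, but this appears to be a typo in the paper (it is inconsistent with the $k=2$ example and with \eqref{dk}), and your version is the intended one---so your base case is fine.
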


Suppose that $f_\w$ is a function in $L^\infty(\X)$ for each $\w \in \2^k$.  From Definition \ref{ucf-soft-def} and Lemma \ref{symcube} we see that the quantity $\int_{X^{[k]}}  \bigotimes_{\w \in \2^k} f_{\w}  \ d \mu^{[k]}$ does not change if we replace one of the $f_\w$ by $\E(f_\w|\Zcal_{<k})$.  In particular we have
$$\int_{X^{[k]}}  \bigotimes_{\w \in \2^k} f_{\w}  \ d \mu^{[k]} = \int_{X^{[k]}}  \bigotimes_{\w \in \2^k} \E(f_{\w}|\Zcal_{<k})  \ d \mu^{[k]}.$$
Thus for instance $\D_k f = \D_k \E(f|\Zcal_{<k})$ for all $f \in L^\infty(\X)$ (and indeed $\Zcal_{<k}$ can be characterized as the minimal factor with this property).  Another corollary of the above formula is

\begin{lemma}\label{conditional-product}\cite[Proposition 4.7(1)]{hk-cubes}
The measure $\mu^{[k]}$ is a conditional relative product over $\Zcal_{<k}^{[k]}(\X)$.  In other words, one has a representation of the form
$ \mu^{[k]} = \int_{\Zcal_{<k}^{[k]}(\X)} (\bigotimes_{\w \in \2^k} \nu_{z,\w})\ d\sigma(z)$,
where $\sigma$ is the restriction of $\mu^{[k]}$ to $\Zcal_{<k}^{[k]}$, and for each $z \in \Zcal_{<k}^{[k]}(\X)$ and $\w \in \2^k$, $\nu_{z,\w}$ is a probability measure on $X$ which depends measurably on $z$.
\end{lemma}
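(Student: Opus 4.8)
The plan is to disintegrate $\mu^{[k]}$ over the factor $\Zcal_{<k}^{[k]}(\X)$ (which is a factor of $\X^{[k]}$ by Lemma~\ref{functor}, since $\Zcal_{<k}(\X)$ is a factor of $\X$) and then to recognise the conditional measures as tensor products of conditional measures of $\mu_X$ over $\Zcal_{<k}(\X)$. Concretely, write $\Y := \Zcal_{<k}(\X)$, let $\pi := \pi^X_{Z_{<k}(\X)}$ be the factor map, and fix a disintegration $\mu_X=\int_Y \mu_{X,y}\,d\mu_Y(y)$ of $\mu_X$ over $\Y$ (available by the standing separability/regularity hypotheses). For $z=(z_\w)_{\w\in\2^k}\in Y^{[k]}$ I would set $\nu_{z,\w}:=\mu_{X,z_\w}$; these depend measurably on $z$, so $z\mapsto\bigotimes_{\w\in\2^k}\nu_{z,\w}$ is a measurable assignment of probability measures on $X^{[k]}$, and one forms $\tilde\mu:=\int_{Y^{[k]}}\bigl(\bigotimes_{\w\in\2^k}\nu_{z,\w}\bigr)\,d\sigma(z)$, where by Lemma~\ref{functor} the restriction $\sigma$ of $\mu^{[k]}$ to $\Zcal_{<k}^{[k]}(\X)$ is precisely the cubic measure $(\mu_Y)^{[k]}$. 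The lemma follows once one shows $\tilde\mu=\mu^{[k]}$; the pointwise assertion that $\bigotimes_\w\nu_{z,\w}$ is the conditional measure of $\mu^{[k]}$ over $\Zcal_{<k}^{[k]}$ at $z$ then comes for free from uniqueness of disintegrations, since $\bigotimes_\w\nu_{z,\w}$ is carried by the fibre $\prod_\w\pi^{-1}(z_\w)$ of $(\pi)^{[k]}$ over $z$.

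Since the indicator tensor products $\bigotimes_\w\1_{A_\w}$, $A_\w\in\B_X$, form a $\pi$-system generating $\B^{[k]}$, it suffices to verify $\int_{X^{[k]}}\bigotimes_\w f_\w\,d\tilde\mu=\int_{X^{[k]}}\bigotimes_\w f_\w\,d\mu^{[k]}$ for all $f_\w\in L^\infty(\X)$. Unwinding the definitions of $\tilde\mu$ and of $\nu_{z,\w}$ by Fubini, and using the defining property $\int_X f_\w\,d\mu_{X,z_\w}=\E(f_\w|\Y)(z_\w)$ of the disintegration, gives
$$ \int_{X^{[k]}}\bigotimes_{\w}f_\w\,d\tilde\mu=\int_{Y^{[k]}}\prod_{\w\in\2^k}\Bigl(\int_X f_\w\,d\mu_{X,z_\w}\Bigr)d(\mu_Y)^{[k]}(z)=\int_{X^{[k]}}\bigotimes_{\w\in\2^k}\E(f_\w|\Zcal_{<k})\,d\mu^{[k]}, $$
the last equality because $\bigotimes_\w\E(f_\w|\Zcal_{<k})$ is $\Zcal_{<k}^{[k]}(\X)$-measurable on $X^{[k]}$, so its $\mu^{[k]}$-integral agrees with its integral against the pushforward $\sigma=(\mu_Y)^{[k]}$. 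It then only remains to pass from $\bigotimes_\w\E(f_\w|\Zcal_{<k})$ back to $\bigotimes_\w f_\w$ inside $\int_{X^{[k]}}\cdot\,d\mu^{[k]}$: this is exactly the identity recorded in the paragraph preceding the lemma, namely that replacing any single $f_\w$ by $\E(f_\w|\Zcal_{<k})$ leaves $\int_{X^{[k]}}\bigotimes_\w f_\w\,d\mu^{[k]}$ unchanged (one applies a symmetry of the cube from Lemma~\ref{symcube} to move $\w$ to the first coordinate $-\1$, then invokes Definition~\ref{ucf-soft-def}). Iterating over the $2^k$ vertices closes the argument.

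The only step that is not pure bookkeeping is this last one — the invariance of the tensor-product integral under coordinatewise conditioning onto $\Zcal_{<k}$. Fortunately it has already been established just above the lemma (as a consequence of Lemma~\ref{symcube} and Definition~\ref{ucf-soft-def}), so the proof reduces to a short assembly: disintegrate over $\Zcal_{<k}^{[k]}(\X)$, reduce to tensor test functions via the $\pi$-system uniqueness lemma for measures, push the $\Zcal_{<k}^{[k]}$-measurable integrand down to $\sigma$, and apply the coordinatewise conditioning identity. The only care points are the measurability of $z\mapsto\nu_{z,\w}$ and the existence of the disintegration over $\Zcal_{<k}^{[k]}(\X)$, both of which are guaranteed by the separability assumptions in force throughout the paper.
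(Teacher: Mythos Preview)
Your proposal is correct and is exactly the approach the paper takes: the lemma is stated as an immediate corollary of the coordinatewise conditioning identity $\int_{X^{[k]}}\bigotimes_\w f_\w\,d\mu^{[k]}=\int_{X^{[k]}}\bigotimes_\w\E(f_\w|\Zcal_{<k})\,d\mu^{[k]}$ recorded just above it, and you have simply spelled out the standard disintegration-and-$\pi$-system verification that converts that identity into the claimed product representation. There is nothing to add.
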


\section{Abelian cohomology}\label{abcom}

In this section we collect some basic facts about abelian cohomology (as defined in Section \ref{abcom0}) that we will need in the paper. Much of this machinery is essentially from \cite{hk-cubes}, but for the convenience of the reader (and given that we are generalizing from $\Z$-actions to more general countable abelian  actions) we present the details here.  

The reader may wish to review the definitiosn in Definition \ref{cohom} before proceeding with the rest of this section.

We begin with the following trivial but useful lemma:

\begin{lemma}[Cocycles and pullbacks]\label{cocycle}  Let $G$ be a locally compact group, let $\X$ be a $G$-system, and let $\Y$ be a factor of $\X$ with factor map $\pi$.  Let $f$ be a $(G,\Y,U)$-function for some abelian group $U$.  Then $f$ is a $(G,\Y,U)$-cocycle if and only if $\pi^* f$ is a $(G,\X,U)$-cocycle.
\end{lemma}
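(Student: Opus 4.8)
The statement is an ``if and only if'' between $f$ being a $(G,\Y,U)$-cocycle and $\pi^* f$ being a $(G,\X,U)$-cocycle. The forward direction is essentially immediate: if $f$ satisfies the cocycle identity $f(g+g',y) = f(g, S_{g'} y) f(g', y)$ for all $g,g' \in G$ and $\mu_Y$-a.e.\ $y$, then composing with the factor map $\pi$ and using the intertwining relation $\pi \circ T_g = S_g \circ \pi$ ($\mu_X$-a.e.) gives $\pi^* f(g+g', x) = f(g+g', \pi x) = f(g, S_{g'} \pi x) f(g', \pi x) = f(g, \pi(T_{g'} x)) f(g', \pi x) = \pi^* f(g, T_{g'} x) \pi^* f(g', x)$ for $\mu_X$-a.e.\ $x$; since $(\pi)_* \mu_X = \mu_Y$, this pulls back the a.e.\ statement correctly for each fixed pair $g,g'$. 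So $\pi^* f \in Z^1(G,\X,U)$.

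For the converse, suppose $\pi^* f$ is a $(G,\X,U)$-cocycle. For each fixed pair $(g,g') \in G \times G$, the cocycle identity for $\pi^* f$ asserts that the $(\X,U)$-function $x \mapsto \pi^* f(g+g', x) \cdot \pi^* f(g, T_{g'} x)^{-1} \cdot \pi^* f(g', x)^{-1}$ equals the identity $\mu_X$-a.e. This function is the pullback under $\pi$ of the $(\Y,U)$-function $y \mapsto f(g+g', y) f(g, S_{g'} y)^{-1} f(g', y)^{-1}$ (again using $\pi \circ T_{g'} = S_{g'} \circ \pi$ a.e.). A pullback $\pi^* h$ of a measurable function $h: Y \to U$ vanishes (i.e.\ equals the identity) $\mu_X$-a.e.\ if and only if $h$ vanishes $\mu_Y$-a.e., because $(\pi)_* \mu_X = \mu_Y$ and $\{h \neq \mathrm{id}\}$ has $\mu_Y$-measure equal to $\mu_X(\pi^{-1}\{h \neq \mathrm{id}\})$. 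Hence $f(g+g', y) = f(g, S_{g'} y) f(g', y)$ for $\mu_Y$-a.e.\ $y$, for each fixed $(g,g')$, which is exactly the cocycle condition in $M(G,\Y,U)$ (where the a.e.\ quantifier is taken per pair $(g,g')$, as in Definition \ref{cohom}). Therefore $f \in Z^1(G,\Y,U)$.

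There is no real obstacle here; the only point requiring a modicum of care is the bookkeeping of ``for each $g,g'$, almost every $x$'' versus ``almost every $x$, for all $g,g'$''. Since Definition \ref{cohom} phrases the cocycle equation with the quantifier order ``for all $g,g'$ and almost every $x$'' (equivalently, per-pair a.e.), and since $G$ is countable so one could even swap the order if desired, this is harmless. One also tacitly uses the standing regularity/measurability assumptions so that the factor map $\pi$, the pushforward of measure, and the a.e.\ intertwining $\pi \circ T_g = S_g \circ \pi$ all behave as expected; these are part of the setup for factors in Definition \ref{factor-def}. The same argument works verbatim with $G$ merely locally compact rather than countable, which is the generality in which the lemma is stated.
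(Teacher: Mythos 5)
Your proof is correct; it is the direct verification one would expect (the paper offers no proof, treating the lemma as trivial). Both directions reduce to the observation that for a measurable $h: Y \to U$ and a measure-preserving factor map $\pi$, $h \circ \pi$ equals the identity $\mu_X$-a.e.\ if and only if $h$ equals the identity $\mu_Y$-a.e.\ (since $\pi_* \mu_X = \mu_Y$), combined with the a.e.\ intertwining $\pi \circ T_{g'} = S_{g'} \circ \pi$; your bookkeeping of the per-pair-$(g,g')$ quantifier is the right reading of Definition \ref{cohom}.
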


Next, we recall that cohomology is trivial for free actions:

\begin{definition}[Free action]\label{free-def} Let $\X = (X, \B_X, \mu_X, (T_g)_{g \in G})$ be a $G$-system. The action of $G$ is said to be \emph{free} if $\X$ is measure-equivalent to a system of the form $Y \times G$, where the action of a group element $g \in G$ is given by the map $(y,h) \mapsto (y,gh)$.
\end{definition}

\begin{remark} If $G$ acts freely on $X$, then so does any compact abelian subgroup of $G$.
\end{remark}

\begin{lemma}[Free actions of compact abelian groups have no cohomology]\label{free-lem}\cite[Lemma C.8]{hk-cubes}  Let $G$ be a compact abelian group, and let $\X$ be a $G$-system in which the action of $G$ is free.  Then every $(G,\X,S^1)$-cocycle is a $(G,\X,S^1)$-coboundary.  In other words,
$Z^1(G,\X,S^1) = B^1(G,\X,S^1)$, or equivalently
$H^1(G,\X,S^1) = 0.$
\end{lemma}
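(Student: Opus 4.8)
The plan is to reduce the statement to a concrete model in which the cohomological triviality of free actions of compact abelian groups is transparent, using the given hypothesis that the action is free. First I would invoke Definition \ref{free-def} to replace $\X$, up to measure-equivalence, by the standard model $Y \times G$ with the translation action $g\cdot(y,h) = (y,gh)$, where $(Y,\B_Y,\mu_Y)$ is a probability space and $\mu_\X = \mu_Y \times \mathrm{Haar}(G)$. Cocycles, coboundaries, and the cohomology group are all invariants of the measure-equivalence class, so it suffices to prove $Z^1(G,\X,S^1) = B^1(G,\X,S^1)$ for this model. Let $\rho \in Z^1(G,\X,S^1)$, so $\rho: G \times (Y\times G) \to S^1$ satisfies the cocycle equation $\rho(g+g',(y,h)) = \rho(g,(y,g'+h))\,\rho(g',(y,h))$ for all $g,g' \in G$ and a.e.\ $(y,h)$.

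The key idea is that on this model a cocycle is essentially determined by its values ``on the diagonal''. I would define the candidate antiderivative $F: Y\times G \to S^1$ by
$$ F(y,h) := \rho(h,(y,0)). $$
Then a direct computation using the cocycle equation gives, for each fixed $g \in G$,
$$ \mder_g F(y,h) = \frac{F(y,g+h)}{F(y,h)} = \frac{\rho(g+h,(y,0))}{\rho(h,(y,0))} = \frac{\rho(g,(y,h))\,\rho(h,(y,0))}{\rho(h,(y,0))} = \rho(g,(y,h)), $$
where in the middle step I used $\rho(g+h,(y,0)) = \rho(g,(y,h))\,\rho(h,(y,0))$, which is the cocycle identity applied with the pair $(g,h)$ and base point $(y,0)$ (note $T_h(y,0) = (y,h)$). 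This shows $\rho = \mder F$, hence $\rho \in B^1(G,\X,S^1)$, and since $B^1 \subseteq Z^1$ always holds, we get equality of the two groups, and thus $H^1(G,\X,S^1) = Z^1/B^1 = 0$.

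The main obstacle is measurability and the ``almost everywhere'' bookkeeping: the cocycle equation only holds for a.e.\ $x$ for each pair $(g,g')$, and $G$ is an uncountable compact group, so I cannot simply intersect over all pairs. To handle this properly I would first use joint measurability of $(g,x)\mapsto \rho(g,x)$ (part of the data of an $M(G,\X,U)$-function is only per-$g$ a.e.\ equality, but one can choose a jointly measurable representative, as is standard and as is used elsewhere in the paper), so that $F(y,h) = \rho(h,(y,0))$ is a genuine measurable function on $Y\times G$; the evaluation $h \mapsto (y,0)$ composed with the measurable map $\rho$ is fine since we are restricting to the measurable diagonal $\{(h,(y,0))\}$. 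Then for each fixed $g$, the computation above holds for a.e.\ $(y,h)$ by Fubini applied to the a.e.\ statement of the cocycle equation in the $(g,h)$ variables, which is exactly what is required for $\rho = \mder F$ in $M(G,\X,S^1)$. Since the citation \cite[Lemma C.8]{hk-cubes} already establishes this for $\Z$-actions by essentially this argument, and nothing in the argument uses that $G = \Z$, I would simply reproduce this reasoning, noting that the only input needed is the free-action normal form and the cocycle identity.
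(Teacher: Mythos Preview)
The paper does not supply its own proof of this lemma, deferring instead to \cite[Lemma C.8]{hk-cubes}, so there is nothing to compare against; your approach is the standard one. However, your handling of the null-set issue has a genuine gap.

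The problem is that your antiderivative $F(y,h) = \rho(h,(y,0))$ is defined by evaluating $\rho$ on the slice $Y \times \{0\}$, which has $\mu_Y \times \mathrm{Haar}$ measure zero (for infinite compact $G$). The cocycle identity you need, namely $\rho(g+h,(y,0)) = \rho(g,(y,h))\,\rho(h,(y,0))$, is the instance of the cocycle equation with the pair $(g,h)$ at the basepoint $x = (y,0)$; but the definition of $Z^1$ only guarantees the equation for a.e.\ $x$, and the slice $\{h'=0\}$ need not lie in that full-measure set. Your appeal to ``Fubini in the $(g,h)$ variables'' does not touch this: the variable you would need to Fubini over is the basepoint coordinate $h'$, not $g$ or $h$, and you have fixed $h'=0$ rather than integrated over it.

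The standard repair has two steps. First, apply Fubini to the four variables $(g,g',y,h')$ to find a \emph{generic} basepoint $h_0$ such that the cocycle identity at $(y,h_0)$ holds for a.e.\ $(g,g',y)$, and set $F(y,h) := \rho(h-h_0,(y,h_0))$; your computation then gives $\rho(g,\cdot) = \mder_g F$ for \emph{a.e.}\ $g \in G$. Second, upgrade from ``a.e.\ $g$'' to ``all $g$'' (which is what equality in $M(G,\X,S^1)$ demands): given any $g_0$, write $g_0 = g_1 + g_2$ with $g_1, g_2$ in the full-measure ``good'' set (possible since the good set and its translate by $g_0$ both have full Haar measure), and use the cocycle identity $\rho(g_0,x) = \rho(g_2, T_{g_1} x)\,\rho(g_1,x)$ together with the already-established identities for $g_1, g_2$ to conclude $\rho(g_0,\cdot) = \mder_{g_0} F$ a.e.
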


There is an analogue of Lemma \ref{free-lem} in the polynomial category.  To state it, we first need a useful algebraic lemma.

\begin{lemma}[Composition of polynomials is again polynomial]\label{polycomp}
Let $G$ be a countable abelian  group, let $U, V$ be abelian groups, and let $\X = \Y \times_\rho U$ be an ergodic abelian extension of a $G$-system $\Y$ by a $(\Y,U)$-phase polynomial $\rho$ of degree $<k$ for some $k \geq 1$.   
\begin{itemize}
\item[(i)] If $p$ is a $(\X,V)$-phase polynomial of degree $<d$, and $u \in U$, then $\mder_u p$ is a $(\X,V)$-phase polynomial of degree $<d-1$.  
\item[(ii)] If $p$ is a $(\X,V)$-phase polynomial of degree $<d$, and $v_1, \ldots, v_j$ are a collection of $(\X,U)$-phase polynomials of degrees $<d_1,\ldots,<d_j$, then the $(\X,V)$-function $P(y,u) := (\mder_{v_1(y,u)} \ldots \mder_{v_j(y,u)} p)(y,u)$ is a $(\X,V)$-phase polynomial of degree $O_{d,j,d_1,\ldots,d_j}(1)$.  
\item[(iii)] If $p$ is a $(\X,V)$-phase polynomial of degree $<d$, $v_1,\ldots,v_j$ are 
a collection of $(\X,U)$-phase polynomials of degrees $<d_1,\ldots,<d_j$, and $s$ is a $(\X,U)$-phase polynomial of degree $<d'$, then the $(\X,V)$-function $$P(y,u) := (\mder_{v_1(y,u)} \ldots \mder_{v_j(y,u)} p)(y,s(y,u))$$ is a $(\X,V)$-phase polynomial of degree $O_{d,j,d_1,\ldots,d_j,d',k}(1)$.  
\item[(iv)] For each $u \in U$, let $q_u$ be a $(\X,V)$-phase polynomial of degree $<m$ which obeys the $U$-cocycle equation
\begin{equation}\label{coco}
q_{uv} = (V_u q_v) q_u
\end{equation}
for all $u, v \in U$.  Suppose that $v_1,\ldots,v_j$ are a collection of $(\X,U)$-phase polynomials of degrees $<d_1,\ldots,<d_j$, and $r, s$ are $(\X,U)$-phase polynomials of degree $<d'$, $<d''$ respectively, then the map $$P(y,u) := (\mder_{v_1(y,u)} \ldots \mder_{v_j(y,u)} q_{r(y,u)})(y, s(y,u))$$x is a $(\X,V)$-polynomial of degree $O_{d,j,d',d'',k}(1)$.  
\end{itemize}
\end{lemma}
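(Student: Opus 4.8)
The plan is to prove all four parts of Lemma~\ref{polycomp} together by a careful bookkeeping of degrees, treating (i) as the base engine and (ii)--(iv) as successive generalizations, each proved by an induction on the relevant degree parameters with the previous parts available.

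\textbf{Part (i).} For a $(\X,V)$-phase polynomial $p$ of degree $<d$, I would simply observe that $\mder_u$ commutes with $\mder_{h_1},\dots,\mder_{h_{d-1}}$ (since the $U$-action $V_u$ commutes with the $G$-action), so $\mder_{h_1}\cdots\mder_{h_{d-1}}(\mder_u p) = \mder_u(\mder_{h_1}\cdots\mder_{h_{d-1}} p)$. The inner object $\mder_{h_1}\cdots\mder_{h_{d-1}} p$ is a phase polynomial of degree $<1$, hence $G$-invariant, hence by ergodicity a constant of modulus $1$; applying $\mder_u$ to a constant gives $1$. Thus $\mder_u p \in \Phase_{<d-1}(\X,V)$. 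This is the routine step.

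\textbf{Parts (ii) and (iii).} I would prove these simultaneously by induction on $j$, and for each fixed $j$ by induction on $d$ (the degree of $p$). For $j=0$, (ii) is trivial and (iii) asserts that $p(y,s(y,u))$ is polynomial of bounded degree; this is the substitution-of-a-polynomial-into-a-polynomial statement, which I would establish by a separate sub-induction on $d$ using the identity $\mder_g[p(y,s(y,u))] = (\mder_{\mder_g s(y,u)} (T_g p))(y, T_g\text{-shifted }s) \cdot$ (derivative of $T_g p$ in $y$), exactly the computation carried out in the proof of Lemma~\ref{mod}(i) in the excerpt, whose degree arithmetic I can quote. For the inductive step in $j$, I would differentiate $P$ in $g\in G$, use the commutation/Leibniz-type identity $\mder_g \mder_{v(y,u)} p = \mder_{\mder_g v(y,u)}(\mder_{v(y,u)} T_g p) \cdot \mder_{v(y,u)}\mder_g p$ type manipulations, and reduce the number of $\mder_{v_i}$'s or the degree of the outer polynomial by one, so that the inductive hypotheses of lower $j$ or lower $d$ apply; the point is that the variable substitutions $v_i(y,u)$, $s(y,u)$ that appear are themselves phase polynomials of bounded degree (using that $\rho$ has degree $<k$, so $u\mapsto$ various homomorphic images of $u$ are bounded-degree $(\X,U)$-polynomials), so all the degree bounds remain $O_{d,j,d_1,\dots,d_j,d',k}(1)$. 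The ``$<d-1$'' drop from (i) is what makes the $d$-induction terminate.

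\textbf{Part (iv).} This is the main obstacle, because the family $\{q_u\}_{u\in U}$ is not a single polynomial but a cocycle-indexed family, and one must differentiate through the index. Fixing the outer substitution $s$ and the inner substitutions $v_1,\dots,v_j$, I would compute $\mder_g P$ using the cocycle identity \eqref{coco} to expand $q_{r(T_g x)} = q_{r(x)\cdot \mder_g r(x)} = (V_{r(x)} q_{\mder_g r(x)}) q_{r(x)}$, obtaining a product of a term involving $\mder_g q_{r(x)}$ (where $\mder_g q_u$ is, for each fixed $u$, a $(\X,V)$-polynomial of degree $<m-1$ still obeying a cocycle-type identity in $u$ — here I use part (i) uniformly in $u$) and a term $(V_{r(x)} q_{\mder_g r(x)})$ evaluated at shifted points, where $\mder_g r$ has degree $<d'-1$. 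This lets me set up a double downward induction: on $m$ (degree of the $q_u$'s) and on $j$ plus the degrees $d',d''$, invoking (ii)/(iii) to handle the composed substitutions $v_i(y,u)$, $s(y,u)$, $r(y,u)$. When $m$ reaches $1$, each $q_u$ is constant by ergodicity, $u\mapsto q_u$ is a homomorphism $U\to V$, and $P$ becomes a bounded-degree polynomial by (iii). The delicate part is checking that at every stage the objects being fed to (ii), (iii) or to the lower-$m$ case of (iv) genuinely satisfy the required cocycle relations in the $U$-variable — this follows because $\mder_g$, $V_u$, and the pointwise operations all respect \eqref{coco} — and that the degree bounds, which compound multiplicatively at each of the $O_{\dots}(1)$-many induction steps, still collapse to a single constant depending only on $d,j,d',d'',k$ (and for (iv), $m$). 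I expect the cleanest writeup routes everything through an auxiliary lemma, in the spirit of Lemma~\ref{mod}, stating that substitution of bounded-degree $(\X,U)$-polynomials into bounded-degree families obeying a $U$-cocycle law yields bounded-degree $(\X,V)$-polynomials; (i)--(iv) are then corollaries.
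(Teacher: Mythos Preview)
Your overall strategy---differentiate $P$ in a group direction $g\in G$, expand via Leibniz/cocycle identities into a product of terms with modified parameters, and induct---is exactly the paper's approach, and parts (i) and (iv) of your sketch are essentially correct.

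The problem is in your induction scheme for (ii) and (iii): you have the direction of the $j$-induction backwards. When you apply $\mder_g$ to $(\mder_{v_1(x)}\cdots\mder_{v_j(x)}p)(x)$ and unwind, the resulting terms have either the \emph{same} number $j$ of vertical derivatives (with $p$ replaced by $\mder_g p$ of lower degree, or with the total degree $d_1+\cdots+d_j$ lowered), or a \emph{larger} number $j'>j$ of vertical derivatives; you never get terms with fewer. In particular your proposed ``base case $j=0$'' for (iii) is not a base case at all: differentiating $p(y,s(y,u))$ in $g$ already introduces a factor of the form $(\mder_{\mder_g s}\,T_g p)(\cdot)$, i.e.\ a $j=1$ expression, so you are thrown to higher $j$ immediately (and you cannot cite Lemma~\ref{mod} here, since that lemma is proved later and relies on extra structure). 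The correct organization---and the one the paper uses---is a triple induction: outermost on $d$ (base $d=1$ trivial by ergodicity); then \emph{downward} on $j$ with base case $j\geq d$, where part~(i) applied $j$ times forces $P\equiv 1$; and innermost on the total degree $d_1+\cdots+d_j$. With that ordering, the terms with $j'>j$ are handled by the $j$-induction hypothesis (they are closer to the base $j\geq d$), the term with $p$ replaced by $\mder_g p$ by the $d$-induction, and the remaining same-$j$ terms by the innermost induction. Once you reverse the $j$-induction this way, your sketch becomes the paper's proof.
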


\begin{proof}  We prove (i) by induction on $d$.  Indeed, the claim is trivial for $d=1$ by ergodicity, and for $d>1$ we have by induction that $\mder_g \mder_u p = \mder_u \mder_g p$ is a phase polynomial of degree $<d-2$ for all $g \in G$, and thus by \eqref{integ} $\mder_u p$ is a phase polynomial of degree $<d-1$ as claimed.  

We prove (ii) by a triple induction. First, the claim is trivial when $d=1$, so suppose that $d>1$ and the claim has already been shown for $d-1$.  For $j \geq d$ the claim follows from the previous claim, so now assume that $j < d$ and the claim has already been proven for $j+1$.  Finally, the claim is clear when $d_1+\ldots+d_j=0$, so suppose inductively that $d_1+\ldots+d_j>0$ and the claim has already been proven for smaller values of $d_1+\ldots+d_j$.  Consider the derivative $\mder_g P$ for some $g \in G$.  Some computation shows that this expression can be written as $(\mder_{v_1(y,u)} \ldots \mder_{v_j(y,u)} \mder_g p)(y,u)$, times a product of finitely many expressions of the form 
$(\mder_{v'_1(y,u)} \ldots \mder_{v'_{j'}(y,u)} p)(y,u)$ where $j'$ is either larger than $j$, or $j=j'$ and the total degree of $v'_1,\ldots,v'_j$ is less than that of $v_1,\ldots,v_j$.  Using the various induction hypotheses we conclude that $\mder_g P$ is a $(\X,S^1)$-phase polynomial of degree $O_{d,d_1,\ldots,d_j}(1)$, and the claim follows from \eqref{integ}.

Claim (iii) is proven by the same inductive argument as the previous claim; the non-linear nature of $s(y,u)$ introduced some new terms when one differentiates, but all such terms increase the number $j$ of vertical derivatives (and only involve polynomials in the subscripts, thanks to the polynomial nature of $s$ and $\rho$) and so can be safely handled by the induction hypothesis.

Finally, we prove claim (iv).  The case $d'=0$ follows from the previous claim, so suppose that $d'>0$ and the claim has already been proven for the smaller values of $d'$.  For $g \in G$, we take a derivative $\mder_g P$.  One obtains essentially the same terms that appeared in the previous claim, plus (thanks to the cocycle equation \eqref{coco}) some additional terms involving $q_{\mdersmall_g r(y,u)}$.  But such terms can be dealt with by the induction hypothesis.
\end{proof}

\begin{lemma}[Polynomial integration lemma]\label{poly-integ}  Let $G$ be a countable abelian group, let $m, k \geq 1$, and let $\X = \Y \times_\rho U$ be an ergodic abelian extension of a $G$-system $\Y$ by a $(G,\Y,U)$-phase polynomial cocycle $\rho$ of degree $<k$.  Let $V$ be a locally compact abelian group. For each $u \in U$, let $q_u$ be a $(\X,V)$-phase polynomial of degree $<m$ which obeys the $U$-cocycle equation \eqref{coco}
for all $u, v \in U$.  Then there exists a $(\X,V)$-phase polynomial $Q$ of degree $<O_{m,k}(1)$ such that $q_u = \mder_u Q$ for all $u \in U$.
\end{lemma}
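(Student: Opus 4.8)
The plan is to build $Q$ explicitly by ``integrating $q_u$ along the fibres'' of the extension $\X = \Y\times_\rho U$, and then to extract the degree bound from Lemma~\ref{polycomp}(iv). Write points of $X$ as $x=(y,u)$ with $y\in Y$, $u\in U$, so that $T_g(y,u)=(S_g y,\rho(g,y)u)$ and the $U$-action is $V_s\colon (y,u)\mapsto(y,su)$, which commutes with the $G$-action. First I would record that the coordinate map $c\colon(y,u)\mapsto u$ is a $(\X,U)$-phase polynomial of degree $<k+1$: indeed $\mder_g c(y,u)=\rho(g,y)$, which has degree $<k$ as a function of $x$ for every $g\in G$ (by hypothesis on $\rho$ and functoriality), so $c$ has degree $<k+1$ by \eqref{integ}.

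Next I would fix a generic base point $u_0\in U$ — generic enough that the slice functions $y\mapsto q_u(y,u_0)$ are well defined (the minor point of evaluating the a.e.-defined functions $q_u$ on a single fibre $Y\times\{u_0\}$ can be handled by a measurable selection, or is automatic when the $q_u$ depend measurably on $u$; one may even take $u_0=1$) — and set
\[
 Q(y,u) := q_{u u_0^{-1}}(y,u_0).
\]
To verify $q_s=\mder_s Q$ for every $s\in U$, I would compute
$\mder_s Q(y,u)=Q(y,su)/Q(y,u)=q_{s u u_0^{-1}}(y,u_0)/q_{u u_0^{-1}}(y,u_0)$
and apply the $U$-cocycle equation \eqref{coco}, taking its variables to be $u u_0^{-1}$ and $s$: since $U$ is abelian, $q_{s u u_0^{-1}}=q_{(u u_0^{-1})\cdot s}=(V_{u u_0^{-1}}q_s)\,q_{u u_0^{-1}}$, and $(V_{u u_0^{-1}}q_s)(y,u_0)=q_s(y,u)$, so the quotient collapses to $\mder_s Q(y,u)=q_s(y,u)$, as desired.

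Finally, for the degree bound I would invoke Lemma~\ref{polycomp}(iv) with $j=0$ (no vertical derivatives), with $r(y,u):=c(y,u)\,u_0^{-1}$, a $(\X,U)$-phase polynomial of degree $<k+1$, and $s(y,u):=u_0$, a constant and hence of degree $<1$. That lemma then asserts that $(y,u)\mapsto q_{r(y,u)}(y,s(y,u))=q_{u u_0^{-1}}(y,u_0)=Q(y,u)$ is a $(\X,V)$-phase polynomial of degree $O_{m,0,k+1,1,k}(1)=O_{m,k}(1)$, which together with $q_u=\mder_u Q$ completes the proof. The construction and the identity $\mder_s Q=q_s$ are routine; the genuinely delicate step is the last one, namely having the composition lemma available in precisely the form of Lemma~\ref{polycomp}(iv), in which the index $r(y,u)$ of the family $q_{(\cdot)}$ is itself a phase polynomial built out of the coordinate function of the extension $\X=\Y\times_\rho U$. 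That is where the degree accounting is done, and it is the step one must be careful with.
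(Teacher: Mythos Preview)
Your proof is correct and follows essentially the same approach as the paper: define $Q(y,vu_0):=q_v(y,u_0)$ for a generic $u_0$, verify $\mder_u Q=q_u$ from the cocycle equation \eqref{coco}, and then invoke the $j=0$ case of Lemma~\ref{polycomp}(iv) for the degree bound. Your additional remarks on the degree of the coordinate map $c$ and the explicit choice of $r$ and $s$ in that lemma are exactly what is needed to make the final appeal precise.
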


\begin{proof}
Let $u_0$ be a generic element of $U$, and define the $(\X,V)$-function $Q$ by the formula
$ Q(y,v u_0) := q_v(y, u_0).$
for all $y \in \Y$ and $v \in U$.  Observe that for any $u \in U$, we have
$$ \mder_u Q(y,v u_0) = \frac{q_{uv}(y,u_0)}{q_v(y,u_0)} = q_u(y,v u_0)$$
thanks to \eqref{coco}.  Thus we have $q_u = \mder_u Q$ for all $u \in U$.  

The fact that $Q$ is a $(\X,V)$-phase polynomial of degree $O_{m,k}(1)$ follows from the $j=0$ case of Lemma \ref{polycomp}(iv).
\end{proof}

\begin{remark}  One can improve the degree bounds in Lemma \ref{polycomp} and Lemma \ref{poly-integ} if one assumes that $\Y=\Zcal_{<j}(\X)$ for some $j$; see Lemma \ref{mod} and Proposition \ref{exact_int}.
\end{remark}

We will also need another result in a similar spirit.

\begin{lemma}[Straightening nearly translation-invariant cocycles]\label{straighten-lemma}\cite[Lemma C.9]{hk-cubes} Let $G$ be a countable abelian group, let $\X$ be an ergodic $G$-system, let $K = (K,\cdot)$ be a compact abelian group acting freely on $X$ and commuting with the $G$ action, and let $\rho$ be an $(G,\X,H)$-function for some compact abelian $H$ such that $\mder_k \rho: (g,x) \mapsto \frac{\rho(g,T_k x)}{\rho(g,x)}$ is a coboundary for all $k \in K$.  Then $\rho$ is cohomologous to a $(G,\X,H)$-function which is invariant with respect to some open subgroup $U$ of $K$.
\end{lemma}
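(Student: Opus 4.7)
My plan is to solve $\mder_k \rho = \mder F_k$ measurably in $k$, promote the family $\{F_k\}$ into a genuine cocycle in $k$ on an open subgroup $U \leq K$, and then integrate that cocycle using the freeness of the $U$-action.

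First, for each $k \in K$ select a measurable antiderivative $F_k \in M(\X,H)$ of the coboundary $\mder_k \rho$, making $k \mapsto F_k$ jointly measurable via Lemma \ref{measurable_choice}. Because the $K$-action commutes with the $G$-action, the identity $\mder_{kk'} \rho = (V_{k'} \mder_k \rho) \cdot \mder_{k'} \rho$ gives
\[
\mder_g\!\left(\frac{F_{kk'}}{(V_{k'} F_k)\,F_{k'}}\right) = 1,
\]
so by ergodicity of $\X$ the ratio $c(k,k') := F_{kk'}/((V_{k'} F_k)\,F_{k'})$ is a constant in $H$, and associativity in $K$ makes $c$ a $2$-cocycle $c: K \times K \to H$.

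Next, I want to trivialize $c$ on an open subgroup of $K$. Lusin's theorem applied to $k \mapsto F_k$ (as an element of a separable $L^2$-space of $H$-valued functions) yields a bad set $E \subset K$ of arbitrarily small Haar measure outside of which this map is uniformly continuous; for $k, k'$ sufficiently close to the identity with $k, k', kk' \notin E$, we have $F_{kk'} \approx F_{k'} \approx F_e$ and $V_{k'}F_k \approx F_e$ in $L^2$, which forces $c(k,k')$ to lie in any prescribed neighborhood of $1_H$. Mirroring the treatment of the analogous constants $\phi_{t,u}$ in Proposition \ref{fin-prop}, I then absorb a measurable family of correcting constants into the $F_k$ so that the modified cocycle is trivial on some open neighborhood of the identity in $K$; invoking a Steinhaus-type argument in $K$ together with the $2$-cocycle identity, the triviality extends to an open subgroup $U \leq K$. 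This cocycle-trivialization is the main obstacle: in Proposition \ref{fin-prop} the corresponding step is driven by the torsion hypothesis on $U$ via Lemma \ref{tor-lem}, whereas here $K$ is an arbitrary compact abelian group, so the subgroup must be extracted from the Steinhaus-type geometry rather than from torsion.

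Once $c$ is trivialized on $U$, the (new) family $\{F_u\}_{u \in U}$ satisfies the genuine cocycle identity $F_{uu'} = (V_{u'} F_u)\,F_{u'}$ while still solving $\mder_u \rho = \mder F_u$. Since $U$ inherits a free action on $X$ from $K$, I select a measurable cross-section $X_0 \subset X$ for the $U$-action and define $F(V_u x_0) := F_u(x_0)$ for $x_0 \in X_0$, $u \in U$; exactly as in the construction of Lemma \ref{poly-integ}, the cocycle property yields $\mder_u F = F_u$ for every $u \in U$. Setting $\rho' := \rho/\mder F$, we find
\[
\mder_u \rho' \;=\; \frac{\mder_u \rho}{\mder_u \mder F} \;=\; \frac{\mder F_u}{\mder F_u} \;=\; 1
\]
for all $u \in U$, so $\rho'$ is $U$-invariant. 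Since $\rho$ and $\rho'$ differ by the coboundary $\mder F$, they are $(G,\X,H)$-cohomologous, completing the proof.
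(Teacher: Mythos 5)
The paper itself cites this lemma from \cite[Lemma C.9]{hk-cubes} without reproducing the proof, so I can only compare your argument against the general framework rather than against a proof the paper writes out. Your overall architecture is reasonable and parallels Proposition~\ref{fin-prop}: solve $\mder_k\rho=\mder F_k$ measurably via Lemma~\ref{measurable_choice}, observe that $c(k,k')=F_{kk'}/((V_{k'}F_k)F_{k'})$ is a constant (hence a measurable $H$-valued $2$-cocycle on $K$), try to make it identically $1$ on an open subgroup, and then integrate the resulting genuine $U$-cocycle $(F_u)_{u\in U}$ using freeness (this last step is exactly Lemma~\ref{free-lem} in disguise, and your explicit cross-section construction and the verification that $\mder_u(\rho/\mder F)=1$ are both correct). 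So steps one, two, four and five are fine.

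The gap is precisely where you flag it, at the trivialization of $c$, and the proposed fix does not close it. A Lusin-plus-Steinhaus argument can only tell you that $c(k,k')$ lies near $1_H$ for pairs $(k,k')$ in a neighborhood of the identity (modulo a small exceptional set), and that a measurable subgroup of positive Haar measure is open. But for a general compact abelian $K$ this is not enough to produce an open subgroup on which $c$ is a genuine $2$-coboundary: every open subgroup of $K$ must contain the connected component $K_0$, so when $K_0$ is nontrivial (for instance $K=S^1$ or $K=\mathbb{T}^n$) there is no small open subgroup sitting inside your Lusin neighborhood, and ``Steinhaus-type geometry'' simply stops at a neighborhood. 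This is exactly why Proposition~\ref{fin-prop} is allowed to lean on Lemma~\ref{tor-lem}, which is special to $p^m$-torsion groups: there an open neighborhood really does contain an open subgroup. Moreover even after an open subgroup $U''$ has been extracted, your text asserts that the correcting constants $d(k)$ can be chosen to kill $c$ on all of $U''\times U''$; that is a vanishing statement for the measurable $H^2(U'',H)$ and needs an argument of its own (near the identity one can take logarithms and average, but making this patch into a global $2$-coboundary on $U''$ is not automatic). The Host--Kra argument for the general compact abelian case instead works with the Polish group $\mathcal{L}=\{(k,F):\mder F=\mder_k\rho\}$, the open mapping theorem, and the associated descriptive-set-theoretic machinery in their Appendix C; that machinery, and not a Steinhaus argument alone, is what supplies the open subgroup. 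If you only need the lemma for the applications in this paper (where $K$ is always $p^m$-torsion), then replacing your ``Steinhaus-type geometry'' sentence by an honest appeal to Lemma~\ref{tor-lem} --- and writing out the $2$-coboundary trivialization step carefully, as is done with the $\phi'_u$ in Proposition~\ref{fin-prop} --- would give a complete proof of the special case you use; but the general statement as written requires a different argument.
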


We will also take advantage of a useful splitting lemma.

\begin{lemma}[Splitting lemma]\label{split}\cite[Lemma C.5]{hk-cubes} (see also \cite{moore-schmidt})
Let $G$ be a countable abelian  group, let $\X$ be an ergodic $G$-system.
Let $f :\X \to S^1$ be a $(G,\X,S^1)$-cocycle such that $d^{[1]}f$ is a $(G,\X^{[1]},S^1)$-coboundary.
Then  $f$ is $(G,\X,S^1)$-cohomologous to a constant cocycle (i.e. a cocycle that is independent of the 
$\X$ coordinate). In other words, we have an exact sequence
\[
\begin{CD}
H^1(G,\pt,S^1)   @>>>   H^1(G,\X,S^1)  @>>> H^1(G,\X^{[1]},S^1)
\end{CD}\]
where the first map is the map induced by the factor map $\pi^\X_{\pt}$, and the second map is the map induced by the derivative map $f \mapsto d^{[1]} f$.
\end{lemma}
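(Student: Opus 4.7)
The plan is to translate the hypothesis on $d^{[1]}f$ into a statement about an invariant function on the extension $\X \times_f S^1$, and then deduce the splitting by the Moore-Schmidt / Mackey cohomology machinery; the argument is essentially that of \cite[Lemma C.5]{hk-cubes} or \cite{moore-schmidt}, with $\Z$ replaced by a general countable abelian $G$ throughout. First I would unpack the hypothesis: there exists $G_0 \in L^\infty(X \times X, S^1)$ with
\[\frac{G_0(T_g x_0, T_g x_1)}{G_0(x_0, x_1)} = \frac{f(g, x_1)}{f(g, x_0)}\]
for all $g \in G$ and $\mu^{[1]}$-a.e.\ $(x_0, x_1)$.

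The key construction is to pass to the extension $Y := \X \times_f S^1$ (with shift $T_g(x, s) := (T_g x, f(g, x) s)$) and define the function
\[\Phi\bigl((x_0, s_0), (x_1, s_1)\bigr) := G_0(x_0, x_1) \cdot \frac{s_0}{s_1}\]
on $Y \times Y$. A direct calculation using the coboundary equation shows that $\Phi$ is invariant under the diagonal $G$-action on $Y \times Y$, while under the commuting $S^1 \times S^1$ action by fiber rotations it transforms as the character $(t_0, t_1) \mapsto t_0/t_1$. Hence $\Phi$ lies in the $(1,-1)$-isotypic component of the algebra $L^\infty(Y \times Y)^G$ of $G$-invariants.

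From here I would invoke the Moore-Schmidt / Mackey machinery: the presence of an $S^1$-valued element in the $(1,-1)$-isotypic component forces the Mackey subgroup of the $(G,\X,S^1)$-cocycle $f$ to be trivial, which is equivalent to the statement that $f$ is $(G,\X,S^1)$-cohomologous to a cocycle independent of the $\X$-coordinate (and, being a cocycle, such a constant cocycle is automatically a character $c: G \to S^1$). A concrete extraction proceeds by Fubini slicing: for generic $x_0^* \in X$ set $H(x) := G_0(x_0^*, x)$ and $c(g) := f(g, x_0^*)$; restricting $\Phi$ to the slice $\{(x_0^*, 1)\} \times Y$ and using its invariance yields the partial splitting $f(g, x) = c(g) G_0(T_g x_0^*, T_g x)/H(x)$, after which Mackey triviality is used to show that the residual factor $G_0(T_g x_0^*, T_g x)/H(T_g x)$ is itself a coboundary in $g$, which is absorbed into $H$.

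The main obstacle is exactly this last step: passing from an abstract $(1,-1)$-invariant to the concrete splitting $f = c \cdot \mder H$. The direct slicing approach alone is insufficient because $x_0^*$ is only generic in the base and the derived equation carries a residual factor $G_0(T_g x_0^*, T_g x)/G_0(x_0^*, T_g x)$ which is \emph{a priori} not a coboundary in $g$. Overcoming this requires either the Moore-Schmidt theorem (which guarantees that any $S^1$-valued measurable cocycle trivialized after applying $d^{[1]}$ must itself be cohomologous to a constant) or, equivalently, the Mackey ergodic decomposition of $Y = \X \times_f S^1$ together with the fact that the $S^1$-action on $Y$ is free. The exactness of the sequence is then immediate, since the reverse inclusion, namely that constant cocycles lie in the kernel of $d^{[1]}$, is trivial from the definition of $d^{[1]}$.
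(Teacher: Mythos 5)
Your construction of $\Phi$ on $Y \times Y$ and the verification that it is $G$-invariant and lies in the $(1,-1)$-isotypic component of the $S^1\times S^1$-action are correct, and you are right that naive Fubini slicing leaves a residual factor $G_0(T_g x_0^*, T_g x)/G_0(x_0^*,T_g x)$ that one cannot discard. But the two tools you propose to close that gap do not do the job. First, the claim that $\Phi$ forces ``the Mackey subgroup of $f$ to be trivial, which is equivalent to $f$ being cohomologous to a constant'' is wrong on both counts: a trivial Mackey group means $f$ is a coboundary, which is strictly stronger than being cohomologous to a constant; and for a generic constant $c$ the extension $\X \times_c S^1$ is ergodic, so the Mackey group of $f$ is all of $S^1$, not trivial. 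Second, the ``Moore--Schmidt theorem'' you invoke — stated as ``any $S^1$-valued cocycle trivialized after applying $d^{[1]}$ must be cohomologous to a constant'' — is not a prior theorem of Moore and Schmidt; it is verbatim the lemma you are trying to prove, so the appeal is circular.

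The genuinely missing step is a compactness/spectral argument (this is the content of \cite[Lemma C.5]{hk-cubes}). Regard $G_0 \in L^\infty(X \times X, S^1) \subset L^2(X\times X)$ as a Hilbert--Schmidt kernel, defining a nonzero compact operator $\Lambda$ on $L^2(X)$ by $\Lambda\phi(x_1) := \int \overline{G_0(x_0,x_1)}\phi(x_0)\,d\mu(x_0)$. The coboundary relation $d^{[1]}f = \mder G_0$ is exactly the statement that $\Lambda$ commutes with the twisted unitary representation $V_g\phi := f(g,\cdot)\,T_g\phi$ (equivalently, your observation that $\Phi$ is a $G$-invariant element of the $(1,-1)$-isotypic component is precisely the statement that $\Lambda$ is an intertwiner). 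Then $\Lambda^*\Lambda$ is a nonzero positive compact operator commuting with each $V_g$, so it has a nonzero eigenvalue whose (finite-dimensional) eigenspace $E$ is $V_g$-invariant. Since $G$ is abelian and $V_g\vert_E$ is unitary, $E$ contains a one-dimensional $V_g$-invariant subspace, spanned by some $\psi$ with $V_g\psi = \chi(g)\psi$ for a character $\chi: G \to S^1$; by ergodicity $|\psi|$ is constant and may be normalized to $1$, and then $f(g,x)\psi(T_g x) = \chi(g)\psi(x)$ rewrites as $f = \chi \cdot \mder\overline{\psi}$, i.e.\ $f$ is cohomologous to the constant cocycle $\chi$. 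Without this compactness step, there is no mechanism to produce the eigenfunction $\psi$, and the proof does not close.
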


\begin{lemma}[Cohomology of $X^{[l]}$ injects into cohomology of $X^{[k]}$]\label{hkc-lem}\cite[Lemma C.7]{hk-cubes} Let $G$ be a countable abelian group, let $\X$ be an ergodic $G$-system, let $U$ be a compact abelian group, and let $k \geq l \geq 0$.  Let $\alpha$ be an $l$-face of $\2^k$, thus by Lemma \ref{ccs} $\partial(\alpha)_*: \X^{[k]} \to \X^{[l]}$ is a factor map.  Let $f$ be a $(G,\X^{[l]},U)$-cocycle such that the $(G,\X^{[k]},U)$-cocycle $(\partial(\alpha)_*)^* f$ is a $(G,\X^{[k]},U)$-coboundary.  Then $f$ is also a $(G,\X^{[l]},U)$-coboundary.  In other words, we have an exact sequence
$$
\begin{CD}
0  @>>> H^1(G,\X^{[l]},U) @>>> H^1(G,\X^{[k]},U).
\end{CD}
$$
\end{lemma}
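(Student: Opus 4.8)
The plan is to reduce to a single cubic boundary map $\partial_{l+1}^-:\X^{[l+1]}\to\X^{[l]}$ and then finish with an averaging argument on the ergodic components of $\X^{[l]}$. Two preliminary reductions. First, since the characters of the compact abelian group $U$ separate points, it suffices to treat $U=S^1$: if $\chi\circ f$ is a $(G,\X^{[l]},S^1)$-coboundary for every $\chi\in\hat U$, then assembling the (measurably chosen) antiderivatives through the embedding $U\hookrightarrow\prod_{\chi}S^1$ shows $f$ is a $(G,\X^{[l]},U)$-coboundary, while $(\partial(\alpha)_*)^*(\chi\circ f)=\chi\circ(\partial(\alpha)_*)^*f$ is a coboundary by hypothesis. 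Second, writing the $l$-face $\alpha$ as an intersection of $k-l$ non-parallel sides and dropping these coordinates one at a time, $\partial(\alpha)_*$ factors as a composition of $k-l$ cubic boundary maps $\X^{[m]}\to\X^{[m-1]}$ (each of which, after normalizing by a symmetry of the cube via Lemma~\ref{symcube}, may be taken to be $\partial_m^-$); by Lemma~\ref{cocycle} every pullback of $f$ up this tower is again a cocycle, so peeling the maps off from the top reduces us to the case $k=l+1$ of a single $\partial_{l+1}^-$.

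For this case set $\Y:=\X^{[l]}$ and let $\Z:=\I_l(\X)$ be its invariant $\sigma$-algebra. By Remark~\ref{iter} and Definition~\ref{cubichk-def} we may identify $\X^{[l+1]}=\Y\times_\Z\Y$ with $\mu^{[l+1]}=\mu^{[l]}\times_\Z\mu^{[l]}$, so that $\partial_{l+1}^-$ is the first coordinate projection $\pi_1$ and $\mu^{[l+1]}=\int (\mu^{[l]})_s\times(\mu^{[l]})_s\,dP_l(s)$ (Remark~\ref{iterm}). The hypothesis says $\pi_1^*f=\mder H$ for some $H\in M(\X^{[l+1]},S^1)$ with $|H|=1$. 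Restricting to an ergodic component $\Y_s$ of $\Y$ — on which $\Y_s\times\Y_s$ carries the product measure $(\mu^{[l]})_s\times(\mu^{[l]})_s$ — we obtain, for $P_l$-a.e.\ $s$, an ergodic system $\Y_s$, a cocycle $f_s\in Z^1(G,\Y_s,S^1)$, and a function $H_s\in M(\Y_s\times\Y_s,S^1)$ with $H_s(T_tx_1,T_tx_2)=f_s(t,x_1)\,H_s(x_1,x_2)$ for all $t\in G$; it then suffices to show $f_s$ is a $(G,\Y_s,S^1)$-coboundary for a.e.\ $s$ and to assemble these trivializations measurably.

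Fix such an $s$. For each eigenfunction $\xi$ of $\Y_s$, with eigenvalue $\lambda_\xi\in\hat G$, set $a_\xi(x_1):=\int_{\Y_s}H_s(x_1,x_2)\overline{\xi(x_2)}\,d(\mu^{[l]})_s(x_2)$; substituting $x_2\mapsto T_tx_2$ (using $T_t$-invariance of $(\mu^{[l]})_s$) and the relation for $H_s$ gives $a_\xi(T_tx_1)=f_s(t,x_1)\,\lambda_\xi(t)^{-1}\,a_\xi(x_1)$, so $|a_\xi|$ is $G$-invariant and hence, by ergodicity of $\Y_s$, constant. If $a_\xi\not\equiv0$ for some $\xi$, then since $\lambda_\xi=\mder\xi$ we obtain $f_s=\mder\!\big(\xi\cdot a_\xi/|a_\xi|\big)$, a coboundary on $\Y_s$. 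The only alternative is that $a_\xi\equiv0$ for \emph{every} eigenfunction $\xi$ of $\Y_s$, i.e.\ $H_s(x_1,\cdot)$ lies in the weakly-mixing part of $L^2(\Y_s)$ for a.e.\ $x_1$; but the relation for $H_s$ shows that $\overline{H_s}\cdot u$ is a $G$-invariant function on the circle extension $(\Y_s\times\Y_s)\times_{\pi_1^*f_s}S^1\cong(\Y_s\times_{f_s}S^1)\times\Y_s$, hence lies in the pure-point part of its $L^2$, which by the spectral theory of products of ergodic systems (the pure-point part of $L^2(\mathcal A\times\mathcal B)$ is the Hilbert tensor product of the pure-point parts of $L^2(\mathcal A)$ and $L^2(\mathcal B)$, applied after decomposing $\Y_s\times_{f_s}S^1$ into ergodic components) forces $H_s(x_1,\cdot)$ into the pure-point part of $L^2(\Y_s)$ for a.e.\ $x_1$; combined with the previous clause this yields $H_s(x_1,\cdot)=0$, contradicting $|H_s|=1$. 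Hence $f_s$ is a coboundary on $\Y_s$ for a.e.\ $s$.

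Finally, a routine measurable-selection argument — choosing, measurably in $s$, an eigenvalue $\lambda_\xi$ of $\Y_s$ with $a_\xi\not\equiv0$ together with the associated antiderivative — assembles the fibrewise trivializations into a single $(\X^{[l]},S^1)$-function $h$ with $f=\mder h$, completing the case $k=l+1$ and hence the lemma. I expect the crux to be the third paragraph: eliminating the degenerate case $a_\xi\equiv0$ via the pure-point spectral structure of a product, and — more mundanely but just as essential — carrying out all the fibrewise choices (ergodic components, eigenfunctions, antiderivatives) jointly measurably over the generally non-ergodic system $\X^{[l]}$.
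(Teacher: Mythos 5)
The paper does not give its own proof of this lemma; it cites \cite[Lemma C.7]{hk-cubes}, so there is no in-paper argument to compare yours against. That said, your reconstruction has the right overall shape, and the heart of it --- restricting to an ergodic fibre $\Y_s$ of $\X^{[l]}$, forming $a_\xi(x_1)=\int H_s(x_1,x_2)\overline{\xi(x_2)}\,d(\mu^{[l]})_s(x_2)$ for eigenfunctions $\xi$, obtaining $a_\xi(T_t x_1)=f_s(t,x_1)\lambda_\xi(t)^{-1}a_\xi(x_1)$, and then running the dichotomy ``some $a_\xi\neq 0$ gives $f_s=\mder(\xi\,a_\xi/|a_\xi|)$'' versus ``all $a_\xi\equiv 0$ is ruled out because the invariant function $\overline{H_s}\,u$ on a product of ergodic systems must lie in $L^2_{pp}\otimes L^2_{pp}$'' --- is correct and is, as far as I can tell, close in spirit to the Host--Kra argument. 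The reduction to a single cubic boundary map by peeling off one face at a time, inducting on $k-l$ and using Lemma~\ref{symcube} to normalize each step, is also sound.

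There are two places where the write-up is lighter than the work actually requires. The first is the reduction to $U=S^1$: over the generally non-ergodic base $\X^{[l]}$, choosing antiderivatives $F_\chi$ character-by-character only gives that $F_\chi F_{\chi'}\overline{F_{\chi\chi'}}$ is \emph{invariant}, not constant, so what you get is an $S^1$-valued $2$-cocycle on $\hat U$ fibre-by-fibre, and ``assembling through $U\hookrightarrow\prod_\chi S^1$'' does not by itself force the assembled map into (a coset of) $U$. The clean fix is to drop the global reduction and work with the $U$-valued $H_s$ directly on each ergodic fibre $\Y_s$: run your $S^1$ argument on $\chi\circ H_s$ for each $\chi\in\hat U$, and then invoke Moore--Schmidt \cite{moore-schmidt} on the \emph{ergodic} system $\Y_s$ to upgrade the family of $S^1$-coboundaries to a $U$-coboundary; there the cohomological obstruction vanishes. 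The second is the closing ``routine measurable-selection argument'', which is where the genuine technical content lies: you must choose, jointly measurably in $s$, an eigenvalue $\lambda$ of $\Y_s$ with $a_{\xi,s}\not\equiv 0$ and an associated eigenfunction and antiderivative, and this requires the kind of machinery in Appendix~\ref{measure-sec} (or the measurable dependence of the Kronecker factor on $s$). Neither point undermines the strategy, but as written these are the two steps at which the proof would not yet go through.
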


Now we see how cohomology on an abelian extension relates to cohomology on the base space.

\begin{lemma}[Descent lemma]\label{descent-lem} Let $k \geq 1$, let $G$ be a countable abelian  group, and let $\X = (X, \B_X, \mu_X, (T_g)_{g \in G})$ be an ergodic $G$-system.  Let $\rho$ be an abelian $(G,\X,U)$-cocycle for some compact abelian $U$.  Let $\phi: U \to V$ be a surjective measurable homomorphism from $U$ to another compact abelian group $V$, and let $\pi^{\X \times_\rho U}_{\X \times_{\phi \circ \rho} V}$ be the associated factor map $(x,u) \mapsto (x,\phi(u))$. Suppose that $f$ is a $(G,\X \times_{\phi \circ \rho} V,S^1)$-function is such that $(\pi^{\X \times_\rho U}_{\X \times_{\phi \circ \rho} V})^* f$ is $(G,\X \times_\rho U,S^1)$-cohomologous to $(\pi^{\X \times_\rho U}_{\X \times_{\phi \circ \rho} V})^* p$ for some $(G,\X \times_{\phi \circ \rho} V,S^1)$-function $p$.  Then $f$ is $(G,\X \times_{\phi \circ \rho} V,S^1)$-cohomologous to $p (\chi \circ \rho \circ \pi^{\X \times_{\phi \circ \rho} V}_\X)$ for some character $\chi \in \hat U$.
\end{lemma}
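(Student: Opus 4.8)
The plan is to realise the factor map $\pi := \pi^{\X \times_\rho U}_{\X \times_{\phi \circ \rho} V}$ as the quotient map of a compact group action and then to descend along it by a Fourier argument. Set $W := \ker \phi$, a closed (hence compact) subgroup of $U$; it acts on $\X \times_\rho U$ by the measure-preserving maps $V_w : (x,u) \mapsto (x,wu)$, which commute with the $G$-action, and $\pi$ is exactly the quotient map for this action (two points have the same image under $\pi$ iff they differ by some $V_w$). Consequently a $(G,\X \times_\rho U, S^1)$-function is the pullback of a $(G,\X \times_{\phi\circ\rho} V,S^1)$-function iff it is $W$-invariant, and likewise for $(\X\times_\rho U,S^1)$-functions. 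By hypothesis $\pi^*(f/p) = \mder F$ for some $(\X \times_\rho U, S^1)$-function $F$, which we normalise so that $|F| \equiv 1$. Since $\pi^*(f/p)$ depends only on $(x,\phi(u))$ it is $W$-invariant, hence so is $\mder F$; as $V_w$ commutes with the $G$-action this forces $\mder(V_w^* F / F) = 1$, i.e.\ $V_w^* F / F \in \Phase_{<1}(\X \times_\rho U, S^1)$ for every $w \in W$.

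Assume first that $\X \times_\rho U$ is ergodic. Then $V_w^* F / F$ is a constant $\lambda(w) \in S^1$, and $w \mapsto \lambda(w)$ is a measurable homomorphism of the compact group $W$ into $S^1$, hence continuous, so $\lambda \in \hat W$. By Pontryagin duality the restriction $\hat U \to \hat W$ is surjective, so pick $\chi \in \hat U$ with $\chi|_W = \lambda$. Then $(x,u) \mapsto F(x,u)\overline{\chi(u)}$ is $W$-invariant, hence equals $\pi^* H$ for some $(\X \times_{\phi\circ\rho} V, S^1)$-function $H$; that is, $F(x,u) = \chi(u)\,H(x,\phi(u))$. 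Differentiating along $T_h$ and using that $\phi \circ \rho$ is a cocycle gives $\mder_h F = (\chi \circ \rho)(h,\cdot) \cdot \pi^*(\mder_h H)$, i.e.\ $\mder F = \pi^*\big[(\chi \circ \rho \circ \pi^{\X\times_{\phi\circ\rho}V}_\X)\cdot \mder H\big]$. Comparing this with $\mder F = \pi^*(f/p)$ and using that $\pi^*$ is injective on $M(G, \X\times_{\phi\circ\rho}V, S^1)$, we obtain $f/p = (\chi \circ \rho \circ \pi^{\X\times_{\phi\circ\rho}V}_\X)\cdot \mder H$, which is the claimed cohomology.

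When $\X \times_\rho U$ is not ergodic the constant $\lambda(w)$ has to be replaced by the family of $W$-Fourier projections $b_\eta(x,u) := \int_W F(x,wu)\,\overline{\eta(w)}\, d\mu_W(w)$, $\eta \in \hat W$. One checks directly that $V_{w_0}^* b_\eta = \eta(w_0)\, b_\eta$ and, pulling $\pi^*(f/p)$ out of the integral via the $W$-invariance of $\mder F$, that $b_\eta(S_h z) = \pi^*(f/p)(h,z)\, b_\eta(z)$; in particular $|b_\eta|$ is $G$-invariant. Since $|F| \equiv 1$, for a.e.\ $z$ some $b_\eta(z) \neq 0$, and as $\hat W$ is countable there is $\eta_0$ for which $A := \{b_{\eta_0} \neq 0\}$ has positive measure; $A$ is both $G$- and $W$-invariant. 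Running the argument of the previous paragraph with $F$ replaced by $b_{\eta_0}/|b_{\eta_0}|$ on $A$ shows that $(f/p)\cdot\overline{\chi\circ\rho\circ\pi^{\X\times_{\phi\circ\rho}V}_\X}$ is a $(G,\pi(A),S^1)$-coboundary, where $\chi \in \hat U$ extends $\eta_0$. The remaining point — and this is the main obstacle — is to promote this to a coboundary over all of $\X \times_{\phi \circ \rho} V$ with one and the same character $\chi$: I would pass to ergodic components, using Mackey theory to see that over a fixed ergodic component of $\X \times_{\phi\circ\rho}V$ the corresponding fibre in $\X\times_\rho U$ splits into ergodic components transitively permuted by $W$ (so that the ergodic argument applies componentwise and the character found on one component propagates to the others), and then glue the resulting coboundaries over the $G$-invariant pieces as in the proof of Proposition \ref{ext-type-k}, checking that the Mackey reductions change $\rho$ only by coboundaries so that $\chi \circ \rho$ is unaffected. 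Everything outside this bookkeeping is the soft computation of the first two paragraphs.
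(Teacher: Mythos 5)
Your route diverges from the paper's, and the divergence is where the gap opens up. The paper first treats the case of trivial $V$ directly: write $\pi^*(f/p) = \mder F$ on $\X\times_\rho U$ and Fourier-expand $F(x,u) = \sum_{\chi\in\hat U} F_\chi(x)\chi(u)$ over all of $U$. The decisive point is that each coefficient $F_\chi$ is a function on $\X$ alone; comparing Fourier coefficients yields $F_\chi(T_g x)\,\chi(\rho(g,x)) = F_\chi(x)\,\overline{p}(g,x)f(g,x)$, so $|F_\chi|$ is $G$-invariant on $\X$ and hence constant by the (assumed) ergodicity of $\X$. Picking a $\chi$ with $F_\chi$ nonvanishing finishes that case, and the general $V$ is then handled by a lifting trick through the joint extension $\X\times_{\rho\oplus\phi\circ\rho}(U\times V)$. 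At no stage does the paper's argument need $\X\times_\rho U$ (or $\X\times_{\phi\circ\rho}V$) to be ergodic in the trivial-$V$ case, because the Fourier expansion pushes everything down to $\X$.

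Your proof instead Fourier-analyzes over $W = \ker\phi$, producing $W$-eigenfunctions $b_\eta$ that live on $\X\times_\rho U$. Their moduli $|b_\eta|$ are both $W$-invariant and $G$-invariant, so they descend to $G$-invariant functions on $\X\times_{\phi\circ\rho}V$ — but this system is not assumed ergodic (and in the paper's applications, e.g.\ Proposition \ref{good-descent} where the base is a cubic power $\Zcal_{<j-1}(\X)^{[k]}$, it genuinely is not). Consequently $|b_{\eta_0}|$ need not be constant, you only obtain a $(G,\pi(A),S^1)$-coboundary on a positive-measure invariant set, and you correctly flag the passage to all of $\X\times_{\phi\circ\rho}V$ as the main obstacle. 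This is a genuine gap, not a routine omission. The repair you sketch — ergodic decomposition, Mackey theory, and gluing ``as in Proposition \ref{ext-type-k}'' — is not straightforward: the gluing in that proposition patches antiderivatives of one \emph{fixed} cocycle across translates of a positive-measure set, whereas here the obstruction is that different ergodic components of $\X\times_{\phi\circ\rho}V$ could a priori yield \emph{different} characters $\chi$, and you would need to prove a uniformity statement asserting that one $\chi$ works globally before any gluing can begin. Nothing in the proposal addresses that. Even for the trivial-$V$ case, where the fix is available (there $W=U$, so $|b_\eta|$ is actually a function on $\X$ and ergodicity of $\X$ closes the argument), you do not make this observation, instead assuming $\X\times_\rho U$ ergodic in the first paragraph — a hypothesis the lemma does not grant and which fails in the intended applications. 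In short: the first paragraph proves a strictly weaker statement, and the second paragraph is honestly incomplete; switching to the paper's Fourier-over-$U$ expansion (which lands the coefficients on the ergodic $\X$ rather than on $\X\times_{\phi\circ\rho}V$) is what circumvents the difficulty in the base case, after which the lifting trick does the rest.
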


\begin{proof} Let us first consider the case when $V$ (and $\phi$) is trivial, so that $f$ is now a $(G,\X,S^1)$-function.
By hypothesis, there exists a $(\X \times_\rho U,S^1)$-function $F$ such that
$$ f( g, x ) = p( g, x ) \frac{F( T_g x, \rho(g,x) u )}{F(x,u)}$$
for all $g \in G$ and almost every $x \in X$, $u \in U$.  We rearrange this as
\begin{equation}\label{ftg}
F(T_g x, \rho(g,x) u) = F(x,u) \overline{p}(g,x) f(g,x).
\end{equation}
We perform a Fourier expansion in $U$, obtaining
$$ F(x,u) = \sum_{\chi \in \hat U} F_\chi(x) \chi(u)$$
for some $F_\chi \in L^\infty(\X)$, not all identically zero. Comparing Fourier coefficients in \eqref{ftg}, we conclude that
$F_\chi(T_g x) \chi \circ \rho(g,x) = F_\chi(x) \overline{p}(g,x) f(g,x)$
for all $g \in G$ and $\chi \in \hat U$, and $\mu_X$-almost every $x \in X$.  In particular this shows that the function $|F_\chi|$ is $G$-invariant and thus (by ergodicity) constant.  Thus there exists $\chi \in \hat U$ such that $|F_\chi|$ is almost everywhere non-vanishing.  We can then write
$$ f(g,x) = p(g,x) \chi \circ \rho(g,x) \frac{F_\chi(T_g x)}{F_\chi(x)}.$$
This completes the proof in the case when $V$ is trivial.

To handle the general case we perform a lifting trick.  Observe that the system $\X' := \X \times_{\rho \oplus \phi \circ \rho} U \times V$ is a simultaneous extension of both $\X \times_\rho U$ and $\X \times_{\phi \circ \rho} V$.  Since $(\pi^{\X \times_\rho U}_{\X \times_{\phi \circ \rho} V})^* f$ is $(G,\X \times_\rho U,S^1)$-cohomologous to $(\pi^{\X \times_\rho U}_{\X \times_{\phi \circ \rho} V})^* p$, we see that $(\pi^{\X'}_{\X \times_{\phi \circ \rho} V})^* f$ is $(G,\X',S^1)$-cohomologous to $(\pi^{\X'}_{\X \times_{\phi \circ \rho} V})^* p$.  Writing $\X'$ as an extension of $\X \times_{\phi \circ \rho} V$ by the cocycle $\rho \circ \pi^{\X \times_{\phi \circ \rho}}_\X$, and applying the previous result, we obtain the claim. 
\end{proof}

\section{A measurable selection lemma}\label{measure-sec}

Suppose that $G$ is a countable abelian group  and  $\X$ is an ergodic $G$-system.
In our arguments we will frequently have a family of $(G,\X,S^1)$-functions $h_u$, parameterised in some measurable fashion by a parameter $u$ in a compact abelian group $U$, such that $h_u$ takes values in $\Phase_k(G,\X,S^1) \cdot B^1(G,\X,S^1)$. In other words (by the axiom of choice),  for each $u$, we may find $\psi_u \in \Phase_k(G,\X,S^1)$ and $F_u \in M(\X,S^1)$ such that for each $u \in U$, we have the equation
\begin{equation}\label{eq:measurable}
h_u =\psi_u \mder f_u.
\end{equation}

Unfortunately, due to the use of the axiom of choice, it is not necessarily the case that the functions $\psi_u$ and $F_u$ that arise here are measurable.  Fortunately, one can resolve this problem by using some separation properties of phase polynomials and the hypothesis that $\X$ is separable.  The basic tool here is the following (cf. \cite[Lemma 7.1]{gt-ff-ratner}):

\begin{lemma}[Separation lemma]\label{sep-lem} Let $G$ be a countable abelian group, let $\X=(X,\B_X,\mu_X,(T_g)_{g \in G})$ be an ergodic $G$-system, let $k \geq 1$, and let $\phi, \psi \in \Phase_{<k}(\X)$ be such that $\phi/\psi$ is non-constant.  Then $\| \phi-\psi\|_{L^2(\X)} \geq \sqrt{2} / 2^{k-2}$.
\end{lemma}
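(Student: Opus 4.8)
The plan is to prove the Separation Lemma by induction on $k$, exploiting the fact that the multiplicative derivative of a phase polynomial of degree $<k$ is a phase polynomial of degree $<k-1$, and that on an ergodic system a non-constant phase polynomial is ``spread out'' in a way that forces its $L^2$-distance from the constant $1$ to be bounded below. The key observation is that for $\phi,\psi\in\Phase_{<k}(\X)$ the ratio $\eta:=\phi/\psi$ is again in $\Phase_{<k}(\X)$ (by Lemma \ref{trivpoly-lem}(ii)), and since $|\phi|=|\psi|=1$ a.e.\ we have $\|\phi-\psi\|_{L^2(\X)}^2 = \|1-\eta\|_{L^2(\X)}^2 = 2 - 2\,\mathrm{Re}\int_X \eta\ d\mu_X$. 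So the lemma reduces to showing: if $\eta\in\Phase_{<k}(\X)$ is non-constant on an ergodic system, then $\mathrm{Re}\int_X \eta\ d\mu_X \le 1 - 1/4^{k-2}$, or more precisely $|1-\int_X\eta\ d\mu_X|$ or $\|1-\eta\|_{L^2}$ is bounded below by $\sqrt2/2^{k-2}$.

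First I would dispose of the base cases. For $k=1$: an ergodic phase polynomial of degree $<1$ is $G$-invariant, hence constant by ergodicity, so the hypothesis is vacuous (the bound $\sqrt2/2^{-1} = 2\sqrt2$ would be required, but there is nothing to prove). For $k=2$: $\eta$ is an eigenfunction, $T_g\eta = \lambda(g)\eta$ for a character $\lambda$; non-constancy forces $\lambda$ non-trivial, so $\int_X\eta\ d\mu_X = \int_X T_g\eta\ d\mu_X = \lambda(g)\int_X\eta\ d\mu_X$ gives $\int_X\eta\ d\mu_X = 0$, whence $\|1-\eta\|_{L^2}^2 = 2$ and the bound $\sqrt2/2^0=\sqrt2$ holds with equality. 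Now for the inductive step, suppose $k\ge 3$ and $\eta\in\Phase_{<k}(\X)$ is non-constant. Since $\eta$ is non-constant and $G$ acts ergodically, there exists $h\in G$ with $T_h\eta\neq\eta$, i.e.\ $\mder_h\eta$ is a non-constant element of $\Phase_{<k-1}(\X)$ (it is in $\Phase_{<k-1}(\X)$ by Lemma \ref{trivpoly-lem}(ii)). By the inductive hypothesis applied to $\mder_h\eta$ and the constant $1$ (both in $\Phase_{<k-1}(\X)$), we get $\|1-\mder_h\eta\|_{L^2(\X)} \ge \sqrt2/2^{k-3}$, i.e.\ $\|\eta - T_h\eta\|_{L^2(\X)} = \|\eta(1-\mder_h\eta)\|_{L^2(\X)} = \|1-\mder_h\eta\|_{L^2(\X)} \ge \sqrt2/2^{k-3}$ (using $|\eta|=1$). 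Then by the triangle inequality $\|\eta-T_h\eta\|_{L^2} \le \|\eta-1\|_{L^2} + \|1-T_h\eta\|_{L^2} = 2\|\eta-1\|_{L^2}$ (since $\|1-T_h\eta\|_{L^2}=\|1-\eta\|_{L^2}$ by measure-preservation), so $\|\eta-1\|_{L^2} \ge \frac12\cdot\sqrt2/2^{k-3} = \sqrt2/2^{k-2}$, closing the induction. Translating back, $\|\phi-\psi\|_{L^2(\X)} = \|1-\phi/\psi\|_{L^2(\X)} \ge \sqrt2/2^{k-2}$, as required.

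The main point requiring care — and the only real subtlety — is the choice of the shift $h$ producing a \emph{non-constant} derivative: one needs that a non-constant measurable function on an ergodic system cannot have $T_h\eta = \eta$ for all $h\in G$, which is precisely the definition of ergodicity (the only $G$-invariant $L^2$ functions are constants), so $T_h\eta\neq\eta$ for some $h$, and then $\mder_h\eta = \bar\eta\,T_h\eta$ is non-constant because if $\mder_h\eta$ were equal to a constant $c$ then $T_h\eta = c\eta$ and $\eta$ would be an eigenfunction-type object; more directly $\mder_h\eta$ is constant iff $T_h\eta/\eta$ is constant a.e., which together with $|T_h\eta/\eta|=1$ and the contradiction with $T_h\eta\neq\eta$ only rules out $c=1$ — one must still handle $c\neq 1$. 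But if $T_h\eta=c\eta$ with $c\neq 1$ a constant of modulus $1$, then $\int_X\eta = \int_X T_h\eta = c\int_X\eta$ forces $\int_X\eta=0$, giving $\|1-\eta\|_{L^2}^2=2\ge 2/4^{k-2}$ directly; so in the remaining case $\mder_h\eta$ is genuinely non-constant and the induction applies. I would write the inductive step to split on these two cases. No step requires a lengthy computation; the whole argument is a clean induction built on the derivative-lowers-degree structure already recorded in Lemma \ref{trivpoly-lem}.
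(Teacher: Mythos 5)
Your proof is correct and uses the same ingredients as the paper's: reduce to $\psi=1$, handle $k=2$ by rotation ($\int\eta = \lambda(h)\int\eta$ forces $\int\eta=0$), and in the inductive step combine measure-preservation of $T_h$, the identity $\|\eta-T_h\eta\|_{L^2}=\|1-\mder_h\eta\|_{L^2}$, and the triangle inequality with the fact that $\mder_h$ drops degree by one. The only difference is presentational: you argue directly by choosing a single $h$ with $T_h\eta\neq\eta$ and splitting on whether $\mder_h\eta$ is constant, whereas the paper argues by contradiction, showing $\mder_h\phi$ is constant for \emph{every} $h$ and hence $\phi\in\Phase_{<2}(\X)$; the two routes are logically equivalent and of comparable length.
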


\begin{remark} The constant $\sqrt{2}/2^{k-2}$ can be improved slightly, but for our purposes any quantity that is independent of $\X$ would suffice here.
\end{remark}

\begin{proof} By dividing $\phi,\psi$ by $\psi$ we may assume $\psi=1$.

The claim is vacuous when $k=1$.  When $k=2$ we argue as follows.  For any $h \in G$ we have
\begin{equation}\label{phishift}
\int_X \phi\ d\mu_X = \int_X T_h \phi\ d\mu = \int_X (\mder_h \phi) \phi\ d\mu_X.
\end{equation}
If $\phi$ is a phase polynomial of degree $<2$, then $\mder_h \phi$ is constant; if $\phi$ is non-constant, then (by ergodicity) $\mder_h \phi$ is not identically $1$ for at least one $h$.  Thus $\int_X \phi\ d\mu_X = 0$ and so $\|\phi - 1 \|_{L^2(X)} = \sqrt{2}$, and the claim follows.

Now suppose inductively that $k \geq 3$ and the claim has already been proven smaller values of $k$.  Suppose for contradiction that there was a non-constant $\phi \in \Phase_{<k}(\X)$ such that $\|\phi-1\|_{L^2(\X)} < \sqrt{2}/2^{k-2}$.  Arguing as in \eqref{phishift} we conclude that
$\|(\mder_h \phi) \phi-1\|_{L^2(\X)} < \sqrt{2}/2^{k-2}$ for all $h$, and thus by the triangle inequality
$$ \| \mder_h \phi - 1\|_{L^2(\X)} = \| (\mder_h \phi)\phi - \phi\|_{L^2(\X)} < \sqrt{2}/2^{k-3}.$$
But $\mder_h \phi \in \Phase_{<k-1}(\X)$.  By the induction hypothesis we conclude that $\mder_h \phi$ is constant for every $h$, or in other words that $\phi \in \Phase_{<2}(\X)$.  The claim then again follows from the induction hypothesis.
\end{proof}

Since $L^2(\X)$ is separable, we conclude

\begin{corollary}[At most countably many polynomials modulo constants]\label{const}  Let $G$ be a countable abelian group, let $\X=(X,\B_X,\mu_X,(T_g)_{g \in G})$ be an ergodic $G$-system, let $k \geq 1$.  Then the collection $\Phase_{<k}(\X)$, after quotienting out by constants, is at most countable.
\end{corollary}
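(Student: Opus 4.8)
The plan is to deduce this at once from the Separation Lemma \ref{sep-lem}, together with the separability of $L^2(\X)$ (which follows from the standing assumption that $\X$ is separable modulo null sets). First I would recall that every $\phi \in \Phase_{<k}(\X)$ takes values in $S^1$ $\mu_X$-almost everywhere, so in particular $\Phase_{<k}(\X)$ is a subset of $L^2(\X)$.

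Next, put on $\Phase_{<k}(\X)$ the equivalence relation $\phi \sim \psi$ iff $\phi/\psi$ is ($\mu_X$-a.e.) constant, and, using the axiom of choice, pick one representative $\phi_c$ from each equivalence class $c$. If $c \neq c'$, then $\phi_c/\phi_{c'}$ is non-constant by definition of the relation, so Lemma \ref{sep-lem} gives $\|\phi_c - \phi_{c'}\|_{L^2(\X)} \geq \sqrt{2}/2^{k-2}$. Hence $\{\phi_c\}$ is a $\delta$-separated subset of $L^2(\X)$, where $\delta := \sqrt{2}/2^{k-2} > 0$; the key point is that this separation constant is uniform, so a single $\delta$ works simultaneously for all pairs of distinct classes.

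Finally, I would invoke the elementary fact that a separable metric space contains no uncountable $\delta$-separated subset: the open balls of radius $\delta/2$ about the points of such a subset are pairwise disjoint, and each must contain a point of a fixed countable dense set (which exists because $L^2(\X)$ is separable), forcing the subset to be at most countable. Applying this to $\{\phi_c\}$, we conclude that the set of equivalence classes is at most countable, which is exactly the assertion. There is essentially no obstacle here beyond correctly quoting Lemma \ref{sep-lem} and the separability hypothesis; the only subtlety worth flagging is the uniformity of the bound in Lemma \ref{sep-lem}.
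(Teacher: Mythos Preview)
Your proof is correct and follows exactly the approach the paper intends: the corollary is stated immediately after Lemma~\ref{sep-lem} with only the one-line justification ``Since $L^2(\X)$ is separable, we conclude'', and you have simply spelled out the implicit packing argument.
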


We are now ready to establish the measurable selection lemma.  

\begin{lemma}[Measurable selection lemma]\label{measurable_choice}  Let $G$ be a countable abelian group.
Let $\X$ be an ergodic $G$-system, and let $k \geq 1$.  Let $U$ be a compact abelian group. If $u \to h_u$ is a Borel measurable map from $U$ to $\Phase_{<k}(G,\X,S^1) \cdot B^1(G,\X,S^1) \subset M(G,\X,S^1)$ (where we give the latter the topology of convergence in measure), then there is a Borel measurable choice of $f_u, \psi_u$ (as functions from $U$ to $M(\X,S^1)$ and $\Phase_{<k}(G,\X,S^1)$ respectively) obeying \eqref{eq:measurable}. 
\end{lemma}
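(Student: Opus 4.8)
The plan is to realize the set $\Phase_{<k}(G,\X,S^1) \cdot B^1(G,\X,S^1)$ as a measurable image of a nice parameter space and then invoke a classical measurable selection theorem (Jankov--von Neumann or Kuratowski--Ryll-Nardzewski). The key structural input is Corollary \ref{const}: modulo constants there are only countably many elements of $\Phase_{<k}(\X)$, and hence (applying this for each fixed $g \in G$, using that $\Phase_{<k}(G,\X,S^1)$ consists of functions that are, for each $g$, phase polynomials of degree $<k$ in $x$) the group $\Phase_{<k}(G,\X,S^1)$ modulo the constant-in-$x$ functions $M(G,S^1)$ is ``small'' in a suitable sense. More precisely, first I would fix a countable set $\{q^{(n)}\}_{n \in \N} \subset \Phase_{<k}(G,\X,S^1)$ of representatives so that every $\psi \in \Phase_{<k}(G,\X,S^1)$ can be written as $\psi = c \cdot q^{(n)}$ for some $n$ and some $c \in M(G,S^1)$ (the constant-in-$x$ ambiguity); this is possible by Corollary \ref{const} applied coordinate-wise in $g$, together with the separability of $\X$.

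Next, given the data, I would set up the selection as follows. Consider the map
$$ \Theta: \N \times M(G,S^1) \times M(\X,S^1) \to M(G,\X,S^1), \qquad \Theta(n, c, F) := c \cdot q^{(n)} \cdot \mder F. $$
Each of $\N$, $M(G,S^1)$ (with the topology of pointwise convergence on the countable group $G$, valued in the compact metric group $S^1$), and $M(\X,S^1)$ (with convergence in measure) is a Polish space, so the domain is Polish; and $\Theta$ is Borel measurable, since $\mder$ is continuous from $M(\X,S^1)$ to $B^1(G,\X,S^1)$ and pointwise multiplication is continuous. By the discussion above, the image of $\Theta$ is exactly $\Phase_{<k}(G,\X,S^1) \cdot B^1(G,\X,S^1)$. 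Now the composite $u \mapsto h_u$ followed by a Borel right inverse of $\Theta$ would give us the measurable choice. To get such a right inverse along the Borel set $\{h_u : u \in U\}$ (or rather along the Borel map $u \mapsto h_u$), I would apply the Jankov--von Neumann uniformization theorem to the Borel (indeed $F_\sigma$) set $R := \{(h, n, c, F) : \Theta(n,c,F) = h\} \subseteq M(G,\X,S^1) \times (\N \times M(G,S^1) \times M(\X,S^1))$: its projection to the first coordinate is all of $\Phase_{<k}(G,\X,S^1)\cdot B^1(G,\X,S^1)$, and Jankov--von Neumann produces a universally (hence, after the standard completion/modification, Borel on the relevant range) measurable section $h \mapsto (n(h), c(h), F(h))$. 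Composing with $u \mapsto h_u$ and setting $\psi_u := c(h_u) q^{(n(h_u))}$, $f_u := F(h_u)$ yields the desired measurable choice satisfying \eqref{eq:measurable}.

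An alternative, perhaps cleaner route avoiding the subtlety of ``Borel versus universally measurable'' sections: use the Kuratowski--Ryll-Nardzewski selection theorem. For each fixed $n$, the set $\{(c,F) : c \cdot q^{(n)} \cdot \mder F = h_u\}$ is a closed subset of the Polish group $M(G,S^1) \times M(\X,S^1)$ (closedness follows because $\mder$ and multiplication are continuous and equality in $M(G,\X,S^1)$ is a closed condition), and it depends in a measurable way on the parameter $u$ through $h_u$; one then partitions $U$ into the Borel pieces $U_n := \{u : h_u \in q^{(n)}\cdot M(G,S^1) \cdot B^1\} \setminus \bigcup_{m<n} U_m$ and applies Kuratowski--Ryll-Nardzewski on each $U_n$ to the measurable closed-set-valued map $u \mapsto \{(c,F) : c q^{(n)} \mder F = h_u\}$, obtaining a measurable selection $(c_u, f_u)$ there; gluing over $n$ gives the global choice, with $\psi_u = c_u q^{(n)}$ on $U_n$.

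The main obstacle I expect is verifying the two measurability/topological facts that make the selection theorems applicable: first, that $\Phase_{<k}(G,\X,S^1)$ modulo $M(G,S^1)$ really is countable (this needs Corollary \ref{const} together with a careful argument that a function on $G \times X$ which is a degree-$<k$ phase polynomial in $x$ for each $g$ can be measurably decomposed $g$-by-$g$ and still be measurable in $g$ — here one uses Lemma \ref{sep-lem} to see that distinct cosets of constants are $\sqrt2/2^{k-2}$-separated in $L^2$, so the ``which coset'' function of $g$ is measurable), and second, that the relevant fiber sets are closed in the convergence-in-measure topology so that Kuratowski--Ryll-Nardzewski (or the closed-graph hypotheses of Jankov--von Neumann) apply. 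Both are routine descriptive-set-theory arguments given the separability hypothesis on $\X$ that is in force throughout, but they require some care to state precisely, and this is where I would spend the bulk of the writeup.
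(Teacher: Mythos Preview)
Your overall strategy---parametrize the target set and invoke a classical measurable selection theorem---is sound, and indeed the paper's remark after the proof explicitly mentions descriptive-set-theoretic approaches as valid alternatives. However, there is a genuine error in your setup: the quotient $\Phase_{<k}(G,\X,S^1)/M(G,S^1)$ is \emph{not} countable in general. An element $\psi \in \Phase_{<k}(G,\X,S^1)$ assigns to each $g \in G$ a phase polynomial $\psi(g,\cdot) \in \Phase_{<k}(\X)$, and Corollary~\ref{const} tells you the latter is countable modulo constants; but since $G$ is countably infinite and there is no constraint linking different values of $g$, the resulting space of maps $G \to \Phase_{<k}(\X)/S^1$ has cardinality continuum. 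So your countable family $\{q^{(n)}\}$ does not exist, and the parametrization $\Theta: \N \times M(G,S^1) \times M(\X,S^1) \to M(G,\X,S^1)$ does not surject.

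The fix is to change the object you select. Rather than selecting $\psi_u$, select $f_u \in M(\X,S^1)$ directly: once $f_u$ is chosen, $\psi_u := h_u/\mder f_u$ is determined. The set $\Omega_u := \{F \in M(\X,S^1): h_u/\mder F \in \Phase_{<k}(G,\X,S^1)\}$ is a coset of $\Phase_{<k+1}(\X)$ (not of $\Phase_{<k}(G,\X,S^1)$), and \emph{this} group is countable modulo $S^1$ by Corollary~\ref{const}. Moreover $\Omega_u$ is closed in $M(\X,S^1)$, so Kuratowski--Ryll-Nardzewski applies cleanly to $u \mapsto \Omega_u$.

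The paper's own proof takes exactly this viewpoint but avoids abstract selection theorems altogether. It fixes a countable dense sequence $F_1, F_2, \ldots$ in $M(\X,S^1)$, lets $n_u$ be the least index for which $\Omega_u$ meets the ball of radius $\sqrt{2}/2^{k+1}$ around $F_{n_u}$, and then uses the separation lemma (Lemma~\ref{sep-lem}) to see that the elements of $\Omega_u$ in that ball differ only by constants, so there is a unique closest point. This gives a concrete, visibly Borel selection. Your approach (once repaired) trades this explicit construction for an appeal to a black-box theorem; the paper's approach is more elementary and self-contained, while yours would generalize more readily if the coset structure were less rigid.
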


\begin{proof}  For each $u \in U$, write
$$\Omega_{u} := \{ f \in M(\X,S^1): h_u / \mder f \in \Phase_{<k}(G,\X,S^1) \},$$
then $\Omega_{u}$ is non-empty for each $u$ by hypothesis.  Also, if $f_u, f'_u \in \Omega_{u}$, then $\mder (f_u/f'_u) \in \Phase_{<k}(G,\X,S^1)$, and so (by \eqref{integ}) $f_u/f'_u \in \Phase_{<k+1}(\X)$; reversing this argument, we conclude  that $\Omega_{u}$ is a coset of $\Phase_{<k+1}(\X)$ in $M(\X,S^1)$ for each $u$.

As $L^2(\X)$ is separable, one can find a countable sequence $F_1, F_2, \ldots \in M(\X,S^1)$ which is dense in $M(\X,S^1)$.  For each $u$, let $n_u$ be the first integer such that there exists $k\geq 1$ and $f_u \in \Omega_{u}$ with $\|f_u - F_{n_u}\|_{L^2(\X)} < \sqrt{2} / 2^{k+1}$; this integer exists by density, and clearly depends in a measurable fashion on $u$.  By Lemma \ref{sep-lem} and the triangle inequality, we see that the $f_u \in \Omega_u$ which lie within $\sqrt{2} / 2^{k+1}$ of $F_{n_u}$ are all constant multiples of each other.  There is thus a unique $f_u \in \Omega_u$ which minimizes the distance $\|f_u - F_{n_u}\|_{L^2(\X)}$.  Selecting this $f_u$ (and then solving for $\psi_u$ using \eqref{eq:measurable}) we obtain the claim.
\end{proof}

\begin{remark} One can also establish this result using a general measure selection result of Dixmier (see e.g. \cite[Theorem 1.2.4]{kechris}) together with Lusin's theorem and Corollary \ref{const}; we omit the details.  One can also appeal to the descriptive set theory of Polish groups, see e.g. \cite[Appendix A]{hk-cubes}.
\end{remark}

\section{Finite characteristic algebra}

In this appendix we collect some algebraic facts that exploit the finite characteristic of the underlying field $\F$ (or the finite torsion of various abelian groups).

\subsection{Compact abelian torsion groups}

Recall that a group $U$ is \emph{$m$-torsion} if we have $u^m=1$ for all $u \in U$.

\begin{lemma}[Open sets of torsion groups contain open subgroups]\label{tor-lem} Let $U$ be a compact abelian $m$-torsion group for some $m \geq 1$.  Let $V$ be an open neighborhood of the identity in $U$.  Then $V$ contains an open subgroup $W$ of $U$.
\end{lemma}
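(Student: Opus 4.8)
The plan is to show that the hypotheses force $U$ to be profinite (an inverse limit of finite abelian groups), after which the conclusion is essentially immediate. The first step is to pass to the Pontryagin dual $\hat U$: since $U$ is compact, $\hat U$ is discrete, and since the endomorphism $u \mapsto u^m$ of $U$ is the trivial map $u \mapsto 1$, its dual $\chi \mapsto \chi^m$ on $\hat U$ is trivial as well, so $\hat U$ is itself a discrete abelian $m$-torsion group.

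Next I would note that $\hat U$ is the directed union of its finitely generated subgroups, and that a finitely generated abelian group of exponent dividing $m$ is finite (by the classification of finitely generated abelian groups it is a finite direct sum $\bigoplus_i \Z/d_i\Z$ with each $d_i \mid m$). Hence $\hat U = \varinjlim_i F_i$ with each $F_i$ finite. Dualizing back and using Pontryagin duality, $U \cong \varprojlim_i \widehat{F_i}$ is an inverse limit of finite abelian groups, i.e.\ $U$ is profinite. (This also reproves the statement, used elsewhere in the paper via \cite{morris}, that $U$ is a product of cyclic groups once one additionally knows $U$ has bounded torsion.)

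Finally, for a profinite group the kernels $W_i := \ker(\pi_i\colon U \to \widehat{F_i})$ of the canonical projections are open subgroups of $U$ and form a neighborhood basis of the identity. Thus the given open neighborhood $V$ of the identity contains some $W_i$, and this $W_i$ is the required open subgroup.

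The argument is essentially bookkeeping; the only point needing a little care is justifying that the $W_i$ form a neighborhood basis at the identity. An alternative organization avoids this: by van Dantzig's theorem it suffices to check that $U$ is totally disconnected, and the identity component of $U$, being a compact connected abelian group, is divisible, hence trivial because $U$ is $m$-torsion; van Dantzig's theorem then produces an open subgroup of $U$ inside $V$ directly. I would expect this last, mildly topological, reduction to be the only genuine (if routine) obstacle.
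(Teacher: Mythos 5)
Your proposal is correct, but it takes a genuinely different route from the paper. The paper proves the lemma by a short Fourier-analytic (Steinhaus-type) argument: pick $V'$ with $V'-V' \subset V$, approximate $1_{V'}$ in $L^2(U)$ by a finite linear combination $F$ of characters $\chi_1,\ldots,\chi_n$; since $U$ is $m$-torsion each $\chi_j$ has open kernel, so $W := \bigcap_j \ker\chi_j$ is an open subgroup on whose cosets $F$ is constant; a good approximation forces $V'$ to have density $>1/2$ on some coset of $W$, whence $W \subset V'-V' \subset V$. Your argument is structural rather than analytic: you show (via Pontryagin duality and the fact that a finitely generated abelian group of bounded exponent is finite) that $U$ is profinite, and then invoke the standard neighborhood basis of open kernels, or alternatively show total disconnectedness (identity component is connected compact abelian hence divisible, but $m$-torsion divisible is trivial) and invoke van Dantzig's theorem. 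Your route proves strictly more — it establishes profiniteness of $U$, not just the stated conclusion — and avoids Haar measure and $L^2$ approximation, at the cost of appealing to heavier structure theory (Pontryagin duality for limits, or van Dantzig). The paper's route is shorter, self-contained at the level of basic harmonic analysis on compact groups, and has the amusing feature of paralleling the Bogolyubov-type arguments that appear in the finitary theory (as the paper itself remarks). Both are sound; either would serve.
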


\begin{proof}  We will use a Fourier-analytic method.  As $V$ is an open neighborhood of the origin, one can find another open neighborhood $V'$ of the origin such that $V' - V' \subset V$.

Let $\mu$ be the Haar measure on $U$, then $\mu(V') > 0$.  Let $\eps > 0$ be a small number (depending on $\mu(V')$) to be chosen later.  By Fourier analysis, we can approximate the indicator function $1_{V'}$ to within $\eps$ in $L^2(U)$-norm by some linear combination $F$ of finitely many characters $\chi_1,\ldots,\chi_n \in \hat U$, where $n$ is finite but potentially unbounded.  Since $U$ is $m$-torsion, each character $\chi_j$ takes on at most $m$ values, with each level set of $\chi_j$ being a coset of an open subgroup of $U$.  If we let $W$ be the intersection of the kernels of all the $\chi_j$, then $W$ is also an open subgroup of $U$, and $F$ is constant on every coset of $W$.  Since $F$ approximates $1_{V'}$ to within $\eps$, we conclude (if $\eps$ is sufficiently small depending on $\mu(V')$) that there exists a coset of $W$ on which $V'$ has density greater than $1/2$.  But then this forces $W \subset V'-V'$ and hence $W \subset U$, as desired.
\end{proof}

\begin{lemma}[Splitting lemma]\label{subgroup} Let $U$ be a compact abelian $m$-torsion group for some $m \geq 1$.  Let $W$ be an open subgroup of $U$.  Then there exists a splitting $U = W' \times Y$, where $W'$ is an open subgroup of $W$, and $Y$ is a finite abelian $m$-torsion group.
\end{lemma}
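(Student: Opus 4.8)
The plan is to reduce the statement to the structural description of compact abelian torsion groups and then to a trivial rearrangement of a product. First I would recall the structure theorem, which is already invoked elsewhere in the paper (via \cite[Chapter 5, Theorem 18]{morris}, applied to each primary component): a compact abelian $m$-torsion group is topologically isomorphic to a product $U \cong \prod_{i \in I} C_{n_i}$ of finite cyclic groups $C_{n_i}$ with each $n_i$ dividing $m$. (If one wishes to avoid quoting this at full generality, write $m = p_1^{a_1} \cdots p_r^{a_r}$, decompose $U$ into its finitely many primary parts $U = \prod_{j=1}^r U_{p_j}$, each $U_{p_j}$ a compact abelian $p_j^{a_j}$-torsion group, and apply the cited cyclic-decomposition result to each.)

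Next I would exploit that $W$ is open. In the product topology on $\prod_{i \in I} C_{n_i}$, the subgroups $K_F := \prod_{i \in I \setminus F} C_{n_i}$, viewed inside $U$ as the tuples supported away from $F$ and ranging over the finite subsets $F \subseteq I$, form a neighbourhood basis at the identity. Each $K_F$ is an open subgroup of $U$, being the kernel of the continuous projection $U \to \prod_{i \in F} C_{n_i}$ onto a finite (discrete) group. Since $W$ is open and contains the identity, it contains some $K_F$.

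Finally, set $W' := K_F$ and $Y := \prod_{i \in F} C_{n_i}$. Rearranging the product yields a topological isomorphism $U \cong W' \times Y$; here $W'$ is an open subgroup of $U$ contained in $W$, hence an open subgroup of $W$, and $Y$ is finite (as $F$ is finite) and $m$-torsion (as each $n_i \mid m$). This is the desired splitting.

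The only genuine input is the structure theorem for compact abelian torsion groups; granted that, the rest is routine. The step I would be most careful about is checking that $K_F$ really is simultaneously a subgroup, open, and contained in $W$ — all three hold because $K_F$ is at once a basic open neighbourhood of the identity in the product topology and literally a sub-product of the factorization $U \cong \prod_{i \in I} C_{n_i}$.
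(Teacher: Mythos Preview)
Your proof is correct and follows essentially the same approach as the paper: invoke the structure theorem to write $U$ as a product of cyclic groups, observe that $W$ must contain a cofinite sub-product (what the paper calls a ``cylinder neighbourhood''), and take that sub-product as $W'$ with its finite complement as $Y$. Your version is simply a more detailed write-up of the same argument.
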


\begin{proof} It is known (see e.g. \cite[Chapter 5, Theorem 18]{morris}) that a compact abelian $m$-torsion group $U$ is topologically isomorphic to the direct product of cyclic $m$-torsion groups\footnote{In particular, the bounded torsion allows us to avoid having to deal with procyclic groups  which are not direct products of cyclic groups.}.  Thus $W$ must contain a cylinder neighbourhood $W'$ of the origin, i.e. a cofinite sub-product of these cyclic groups.  Since one clearly has the desired splitting $U = W' \times Y$, the claim follows.
\end{proof}

\subsection{Polynomials are discretely valued}

An important fact about phase polynomials over $\Fw$, which is not true for polynomials over some other groups $G$ (such as the integers $\Z$), is that such polynomials only take finitely many values.  More precisely, if we let $C_n := \{ z \in \C: z^n = 1\}$ denote the cyclic group of $n^\th$ roots of unity, we have 

\begin{lemma}[Phase polynomials over $\Fw$ are discretely valued]\label{L:values-in-F_p}
Let $\F$ be a finite field of characteristic $p$, and let $\X = (X, \B_X, \mu_X,(T_g)_{ g \in \Fw})$ be an
ergodic  $\Fw$-system. 
\begin{itemize}
\item[(i)] If $f \in \Phase_{<k}(\X,S^1)$ for some $k \geq p$, then $f^p \in \Phase_{<k-p+1}(\X,S^1)$.
\item[(ii)] If $f \in \Phase_{<k}(\X,S^1)$ for some $k \geq 1$, then (after multiplying $f$ by a constant), $f$ takes values in $C_{p^{\lfloor (k-2)/(p-1) \rfloor + 1}}$.  In other words,
$ \Phase_{<k}(\X) = S^1 \cdot \Phase_{<k}(\X, C_{p^{\lfloor (k-2)/(p-1) \rfloor+1}}).$
\item[(iii)] If $f \in \Phase_{<k}(\X,S^1)$ for some $k \geq 1$, and $f$ takes values in $C_{p^{\lfloor (k-2)/(p-1) \rfloor + 1}}$, then for any $g \in \Fw$, $\prod_{i=0}^{p-1} T_g^i f$ takes values in $C_{p^{\lfloor k/p \rfloor}}$.
\item[(iv)] If $f \in \Phase_{<k}(\Fw,\X,S^1)$ is a cocycle for some $k \geq 1$, then $f$ takes values in $C_{p^{\lfloor k/p \rfloor + 1}}$.
\end{itemize}
\end{lemma}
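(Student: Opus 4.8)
The plan is to prove parts (i)--(iv) of the lemma in that order, each one bootstrapping off its predecessors; part~(i) carries the quantitative weight. The two recurring tools are that $\Fw$ is $p$-torsion, so $T_g^p = T_{pg} = \id$ for every $g \in \Fw$, and the multiplicative Taylor identity
$$ T_g^i \phi = \prod_{j \ge 0} (\mder_g^j \phi)^{\binom{i}{j}}, $$
valid for any phase polynomial $\phi$ (the product being finite), which I would prove by induction on $i$ from $T_g(\mder_g^j\phi) = (\mder_g^j\phi)(\mder_g^{j+1}\phi)$. Summing over $i = 0,\ldots,p-1$, the hockey-stick identity $\sum_{i=0}^{p-1}\binom{i}{j} = \binom{p}{j+1}$ together with $p \mid \binom{p}{m}$ for $1 \le m \le p-1$ converts the relation $T_g^p\phi = \phi$ into the arithmetic relations among the $\mder_g^j\phi$ that drive the argument.

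For part~(i), first suppose $k > p$ and the claim holds for $k-1$: then for each $g$ the function $\mder_g f$ lies in $\Phase_{<k-1}(\X,S^1)$ with $k-1 \ge p$, so $\mder_g(f^p) = (\mder_g f)^p \in \Phase_{<k-p}(\X,S^1)$, and since this holds for all $g$ the polynomiality criterion (Lemma~\ref{trivpoly-lem}(iii)) gives $f^p \in \Phase_{<k-p+1}(\X,S^1)$. This reduces matters to the base case $k=p$, i.e.\ to showing that $f \in \Phase_{<p}(\X,S^1)$ forces $f^p$ constant, equivalently $(\mder_g f)^p = 1$ for all $g$. Here I would observe that $\mder_{h_1}\cdots\mder_{h_{p-1}}f$ is constant in $x$ (one further differentiation gives $1$, and $\X$ is ergodic) and is a symmetric multilinear form on $\Fw^{p-1}$ (the intermediate $(p-2)$-fold derivative $\mder_{h_2}\cdots\mder_{h_{p-1}}f$ lies in $\Phase_{<2}(\X,S^1)$, so differentiating it in $h_1$ is additive), hence is $C_p$-valued because $\Fw$ is $p$-torsion; in particular $\mder_g^{p-1}f \in C_p$ for all $g$. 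Then applying $T_g^p = \id$ to $\mder_g^{l-1}f \in \Phase_{<p-l+1}(\X,S^1)$ and expanding by the Taylor identity shows, for each $1 \le l \le p-1$, that $\mder_g^{l}f\cdot\prod_{n>l}(\mder_g^{n}f)^{b_{n-l+1}}$ is $C_p$-valued, where $b_m := \binom{p}{m}/p \in \Z$ and $b_1 = 1$; a downward induction on $l$, started from $\mder_g^{p-1}f \in C_p$, then gives $\mder_g^m f \in C_p$ for all $1 \le m \le p-1$, the case $m=1$ being the assertion. Making this downward induction close with the correct moduli is, I expect, the main obstacle of the lemma.

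Part~(ii) then follows by iteration: for any $k$, the case $k \ge p$ of (i) (and, for $k \le p$, the base case of (i) via monotonicity) gives $f^p \in \Phase_{<\max(k-p+1,1)}(\X,S^1)$, hence $f^{p^j} \in \Phase_{<\max(k-j(p-1),1)}(\X,S^1)$, and taking $j = N(k) := \lfloor(k-2)/(p-1)\rfloor + 1 = \lceil(k-1)/(p-1)\rceil$ makes the exponent $\le 1$, so $f^{p^{N(k)}}$ is constant by ergodicity. For part~(iii) I would again expand $\prod_{i=0}^{p-1}T_g^i f = \prod_{l\ge 0}(\mder_g^l f)^{\binom{p}{l+1}}$: since $f \in \Phase_{<k}(\X,S^1)$ kills $\mder_g^l f$ for $l \ge k$ and $\binom{p}{l+1} = 0$ for $l \ge p$, the only factor with exponent not divisible by $p$ is $\mder_g^{p-1}f$, a phase polynomial of degree $<k-p+1$ whose modulus is controlled via (ii), while the other factors form a $p$-th power of a $C_{p^{N(k)}}$-valued function; an induction on $k$ (using that $\prod_{i=0}^{p-1}T_g^i f$ is itself $T_g$-invariant, as $T_g$ permutes the factors) then sharpens the modulus to $C_{p^{\lfloor k/p\rfloor}}$ --- the careful tracking of $p$-adic valuations here being the bulk of the remaining work. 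Finally, for part~(iv): for each fixed $g$ the cocycle relation and $pg=0$ give $\prod_{i=0}^{p-1}f(g, T_{ig}x) = f(pg,x) = 1$, i.e.\ $\prod_{i=0}^{p-1}T_g^i\big(f(g,\cdot)\big) = 1$; combining this with part~(iii), applied after writing $f(g,\cdot)$ as a constant (depending on $g$) times a $C_{p^{N(k)}}$-valued polynomial as in (ii), forces that constant to have the modulus $C_{p^{\lfloor k/p\rfloor+1}}$, using also that it depends on $g$ like a power of a character of the $p$-torsion group $\Fw$; this gives the stated bound for $f$.
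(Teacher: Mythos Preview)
Your proposal is correct and follows essentially the same strategy as the paper. For part~(i), both you and the paper reduce to the base case $k=p$ by the identity $\mder_g(f^p)=(\mder_g f)^p$, and both then exploit $T_g^p=\id$ via a binomial/Taylor expansion; the paper passes to the additive logarithm $F:\X\to\R/\Z$ and inverts the nilpotent operator $1+b_2\Delta_g+\cdots+b_{p-1}\Delta_g^{p-2}$ by a Neumann series, whereas you stay multiplicative and back-substitute through the triangular system of relations $(\mder_g^l f)\prod_{n>l}(\mder_g^n f)^{b_{n-l+1}}\in C_p$. These are two presentations of the same linear-algebra computation. (Note that your separate multilinear-form argument for $\mder_g^{p-1}f\in C_p$ is correct but redundant: the $l=p-1$ case of your own relation already yields it.) Parts~(ii)--(iv) likewise match the paper: iterate~(i) for~(ii), use the hockey-stick expansion $\prod_{i=0}^{p-1}T_g^i f=\prod_{l\ge0}(\mder_g^l f)^{\binom{p}{l+1}}$ for~(iii), and combine the cocycle identity $\prod_{i=0}^{p-1}f(g,T_g^i x)=f(pg,x)=1$ with~(ii) and~(iii) for~(iv). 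Your closing remark about the constant depending on $g$ ``like a power of a character'' is an unnecessary detour --- the paper simply reads off $c_g^p\in C_{p^{\lfloor k/p\rfloor}}$ directly from~(iii) --- but the argument is otherwise sound.
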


\begin{proof}  To prove (i), it suffices to verify it in the case $k=p$, since the higher cases then follow by induction and from the identity $\mder_g (f^p) = (\mder_g f)^p$.  Taking logarithms, it suffices to show that if $F: \X \to \R/\Z$ is a polynomial of degree $<p$, then $pf$ is constant.

Let $g \in G$.  Since $T_g^p f = f$ and $T_g = 1 + \ader_g$, we conclude using the binomial formula that
$\sum_{i=0}^p \binom{p}{i} \ader_g^i f = f.$
Since $f$ has degree $<p$, $\ader_g^p f = 0$.  We conclude that
$$ p \ader_g f + \binom{p}{2} \ader_g f + \ldots + p \ader_g^{p-1} f = 0$$
which we rewrite as
$$ (1 + \frac{p-1}{2} \ader_g + \ldots + \ader_g^{p-2}) \ader_g pf = 0.$$
Inverting the expression in brackets using Neumann series (and using the fact that $\ader_g^{p-1}$ annihilates $\ader_g pf$) we conclude that $\ader_g pf = 0$ for any $g$, thus by ergodicity $pf$ is constant as claimed.

To prove (ii), we first observe that it suffices to prove the claim for $k$ of the form $k=pm+1$ for integer $m$.  But the claim is trivial for $m=1$, and from (i), we see that the claim for $m$ implies the claim for $m+1$, and so (ii) follows by induction.

To prove (iii), it suffices to do so for $k$ of the form $k=pm-1$ for some integer $m$, since the claim is trivial otherwise.  By (i), the claim for $m$ implies the claim for $m-1$, so it suffices to verify the case $m=p-1$.  Taking logarithms, it suffices to show that if $F: \X \to \R/\Z$ is a polynomial of degree $<p-1$ with $pF=0$, then $\sum_{i=0}^{p-1} T_g^i F = 0$.  But writing $T_g = 1 + \ader_g$ we obtain the identity
$ \sum_{i=0}^{p-1} T_g^i = \sum_{i=0}^{p-1} \binom{p}{i+1} \ader_g^i$
and the claim follows, since $\binom{p}{i+1}$ is a multiple of $p$ for all $0 \leq i < p-1$, and $\ader_g^{p-1}$ annihilates $F$.

To prove (iv), observe from the cocycle equation that $\prod_{i=0}^{p-1} f( g, T_g^i x ) = 1$ for all $g \in \Fw$ and almost all $x \in X$.  On the other hand, from (ii) we know that for fixed $g$, $f( g, \cdot )$ is equal to a constant $c_g$ times a polynomial taking values in $C_{p^{\lfloor (k-2)/(p-1) \rfloor + 1}}$, so by (iii), $\prod_{i=0}^{p-1} f( g, T_g^i x )$ is equal to $c_g^p$ times a quantity in $C_{p^{\lfloor k/p \rfloor}}$.  Thus $c_g \in C_{p^{\lfloor k/p \rfloor + 1}}$, and the claim follows.
\end{proof}

\begin{remark} The claims are sharp.  For instance, in the characteristic $2$ space $\F_2^\omega$ (which acts on itself by translations), the function
$ \phi_k: (x_1,x_2,\ldots) \mapsto e^{2\pi i (\sum_{j=1}^\infty |x_j|)/2^k}$
for $k \geq 1$, where $x \mapsto |x|$ is the obvious map from $\F_2$ to $\{0,1\}$, is a phase polynomial of degree $k$ which takes values in $C_{2^k}$, but in no smaller group.  
\end{remark}

\subsection{Roots of phase polynomials}

We now develop some machinery that will allow us to take $m^\th$ roots of phase polynomials and still obtain a phase  polynomial.  It will be convenient to use the notation $O_{a_1,\ldots,a_k}(1)$ to denote a quantity bounded in magnitude by some constant $C(a_1,\ldots,a_k)$ depending only on $a_1,\ldots,a_k$.  Throughout this appendix, $\F$ is a finite field of characteristic $p$, and $\X$ is n $\Fw$-system.

For any cyclic $p$-group $\Z/p^m\Z$, and any $0 \leq j \leq m-1$, let $b_j = b_{j,m}: \Z/p^m\Z \to \{0,1,\ldots,p-1\}$ be the $j^{th}$ digit map, thus
$x = \sum_{j=0}^{m-1} b_j(x) p^j$
for all $x \in \Z/p^m \Z$.  

Recall that a map $P: X \to H$ into an \emph{additive} group $H$ is a polynomial of degree $<d$ if we have $\ader_{g_1} \ldots \ader_{g_d} P = 0$ for all $g_1,\ldots,g_d \in G$.

\begin{proposition}[Digits of polynomials are polynomials]\label{dig-prop} Let $P: X \to \Z/p^m\Z$ be a polynomial of degree $<d$, and let $\Z/p^l\Z$ be a cyclic group.  Embed $\{0,1,\ldots,p-1\}$ into $\Z/p^l\Z$ in the obvious manner.  Then for any $0 \leq j < m-1$, $b_j(P)$ is a polynomial of degree $<O_{l,d,p,j}(1)$.
\end{proposition}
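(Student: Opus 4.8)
The plan is to reduce the statement to a one-digit version — showing that the top digit map $b_{m-1}$ converts polynomials into polynomials with a controlled loss of degree — and then iterate.

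\textbf{Step 1: Reduce to the top digit.} Suppose we can show: if $Q: X \to \Z/p^m\Z$ is a polynomial of degree $<d$, then $b_{m-1}(Q)$, viewed in $\Z/p^l\Z$, is a polynomial of degree $<O_{l,d,p}(1)$. Granting this, I handle a general digit $b_j$ as follows. The reduction map $\pi_{j+1}: \Z/p^m\Z \to \Z/p^{j+1}\Z$ is a homomorphism, so $\pi_{j+1}(P)$ is a polynomial of degree $<d$ into $\Z/p^{j+1}\Z$, and $b_j(P) = b_j(\pi_{j+1}(P))$ is the \emph{top} digit of $\pi_{j+1}(P)$. Applying the top-digit case (with $m$ replaced by $j+1$) gives that $b_j(P)$ is a polynomial of degree $<O_{l,d,p,j}(1)$, as desired. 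So the whole proposition collapses to the top-digit statement.

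\textbf{Step 2: Prove the top-digit statement by induction on $m$.} For $m=1$ there is nothing to prove ($b_0 = \mathrm{id}$). For $m \geq 2$: the key algebraic fact is that if $Q$ has degree $<d$ into $\Z/p^m\Z$, then $p Q$ is a polynomial of degree $<d$ into $p\Z/p^m\Z \cong \Z/p^{m-1}\Z$ — this is because the $p^{\mathrm{th}}$-power/multiplication-by-$p$ map, in the \emph{additive} language, is the analogue of Lemma \ref{L:values-in-F_p}(i): writing $T_g = 1 + \ader_g$ and expanding $T_g^{p^m}Q = Q$ via the binomial theorem (using $\ader_g^d Q = 0$) one finds, exactly as in the proof of Lemma \ref{L:values-in-F_p}(i), that $\ader_g(pQ)$ has degree strictly smaller than $d$; more precisely one gets that $pQ$ is a polynomial of degree $<d$ (indeed one can be sharper, but we only need boundedness). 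The point is that $Q' := pQ$ has, by the induction hypothesis applied at level $m-1$, the property that each of its digits $b_i(Q')$ for $0 \le i \le m-2$ is a polynomial of bounded degree; but $b_i(Q') = b_i(pQ) = b_{i-1}(Q)$ for $1 \le i \le m-1$ (and $b_0(pQ)=0$), so in particular the second digit through the top digit of $Q$ are all bounded-degree polynomials. It remains only to handle $b_0(Q)$. But $b_0(Q) = Q - p\lfloor Q/p\rfloor$ where $\lfloor Q/p \rfloor = \sum_{i\geq 1} b_i(Q)p^{i-1}$ is now known to be a bounded-degree polynomial into $\Z/p^{m-1}\Z$; lifting into $\Z/p^m\Z$ (which only changes the degree by $O(1)$ since a polynomial of degree $<d$ into $\Z/p^{m-1}\Z$ lifts to one of degree $<O_{d,p,m}(1)$ into $\Z/p^m\Z$ — e.g. via its own digit decomposition, or by noting the lift of $b_i$ composed with $p^i$), we get that $b_0(Q) = Q - p \lfloor Q/p \rfloor$ is a difference of bounded-degree polynomials into $\Z/p^l\Z$ (after embedding), hence a bounded-degree polynomial. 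Tracking the bounds through the induction gives the claimed $O_{l,d,p,j}(1)$ dependence.

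\textbf{The main obstacle.} The genuinely delicate point is the passage from ``$pQ$ is a polynomial of degree $<d$'' (the additive analogue of Lemma \ref{L:values-in-F_p}(i)) back to controlling $Q$ itself — the issue being that multiplication by $p$ is not injective on $\Z/p^m\Z$, so knowing $pQ$ is nice only pins down $Q$ modulo the $p$-torsion subgroup, which is precisely the bottom digit $b_0(Q)$. I expect to have to argue carefully that this residual bottom digit is itself polynomial, and the cleanest route is the circular-looking but valid observation that $p^{m-1}b_0(Q) = p^{m-1}Q$ is $p$ times a polynomial and so (one more application of the Lemma \ref{L:values-in-F_p}(i)-type argument, or simply because $p^{m-1}Q$ has degree $<d$) $b_0(Q)$, which takes values in the $p$-torsion $\{0,p^{m-1},\ldots\}\cong C_p$, must have bounded degree — alternatively the subtraction identity $b_0(Q) = Q - p\lfloor Q/p\rfloor$ above does the job once the higher digits are handled. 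Either way, the induction bookkeeping to get the explicit dependence on $l,d,p,j$ is the only place requiring real care; everything else is a routine unwinding of the binomial-theorem computation already carried out for Lemma \ref{L:values-in-F_p}.
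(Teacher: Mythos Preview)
There is a genuine gap in your inductive step. When you form $Q' := pQ$ and pass through the isomorphism $p\Z/p^m\Z \cong \Z/p^{m-1}\Z$, the element you obtain is simply $Q \bmod p^{m-1}$: multiplication by $p$ shifts digits \emph{up} by one in $\Z/p^m\Z$, but the identification $p\Z/p^m\Z \cong \Z/p^{m-1}\Z$ shifts indices back \emph{down} by one, and these cancel. Concretely, $b_i(Q') = b_i(Q)$ for $0 \le i \le m-2$, not $b_i(Q') = b_{i-1}(Q)$ as you wrote. So applying the induction hypothesis at level $m-1$ controls only the \emph{bottom} $m-1$ digits $b_0(Q),\ldots,b_{m-2}(Q)$ and never touches the top digit $b_{m-1}(Q)$ --- which, after your Step~1 reduction, is exactly the one you need. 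The induction is circular.

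The base case $m=1$ is also not trivial for $l>1$: the embedding $\{0,\ldots,p-1\}\hookrightarrow \Z/p^l\Z$ is not a homomorphism, so saying ``$b_0=\mathrm{id}$'' does not show that a degree-$<d$ polynomial into $\Z/p\Z$ remains a bounded-degree polynomial after lifting to $\Z/p^l\Z$. Establishing this already requires analysing the carry when you compute $\ader_g(\iota(Q)) = \iota(\ader_g Q) - p\cdot c(Q,\ader_g Q)$ and showing the carry term $c$ is polynomial (via Lagrange interpolation and a further induction on $l$). That is essentially the content of the paper's proof, which bypasses the top-digit reduction entirely and runs a direct triple induction on $(d,l,j)$ using the carry-bit identity $b_j(x+y) = b_j(x) + b_j(y) + c_j(\ldots) - p\,c_{j+1}(\ldots)$.
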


\begin{proof} We may assume inductively that the claim is already proven for smaller values of $d$; for the same value of $d$ and smaller values of $l$; or the same value of $d$ and $l$ and smaller values of $j$.  We abbreviate $O_{l,d,p,j}(1)$ as $O(1)$.

From primary school arithmetic we know that we have a formula of the form
\begin{align*}
b_j(x+y) &= b_j(x) + b_j(y) + c_j( b_0(x), \ldots, b_{j-1}(x), b_0(y), \ldots, b_{j-1}(y) )\\
&\quad - p c_{j+1}( b_0(x), \ldots, b_{j}(x), b_0(y), \ldots, b_{j}(y) )
\end{align*}
for some ``carry bit'' functions $c_j: \{0,1,\ldots,p-1\}^{2j} \to \{0,1\}$.  Applying this with $P$ and $\ader_g P$ for some group element $g$ we conclude
\begin{align*}
b_j( T_g P ) &= b_j( P ) + b_j( \ader_g P ) + c_j( b_0(P), \ldots, b_{j-1}(P), b_0(\ader_g P), \ldots, b_{j-1}(\ader_g P) ) \\
&\quad - p c_{j+1}( b_0(P), \ldots, b_{j}(P), b_0(\ader_g P), \ldots, b_{j}(\ader_g P) )
\end{align*}
and so
\begin{align*}
\ader_g(b_j( P )) &= b_j( \ader_g P ) + c_j( b_0(P), \ldots, b_{j-1}(P), b_0(\ader_g P), \ldots, b_{j-1}(\ader_g P) ) \\
&\quad - p c_{j+1}( b_0(P), \ldots, b_{j}(P), b_0(\ader_g P), \ldots, b_{j}(\ader_g P) ).
\end{align*}
By the induction hypothesis on $d$, $b_j(\ader_g P)$ is already a polynomial of some degree $O(1)$.  By the induction hypothesis on $j$, $b_0(P), \ldots, b_{j-1}(P), b_0(\ader_g P), \ldots, b_{j-1}(\ader_g P)$ are also polynomials of degree $O(1)$.  The carry function $c_j$, by Lagrange interpolation, can be expressed as a polynomial in $\Z/p^l\Z$ of its arguments; the key point here is that as the arguments lie in $\{0,\ldots,p-1\}$, the denominators in the Lagrange interpolation formula contain no factors of $p$ and are thus invertible.  We thus conclude that 
$c_j( b_0(P), \ldots, b_{j-1}(P), b_0(\ader_g P), \ldots, b_{j-1}(\ader_g P) )$ is also a polynomial of degree $O(1)$.

Finally, by the induction hypothesis on $l$, we know that $$b_0(P), \ldots, b_{j}(P), b_0(\ader_g P), \ldots, b_{j}(\ader_g P)$$ are polynomials of degree $O(1)$ in $\Z/p^{l-1}\Z$, and thus by arguing as before $$c_{j+1}(b_0(P), \ldots, b_{j}(P), b_0(\ader_g P), \ldots, b_{j}(\ader_g P))$$
is a polynomial of degree $O(1)$ in $\Z/p^{l-1}\Z$.  This implies that
$$p c_{j+1}(b_0(P), \ldots, b_{j}(P), b_0(\ader_g P), \ldots, b_{j}(\ader_g P))$$ 
is a polynomial of degree $O(1)$ in $\Z/p^l\Z$.

Putting all this together we see that $\ader_g(b_j(P))$ is a polynomial of degree $O(1)$ for all $g$, and hence $b_j(P)$ is a polynomial of degree $O(1)$, thus closing the induction.
\end{proof}

\begin{corollary}[Functions of phase polynomials are phase polynomial]\label{func} Let $\phi_1,\ldots,\phi_m$ be $(\X,S^1)$-phase polynomials of degree $<d$ for some $d, m \geq 1$, let $n \geq 1$, and let $F(\phi_1,\ldots,\phi_m)$ be some function of $\phi_1,\ldots,\phi_m$ taking values in the cyclic group $C_{p^n}$.  Then $F(\phi_1,\ldots,\phi_m)$ is a $(\X,S^1)$-phase polynomial of degree $O_{p,d,m,n}(1)$.
\end{corollary}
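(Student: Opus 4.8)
The plan is to reduce Corollary \ref{func} to Proposition \ref{dig-prop} by expressing the $(\X,S^1)$-phase polynomials $\phi_1,\ldots,\phi_m$ in terms of genuine polynomials taking values in cyclic groups, and then showing that an arbitrary function of those polynomials is again polynomial, using the fact that everything in sight is discretely valued because of finite characteristic. First I would apply Lemma \ref{L:values-in-F_p}(ii): after multiplying each $\phi_i$ by a constant (which does not affect the conclusion, since we may absorb the resulting constant into $F$), we may assume each $\phi_i$ takes values in a fixed cyclic group $C_{p^{N}}$ with $N = O_{p,d}(1)$. Writing $\phi_i = e(P_i/p^{N})$ where $P_i: X \to \Z/p^{N}\Z$, the hypothesis that $\phi_i$ has degree $<d$ translates (taking logarithms, as in the proof of Lemma \ref{L:values-in-F_p}) into $P_i$ being a polynomial of degree $<d$ in the additive sense $\ader_{g_1}\ldots\ader_{g_d} P_i = 0$.

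Next I would use Proposition \ref{dig-prop} to extract the base-$p$ digits: for each $i$ and each $0 \leq j < N$, the digit map $b_{j}(P_i): X \to \{0,1,\ldots,p-1\} \subset \Z/p^{n+1}\Z$ (say) is a polynomial of degree $<O_{p,d,N,j}(1) = O_{p,d}(1)$ (note $n$ only enters the modulus, not the degree bound, and we only need finitely many choices of $j$). Now the point is that $F(\phi_1,\ldots,\phi_m)$ is a function only of the tuple of values $(P_1,\ldots,P_m) \in (\Z/p^{N}\Z)^m$, hence a function of the finitely many digit-values $(b_j(P_i))_{1\le i\le m, 0\le j<N} \in \{0,\ldots,p-1\}^{mN}$. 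Since $F$ takes values in $C_{p^n}$, I would take logarithms to view $F$ as a map into $\Z/p^n\Z$, and then express this map by Lagrange interpolation as a genuine polynomial (in the ring $\Z/p^n\Z$) in the $mN$ digit-variables; the crucial point, exactly as in the proof of Proposition \ref{dig-prop}, is that the interpolation denominators only involve differences of elements of $\{0,\ldots,p-1\}$ and so are coprime to $p$, hence invertible mod $p^n$. Thus $F(\phi_1,\ldots,\phi_m)$, after taking logarithms, is a fixed polynomial expression in finitely many polynomials of degree $O_{p,d}(1)$.

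It then remains to observe that a polynomial expression (sums and products in $\Z/p^n\Z$) of polynomials of degree $O_{p,d,m,n}(1)$ is again a polynomial of degree $O_{p,d,m,n}(1)$: this is elementary, since sums of polynomials of degrees $<d_1,<d_2$ have degree $<\max(d_1,d_2)$ and products have degree $<d_1+d_2-1$ (by the Leibniz-type rule $\ader_g(PQ) = (\ader_g P) Q + P(\ader_g Q) + (\ader_g P)(\ader_g Q)$ and induction), and there are boundedly many such operations, the number of monomials in $mN$ variables of bounded degree over $\Z/p^n\Z$ being $O_{p,d,m,n}(1)$. Transferring back from additive polynomials of bounded degree to $(\X,S^1)$-phase polynomials of bounded degree (exponentiating, and using that $e(R/p^n)$ has $\X$-degree one more than the additive degree of $R$, cf. the passage between Definition \ref{phase-def} and \eqref{integ}) yields the claim with the degree bound $O_{p,d,m,n}(1)$. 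I expect the only mildly delicate point to be the bookkeeping in the last step — making sure the degree bound stays finite and depends only on $p,d,m,n$ — but this is routine given Proposition \ref{dig-prop}; the genuinely substantive input, namely that digits of polynomials over $\Fw$ are themselves polynomials, has already been isolated in Proposition \ref{dig-prop}.
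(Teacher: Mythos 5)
Your argument is correct and follows essentially the same route as the paper: rotate each $\phi_i$ into a fixed cyclic group via Lemma \ref{L:values-in-F_p}, pass to additive polynomials and their base-$p$ digits via Proposition \ref{dig-prop}, Lagrange-interpolate $F$ as a $\Z/p^n\Z$-polynomial in those digits, and close by observing that polynomial expressions of bounded-degree polynomials have bounded degree. You spell out the last composition step (the discrete Leibniz rule and the monomial count) slightly more explicitly than the paper, which leaves it implicit in the phrase ``the claim now follows from Proposition \ref{dig-prop}.''
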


\begin{proof}  We have the freedom to rotate each of the $\phi_j$ by a constant.  By Lemma \ref{L:values-in-F_p}, this allows us to assume that all the $\phi_1,\ldots,\phi_m$ take values in $C_{p^d}$ (say), thus $\phi_j = e( P_j/p^d )$ for some additive polynomials $P_j: X \to C_{p^d}$ of degree $<d$.  Now observe that $F$ can be viewed as a $e(G( (b_i(P_j))_{0 \leq i < d; 1 \leq j \leq m})/p^n)$ for some function $G: \{0,\ldots,p-1\}^{md} \to \Z/p^n\Z$.  By Lagrange interpolation, $G$ can be viewed as the restriction of a polynomial from $(\Z/p^n\Z)^{md}$ to $\Z/p^n\Z$ with degree $O_{d,m,p,n}(1)$.  The claim now follows from Proposition \ref{dig-prop}.
\end{proof}

We isolate one special case of Corollary \ref{func}:
 
\begin{corollary}[Phase polynomials have phase polynomial roots]\label{root}  Let $\phi$ be a $(\X,S^1)$-phase polynomial of degree $<d$ for some $d \geq 1$, and let $n \geq 1$.  Then there exists a phase polynomial $\psi$ of degree $O_{d,n,p}(1)$ such that $\psi^n = \phi$.
\end{corollary}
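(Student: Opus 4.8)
The plan is to reduce the claim to Corollary \ref{func} by exhibiting $\psi$ as an explicit function of a constant rotation of $\phi$, taking values in a finite cyclic $p$-group.

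By Lemma \ref{L:values-in-F_p}(ii), after multiplying $\phi$ by a suitable constant $c \in S^1$ we may write $\phi = c \phi_0$, where $\phi_0 \in \Phase_{<d}(\X, C_{p^M})$ with $M := \lfloor (d-2)/(p-1) \rfloor + 1 = O_{d,p}(1)$. Write $n = p^a m$ with $a \geq 0$ and $p \nmid m$, so that $a = O_n(1)$, and fix an integer $\bar m$ with $m \bar m \equiv 1 \pmod{p^M}$. Define a function $F \colon C_{p^M} \to C_{p^{M+a}}$ by setting $F(\eta) := e(j/p^{M+a})$, where $j \in \{0, 1, \ldots, p^M - 1\}$ is the unique integer with $\eta^{\bar m} = e(j/p^M)$. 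Then $F(\eta)^{p^a} = e(j/p^M) = \eta^{\bar m}$, and hence $F(\eta)^n = (F(\eta)^{p^a})^m = \eta^{m \bar m} = \eta$, using that $\eta^{p^M} = 1$.

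Put $\psi_0 := F(\phi_0)$. By construction $\psi_0^n = \phi_0$ and $\psi_0$ takes values in the cyclic $p$-group $C_{p^{M+a}}$; since $\phi_0$ is an $(\X, S^1)$-phase polynomial of degree $<d$ and $\psi_0$ is a function of it valued in $C_{p^{M+a}}$, Corollary \ref{func} shows that $\psi_0$ is an $(\X, S^1)$-phase polynomial of degree $<O_{p, d, M+a}(1) = O_{d, n, p}(1)$. Finally, since $S^1$ is divisible we may choose $c' \in S^1$ with $(c')^n = c$; setting $\psi := c' \psi_0$ we obtain $\psi^n = (c')^n \psi_0^n = c \phi_0 = \phi$, and $\psi$ remains a phase polynomial of the same degree bound, since multiplication by the constant $c'$ (an element of $\Phase_{<1}(\X, S^1)$) does not increase the degree.

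The one point that needs care is that an $n$-th root cannot be chosen functorially over all of $S^1$: the argument genuinely exploits Lemma \ref{L:values-in-F_p}(ii) to first confine $\phi$, up to a constant, to a finite cyclic $p$-group, on which a root can be extracted by an explicit formula that is a bona fide function of $\phi$ (so that Corollary \ref{func} applies). Beyond arranging this, the proof is only bookkeeping with cyclic group arithmetic, and I foresee no further obstacle.
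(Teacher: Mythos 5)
Your proof is correct and follows essentially the same route as the paper: reduce via Lemma \ref{L:values-in-F_p}(ii) to $\phi$ valued (up to a constant) in a cyclic $p$-group, then extract the root by applying Corollary \ref{func}. The only cosmetic difference is that the paper first disposes of the prime-to-$p$ part of $n$ by raising $\phi$ to an inverse power and then invokes Corollary \ref{func} only for the $p$-power part, whereas you fold both into a single explicit root function $F$ before applying Corollary \ref{func}; you also spell out the bookkeeping with the constant $c'$ that the paper leaves implicit.
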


\begin{proof}  By rotating $\phi$ by a constant and using Lemma \ref{L:values-in-F_p}, we may assume that $\phi$ takes values in $C_{p^m}$ for some $m = O_{d,p}(1)$. If $n$ is not divisible by $p$, then $n$ is invertible in $C_{p^m}$ and the claim is immediate, so it suffices to check the case when $n$ is a power of $p$.  But then the claim follows immediately from Corollary \ref{func}.
\end{proof}

Another interesting consequence of Corollary \ref{func} (or Proposition \ref{dig-prop}) is that phase polynomials can always be expressed in terms of $C_p$-valued polynomials of higher degree.

\begin{corollary}[Representation of phase polynomials by $C_p$-valued phase polynomials]\label{phaserep} Let $\phi$ be a $(\X,S^1)$-phase polynomial of degree $<d$ for some $d \geq 1$.  Then $\phi$ can be expressed as a function of $O_{d,p}(1)$ many $C_p$-valued $(\X,S^1)$-phase polynomials of degree $<O_{d,p}(1)$.
\end{corollary}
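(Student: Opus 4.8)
The plan is to pass from $\phi$ to its base-$p$ digit expansion after first bringing it into a cyclic $p$-group of bounded order, and then quote the digit-polynomiality result, Proposition \ref{dig-prop}.

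First I would apply Lemma \ref{L:values-in-F_p}(ii): multiplying $\phi$ by a suitable constant $c \in S^1$, we may assume $\phi$ takes values in $C_{p^m}$ with $m := \lfloor (d-2)/(p-1)\rfloor + 1 = O_{d,p}(1)$; the constant $c$ is harmless since it will be absorbed into the ``function of'' at the very end. Writing $\phi = c\, e(P/p^m)$ and identifying $C_{p^m}$ with $\Z/p^m\Z$ (under which $\mder_h$ acting on $\phi$ corresponds to $\ader_h$ acting on $P$), the hypothesis that $\phi \in \Phase_{<d}(\X)$ becomes the statement that $P: X \to \Z/p^m\Z$ is an additive polynomial of degree $<d$.

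Next I would invoke Proposition \ref{dig-prop} with $l=1$: for $0 \le j \le m-2$ the digit map $b_j(P): X \to \{0,\dots,p-1\}\subset\Z/p\Z$ is an additive polynomial of degree $<O_{1,d,p,j}(1)=O_{d,p}(1)$ (using $j\le m-1=O_{d,p}(1)$); the top digit $b_{m-1}(P)$ is likewise a polynomial of degree $O_{d,p}(1)$, being the image of the (bounded-degree) polynomial $P-\sum_{j=0}^{m-2}b_j(P)p^j$ under the group isomorphism $p^{m-1}\Z/p^m\Z\cong\Z/p\Z$. Hence each $\psi_j := e(b_j(P)/p)$, $0\le j\le m-1$, is a $C_p$-valued $(\X,S^1)$-phase polynomial of degree $<O_{d,p}(1)$, and there are $m=O_{d,p}(1)$ of them. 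Since $t\mapsto e(t/p)$ is injective on $\{0,\dots,p-1\}$, the value $b_j(P)$ is determined by the value $\psi_j$, so that
$$ \phi = c\, e(P/p^m) = c\prod_{j=0}^{m-1} e\!\left(b_j(P)/p^{m-j}\right) $$
is a measurable function of $\psi_0,\dots,\psi_{m-1}$ alone, which is exactly the assertion.

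There is no serious obstacle here; the only points needing a word of care are that $m$ remains bounded in terms of $d$ and $p$ (which is exactly what Lemma \ref{L:values-in-F_p}(ii) supplies) and the separate treatment of the leading digit $b_{m-1}(P)$, both of which are routine.
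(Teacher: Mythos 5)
Your strategy (peel off a constant via Lemma \ref{L:values-in-F_p}(ii), pass to base-$p$ digits, quote Proposition \ref{dig-prop}) is precisely what the paper intends -- the corollary is stated only as a consequence of Corollary \ref{func} or Proposition \ref{dig-prop}, and your proof fills in that sketch. However, there is a genuine (if small) gap in your treatment of the top digit. You assert that $P - \sum_{j=0}^{m-2} b_j(P)p^j$ is a bounded-degree polynomial $X \to \Z/p^m\Z$, but from the $l=1$ instance of Proposition \ref{dig-prop} you only know that each $b_j(P)\colon X \to \Z/p\Z$ has bounded degree, and this does \emph{not} imply that $b_j(P)p^j\colon X \to \Z/p^m\Z$ has bounded degree: the map $\Z/p\Z \ni k \mapsto kp^j \in \Z/p^m\Z$ is not a group homomorphism when $j < m-1$ (for instance, $1 + (p-1) = 0$ in $\Z/p\Z$, but $p^j + (p-1)p^j = p^{j+1} \ne 0$ in $\Z/p^m\Z$). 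So the ``bounded-degree'' claim for the telescoped sum is not justified as written.

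The fix is routine: invoke Proposition \ref{dig-prop} with $l = m$ (or $l = m-j$) rather than $l = 1$. Since $m = O_{d,p}(1)$, the resulting degree bound $O_{m,d,p,j}(1)$ is still $O_{d,p}(1)$, and now $b_j(P)\colon X \to \Z/p^m\Z$ is a bounded-degree polynomial; multiplication by the constant $p^j$ is an endomorphism of $\Z/p^m\Z$, so $b_j(P)p^j$ is too, and the subtraction argument for the top digit then goes through. Composing $b_j(P)\colon X \to \Z/p^m\Z$ with the quotient homomorphism $\Z/p^m\Z \to \Z/p\Z$ recovers your $\Z/p\Z$-valued digits and hence the $\psi_j$. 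Alternatively, you can bypass the leading-digit case entirely by quoting Corollary \ref{func} directly: for each $0 \le j \le m-1$ the digit map $F_j\colon C_{p^m} \to C_p$, $e(k/p^m)\mapsto e(b_j(k)/p)$, is an arbitrary $C_p$-valued function of the single phase polynomial $\phi' := \phi/c$, so $\psi_j := F_j(\phi')$ is a $C_p$-valued phase polynomial of degree $<O_{d,p}(1)$ with no special pleading for $j = m-1$. The rest of your argument (that $\phi$ is a function of $\psi_0,\dots,\psi_{m-1}$, with $c$ absorbed) is correct.
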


\section{Connection with cubic complexes}\label{cubic-sec}

In this appendix we point out some connections between the notions of polynomiality and type in this paper with the theory of cubic complexes as used in topology, as set out in \cite{polyak}, in analogy with the more well-known simplicial complexes used in that field.  This material is not used elsewhere in this paper.

Abstractly, a cubic complex is a sequence of spaces $X^{[k]}, X^{[k-1]}, \ldots, X^{[0]}$, together with maps $\partial(\alpha)_*: X^{[k]} \to \X^{[l]}$ for every $l$-face $\alpha$ of $\2^k$, such that one has the relation $\partial(\alpha)_* \partial(\beta)_* = \partial( \partial(\beta)(\alpha) )_*$ whenever $k \geq l \geq m$, $\beta$ is an $l$-face of $\2^k$, and $\alpha$ is an $m$-face of $\2^l$ (so that $\partial(\beta)(\alpha)$ is an $m$-face of $\2^k$.  Note that the concrete cubic complex defined in Definition \ref{complex} is of this form.

Let $\X$ be an ergodic $G$-system for some countable abelian group $G$, and let $U=(U,+)$ a locally compact abelian group, which we now express additively for compatibility with \cite{polyak}.
By Lemma \ref{basic}(iii), a $(G,\X,U)$-function $f$ is an a $(G,\X,U)$-polynomial of degree $<k$ if and only if $d^{[k]} f = 0$ $\mu^{[k]}$-almost everywhere.  In the language of \cite{polyak}, this is equivalent to $f$ being of \emph{degree $<k$} in the sense of cubic complexes.

In the case that $U$ is uniquely divisible by $1,\ldots,k$ (i.e. for every $1 \leq j \leq k$ and $u \in U$ there is a unique solution $u/j\in U$ to the equation $j(u/j)=u$), one can express the operator $d^{[k]}: M(G,\X,U) \to M(G,\X^{[k]},U)$ as the composition of the $k$ differentiation operators $d: M(G,\X^{[j-1]},U) \to M(G,\X^{[j]},U)$ defined by
$ d f(g,x): = \frac{1}{j} \sum_{i=1}^j ( f(g,\partial^+_i {\bf x})-f(g,\partial^-_i {\bf x})).$


\begin{thebibliography}{10}

\bibitem{abramov}  L. M. Abramov, \emph{Metric automorphisms with quasi-discrete spectrum}, Izv. Akad. Nauk SSSR Ser. Mat. 26 (1962), 513-530; English transl., Amer. Math. Soc. Transl., (2) 39 (1964), 37-56.
 
\bibitem{kechris} H. Becker, A. Kechris, \emph{The descriptive set theory of Polish group actions}, Lond. Math. Soc. Lecture Notes Series 232, Cambridge University Press, 1996.


\bibitem{bhk} V. Bergelson, B. Host, B. Kra, \emph{Multiple recurrence and nilsequences}, Inventiones Math., \textbf{160}, 2, (2005) 261--303. With an appendix by I. Ruzsa.

\bibitem{berg-leib-poly} V. Bergelson, S. Leibman, \emph{Distribution of values of bounded generalized polynomials}. Acta. Math 198 (2007) no. 2, 155-230 



\bibitem{cl} J.-P. Conze, E. Lesigne, \emph{Th\'eor\`emes ergodiques pour des mesures diagonales}, Bull. Soc. Math. France \textbf{112} (1984), 143--175.
 
\bibitem{fhk}
N. Frantzikinakis, B. Host, B. Kra, \emph{Multiple recurrence and convergence for sequences related to the prime numbers}, Crelles Journal (J. Reine Angew. Math.) \textbf{611} (2007), 131--144.

\bibitem{fw}
N. Frantzikinakis, M. Wierdl, \emph{A Hardy field extension of Szemeredi's theorem}, preprint.

\bibitem{fu-szemeredi} H. Furstenberg,  {\em Ergodic behavior of diagonal measures and a theorem of 
Szemer\'edi on arithmetic progressions}.  J. Analyse Math.  31  (1977), 204-256.


\bibitem{Fur2} Furstenberg H., \emph{Recurrence in Ergodic Theory and Combinatorial Number Theory}, Princeton University Press, New Jersey, 1981.

\bibitem{fw-char} H. Furstenberg, B. Weiss, {\em A mean ergodic theorem for  $1/N\sum^N_{n=1}f (T^n x)g(T^ {n\sp 2}x)$}.  Convergence in ergodic theory and probability (Columbus, OH, 1993), 193-227, Ohio State Univ. Math. Res. Inst. Publ., 5 de Gruyter, Berlin, 1996. 




\bibitem{gowers}
T. Gowers, \emph{A new proof of Szemeredi's theorem}, Geom. Func. Anal., \textbf{11} (2001), 465-588.

\bibitem{gowwolf}
T. Gowers, J. Wolf, \emph{The true complexity of a system of linear equations}, preprint.

\bibitem{gt-primes}
B. Green, T. Tao, \emph{The primes contain arbitrarily long arithmetic progressions}, {Annals of Math.}, \textbf{167} (2008), 481--547.

\bibitem{gt:inverse-u3}
B. Green, T. Tao, \emph{An inverse theorem for the Gowers $U^3(G)$ norm}, preprint.

\bibitem{green-tao-linearprimes} B. Green, T. Tao, \emph{Linear equations in primes,} {Annals of Math.}, to appear.

\bibitem{green-tao-finfieldAP4s} B. Green, T. Tao, \emph{New bounds for Szemer\'edi's Theorem, I: Progressions of length 4 in finite field geometries}, preprint.

\bibitem{gt-ff-ratner} B. Green, T. Tao, \emph{The distribution of polynomials over finite fields, and applications to the Gowers Inverse conjecture}, preprint.

\bibitem{hk-cubes} B. Host, B. Kra \emph{Non-conventional ergodic averages and nilmanofolds}, Ann. Math. 161, 1 (2005) 397-488.


\bibitem{les} E. Lesigne, \emph{Th\'eor\`emes ergodiques pour une translation sur une nilvariet\'e}, Ergod. Th. Dynam. Sys. \textbf{9}(1) (1989), 115--126.
 
\bibitem{lms} S. Lovett, R. Meshulam, A. Samorodnitsky, \emph{Inverse Conjecture for the Gowers norm is false }, preprint.

\bibitem{polyak} S. Matveev, M. Polyak,  {\em Cubic complexes and finite type invariants}
Geometry and Topology Monographs, Volume 4: Invariant Knots and 3-manifolds (Kyoto 2001), 215-233.

\bibitem{moore-schmidt} C. Moore, K. Schmidt, \emph{Coboundaries and homomorphisms for non-singular actions and a problem of H. Helson} Proc. London Math. Soc. (3) \textbf{40} (1980) 443--475.

\bibitem{morris}
S. A. Morris, \emph{Pontryagin duality and the structure of locally compact abelian groups}, London Math. Soc. Lecture Note Series, 29.  Cambridge University Press, 1977

\bibitem{parry} W.~Parry, \emph{Dynamical systems on nilmanifolds,}
Bull. London Math. Soc. \textbf{2} (1970) 37--40.


\bibitem{sam}
A. Samorodnitsky, \emph{Low-degree tests at large distances}, preprint.


\bibitem{szemeredi}
E. Szemer\'edi, \emph{On sets of integers containing no $k$ elements in arithmetic progression},
Acta Arith. \textbf{27} (1975), 299--345.



\bibitem{tz-correspondence}
T. Tao, T. Ziegler, \emph{The inverse conjecture for the Gowers norm over finite fields via the correspondence principle}, preprint.



\bibitem{zieg-jams} T. Ziegler, \emph{Universal characteristic factors and Furstenberg averages},
J. Amer. Math. Soc. \textbf{20}  (2007), 53--97. 


\end{thebibliography}
\end{document}